\newcommand{\smallsum}{\textstyle\sum}
\newcommand{\calF}{\mathcal F}
\newcommand{\R}{\mathbb{R}}
\newcommand{\N}{\mathbb{N}}
\newcommand{\E}{\mathbb{E}}
\renewcommand{\P}{\mathbb{P}}
\newcommand{\tr}{\operatorname{trace}}
\newcommand{\Hess}{\operatorname{Hess}}
\newtheorem{lemma}{Lemma}[section]
\newtheorem{theorem}[lemma]{Theorem}
\newtheorem{definition}[lemma]{Definition}
\newtheorem{prop}[lemma]{Proposition}
\newtheorem{corollary}[lemma]{Corollary}
\providecommand{\eps}{{\ensuremath{\varepsilon}}}
\providecommand{\N}{{\ensuremath{\mathbbm{N}}}}
\providecommand{\R}{{\ensuremath{\mathbbm{R}}}}
\providecommand{\E}{{\ensuremath{\mathbb{E}}}}
\renewcommand{\P}{{\ensuremath{\mathbb{P}}}}
\providecommand{\1}{{\ensuremath{\mathbbm{1}}}}
\providecommand{\N}{{\ensuremath{\mathbbm{N}}}}
\providecommand{\R}{{\ensuremath{\mathbbm{R}}}}
\providecommand{\E}{{\ensuremath{\mathbb{E}}}}
\renewcommand{\P}{{\ensuremath{\mathbb{P}}}}
\providecommand{\1}{{\ensuremath{\mathbbm{1}}}}
\providecommand{\HS}{{\ensuremath{\textup{HS}}}}
\begin{document}
\title{Exponential integrability properties of numerical approximation processes
for nonlinear stochastic differential equations}

\author{Martin Hutzenthaler, Arnulf Jentzen,
and Xiaojie Wang\footnote{corresponding author; e-mail: x.j.wang7@csu.edu.cn}}

\maketitle
\makeatletter
\let\@makefnmark\relax
\let\@thefnmark\relax
\@footnotetext{\emph{AMS 2010 subject classification:} 60H35, 65C30}
\@footnotetext{\emph{Key words and phrases:}
  Exponential moments,
  numerical approximation,
  stochastic differential equation,
  Euler scheme,
  Euler-Maruyama,
  implicit Euler scheme,
  tamed Euler scheme,
  strong convergence rate
  }
\makeatother

\begin{abstract}
  Exponential integrability properties of numerical approximations
  are a key tool for establishing positive rates of strong and numerically weak convergence
  for a large class of nonlinear stochastic differential equations.
  It turns out that well-known numerical approximation processes
  such as Euler-Maruyama approximations,
  linear-implicit Euler approximations,
  and some tamed Euler approximations from the literature
  rarely preserve
  exponential integrability properties of the exact solution.
  The main contribution of this article is to identify
  a class of stopped increment-tamed Euler approximations
  which preserve
  exponential integrability properties of the exact solution
  under
  minor additional
  assumptions on the involved functions.
\end{abstract}

%

\section{Introduction}

Let
$ T \in ( 0, \infty ) $,
$ d, m \in \N = \{ 1, 2, \ldots \} $,
let
$ \mu \colon \R^d \to \R^d $
and
$ \sigma \colon \R^d \to \R^{ d \times m } $
be locally Lipschitz continuous functions,
let
$ 
  ( \Omega, \mathcal{F}, \P )
$
be a probability space with a normal filtration
$
  ( \mathcal{F}_t )_{ t \in [0,T] }
$,
let $ W \colon [0,T] \times \Omega \to \R^m $
be a standard $ ( \mathcal{F}_t )_{ t \in [0,T] } $-Brownian motion
with continuous sample paths,
and let
$
  X \colon [0,T] \times \Omega \to \R^d
$
be an 
$ ( \mathcal{F}_t )_{ t \in [0,T] } $-adapted
stochastic process
with continuous sample paths satisfying
that for all $ t \in [0,T] $
it holds $ \P $-a.s.\ that
\begin{equation}
\label{eq:SDE.intro}
  X_t = X_0 + \int_0^t
  \mu( X_s ) \, ds +
  \int_0^t \sigma( X_s ) \, dW_s
  .
\end{equation}
The stochastic process $ X $ 
is thus a solution process of 
the stochastic differential equation (SDE)~\eqref{eq:SDE.intro}.

The goal of this paper is to identify numerical approximations
$
  Y^N \colon[0,T]\times\Omega\to\R^d
$, $N\in\N$,
that converge
in the strong sense
to the exact solution
of the SDE~\eqref{eq:SDE.intro}
and
that \emph{preserve exponential integrability properties}
in the sense that
for all suffciently regular functions
$
  U \colon \R^d \to [0,\infty)
$
with 
$
  \sup_{t\in[0,T]}\E\big[\exp(U(X_t))\big] < \infty
$
it holds that
$
  \sup_{ N \in \N }\sup_{t\in[0,T]}
  \E\big[
    \exp\!\big(
      U( Y_t^N )
    \big)
  \big] < \infty
$.
Our main motivation for this is that such exponential integrability properties
are a key tool for establishing
rates of strong and numerically
weak convergence
for a large class of nonlinear SDEs.
To be more specific, strong convergence rates of 
approximations of a multi-dimensional SDE
have, except of in D\"{o}rsek~\cite{Doersek2012}
and except of in \cite{HutzenthalerJentzen2014},
only been established if the coefficients of the
SDE are globally monotone
(see, e.g., (H2) in Pr{\'e}v{\^o}t \&
R\"{o}ckner~\cite{PrevotRoeckner2007}
for the
global monotonicity assumption).
Unfortunately, most of the nonlinear SDEs from
the literature fail to satisfy the global monotonicity
assumption (see, e.g., Section~4
in Cox et al.~\cite{CoxHutzenthalerJentzen2014}
for a list of examples).
In Corollary 3.2 in D\"{o}rsek~\cite{Doersek2012}, strong convergence
rates for spatial spectral Galerkin approximations
of the solution of the vorticity formulation
of two-dimensional stochastic Navier-Stokes equations
have been established by exploiting
\emph{exponential integrability properties}.
Moreover, 
the perturbation
estimate in Theorem~1.2
in \cite{HutzenthalerJentzen2014}
implies in a general setting that
suitable \emph{exponential integrability properties} of a family
of approximation processes are sufficient
to establish \emph{strong convergence rates}.
This conditional result together 
with the
\emph{exponential integrability properties}
established
in this article then yields
strong convergence rates for
the numerical scheme
proposed in this article
(see \eqref{eq:proposed.method} below)
for several SDEs with non-globally monotone
coefficients
(see Theorem 1.3 in~\cite{HutzenthalerJentzen2014} for details).
In particular, to the best of our knowledge,
the numerical scheme
proposed in this article
(see \eqref{eq:proposed.method} below)
is the first approximation method
for which temporal strong convergence rates
have been proved
(see Theorem 1.3
in \cite{HutzenthalerJentzen2014})
for at least one
multi-dimensional SDE with non-globally monotone coefficients
(see Section 3.1
in \cite{HutzenthalerJentzen2014}
for a list of example SDEs
for which temporal strong convergence rates
for the numerical method~\eqref{eq:proposed.method} below have been proved).
In addition, exponential integrability properties of numerical approximations
are necessary for approximating expectations of exponentially growing test functions
of the exact solution.

There are a number of SDEs
in the literature
that admit exponential integrability properties.
We focus on
Corollary~2.4 in Cox et al.~\cite{CoxHutzenthalerJentzen2014}
(see, for example, also Lemma 2.3 in Zhang~\cite{Zhang2010}).
Let
$
  \rho \in [0,\infty)
$,
$
  U \in
  C^3( \R^d, [0,\infty) )
$,
and
$
  \bar{U} \in C( \R^d, \R)
$
satisfy for all $ x \in \R^d $ 
that
$
  \E\big[
    e^{ U( X_0 ) }
  \big]
  <
  \infty
$,
$
  \inf_{ y \in \R^d }
  \bar{U}( y ) > - \infty
$,
and 
\begin{equation} \label{eq:Lyapunov.condition.ExpMoments}
  U'(x)\mu(x)+\tfrac{1}{2}\text{tr}\Big(\sigma(x)\sigma(x)^{*}(\Hess U)(x)\Big)
  +\tfrac{1}{2}\left\|\sigma(x)^{*}(\nabla U)(x)\right\|^2
  +\bar{U}(x)\leq \rho \, U(x)
  .
\end{equation}
Then
Corollary~2.4 in Cox et al.~\cite{CoxHutzenthalerJentzen2014}
yields that
for all $ t \in [0,T] $
it holds that
\begin{equation} 
\label{eq:ExpMoments.inequality.solution}
  \E\left[
    \exp\!\left(
      \tfrac{U(X_t)}{e^{\rho t}}+\int_0^t \tfrac{\bar{U}(X_s)}{e^{\rho s}}\,ds
    \right)
  \right]
      \leq
      \E\!\left[
        e^{ U( X_0 ) }
      \right]
    \in(0,\infty)
    .
\end{equation}
Section~\ref{sec:examples} lists a selection of SDEs from the literature
which satisfy condition~\eqref{eq:Lyapunov.condition.ExpMoments}.
Further instructive exponential integrability results
for solutions of SDEs can be found, e.g.,
in~\cite{
BouRabeeHairer2013,
EsSarhirStannat2010,
FangImkellerZhang2007,
HairerMattingly2006,
HieberStannat2013,
Li1994,
Zhang2010}.
In the light of inequality~\eqref{eq:ExpMoments.inequality.solution},
the goal of this paper is, in particular, to identify numerical approximations
that converge
in the strong sense
to the exact solution
of the SDE~\eqref{eq:SDE.intro}
and that
\emph{preserve inequality~\eqref{eq:ExpMoments.inequality.solution}}
in a suitable sense; see inequality~\eqref{eq:thm.inequality} below.

It turns out that many well-known numerical methods
for SDEs
fail to
preserve exponential integrability properties.
For instance, in the special case where
$ d = m = 1 $,
where $X_0 = \xi \in \R $,
and where for all $ x \in \R $
it holds that
$ \mu( x ) = - x ^3 $
and
$ \sigma( x ) = 1 $,
the solution process $ X $ of the SDE~\eqref{eq:SDE.intro}
satisfies that for all $ t \in [0,T] $
it holds $ \P $-a.s.\ that 
\begin{equation} \label{eq:SDE.x3}
   X_t
 =
   X_0
   -\int_0^t ( X_s )^3 \, ds + W_t
   .
\end{equation}
In that case,
inequality~\eqref{eq:Lyapunov.condition.ExpMoments}
holds with 
$ \rho = 0 
$, 
$
  \eps \in ( 0, \tfrac{ 1 }{ 2 } )
$,
$ 
  U = 
  \big( 
    \R \ni x \mapsto \eps |x|^4 \in [0,\infty)
  \big)
$,
and
$
  \bar{U}
  =
  \big(
  \R \ni x \mapsto 4 \eps \left( 1 - 2 \eps \right) x^6 - 6 \eps x^2
  \in \R
  \big)
$;
see Subsection~\ref{ssec:exampleSDE} below.
Thus,
Corollary~2.4 in Cox et al.~\cite{CoxHutzenthalerJentzen2014}
implies for all $ \eps \in[0,\tfrac{ 1 }{ 2 }]$ that
  \begin{equation}  \begin{split} \label{eq:exampleSDE.Lyapunov.intro}
    \sup_{ t \in [0,T] }
    \E\!\left[
      \exp\!\left(
        \eps \left| X_t \right|^4
        +
        \int_0^t
        4 \eps
        \left( 1 - 2 \eps \right)
        \left| X_s \right|^6
        -
        6 \eps \left| X_s \right|^2
        ds
      \right)
    \right]
  \leq
    \E\!\left[
      e^{
        \eps |X_0|^4
      }
    \right]
  < \infty
  \end{split}     \end{equation}
and, in particular,
for all $ \eps \in  [0, \frac{ 1 }{ 2 }) $
that
$
  \sup_{ t \in [0,T] }
    \E\!\left[
      \exp\!\left(
        \eps | X_t |^4
      \right)
    \right]
  < \infty
$.
If $ Y^N \colon [0,T] \times \Omega \to \R^d $, $N\in\N$,
are the classical Euler-Maruyama approximations
as defined in~\eqref{eq:stopped.Euler} below
with $D_t=\R$, $t\in(0,T]$,
then moments are
finite but
unbounded
in the sense that 
for all $ N \in \N $, $ p \in (0, \infty ) $
it holds that
$
  \E\big[
    | Y_T^N |^p
  \big] < \infty
$
and
$
  \lim_{ M \to \infty }
  \E\big[ |Y_T^M|^p \big] = \infty
$
(see Theorem 2.1 in~\cite{hjk11} for the case $ p \in [1,\infty) $
and Theorem 2.1 in~\cite{HutzenthalerJentzenKloeden2013})
whereas approximations of
$
  \E\!\left[
    \exp\!\left(
      \delta | X_t |^4
    \right)
  \right] 
$, 
$
  \delta \in (0,\eps) 
$,
$ t \in (0,T] 
$, 
are infinite in the sense that
for all 
$ N \in \N $, $ p \in (0,\infty) $, $ q \in (2,\infty) $
it holds that
$
  \inf_{ t \in (0,T] } 
  \E\big[\exp(p|Y_t^N|^q)\big] = \infty
$;
see Lemma~\ref{l:stopped.Euler.no.exp} below.
Next, if $ Y^N \colon [0,T] \times \Omega \to \R^d $, $ N \in \N $,
are the linear-implicit Euler approximations
as defined in~\eqref{eq:stopped.linear.implicit.Euler} below
with $ D_t = \R $, $ t \in (0,T] $,
then strong convergence holds
in the sense that for all $ p \in (0,\infty) $
it holds that
$
  \lim_{ N \to \infty }
  \E\big[
    | X_T - Y_T^N |^p
  \big] = 0
$
whereas approximations of
$
  \E\!\left[
    \exp\!\left(
      \delta | X_t |^4
    \right)
  \right]
$, 
$
  \delta \in (0,\eps) 
$,
$ t \in (0,T] $, 
are infinite in the sense that
for all $ N \in \N $, $ p \in (0,\infty) $, $ q \in ( 2, \infty) $
it holds that
$
  \inf_{t\in(0,T]}\E\big[\exp(p|Y_t^N|^q)\big]=\infty
$;
see Lemma~\ref{l:linear.implicit.Euler.no.exp} below.
Moreover, if $ Y^N \colon [0,T] \times \Omega \to \R^d $, $N\in\N$,
are tamed Euler approximations
as defined in~\eqref{eq:increment-tamed.Euler1}
or as in~\eqref{eq:increment-tamed.Euler2} below
with $ D_t = \R $, $ t \in (0,T] $,
then  
approximations of
$
  \E\!\left[
    \exp\!\left(
      \delta | X_t |^4 
    \right)
  \right]
$, 
$ \delta \in (0,\eps) $,
$ t \in (0,T] $, 
are finite but unbounded in the sense that
for all $ N \in \N $,
$ p \in (0,\infty) $, $ q \in (3,\infty) $
it holds that
$
  \sup_{t\in[0,T]}\E\big[\exp(\eps|Y_t^N|^4)\big]<\infty
$ 
and 
$
  \lim_{ M \to \infty }
  \inf_{ t \in (0,T] }
  \E\big[ \exp( p |Y_t^M|^q ) \big] = \infty
$;
see Corollary~\ref{c:increment.tamed.no.exp1}
and Corollary~\ref{c:increment.tamed.no.exp2} below.
In the above sense,
Euler-Maruyama approximations, linear-implicit Euler approximations,
and tamed Euler approximations
as defined in~\eqref{eq:increment-tamed.Euler1}
or as in~\eqref{eq:increment-tamed.Euler2}
are not suitable for approximating
$
  \E\!\left[\exp\!\left(\delta|X_t|^4\right)\right]
$,
$ \delta \in (0,\eps) $,
$ t \in (0,T] $,
in the numerical weak sense.
Lemma~\ref{l:unbounded.expmoments} below also indicates that
further numerical one-step approximation methods whose one-step increment function
grows sufficiently fast as  the discretization step size decreases
are not  suitable for approximating
expectations of exponential functionals in the generality of
Theorem~\ref{thm:main.result} below.

There are many results in the literature
which prove uniform boundedness of polynomial moments
of numerical approximations of certain nonlinear SDEs
with superlinearly growing coefficients;
see, e.g.,
\cite{
BouRabeeHairer2013,
BrzezniakCarelliProhl2013,
DereichNeuenkirchSzpruch2012,
GyoengyMillet2005,
Halidias2013,
hms02,
HighamMaoSzpruch2013,
h96,
hj11,
HutzenthalerJentzen2014Memoires,
hjk12,
MaoSzpruch2013NoRate,
NeuenkirchSzpruch2014,
Sabanis2013ECP,
Sabanis2016AAP,
Schurz2003,
hmps11,
TretyakovZhang2013,
WangGan2013}. 
To the best of our knowledge,
the only reference on exponential integrability properties
of appropriate numerical approximations for nonlinear SDEs is
Bou-Rabee \& Hairer~\cite{BouRabeeHairer2013}.
More precisely,
Lemma 3.6
in Bou-Rabee \& Hairer~\cite{BouRabeeHairer2013}
implies that there exists
$ \theta \in (0,\beta) $
such that
$
  \sup_{ h \in (0,1] }
  \E\big[
    \exp(
      \theta
      U( \bar{X}_{ \lfloor 1 / h \rfloor }^h )
    )
  \big]
  < \infty
$
where
$
  \bar{X}^h \colon \{ 0, 1, 2, \dots \}
  \times \Omega \to \R^d
$,
$
  h \in (0,1]
$,
is a 'patched' version of the
Metropolis-Adjusted Langevin Algorithm (MALA)
for the overdamped Langevin dynamics
(see Subsection~\ref{sec:overdamped_Langevin} below)
where the
potential energy function
$U\in C^4(\R^d,\R)$
satisfies certain assumptions; see \cite{BouRabeeHairer2013}
for the details.
In addition, Proposition 5.2
in Bou-Rabee \& Hairer~\cite{BouRabeeHairer2013}
provides an exponential one-step estimate
for MALA.

In this article, we propose the following method to approximate
the solution of the SDE~\eqref{eq:SDE.intro}
and to
preserve inequality~\eqref{eq:ExpMoments.inequality.solution}
in a suitable sense.
Let $ Y^N \colon [0,T] \times \Omega \to \R^d $,
$ N \in \N $,
be the mappings
which satisfy that
for all
$ N \in \N $,
$ n \in \{0,1,\ldots,N-1\} $,
$ t \in (\tfrac{nT}{N},\tfrac{(n+1)T}{N} ] $
that
$ Y^N_0 = X_0 $
and
\begin{equation}
\label{eq:proposed.method}
  Y^N_t =
    Y_{\frac{nT}{N}}^N
    +
    \1_{
      \!
      \left\{
        \|
          Y_{ n T / N }^N
        \|
        \leq
        \exp\left(
          | \ln( N / T ) |^{ 1 / 2 }
        \right)
      \right\}
    }
       \! \left[
       \tfrac{
         \mu(
           Y_{ n T / N }^N
         )
         (
           t - \frac{ n T }{ N }
         )
         +
         \sigma(
           Y_{ n T / N }^N
         )
         (
           W_t
           -
           W_{ n T / N }
         )
       }{
         1 +
         \|
         \mu(
           Y_{ n T / N }^N
         )
         (
           t - \frac{ n T }{ N }
         )
         +
         \sigma(
           Y_{ n T / N }^N
         )
         (
           W_t
           -
           W_{ n T / N }
         )
         \|^2
       }
            \right]
  .
\end{equation}
  This method differs from the classical Euler-Maruyama scheme in
  two aspects. First, the Euler-Maruyama increment is divided through
  by one plus the squared norm of the Euler-Maruyama increment.
  This ensures that the increments of the numerical method~\eqref{eq:proposed.method}
  are uniformly bounded.
  Second, the approximation paths with $N\in\N$ time discretizations
  are stopped after leaving the
  set
  $
    \{
      x \in \R^d\colon
      \| x \|
      \leq
      \exp\big(
          | \ln( N/T ) |^{1/2}
      \big)
    \}
  $
  where we choose the stopping levels mainly such that
  for all $ p \in (0,\infty) $
  it holds that
  $
    \lim_{ N \to \infty }
    \exp\!\big(
        | \ln( N/T ) |^{1/2}
    \big) \, N^{ - p } = 0
  $.
  These a priori bounds
  give us control on certain rare events.
  In addition, observe that the numerical approximations
  $
    \{0,1,\ldots,N\}\times\Omega\ni(n,\omega)\mapsto
    Y_{ n T / N }^N(\omega)\in \R^d
  $, $N\in\N$, can be easily
  implemented recursively.
  In fact, this implementation requires only a few additional
  arithmetical operations in each recursion step compared to
  the classical Euler-Maruyama approximations.
  Theorem~\ref{thm:main.result} below
  shows that
  the numerical approximations~\eqref{eq:proposed.method}
  preserve inequality~\eqref{eq:ExpMoments.inequality.solution}
  in a suitable sense
  under slightly stronger assumptions on $\mu$, $\sigma$, $U$, and $\bar{U}$.
%
%
\begin{theorem} \label{thm:main.result}
  Assume the above setting,
  let $ p, c \in [1,\infty) $,
  let
  $\tau_N\colon\Omega\to[0,T]$, $N\in\N$,
  be mappings
  satisfying
  for all $ N \in \N $
  that
$
  \tau_N =
  \inf\!\big(
    \big\{
    t \in \{ 0, \frac{ T }{ N }, \frac{ 2 T }{ N }, \dots, T \}
    \colon
    \| Y^N_t \|
    >
    \exp(
      | \ln( N / T ) |^{ 1 / 2 }
    )
  \big\}
  \cup\{T\}
  \big)
$,
  and assume 
  for all
  $ x, y \in\R^d $,
  $ i \in \{ 1, 2, 3 \} $
  that
  \begin{align}
      \| \mu(x) \|
      +
      \| \sigma(x) \|_{\HS(\R^m,\R^d)}
    &
  \leq
    c
    \left(
      1 +
      \| x \|^{ c }
    \right)
    ,
  \\
      | \bar{U}(x) - \bar{U}(y) |
    &
    \leq
    c
    \left(
      1 + \| x \|^{ c } + \| y \|^{ c }
    \right)
      \| x - y \|
    ,
  \\
    \| x \|^{ 1 / c }
    &
    \leq
    c
    \left(
      1 +
      U( x )
    \right)
    ,
  \\
    \|
      U^{(i)}( x )
    \|_{L^{(i)}(\R^d,\R)}
    &\leq
    c
    \left( 1 + U(x) \right)^{ \max\{ 1 - i / p , 0 \} }
    .
  \end{align}
  Then it holds for all
$ r \in (0,\infty) $ that
$
  \lim_{ N \to \infty }
  \big(
  \sup_{ t \in [0,T] }
  \E\big[
    \| X_t - Y^N_t \|^r
  \big]
  \big)
  = 0
$,
it holds that
\begin{equation}
\label{eq:thm.inequality}
  \limsup_{ N \to \infty }
  \sup_{ t \in [0,T] }
  \E\!\left[
       \exp\!\left(
         \tfrac{
           U( Y^N_t )
         }{
           e^{ \rho t }
         }
         +
         \smallint_0^{ t \wedge \tau_N }
           \tfrac{
             \bar{U}( Y^N_s )
           }{
             e^{ \rho s }
           }
         \, ds
       \right)
     \right]
  \leq
  \E\!\left[
        e^{
          U( X_0 )
        }
  \right]
  < \infty
  ,
\end{equation}
and it holds that
$
  \sup_{
    N \in \N
  }
  \sup_{ t \in [0,T] }
  \E\big[
       \exp\!\big(
           e^{ - \rho t }
           \,
           U( Y^N_t )
         +
         \smallint_0^{ t \wedge \tau_N }
             e^{ - \rho s }
             \,
             \bar{U}( Y^N_s )
         \, ds
       \big)
     \big]
  <
  \infty
$.
\end{theorem}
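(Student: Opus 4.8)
The strategy is to establish the two exponential integrability assertions first, by a one-step analysis that discretizes the argument behind \eqref{eq:ExpMoments.inequality.solution}, and then to deduce the strong convergence statement from them. Write $h=T/N$, $\theta_N=\exp(|\ln(N/T)|^{1/2})$, and $\mathcal Z^N_t=\exp\!\big(e^{-\rho t}U(Y^N_t)+\int_0^{t\wedge\tau_N}e^{-\rho s}\bar U(Y^N_s)\,ds\big)$, so that $\mathcal Z^N_0=e^{U(X_0)}$ and both claims concern $\sup_{t\in[0,T]}\E[\mathcal Z^N_t]$. First I would record the consequences of the hypotheses: integrating $\|U'(x)\|\le c(1+U(x))^{1-1/p}$ along rays gives $U(x)\le C(1+\|x\|)^p$, and together with the polynomial growth of $\mu,\sigma,\bar U,U^{(i)}$ and the coercivity $\|x\|^{1/c}\le c(1+U(x))$ this shows that on the event $\{\|Y^N_{nh}\|\le\theta_N\}$ — which, since \eqref{eq:proposed.method} freezes the path after the first grid time at which the norm exceeds $\theta_N$, is exactly the event that the step $n\mapsto n+1$ is active — the quantities $U(Y^N_{nh})$, $|\bar U(Y^N_{nh})|$, $\|\mu(Y^N_{nh})\|$, $\|\sigma(Y^N_{nh})\|$, $\|U^{(i)}(Y^N_{nh})\|$ are bounded by $C\theta_N^{q}$ for a fixed $q$, hence — this is the point of the choice of $\theta_N$ — by $N^{o(1)}$, i.e.\ by a quantity growing slower than every positive power of $N$. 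On the complementary set $Y^N$ is constant from $nh$ onwards and $\tau_N\le nh$, so since $\rho,U\ge0$ the map $t\mapsto\mathcal Z^N_t$ is non-increasing there; hence it suffices to treat active steps, on which the taming makes the increment bounded by $\tfrac12$, so that $\|Y^N_s\|\le\theta_N+\tfrac12$ throughout the step.

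Fix an active step at level $x=Y^N_{nh}$ and $t\in(nh,(n+1)h]$, and write $\mathcal Z^N_t=\mathcal Z^N_{nh}\,e^{G}$ with $G=e^{-\rho t}U(x+\Delta_t)-e^{-\rho nh}U(x)+\int_{nh}^{t}e^{-\rho s}\bar U(Y^N_s)\,ds$, where $\Delta_t=D_t(1+\|D_t\|^2)^{-1}$ and $D_t=\mu(x)(t-nh)+\sigma(x)(W_t-W_{nh})$. I would split on $\mathcal A=\{\sup_{s\in[nh,(n+1)h]}\|W_s-W_{nh}\|\le h^{1/2-\eps}\}$ for a small fixed $\eps$; by the reflection principle $\P(\mathcal A^c\mid\mathcal F_{nh})\le C\exp(-c\,h^{-2\eps})$, which is smaller than every power of $h$. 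On $\mathcal A$ every relevant term is bounded by a deterministic quantity of the form (positive power of $h$)$\times N^{o(1)}$: a second-order Taylor expansion of $U(x+\Delta_t)$ with the $U^{(3)}$-remainder, the identity $\Delta_t=D_t-\|D_t\|^2 D_t(1+\|D_t\|^2)^{-1}$, and the expansions $e^{-\rho h}=1-\rho h+O(h^2)$, $\bar U(Y^N_s)=\bar U(x)+O(N^{o(1)}h^{1/2-\eps})$ give, after choosing $\eps$ small, $G=e^{-\rho nh}\Psi(x,W_t-W_{nh})+\mathcal R$ with $|\mathcal R|\le N^{o(1)}h^{5/4}$ on $\mathcal A$, where
\[
  \Psi(x,w)=-\rho(t-nh)U(x)+(t-nh)\big(U'(x)\mu(x)+\bar U(x)\big)+U'(x)\sigma(x)w+\tfrac12\,w^{*}\sigma(x)^{*}(\Hess U)(x)\sigma(x)w .
\]
Using the bound on $\mathcal R$ and then discarding $\1_{\mathcal A}\le1$, $\E[e^{G}\1_{\mathcal A}\mid\mathcal F_{nh}]\le e^{N^{o(1)}h^{5/4}}\,\E[e^{e^{-\rho nh}\Psi(x,\,W_t-W_{nh})}\mid\mathcal F_{nh}]$, and the last expectation is, for $N$ large enough that $N^{o(1)}h<1$, an explicit Gaussian integral, equal to $\det(I-a(t-nh)\sigma^{*}\Hess U\,\sigma)^{-1/2}\exp\!\big(a(t-nh)(-\rho U+U'\mu+\bar U)+\tfrac12 a^2(t-nh)\|\sigma^{*}\nabla U\|^2\big)$ with $a=e^{-\rho nh}\le1$ and everything evaluated at $x$; expanding the determinant as $\exp(\tfrac12 a(t-nh)\tr(\sigma\sigma^{*}\Hess U)+O(N^{o(1)}h^2))$ and using $\tfrac12 a^2\le\tfrac12$, its exponent is at most $a(t-nh)\big(U'\mu+\tfrac12\tr(\sigma\sigma^{*}\Hess U)+\tfrac12\|\sigma^{*}\nabla U\|^2+\bar U-\rho U\big)+O(N^{o(1)}h^2)$, which is $O(N^{o(1)}h^2)$ by the Lyapunov condition \eqref{eq:Lyapunov.condition.ExpMoments}. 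On $\mathcal A^c$ one has, again because the increment is bounded by $\tfrac12$, the deterministic bound $G\le C(1+\theta_N)^{\max(p,c+1)}=N^{o(1)}$, so $\E[e^{G}\1_{\mathcal A^c}\mid\mathcal F_{nh}]\le e^{N^{o(1)}}C\exp(-c\,h^{-2\eps})$, which is smaller than every power of $h$ since $N^{o(1)}\ll h^{-2\eps}=(N/T)^{2\eps}$. Combining the two, $\E[e^{G}\mid\mathcal F_{nh}]\le 1+C\theta_N^{q'}h^{5/4}$ on active steps and $\le1$ on stopped ones, whence $\E[\mathcal Z^N_t\mid\mathcal F_{nh}]\le(1+C\theta_N^{q'}h^{5/4})\,\mathcal Z^N_{nh}$ for all $t\in(nh,(n+1)h]$ and all $n$.

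Iterating this over $n=0,\dots,N-1$ gives $\sup_{t\in[0,T]}\E[\mathcal Z^N_t]\le(1+C\theta_N^{q'}h^{5/4})^N\,\E[e^{U(X_0)}]\le\exp\!\big(C\theta_N^{q'}h^{1/4}\big)\E[e^{U(X_0)}]$ for all large $N$ (finiteness of $\sup_t\E[\mathcal Z^N_t]$ for the remaining, finitely many, $N$ follows from the crude deterministic bound $G\le N^{o(1)}$ on active steps, which yields $\mathcal Z^N_t\le e^{U(X_0)}\exp(N\cdot N^{o(1)})$). Since $\theta_N^{q'}h^{1/4}=N^{o(1)-1/4}\to0$, this gives simultaneously $\sup_N\sup_t\E[\mathcal Z^N_t]<\infty$ and $\limsup_N\sup_t\E[\mathcal Z^N_t]\le\E[e^{U(X_0)}]$, which is \eqref{eq:thm.inequality}. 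For the strong convergence, $\inf_y\bar U(y)>-\infty$ and $\rho\ge0$ give $\mathcal Z^N_t\ge c_1\exp(e^{-\rho T}U(Y^N_t))\ge c_1\exp\!\big(e^{-\rho T}(c^{-1}\|Y^N_t\|^{1/c}-1)\big)$, so the uniform bound just proved forces $\sup_N\sup_t\E[\exp(\delta\|Y^N_t\|^{1/c})]<\infty$ for some $\delta>0$, hence uniform boundedness of all moments of $\|Y^N_t\|$; the same holds for $X$ by \eqref{eq:ExpMoments.inequality.solution}, so $\{\|X_t-Y^N_t\|^r\}_{N,t}$ is uniformly integrable for every $r\in(0,\infty)$. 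It remains to prove $Y^N_t\to X_t$ in probability uniformly in $t$, which follows by combining the exponential integrability above with the perturbation estimate of \cite{HutzenthalerJentzen2014} (and with $\P(\tau_N<T)\to0$, a consequence of $\sup_{t\in[0,T]}\|X_t\|<\infty$ a.s.\ and $\theta_N\to\infty$); together with the uniform integrability this gives $\lim_N\sup_t\E[\|X_t-Y^N_t\|^r]=0$.

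The heart of the argument is the one-step estimate, and within it two points that must be gotten exactly right. The first is that the remainder $\mathcal R$ contribute only at order $o(h)$ per step — so that the product over the $\asymp h^{-1}$ steps stays bounded, and in fact tends to $1$ — without paying a factor $e^{\mathrm{poly}(\theta_N)}$ coming from the a priori level bound: this is precisely what the splitting on $\mathcal A$ buys, since on $\mathcal A$ the relevant random quantities are not merely $\mathrm{poly}(\theta_N)$-bounded but are a genuine positive power of $h$ times $\mathrm{poly}(\theta_N)=N^{o(1)}$, while on $\mathcal A^c$ the super-polynomially small probability beats the $e^{\mathrm{poly}(\theta_N)}$. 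The second is that the coefficient of $\|\sigma^{*}\nabla U\|^2$ produced by the Gaussian computation be $\le\tfrac12$, so as to match \eqref{eq:Lyapunov.condition.ExpMoments} with no loss; this is why the Gaussian integral is carried out exactly rather than by splitting off the martingale term with Hölder's inequality, the factor $\tfrac12 a^2$ with $a=e^{-\rho nh}\le1$ being exactly the discrete analogue of the $e^{-\rho t}\le1$ that makes \eqref{eq:ExpMoments.inequality.solution} work. Essentially all of the technical work lies in making the bookkeeping of the error terms in $G$ uniform in $N$; the local Lipschitz continuity of $\mu,\sigma$ from the ambient setting is needed only for the strong convergence part, the exponential integrability requiring of $\mu,\sigma$ only their polynomial growth and of $\bar U$ only its polynomial local Lipschitz bound.
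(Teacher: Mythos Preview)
Your argument is correct and the one-step analysis is carried out cleanly; the two points you single out at the end are exactly the ones that matter, and you handle both. The route, however, is genuinely different from the paper's.

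For the exponential integrability, the paper does not Taylor-expand $U$ and split on the Brownian increment; instead it applies It\^o's formula to the continuous-time process $s\mapsto\exp\!\big(e^{-\rho s}U(\Phi(x,s,W_s))+\int_0^s e^{-\rho r}\bar U(\Phi(x,r,W_r))\,dr\big)$, where $\Phi$ is the tamed increment function (Lemma~2.6). This produces the generator combination exactly, and the Lyapunov condition is invoked pointwise before any expectation is taken; the remaining error terms---differences like $\partial_s\Phi-\mu$, $\partial_y\Phi-\sigma$, $U^{(i)}(Y^x_s)-U^{(i)}(x)$---are controlled by an $L^2$--$L^2$ H\"older split against the exponential factor, with all sizes measured through the inclusion $D_h\subseteq\{U\le c\,h^{-\alpha}\}$ for an explicit range of $\alpha$. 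This yields explicit constants and a result for arbitrary taming exponent $q>1$ and arbitrary stopping sets of that form (Theorem~2.9), of which the statement you prove is then a special case. Your Taylor-plus-event-splitting argument is more elementary (no stochastic calculus, no $L^p$ bookkeeping), exploits directly that $\theta_N^{q'}=N^{o(1)}$ beats every fixed power of $h$, and makes transparent why the coefficient in front of $\|\sigma^*\nabla U\|^2$ comes out as $\tfrac12 e^{-2\rho nh}\le\tfrac12$; the price is that it is tailored to the specific scheme \eqref{eq:proposed.method} and does not immediately give the more general Theorem~2.9.

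For strong convergence, the paper proceeds independently of the moment bounds: it shows the scheme is $(\mu,\sigma)$-consistent (Lemmas~3.2--3.3, Corollary~3.6), deduces convergence in probability (Proposition~3.5, Corollary~3.7), and only then uses the uniform moments to upgrade to $L^r$. Your route---invoke the perturbation estimate of \cite{HutzenthalerJentzen2014} once the exponential moments are in hand---also works, but note that your parenthetical justification of $\P(\tau_N<T)\to0$ via ``$\sup_t\|X_t\|<\infty$ a.s.'' is circular as written; the non-circular argument is the one you already have available, namely a union bound over the $N$ grid points against the uniform bound $\sup_{N,t}\E[\exp(\delta\|Y^N_t\|^{1/c})]<\infty$, which decays super-polynomially in $\theta_N$.
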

Theorem~\ref{thm:main.result} is a special case of
our main result,
Corollary~\ref{cor:for_examples} below,
in which the state space of the exact solution of the SDE
under consideration is an open subset of $ \R^d $.
Corollary~\ref{cor:for_examples}, in turn,
follows from our general result on exponential integrability properties
of stopped increment-tamed Euler-Maruyama schemes,
Theorem~\ref{thm:stopped.Euler.bounded.increments} below,
and from  convergence in probability
of stopped increment-tamed Euler-Maruyama schemes,
Corollary~\ref{cor:convergence_increment_tamed} below
(see also Subsection~\ref{sec:outline_proof} below for a brief outline 
of the proof of Corollary~\ref{cor:for_examples}).
To the best of our knowledge,
Theorem~\ref{thm:main.result}
and its generalization in
Corollary~\ref{cor:for_examples} below
respectively
are the first results in the literature
which imply
exponential integrability properties 
for
numerical approximations of
the stochastic Ginzburg-Landau equation
in
Subsection~\ref{ssec:Stochastic Ginzburg-Landau equation},
for numerical approximations of
the stochastic Lorenz equation with additive noise
in Subsection~\ref{ssec:stochastic.Lorenz.equation},
for numerical approximations of
the stochastic van der Pol oscillator
in Subsection~\ref{ssec:stochastic.van.der.Pol.oscillator},
for numerical approximations of
the stochastic Duffing-van der Pol oscillator
in Subsection~\ref{ssec:stochastic.Duffing.van.der.Pol.oscillator},
for numerical approximations of
the model from experimental psychology
in Subsection~\ref{ssec:experimental.psychology},
for numerical approximations of
the stochastic SIR model
in Subsection~\ref{ssec:stochastic.SIR.model},
or -- under additional assumptions
on the model --
for numerical approximations of
the Langevin dynamics
in Subsection~\ref{ssec:Langevin.dynamics}.

\subsection{A brief outline of the proof of the main result of this article}
\label{sec:outline_proof}

In this section we give a brief and rough outline of the proof of Theorem~\ref{thm:main.result}.
Theorem~\ref{thm:main.result} is a special case of Corollary~\ref{cor:for_examples},
which is the main result of this article.
For our outline of the proof of Theorem~\ref{thm:main.result},
suppose the assumptions of Theorem~\ref{thm:main.result},
let $ D_t \subseteq \R^d $, $ t \in (0,\infty) $,
be the sets with the property that for all 
$ t \in (0,\infty) $
it holds that
$
  D_t = 
  \big\{ 
    x \in \R^d 
    \colon 
    \left\| x \right\| 
    \leq 
    \exp\!\big(
      | \ln( t ) |^{ 1 / 2 }
    \big)
  \big\}
$,
let 
$
  \mathcal{G}^U_{ \mu, \sigma } \colon \R^d \to \R
$
be the function with the property that
for all $ x \in \R^d $
it holds that
$
  \mathcal{G}^U_{ \mu, \sigma }( x ) = 
  U'(x) \mu( x ) 
  +
  \frac{ 1 }{ 2 }
  \text{tr}\big(
    \sigma( x ) \sigma( x )^*
    ( \operatorname{Hess} U)( x )
  \big)
$
(cf.\ \eqref{eq:Lyapunov.condition.ExpMoments} above),
let 
$
  \Phi_h \colon \R^d \times [0,h] \times \R^m \to \R^d
$,
$ h \in [0,T] $,
be the functions with the property that
for all 
$ h \in [0,T] $,
$ (x,t,y) \in \R^d \times [0,h] \times \R^m $ it holds that
\begin{equation}
\label{eq:Phi_def}
  \Phi_h(x,t,y)
  =
  x 
  +
    \tfrac{
      \mu( x ) t + \sigma( x ) y
    }{
      1 +
      \left\|
        \mu( x ) t + \sigma( x ) y
      \right\|^2
    }
  ,
\end{equation}
and let 
$ Z^{ s, x, h } \colon [ 0, h ] \times \Omega \to \R^d $,
$ h \in [0,T-s] $,
$ s \in [0,T] $,
$ x \in \R^d $,
be the stochastic processes with the property that
for all $ s \in [0,T] $, $ h \in [0,T-s] $, $ x \in \R^d $, $ t \in [0, h ] $
it holds that
$
  Z^{ s, x, h }_t 
  = 
  \Phi_h( x, t, W_{ s + t } - W_s )
$.
Observe that
for all $ N \in \N $, $ t \in [0, \frac{ T }{ N } ] $,
$ \omega \in \Omega $
with 
$
  X_0( \omega ) \in D_{ T / N }
$
it holds that
$
  Y^N_t( \omega )
  = 
  Z^{ 0, X_0( \omega ), T / N }_t( \omega )
$.
A key step in the proof of Theorem~\ref{thm:main.result} is to show that
there exists a real number 
$ c \in (0,\infty) $
such that
for all $ s \in [0,T] $, $ h \in [0, \min\{ T - s , 1 \} ] $, 
$ x \in D_h $, $ t \in [0,h] $
it holds that
\begin{equation}
\label{eq:to_show}
  \E\!\left[ 
    \exp\!\left(
      \tfrac{
        U( Z^{ s, x, h }_t )
      }{
        e^{ \rho t }
      }
      +
      \int_0^t
      \tfrac{
        \bar{U}( Z^{ s, x, h }_r )
      }{
        e^{ \rho r }
      }
      \, dr
    \right)
  \right]
  \leq
  \exp\!\left(
    c \, t^{ 1 + 1 / c } + U(x) 
  \right)
\end{equation}
(cf.\ \eqref{eq:UbarU_estimate} in the proof 
of Theorem~\ref{thm:stopped.Euler.bounded.increments}
in Subsection~\ref{sec:exponential_moments} below
and cf.\ also Lemma~\ref{l:exp.mom.abstract.one-step} in Subsection~\ref{sec:a_one_step_estimate} below).
We prove \eqref{eq:to_show} by exploiting the assumption that
$
  \forall \, x \in \R^d \colon
  \mathcal{G}^U_{ \mu, \sigma }( x ) 
  + 
  \frac{ 1 }{ 2 }
  \left\| \sigma( x )^* ( \nabla U )( x ) \right\|^2 + \bar{U}( x )
  \leq 
  \rho \, U( x )
$
(see \eqref{eq:Lyapunov.condition.ExpMoments} above)
and by applying the It\^{o} formula and the 
fundamental theorem of calculus respectively.
More formally, 
the fundamental theorem of calculus 
and the assumption that
$
  \forall \, x \in \R^d \colon
  \mathcal{G}^U_{ \mu, \sigma }( x ) 
  + 
  \frac{ 1 }{ 2 }
  \left\| \sigma( x )^* ( \nabla U )( x ) \right\|^2 + \bar{U}( x )
  \leq 
  \rho \, U( x )
$
prove that
for all 
$ s \in [0,T] $, $ h \in [0,T-s] $, $ x \in D_h $, $ t \in [0,h] $
it holds that
\begin{equation}
\label{eq:outline_first}
\begin{split}
&
  \E\!\left[ 
    \exp\!\left(
      \tfrac{
        U( Z^{ s, x, h }_t )
      }{
        e^{ \rho t }
      }
      +
      \int_0^t
      \tfrac{
        \mathbbm{1}_{ D_h }\!( x ) 
        \,
        \bar{U}( Z^{ s, x, h }_r )
      }{
        e^{ \rho r }
      }
      \, dr
    \right)
  \right]
  -
  e^{ U( x ) }
=
  \E\!\left[ 
    \exp\!\left(
      \tfrac{
        U( Z^{ s, x, h }_t )
      }{
        e^{ \rho t }
      }
      +
      \int_0^t
      \tfrac{
        \bar{U}( Z^{ s, x, h }_r )
      }{
        e^{ \rho r }
      }
      \, dr
    \right)
  \right]
  -
  e^{ U( x ) }
\\ & \leq  
  \smallint\limits_0^t
  \Big|
  \tfrac{ \partial }{ \partial u }
  \E\!\left[ 
    \exp\!\left(
      \tfrac{
        U( Z^{ s, x, h }_u )
      }{
        e^{ \rho u }
      }
      +
      \int_0^u
      \tfrac{
        \bar{U}( Z^{ s, x, h }_r )
      }{
        e^{ \rho r }
      }
      \, dr
    \right)
  \right]
  -
  \left(
  \mathcal{G}^U_{ \mu, \sigma }( x ) 
  + 
  \tfrac{ 1 }{ 2 }
  \left\| \sigma( x )^* ( \nabla U )( x ) \right\|^2 + \bar{U}( x )
  -
  \rho \, U( x )
  \right)
  \Big|
  \, 
  du
  .
\end{split}
\end{equation}
In the next step we apply It\^{o}'s formula,
we exploit the fact that
$
  \forall \, h \in (0, 1 ] , u \in (0,h] \colon
  D_h 
  = 
  \big\{ 
    x \in \R^d 
    \colon 
    \left\| x \right\| 
    \leq 
    \exp\!\big(
      | \ln( h ) |^{ 1 / 2 }
    \big)
  \big\}
  \subseteq
  \big\{ 
    x \in \R^d 
    \colon 
    \left\| x \right\| 
    \leq 
    \exp\!\big(
      | \ln( u ) |^{ 1 / 2 }
    \big)
  \big\}
$,
we exploit \eqref{eq:Phi_def},
and we use a number of elementary
estimates (see the proof 
of Lemma~\ref{l:exp.mom.abstract.one-step} in Subsection~\ref{sec:a_one_step_estimate} for details)
to obtain that
there exists a real number 
$ c \in (0,\infty) $
such that
for all 
$ s \in [0,T] $,
$ h \in [ 0, \min\{ T - s , 1 \} ] $,
$ x \in D_h $,
$ u \in (0,h] $
it holds that
\begin{equation}
\label{eq:outline_second}
\begin{split}
&
  \Big|
  \tfrac{ \partial }{ \partial u }
  \E\!\left[ 
    \exp\!\left(
      \tfrac{
        U( Z^{ s, x, h }_u )
      }{
        e^{ \rho u }
      }
      +
      \int_0^u
      \tfrac{
        \bar{U}( Z^{ s, x, h }_r )
      }{
        e^{ \rho r }
      }
      \, dr
    \right)
  \right]
  -
  \left(
  \mathcal{G}^U_{ \mu, \sigma }( x ) 
  + 
  \tfrac{ 1 }{ 2 }
  \left\| \sigma( x )^* ( \nabla U )( x ) \right\|^2 + \bar{U}( x )
  -
  \rho \, U( x )
  \right)
  \Big|
\\ & \leq
    c \, u^{ 1 / c }  
    \,
    e^{ U( x ) }.
\end{split}
\end{equation}
Putting \eqref{eq:outline_second}
into \eqref{eq:outline_first}
then results in \eqref{eq:to_show}.
Using \eqref{eq:to_show} iteratively, in turn, 
will allow us to prove that there exists a real number 
$ c \in (0,\infty) $
such that
for all $ N \in \N \cap [ T , \infty ) $
it holds that
\begin{equation}  
\label{eq:sketch_iteratively}
\begin{split}
    \sup_{ t \in [0,T] }
    \E\!\left[
      \exp\!\left( 
        \tfrac{
          U( Y^N_t ) 
        }{
          e^{ \rho t } 
        }
        +
        \smallint_0^t
          \tfrac{
            \1_{
              D_{ T / N } 
            }(
              Y^N_{
                \lfloor r \rfloor_{ \theta }
              }
            )
            \, 
            \bar{U}( Y^N_r ) 
          }{
            e^{ \rho r } 
          }
          \, dr
        \right)
      \right]
&   \leq
      \exp\!\left(
        c N \left[ \tfrac{ T }{ N } \right]^{ 1 + 1 / c } 
      \right)
      \,
      \E\!\left[
        e^{
          U( Y^N_0 ) 
        }
      \right]
\\ &   
  =
      \exp\!\left(
        \frac{ c T^{ 1 + 1 / c } }{ N^{ 1 / c } }
      \right)
      \,
      \E\!\left[
        e^{
          U( X_0 ) 
        }
      \right]
\end{split}
\end{equation}
(see Corollary~\ref{Cor:exp.mom.abstract}
and 
\eqref{eq:first.exp.mom.bound}
and 
\eqref{eq:first.exp.mom.bound.before.final} 
in the proof of Theorem~\ref{thm:stopped.Euler.bounded.increments}
for details).
Clearly, \eqref{eq:sketch_iteratively}
implies 
\begin{equation}
\label{eq:outline_limsup}
  \limsup_{ N \to \infty }
  \sup_{ t \in [0,T] }
  \E\!\left[
       \exp\!\left(
         \tfrac{
           U( Y^N_t )
         }{
           e^{ \rho t }
         }
         +
         \smallint_0^{ t \wedge \tau_N }
           \tfrac{
             \bar{U}( Y^N_s )
           }{
             e^{ \rho s }
           }
         \, ds
       \right)
     \right]
  \leq
  \E\!\left[
        e^{
          U( X_0 )
        }
  \right]
  < \infty
\end{equation}
and 
\begin{equation}
\label{eq:outline_sup}
  \sup_{
    N \in \N \cap [T,\infty)
  }
  \sup_{ t \in [0,T] }
  \E\!\left[
       \exp\!\big(
           e^{ - \rho t }
           \,
           U( Y^N_t )
         +
         \smallint_0^{ t \wedge \tau_N }
             e^{ - \rho s }
             \,
             \bar{U}( Y^N_s )
         \, ds
       \big)
     \right]
  <
  \infty
  .
\end{equation}
Display~\eqref{eq:outline_limsup}
shows display~\eqref{eq:thm.inequality}
in Theorem~\ref{thm:main.result}
and the inequality
$
  \sup_{
    N \in \N
  }
  \sup_{ t \in [0,T] }
  \E\big[
       \exp\!\big(
           e^{ - \rho t }
           \,
           U( Y^N_t )
         +
         \smallint_0^{ t \wedge \tau_N }
             e^{ - \rho s }
             \,
             \bar{U}( Y^N_s )
         \, ds
       \big)
     \big]
  <
  \infty
$
in Theorem~\ref{thm:main.result}
follows immediately from 
estimate~\eqref{eq:outline_sup}
(cf.\ \eqref{eq:bound_Y_smallN} in the proof 
of Corollary~\ref{cor:for_examples} below).
Moreover, extensions of the notions and the results 
in Sections~3.2--3.4
in \cite{HutzenthalerJentzen2014Memoires}
will allow us to prove that
for all
$ r \in (0,\infty) $ 
it holds that
$
  \lim_{ N \to \infty }
  \big(
  \sup_{ t \in [0,T] }
  \E\big[
    \| X_t - Y^N_t \|^r
  \big]
  \big)
  = 0
$
(see Section~\ref{sec:consistency} below for details).
This completes this sketch of 
the proof of Theorem~\ref{thm:main.result}.

\subsection{Notation}

Throughout this article the following notation is used.
By $ \N = \{ 1, 2, 3, \dots \} $ we denote
the set of natural numbers and by
$ \N_0 = \{ 0, 1, 2, \dots \} = \N \cup \{ 0 \} $
we denote the union of the set of natural numbers and zero.
Additionally, for a natural number $ d \in \N $
and a set $ D \subseteq \R^d $ we denote by
$
  \mathring{D} 
$
the interior of $ D $, that is, the set 
given by
\begin{equation}
  \mathring{D} =
  \left\{ 
    x \in D \colon 
    \left(
      \exists \, \varepsilon \in (0,\infty) \colon
      \left\{ 
        y \in \R^d \colon 
        \left\| x - y \right\| < \varepsilon 
      \right\}
      \subseteq D
    \right)
  \right\}
  .
\end{equation}
Furthermore, let 
$
  \left\| \cdot \right\| \colon
  \left(
    \cup_{ n \in \N }
    \R^n
  \right)
  \to 
  [0,\infty)
$
and
$
  \langle \cdot, \cdot \rangle \colon
  \left(
    \cup_{ n \in \N }
    (
    \R^n \times \R^n
    )
  \right)
  \to
  [0,\infty)
$
be the functions with the property that
for all $ n \in \N $, $ v = ( v_1, \dots, v_n ) $, $ w = ( w_1, \dots, w_n ) \in \R^n $
it holds that
$
  \left\| v \right\|
  =
  \big[ 
    \left| v_1 \right|^2 +
    \ldots +
    \left| v_n \right|^2
  \big]^{ 1 / 2 }
$
and
$
  \langle
    v, w
  \rangle
    =
    v_1 w_1 + 
    \ldots +
    v_n w_n
$.
Moreover, for natural numbers
$ d, m \in \N $
and a
$ d \times m $-matrix
$ A \in \R^{ d \times m } $
we denote by
$ A^{*} \in \R^{ m \times d } $
the transpose of the matrix $ A $
and by $ \| A \|_{ \HS( \R^m, \R^d) } $ 
the Hilbert-Schmidt norm of the matrix $ A $.
Furthermore, for natural numbers 
$ k, d, m \in \N $
we denote by
$
  L^{ (k) }( \R^d, \R^m ) 
$
the set of all $ k $-linear mappings from 
$ ( \R^d )^k = \R^d \times \R^d \times \dots \times \R^d $
to 
$ \R^m $
and we denote by
$
  \left\| \cdot \right\|_{ L^{ (k) }( \R^d, \R^m ) } 
  \colon
  L^{ (k) }( \R^d, \R^m )
  \to 
  [0,\infty)
$
the mapping with the property that for all 
$
  A \colon \R^d \times \R^d \times \dots \times \R^d \to \R^m
  \in L^{ (k) }( \R^d, \R^m )
$
it holds that
\begin{equation}
  \left\| A \right\|_{ L^{ (k) }( \R^d, \R^m ) }
=
  \sup_{
    v_1, v_2, \dots, v_k \in \R^d \backslash \{ 0 \}
  }
  \left(
  \frac{
    \left\| A( v_1, v_2, \dots, v_k ) \right\|
  }{
    \left\| v_1 \right\|
    \cdot 
    \left\| v_2 \right\|
    \cdot 
    \ldots 
    \cdot 
    \left\| v_k \right\|
  }
  \right)
  \in [0,\infty)
  .
\end{equation}
Additionally, for natural numbers
$ k, d, m \in \N $,
an open set $ U \subseteq \R^d $,
and a $ k $-times continuously differentiable 
function $ f \in C^k( U, \R^m ) $
we denote by 
$
  f^{ (k) } \colon U \to L^{ (k) }( \R^d, \R^m )
$
the $ k $-th derivative of $ f $.
Observe that for all 
$ k, d, m \in \N $,
$ v^{ (1) } = ( v^{ (1) }_1, \dots, v^{ (1) }_d ) $, 
$ \dots $, 
$ v^{ (k) } = ( v^{ (k) }_1, \dots, v^{ (k) }_d ) \in \R^d $,
all open sets $ U \subseteq \R^d $,
all $ k $-times continuously differentiable 
functions $ f \in C^k( U, \R^m ) $,
and all $ x = ( x_1, \dots, x_d ) \in U $
it holds that
\begin{equation}
  f^{ (k) }( x )( v_1, \dots, v_k )
  =
  \sum_{ l_1, \dots, l_k = 1 }^d
  \big(
    \tfrac{ \partial^k }{
      \partial x_{ l_1 } \dots \partial x_{ l_k }
    }
    f
  \big)( x )
  \cdot v^{ (1) }_{ l_1 } \cdot v^{ (2) }_{ l_2 } \cdot \ldots \cdot v^{ (k) }_{ l_k }
  .
\end{equation}
Moreover, for sets $ A $ and $ B $ 
we denote by $ \mathbbm{M}( A , B ) $ the 
set of all mappings from $ A $ to $ B $.
In addition,
for natural numbers
$ d, m \in \mathbb{N} $
and arbitrary functions
$
  \mu
  \colon \mathbb{R}^d
  \rightarrow \mathbb{R}^d
$
and
$
  \sigma
  \colon \mathbb{R}^d
  \rightarrow \mathbb{R}^{ d \times m }
$
we denote
by
$
  \mathcal{G}_{ \mu, \sigma }
  \colon
  C^2( \mathbb{R}^d, \mathbb{R} )
  \rightarrow
  \mathbbm{M}( \R^d, \R )
$
the formal generator associated to $ \mu $ and $ \sigma $,
that is,
we denote by
$
  \mathcal{G}_{ \mu, \sigma }
  \colon
  C^2( \mathbb{R}^d, \mathbb{R} )
  \rightarrow
  \mathbbm{M}( \R^d, \R )
$
the mapping with the property that
for all 
$
  \varphi \in 
  C^2( \mathbb{R}^d, \mathbb{R} )
$,
$ x \in \mathbb{R}^d $
it holds that
\begin{equation}
\label{eq:generator}
\begin{split}
  ( \mathcal{G}_{ \mu, \sigma } \varphi )
  (x)
  =
  \left<
    \mu(x),
    (\nabla \varphi)(x)
  \right>
+
    \tfrac{ 1 }{ 2 }
  \tr\!\big(
    \sigma(x)
    \sigma(x)^{ * }
    (\text{Hess } \varphi)(x)
  \big)
  .
\end{split}
\end{equation}
Furthermore,
for a natural number $ d \in \mathbb{N} $
and a Borel
measurable set
$ A \in \mathcal{B}( \mathbb{R}^d ) $
we denote by
$
  \lambda_{ A }
  \colon
  \mathcal{B}( A )
  \rightarrow [0,\infty]
$
the Lebesgue-Borel
measure on
$ A \subseteq \mathbb{R}^d $.
Moreover, for measurable spaces
$ ( A, \mathcal{A} ) $
and
$ ( B, \mathcal{B} ) $
we denote by
$ \mathcal{M}( \mathcal{A} , \mathcal{B} ) $
the set of all $ \mathcal{A} $/$ \mathcal{B} $-measurable mappings.
In addition,
for numbers
$ n, d \in \mathbb{N} $,
$
  p, c \in (0,\infty)
$,
a set $ B \subseteq \R $,
and an open and convex set $ A \subseteq \R^d $
we denote by
$ C^n_{ p, c }( A, B ) $
(cf.\ (1.12) in \cite{HutzenthalerJentzen2014Memoires})
the set given by
\begin{equation}
\begin{split}
  C^n_{p,c}( A, B )
 & 
 =
  \left\{
    f \in C^{ n - 1 }( A, B )
    \colon
    \begin{array}{c}
      \forall \, x, y \in A, i \in \N_0 \cap [0,n) \colon
      \| f^{ (i) }( x ) - f^{ (i) }( y ) \|_{
        L^{ (i) }( \R^d, \R )
      }
      \leq
    \\
      c \left\| x - y \right\|
      \big[
        1 +
        \sup_{ r \in [0,1] }
        | f( r x + (1-r) y ) |
      \big]^{ 
        \max\{ 1 - \nicefrac{ ( i + 1 ) }{ p } , 0 \}
      }
    \end{array}
  \right\}.
\end{split}
\end{equation}
Next let
$
  ( \cdot ) \vee ( \cdot )
  \colon \R^2 \to \R
$
and
$
  ( \cdot ) \wedge ( \cdot )
  \colon \R^2 \to \R
$
be the mappings with the property that
for all $ x, y \in \R $
it holds that
$ x \vee y = \max\{ x, y \} $ and
$ x \wedge y = \min\{ x , y \} $.
In addition, for a set $ \Omega $
we denote by $ 2^{ \Omega } $ the power set of 
$ \Omega $ (the set of all subsets of $ \Omega $)
and for a set $ \Omega $ we denote by
$
  \#_{ \Omega } \colon 2^{ \Omega } \to [0,\infty] 
$
the counting measure on $ \Omega $.
Furthermore, for a real number
$ T \in [0,\infty) $ we denote by
$ \mathcal{P}_T $
the set given by
$
  \mathcal{P}_T
  =
  \{
    A \subseteq [0,T] \colon
    \#_{ \R }( A ) < \infty
    \text{ and }
    \{ 0, T \} \subseteq A
  \}
$
(the set of all partitions of the interval $ [0,T] $).
Moreover, 
let 
$ 
  \ell \colon 2^{ \R } \to (-\infty,\infty]
$
be the mapping with the property that for all 
$ \theta \subseteq \R $
it holds that
$
  \ell( \theta ) = \#_{ \R }( \theta ) - 1
$.
In addition,
for a real number $ T \in [0,\infty) $
we denote by
$
  \left| \cdot \right|_T \colon \mathcal{P}_T \to [0,T]
$
the mapping with the property that
for all $ \theta \in \mathcal{P}_T $
it holds that
\begin{equation}
  \left| \theta \right|_T
  =
  \max\!\Big(
  \{ 0 \} \cup
  \Big\{
    x \in (0,\infty)
    \colon
    \big(
      \exists \, a, b \in \theta
      \colon
      \big[
        x = b - a
      \text{ and }
        \theta \cap ( a, b ) = \emptyset
      \big]
    \big)
  \Big\}
  \Big)
  \in [0,T]
  .
\end{equation}
Note for every $ T \in [0,\infty) $
and every $ \theta \in \mathcal{P}_T $
that $ \left| \theta \right|_T \in [0,T] $ is the maximum
step size of the partition $ \theta $.
Finally,
let
$ 
  \lfloor \cdot \rfloor_{ \theta } 
  \colon [0,\infty) \to [0,\infty) 
$,
$ \theta \in [ (0,\infty) \cup ( \cup_{ T \in [0,\infty) } \mathcal{P}_T ) ]
$,
and 
$ 
  \llcorner \cdot \lrcorner_{ \theta } \colon 
  [0,\infty) 
  \to 
  [0,\infty)
$,
$ \theta \in ( \cup_{ T \in [0,\infty) } \mathcal{P}_T ) $,
be the mappings which satisfy
for all 
$ 
  \theta \in 
  (
    \cup_{ T \in [0,\infty) }
    \mathcal{P}_T 
  )
$, 
$ h, t \in (0,\infty) $
that
$
  \lfloor t \rfloor_h
=
  \max\!\left(
    \{ 0, h, 2h, 3h, \dots \} \cap [0,t]
  \right)
$,
$
  \lfloor t \rfloor_{ \theta }
=
  \max\!\left(
    \theta \cap [0,t]
  \right)
$,
$
  \llcorner 
    t 
  \lrcorner_{ \theta }
=
  \max\!\left(
    \theta \cap [0,t)
  \right)
$,
and 
$
  \lfloor
    0 
  \rfloor_h
  =
  \lfloor
    0 
  \rfloor_{ \theta }
  =
  \llcorner 
    0 
  \lrcorner_{ \theta }
  = 0
$.

%
\section{Exponential moments for numerical approximation processes}
\label{sec:exponential}
%

In this section we establish exponential integrability properties for 
certain numerical approximation processes of stochastic differential equations.
In Subsection~\ref{sec:from_one_step_to_aprior}
we show how exponential moment bounds 
for the numerical approximation processes
can be derived from 
suitable one-step estimates.
We first prove a general Lyapunov-type estimate in 
Proposition~\ref{prop:stability2} 
in Subsection~\ref{sec:from_one_step_to_aprior}.
Thereafter, we establish appropriate Lyapunov-type estimates
for exponentially growing Lyapunov-type functions 
in Lemma~\ref{l:stability3} and Corollary~\ref{Cor:exp.mom.abstract} 
in Subsection~\ref{sec:from_one_step_to_aprior}.
The results in Subsection~\ref{sec:from_one_step_to_aprior}
are elementary extensions of known results
in the literature 
(cf., e.g., 
Section~2.1.1 in \cite{HutzenthalerJentzen2014Memoires}
and
Section~3.1 in Schurz~\cite{Schurz2005}).
Proposition~\ref{prop:stability2},
Lemma~\ref{l:stability3},
and Corollary~\ref{Cor:exp.mom.abstract}
all assume suitable one-step estimates 
for the considered numerical approximation processes;
see \eqref{eq:semi.V.stable2} in the case of Proposition~\ref{prop:stability2},
see \eqref{l:stability3} in the case of Lemma~\ref{l:stability3},
and see \eqref{eq:exp.mom.abstract.assumption} in the case of Corollary~\ref{Cor:exp.mom.abstract}.
The purpose of Subsection~\ref{sec:a_one_step_estimate} is to prove that an appropriate class 
of stopped numerical approximation schemes fulfills the one-step estimate~\eqref{eq:exp.mom.abstract.assumption}
(see Lemma~\ref{l:exp.mom.abstract.one-step} in Subsection~\ref{sec:a_one_step_estimate}, 
which is the key result of this article)
so that Corollary~\ref{Cor:exp.mom.abstract} in Subsection~\ref{sec:from_one_step_to_aprior}
can be applied.
In Subsection~\ref{sec:exponential_moments}
we then combine 
Corollary~\ref{Cor:exp.mom.abstract}
in Subsection~\ref{sec:from_one_step_to_aprior}
and 
Lemma~\ref{l:exp.mom.abstract.one-step} in Subsection~\ref{sec:a_one_step_estimate}
to finally obtain exponential integrability properties for a class of stopped
increment-tamed Euler-Maruyama schemes;
see Theorem~\ref{thm:stopped.Euler.bounded.increments}
and Corollary~\ref{cor:stopped.Euler.bounded.increments}
in Subsection~\ref{sec:exponential_moments}.

\subsection{From one-step estimates to exponential moments}
\label{sec:from_one_step_to_aprior}

The following proposition, Proposition \ref{prop:basis},
is an extended and generalized version of Corollary 2.2 in \cite{HutzenthalerJentzen2014Memoires}.
The proof of Proposition~\ref{prop:basis} is similar to the proof of
Proposition~2.1 in \cite{HutzenthalerJentzen2014Memoires}.

\begin{prop}
\label{prop:basis}
\label{prop:stability2}
Let
$
  \left(
    \Omega, \mathcal{F},
    \mathbb{P}
  \right)
$
be a probability
space,
let
$
  \left( E, \mathcal{E} \right)
$
be a measurable space, 
let 
$ V \in \mathcal{M}( \mathcal{E}, \mathcal{B}( [0, \infty] ) ) $, 
let 
$
  Z \colon \mathbb{N}_0 \times
  \Omega
  \rightarrow E
$
be a stochastic process,
let
$ \gamma_n \!\in\! [0, \infty)$, $n \!\in\! \mathbb{N}_0 $,
$\delta_n \!\in\! (0, \infty] $, $n \in \mathbb{N}_0$,
and $\Omega_n \in \mathcal{F}$, $n \in \mathbb{N}_0$,
be sequences 
which satisfy
for all $ n \in \N_0 $
that
$ 
  \Omega_0 = \Omega
$,
that
$
  \Omega_n \backslash \Omega_{n+1} \subseteq \{ V(Z_n) > \delta_n \}
$,
and that
\begin{equation}
\label{eq:semi.V.stable2}
   \mathbb{E} \Big[\mathbbm{1}_{
  \Omega_{n+1}
  }
    V( Z_{ n + 1 } )\Big]
\leq
  \gamma_n
  \cdot
  \mathbb{E} \Big[\mathbbm{1}_{
  \Omega_{n}
  }
    V( Z_{ n } )\Big]
  .
\end{equation}
Then it holds for all
$ n \in \mathbb{N}_0 $,
$ p \in [1,\infty] $,
$ 
  \bar{V} \in \mathcal{M}( \mathcal{E} , \mathcal{B}( [0,\infty] ) ) 
$
with $ \bar{V} \leq V $
that
\begin{equation}
\label{eq:EV.inequality}
    \mathbb{E}\big[
      \mathbbm{1}_{ \Omega_n }
      V(
        {Z}_n
      )
    \big]
   \leq
     \!\left( \prod_{k = 0}^{n-1} \gamma_k \right)
    \cdot
    \mathbb{E}\big[
      V(
        {Z}_0
      )
    \big], \quad
  \;
  \mathbb{P}\!\left[
    (
      \Omega_n
    )^{ c }
  \right]
  \leq
  \left(
  \sum_{ k = 0 }^{ n - 1 }
  \frac{
      \prod_{l=0}^{k-1} \gamma_l
  }{
    \delta_k
  }
  \right)
  \mathbb{E}\big[
    V(
      {Z}_0
    )
  \big] ,
\end{equation}
\begin{equation}
\label{eq:EVbar}
\begin{split}
  \mathbb{E}\big[
    \bar{V}({Z}_n)
  \big]
&
  \leq \!
      \left( \prod_{k = 0}^{n-1} \gamma_k \right)
    \cdot
    \mathbb{E}\big[
      V(
        {Z}_0
      )
    \big]
  +
  \|
    \bar{V}( Z_n )
  \|_{
    L^p( \Omega; \mathbb{R} )
  }
  \bigg[
\bigg(
  \sum_{ k = 0 }^{ n - 1 }
  \frac{
      \prod_{l=0}^{k-1} \gamma_l
  }{
    \delta_k
  }
  \bigg)
  \mathbb{E}\big[
    V(
      {Z}_0
    )
  \big]
  \bigg]^{ \! ( 1 - \tfrac{1}{p} ) }.
\end{split}
\end{equation}
\end{prop}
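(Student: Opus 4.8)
The plan is to derive all three displayed inequalities by iterating the one-step bound \eqref{eq:semi.V.stable2}, converting it into a Markov-type tail estimate for $(\Omega_n)^c$, and finally splitting $\mathbb{E}[\bar V(Z_n)]$ over $\Omega_n$ and its complement and applying H\"older's inequality on the complement. Throughout one works in $[0,\infty]$ with the conventions $\tfrac1\infty=0$ and $0\cdot\infty=0$; if $\mathbb{E}[V(Z_0)]=\infty$ every right-hand side is $\infty$ and there is nothing to prove, so we may assume $\mathbb{E}[V(Z_0)]<\infty$. \emph{Step 1 (first inequality in \eqref{eq:EV.inequality}).} Since $\Omega_0=\Omega$ we have $\mathbb{E}[\mathbbm{1}_{\Omega_0}V(Z_0)]=\mathbb{E}[V(Z_0)]$, and a straightforward induction on $n$ via \eqref{eq:semi.V.stable2} yields $\mathbb{E}[\mathbbm{1}_{\Omega_n}V(Z_n)]\leq\big(\prod_{k=0}^{n-1}\gamma_k\big)\mathbb{E}[V(Z_0)]$ for all $n\in\mathbb{N}_0$, with the empty product equal to $1$.

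\emph{Step 2 (second inequality in \eqref{eq:EV.inequality}).} The one point that needs a little care is that $(\Omega_n)_{n\in\mathbb{N}_0}$ is not assumed to be a decreasing family; nevertheless one always has the inclusion
\[
  (\Omega_n)^c=\Omega_0\setminus\Omega_n\subseteq\bigcup_{k=0}^{n-1}(\Omega_k\setminus\Omega_{k+1}),
\]
since for $\omega\in\Omega_0\setminus\Omega_n$ the largest index $k\in\{0,\dots,n-1\}$ with $\omega\in\Omega_k$ satisfies $\omega\notin\Omega_{k+1}$ by maximality. Using this inclusion, the union bound, the hypothesis $\Omega_k\setminus\Omega_{k+1}\subseteq\{V(Z_k)>\delta_k\}$, the pointwise estimate $\mathbbm{1}_{\{V(Z_k)>\delta_k\}}\leq\delta_k^{-1}V(Z_k)$, and Step 1, one obtains
\begin{equation*}
  \mathbb{P}[(\Omega_n)^c]\leq\sum_{k=0}^{n-1}\mathbb{E}\big[\mathbbm{1}_{\Omega_k}\mathbbm{1}_{\{V(Z_k)>\delta_k\}}\big]\leq\sum_{k=0}^{n-1}\tfrac1{\delta_k}\,\mathbb{E}\big[\mathbbm{1}_{\Omega_k}V(Z_k)\big]\leq\Big(\sum_{k=0}^{n-1}\tfrac{\prod_{l=0}^{k-1}\gamma_l}{\delta_k}\Big)\mathbb{E}[V(Z_0)],
\end{equation*}
which is the asserted bound, the summands with $\delta_k=\infty$ vanishing consistently.

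\emph{Step 3 (inequality \eqref{eq:EVbar}).} Decompose $\mathbb{E}[\bar V(Z_n)]=\mathbb{E}[\mathbbm{1}_{\Omega_n}\bar V(Z_n)]+\mathbb{E}[\mathbbm{1}_{(\Omega_n)^c}\bar V(Z_n)]$. Since $\bar V\leq V$, Step 1 bounds the first summand by $\big(\prod_{k=0}^{n-1}\gamma_k\big)\mathbb{E}[V(Z_0)]$. For the second summand one may assume $\|\bar V(Z_n)\|_{L^p(\Omega;\mathbb{R})}<\infty$ (otherwise the right-hand side of \eqref{eq:EVbar} is $\infty$), and H\"older's inequality with exponents $p$ and $\tfrac{p}{p-1}$ (the endpoint cases $p=1$ and $p=\infty$ being immediate) gives $\mathbb{E}[\mathbbm{1}_{(\Omega_n)^c}\bar V(Z_n)]\leq\|\bar V(Z_n)\|_{L^p(\Omega;\mathbb{R})}\,\big(\mathbb{P}[(\Omega_n)^c]\big)^{1-1/p}$, into which one inserts the estimate from Step 2. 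Adding the two contributions yields \eqref{eq:EVbar}.

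The argument is essentially bookkeeping: the only genuinely non-mechanical ingredient is the set inclusion in Step 2 which accommodates the possibly non-monotone sequence $(\Omega_n)_{n\in\mathbb{N}_0}$, and one must stay mildly alert to the degenerate cases $\delta_k=\infty$, $\mathbb{E}[V(Z_0)]=\infty$, and $\bar V(Z_n)\notin L^p(\Omega;\mathbb{R})$, each of which makes the relevant inequality trivial.
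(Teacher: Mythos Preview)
Your proof is correct and follows essentially the same route as the paper: iterate \eqref{eq:semi.V.stable2} for the first bound, establish the inclusion $(\Omega_n)^c\subseteq\bigcup_{k=0}^{n-1}(\Omega_k\setminus\Omega_{k+1})$ and combine it with Markov's inequality for the second, and split $\mathbb{E}[\bar V(Z_n)]$ over $\Omega_n$ and $(\Omega_n)^c$ with H\"older on the complement for the third. The only cosmetic difference is that the paper obtains the set inclusion by iterating $(\Omega_n)^c\subseteq(\Omega_{n-1}\setminus\Omega_n)\cup(\Omega_{n-1})^c$ down to $(\Omega_0)^c=\emptyset$, whereas you pick the largest index $k$ with $\omega\in\Omega_k$; both arguments are equivalent.
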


\begin{proof}[Proof
of Proposition~\ref{prop:stability2}]
The first inequality in \eqref{eq:EV.inequality} 
is an immediate consequence of \eqref{eq:semi.V.stable2}. To arrive at the second estimate in \eqref{eq:EV.inequality}, note first for all
$
  n \in \mathbb{N}
$
that
\begin{equation}
\label{eq:sets}
  \left(
    \Omega_n
  \right)^c
  =
  \big(
    \Omega_{
      n-1
    }
    \backslash
    \Omega_n
  \big)
  \uplus
  \big(
  \left(
    \Omega_{
      n-1
    }
  \right)^{ c }
  \backslash \Omega_n
  \big)
  \subseteq
  \big(
    \Omega_{
      n-1
    }
    \backslash
    \Omega_{n}
  \big)
  \cup
  \big(
  \left(
    \Omega_{
      n-1
    }
  \right)^{ c }
  \big).
\end{equation}
Iterating inclusion~\eqref{eq:sets} and
using
$ \Omega_0 = \Omega $
shows for all
$
  n \in \mathbb{N}_0
$ that
\begin{equation}
\begin{split}
  \left(
    \Omega_n
  \right)^c
& \subseteq
  \bigg(
  \bigcup_{ k = 0 }^{ n-1 }
  \left(
    \Omega_{
      k
    }
    \!
    \backslash
    \Omega_{
      { k + 1 }
    }
  \right)
  \bigg)
  \;\bigcup\;
  \big(
  \left(
    \Omega_{
      0
    }
  \right)^{ c }
  \big)
  =
  \bigcup_{ k = 0 }^{ n-1 }
  \left(
    \Omega_{
      k
    }
    \!
    \backslash
    \Omega_{
      { k + 1 }
    }
  \right) =  \bigcup_{ k = 0 }^{ n-1 }
  \Big( \Omega_{k} \cap \big(
    \Omega_{
      k
    }
    \backslash
    \Omega_{
      { k + 1 }
    }\big)
  \Big)
 \\ & \subseteq
  \bigcup_{ k = 0 }^{ n-1 }
  \left(
    \Omega_{
      k
    }
    \cap
    \left\{
      V(
        {Z}_{
          k
        }
      )
      >
      \delta_k
    \right\}
  \right)
  =
  \bigcup_{ k = 0 }^{ n-1 }
    \left\{
      \mathbbm{1}_{
        \Omega_{
          k
        }
      }
      V(
        {Z}_{
          k
        }
      )
      >
      \delta_k
    \right\}.
\end{split}
\end{equation}
Additivity of
the probability measure
$ \mathbb{P} $,
Markov's inequality
and
the first inequality in \eqref{eq:EV.inequality}
therefore
imply for all
$
  n \in \mathbb{N}_0
$
that
\begin{equation}
\begin{split}
  \mathbb{P}\big[
  (
    \Omega_n
  )^c
  \big]
& \leq
  \sum_{ k = 0 }^{ n-1 }
  \mathbb{P}\Big[ \,
    \mathbbm{1}_{
      \Omega_{
        k
      }
    }
      V(
        {Z}_{
          k
        }
      )
      >
      \delta_k
  \Big]
 \leq
  \sum_{ k = 0 }^{ n-1 }
  \!\left[
    \frac{
      \mathbb{E}\big[
      \mathbbm{1}_{
        \Omega_{
          k
        }
      }
        V(
          Z_{
            k
          }
        )
      \big]
    }{
      \delta_k
    }
  \right]
\\ & \leq
  \sum_{ k = 0 }^{ n - 1 }
  \!\left[
  \frac{
      \left(\prod_{l=0}^{k-1} \gamma_l\right)\!\cdot
    \mathbb{E}[
      V(
        {Z}_0
      )
    ]
  }{
    \delta_k
  }
  \right].
\end{split}
\end{equation}
This is the second inequality in \eqref{eq:EV.inequality}
and the proof of
\eqref{eq:EV.inequality}
is thus completed.
Next observe that H\"{o}lder's
inequality implies for all
$
  \tilde{\Omega} \in \mathcal{F}
$,
$ p \in [1,\infty] $
and all
$ \mathcal{F} $/$
  \mathcal{B}( [0,\infty] )
$-measurable
mappings
$
  X \colon \Omega
  \rightarrow [0,\infty]
$
that
\begin{equation}
\label{eq:simpleest}
  \mathbb{E}[
    X
  ]
\leq
  \mathbb{E}\!\left[
    \mathbbm{1}_{ \tilde{ \Omega } }
    X
  \right]
  +
  \big(
    \mathbb{P}\big[
      ( \tilde{\Omega} )^c
    \big]
  \big)^{
    \! ( 1 - 1/p )
  }
  \|
    X
  \|_{
    L^{ p }( \Omega; \mathbb{R} )
  }.
\end{equation}
Combining \eqref{eq:EV.inequality}
and \eqref{eq:simpleest}
finally shows that
for all
$ n \in \mathbb{N}_0 $,
$ p \in [1,\infty] $
and all
$ \mathcal{E} $/$ \mathcal{B}( [0,\infty] )
$-measurable functions
$ \bar{V} \colon E \rightarrow [0,\infty] $
with $ \forall \, x \in E \colon \bar{V}(x) \leq V(x) $
it holds that
\begin{equation}  \begin{split}
  \mathbb{E}\big[
    \bar{V}({Z}_n)
  \big]
  & \leq
  \mathbb{E}\big[
    \mathbbm{1}_{\Omega_n}
    V( { Z }_{ n } )
  \big]
  +
  \|
    \bar{V}( Z_n )
  \|_{
    L^p( \Omega; \mathbb{R} )
  }
  \left(
    \mathbb{P}\big[
      ( \Omega_n )^c
    \big]
  \right)^{
    ( 1 - 1/p )
  }
\\
  &\leq
   \left( \prod_{k = 0}^{n-1} \gamma_k \right)\!
    \cdot
    \mathbb{E}\big[
      V(
        {Z}_0
      )
    \big]
  +
  \|
    \bar{V}( Z_n )
  \|_{
    L^p( \Omega; \mathbb{R} )
  }
  \bigg[
\bigg(
  \sum_{ k = 0 }^{ n - 1 }
  \frac{
      \prod_{l=0}^{k-1} \gamma_l
  }{
    \delta_k
  }
  \bigg)
  \mathbb{E}\big[
    V(
      {Z}_0
    )
  \big]
  \bigg]^{ \! ( 1 - \tfrac{1}{p} ) }
  .
\end{split}
\end{equation}
The proof of
Proposition~\ref{prop:stability2}
is thus completed.
\end{proof}

The next elementary lemma
(Lemma~\ref{l:stability3})
establishes an a priori bound based on
a specific class of path dependent
Lyapunov-type functions
(see \eqref{eq:NR} below for details
and cf., e.g., also Section~3.1 in Schurz~\cite{Schurz2005}).
For completeness the proof
of Lemma~\ref{l:stability3}
is given below.

\begin{lemma}
\label{l:stability3}
Let  $ ( \Omega,\mathcal{F},\P)$ be a probability space,
let $ ( E, \mathcal{E})$ be a measurable space,
let
$ T, \rho \in [0, \infty) $,
$ \theta \in \mathcal{P}_T $,
$ 
  c \in \mathbb{R}
$,  
$
  U, \bar{U} 
  \in \mathcal{M}( \mathcal{E}, \mathcal{B}( \R ) )
$,
$ A \in \mathcal{E} $,
and let
$ Y \colon [0, T] \times \Omega \rightarrow E $
be a product measurable stochastic process 
which satisfies
that for all $ t \in [0,T] $
it holds $ \P $-a.s.\ that
$
  \smallint_0^t
          \1_{A}(
            Y_{ \lfloor r \rfloor_{ \theta } }
          )
          \,
          | \bar{U}(Y_r) |
          \, dr < \infty
$
and
\begin{equation}
    \E\!\left[
      \exp\!\left(
        - c \, t
        +
        \tfrac{
          U( Y_t )
        }{
          e^{ \rho t }
        }
        \,
        +
        \smallint_0^t
        \tfrac{
          \1_{ A }(
            Y_{ \lfloor r \rfloor_{ \theta } }
          )
          \,
          \bar{U}( Y_r )
        }{
          e^{ \rho r }
        }
          \, dr
                 \right)
         \Big| \,
         ( Y_r )_{ 
           r \in 
           \left[ 
             0, 
             \llcorner t \lrcorner_{ \theta } 
           \right]
         }
      \right]
    \leq
     \exp\!\left(
       - c \,
       \llcorner t \lrcorner_{ \theta } 
       +
       \tfrac{
         U( 
           Y_{ 
             \llcorner t \lrcorner_{ \theta } 
           } 
         )
       }{
         e^{ 
           \rho 
           \llcorner t \lrcorner_{ \theta } 
         }
       }
       +
       \smallint_0^{ 
         \llcorner t \lrcorner_{ \theta } 
       }
       \tfrac{
         \1_{ A }(
           Y_{
             \lfloor r \rfloor_{ \theta }
           }
         )
         \,
         \bar{U}( Y_r )
       }{
         e^{ \rho r }
       }
       \, dr
     \right)
   .
\label{eq:NR}
\end{equation}
Then it holds for all $t \in [0, T]$ that
\begin{equation}\label{eq:stability.assertion}
    \E\!\left[
      \exp\!\left(
        \tfrac{
          U( Y_t )
        }{
          e^{ \rho t }
        }
        +
        \smallint_0^t
        \tfrac{
          \mathbbm{1}_{A}(
            Y_{ \lfloor r \rfloor_{ \theta } }
          )
          \,
          \bar{U}(Y_r)
        }{
          e^{ \rho r }
        }
        \, dr
      \right)
    \right]
    \leq
      e^{ct}\, \E \!\left[e^{U (Y_0)}\right]
   .
\end{equation}

\end{lemma}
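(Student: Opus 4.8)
The plan is to recognise \eqref{eq:NR} as a one-step supermartingale estimate for the auxiliary process
\begin{equation*}
  M_t := \exp\!\left( - c t + \tfrac{ U( Y_t ) }{ e^{ \rho t } } + \smallint_0^t \tfrac{ \1_{A}( Y_{ \lfloor r \rfloor_{ \theta } } ) \, \bar{U}( Y_r ) }{ e^{ \rho r } } \, dr \right) , \qquad t \in [0,T] ,
\end{equation*}
and then to propagate it from $0$ to $t$ through the points of the partition $\theta$. First I would record the bookkeeping facts: by the assumption $\smallint_0^T \1_{A}( Y_{ \lfloor r \rfloor_{ \theta } } ) \, | \bar{U}( Y_r ) | \, dr < \infty$ together with product measurability of $Y$, the process $M$ is well defined and nonnegative $\P$-a.s., it satisfies $M_0 = e^{ U( Y_0 ) }$, and $M_{ \lfloor t \rfloor_{ \theta } }$ is $\sigma( (Y_r)_{ r \in [0, \lfloor t \rfloor_{ \theta } ] } )$-measurable for every $t \in [0,T]$. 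Consequently \eqref{eq:NR} is nothing but $\E[ M_t \mid \sigma( (Y_r)_{ r \in [0, \lfloor t \rfloor_{ \theta } ] } ) ] \le M_{ \lfloor t \rfloor_{ \theta } }$ $\P$-a.s., and taking expectations (which is harmless even when the quantities are infinite, since $M \ge 0$) yields $\E[ M_t ] \le \E[ M_{ \lfloor t \rfloor_{ \theta } } ]$ for all $t \in [0,T]$.

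Next I would write $\theta = \{ t_0, t_1, \dots, t_\ell \}$ with $0 = t_0 < t_1 < \cdots < t_\ell = T$ and apply this estimate along the grid: for $t \in ( t_n, t_{n+1} ]$ the preceding inequality reads $\E[ M_t ] \le \E[ M_{ t_n } ]$, so in particular $\E[ M_{ t_{n+1} } ] \le \E[ M_{ t_n } ]$ for every $n \in \{ 0, 1, \dots, \ell - 1 \}$. Chaining these inequalities gives $\E[ M_{ t_n } ] \le \E[ M_{ t_0 } ] = \E[ M_0 ] = \E[ e^{ U( Y_0 ) } ]$ for all $n$, and therefore $\E[ M_t ] \le \E[ M_{ \lfloor t \rfloor_{ \theta } } ] \le \E[ e^{ U( Y_0 ) } ]$ for every $t \in [0,T]$. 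Multiplying through by $e^{ c t }$ and unravelling the definition of $M_t$ turns this into exactly the asserted bound \eqref{eq:stability.assertion}. (Alternatively, the iteration can be deduced from Proposition~\ref{prop:basis} applied with $\Omega_n = \Omega$, $\delta_n = \infty$, $\gamma_n = 1$, with $Z_n$ encoding $n$ together with $(Y_r)_{ r \in [0, t_n] }$, and with $V$ the exponential functional above, whose first conclusion in \eqref{eq:EV.inequality} is $\E[ M_{ t_n } ] \le \E[ M_0 ]$.)

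There is no genuine analytic difficulty here; the lemma is elementary, as the paper indicates. The points needing a little care are purely organisational: verifying that $M_{ \lfloor t \rfloor_{ \theta } }$ is measurable with respect to the conditioning $\sigma$-algebra $\sigma( (Y_r)_{ r \in [0, \lfloor t \rfloor_{ \theta } ] } )$ (so that it may legitimately be pulled out of the conditional expectation in \eqref{eq:NR}), keeping the nested conditioning $\sigma$-algebras straight so that the one-step estimate really does chain from $t_{ n + 1 }$ back to $t_0 = 0$, and observing that the degenerate case $\E[ e^{ U( Y_0 ) } ] = \infty$ makes \eqref{eq:stability.assertion} trivially true, so that no a priori integrability of $e^{ U( Y_0 ) }$ needs to be imposed.
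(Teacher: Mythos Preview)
Your proposal is correct and follows essentially the same route as the paper: define the auxiliary process $M_t$, read \eqref{eq:NR} as $\E[M_t\mid (Y_r)_{r\in[0,\lfloor t\rfloor_\theta]}]\le M_{\lfloor t\rfloor_\theta}$, take expectations via the tower property, and iterate down the partition to $\E[M_0]=\E[e^{U(Y_0)}]$. The paper's proof is exactly this chain written in one display (with the iteration abbreviated by ``$\leq\ldots\leq$''); your version just spells out the grid points and the bookkeeping more explicitly.
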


\begin{proof}[Proof of Lemma~\ref{l:stability3}]
 Assumption \eqref{eq:NR} implies for all $t \in [0, T]$ that
 \begin{equation} \begin{split}
 &
 \E\!\left[
      \exp\!\left( - ct +
        \tfrac{
          U( Y_t )
        }{
          e^{ \rho t }
        }
        +
        \smallint_0^t
        \tfrac{
          \mathbbm{1}_{A}(
            Y_{ \lfloor r \rfloor_{ \theta } }
          )
          \,
          \bar{U}(Y_r)
        }{
          e^{ \rho r }
        }
        \, dr
      \right)
    \right]
 \\ &
 =
 \E \! \left[ \E \! \left[  \exp\!\left( - ct +
        \tfrac{
          U( Y_t )
        }{
          e^{ \rho t }
        }
        +
        \smallint_0^t
        \tfrac{
          \mathbbm{1}_{A}(
            Y_{ \lfloor r \rfloor_{ \theta } }
          )
          \,
          \bar{U}(Y_r)
        }{
          e^{ \rho r }
        }
        \, dr
      \right)
      \Big|
      \left( Y_s \right)_{ 
        s \in 
        \left[ 
          0, 
          \llcorner t \lrcorner_{ \theta } 
        \right]
      }
    \right]
    \right]
 \\ &
 \leq
 \E\!\left[ 
   \exp\!\left(
       - c \,
       \llcorner t \lrcorner_{ \theta } 
       +
       \tfrac{
         U(  
           Y_{ 
             \llcorner t \lrcorner_{ \theta } 
           } 
         )
       }{
         e^{ 
           \rho \,
           \llcorner t \lrcorner_{ \theta } 
         }
       }
       +
       \smallint_0^{ 
         \llcorner t \lrcorner_{ \theta } 
       }
       \tfrac{
         \1_A(
           Y_{
             \lfloor r \rfloor_{ \theta }
           }
         )
         \,
         \bar{U}( Y_r )
       }{
         e^{ \rho r }
       }
       \, dr
     \right) \right]
 \\ &
 \leq \ldots \leq
  \E\!\left[
    \exp\!\left(
      \tfrac{ U( Y_0 )
      }{
        e^{ \rho \cdot 0 }
      }
    \right)
  \right]
 = \E\! \left[ e^{ U( Y_0 ) }
  \right] .
 \end{split} \end{equation}
 This completes the proof of Lemma~\ref{l:stability3}.
\end{proof}

The next corollary, Corollary~\ref{Cor:exp.mom.abstract},
specialises Lemma~\ref{l:stability3} to the
case where the product measurable stochastic process
appearing in \eqref{eq:NR} and \eqref{eq:stability.assertion} is an appropriate
one-step approximation
process for an SDE driven by a standard
Brownian motion;
see \eqref{eq:process_dynamic} below for details.

\begin{corollary} 
\label{Cor:exp.mom.abstract}
  Let 
  $
    T, \rho, c
    \in [0,\infty)
  $,
  $ \theta \in \mathcal{P}_T $,
  $ d, m \in \mathbb{N} $,
  $
    \Phi \in 
    \mathcal{M}( 
      \mathcal{B}( \mathbb{R}^d \times [0, T] \times \mathbb{R}^m ), 
      \mathcal{B}( \mathbb{R}^d ) 
    ) 
  $,
  $
    A \in \mathcal{B}( \R^d )
  $,
  $ 
    U \in \mathcal{M}( \mathcal{B}( \mathbb{R}^d ), \mathcal{B}( [0, \infty) ) ) 
  $,
  $
    \bar{U} \in \mathcal{M}( \mathcal{B}( \mathbb{R}^d ), \mathcal{B}( \R ) )
  $, 
  let
  $
    (\Omega,\mathcal{F},\P,(\mathcal{F}_t)_{t\in[0,T]})
  $
  be a filtered probability space,
  let $W\colon[0,T]\times\Omega\to\R^m$ be a standard
  $(\mathcal{F}_t)_{t\in[0,T]}$-Brownian motion
  with continuous sample paths, let $Y\colon[0,T]\times\Omega\to\R^d$ be an 
  $ ( \mathcal{F}_t )_{ t \in [0,T] } $-adapted
  stochastic process which satisfies
  for all $ t \in [0, T ] $
  that
  \begin{equation}
  \label{eq:process_dynamic}
    Y_t =
    \1_{ \R^d \backslash A }( 
      Y_{ \llcorner t \lrcorner_{ \theta } }
    )
    \cdot 
    Y_{ \llcorner t \lrcorner_{ \theta } }
    +
    \1_{A}(
      Y_{ \llcorner t \lrcorner_{ \theta } }
    )
    \cdot
    \Phi\!\left(
      Y_{ \llcorner t \lrcorner_{ \theta } }
      ,
      t - 
      \llcorner t \lrcorner_{ \theta } 
      ,
      W_t - 
      W_{ 
        \llcorner t \lrcorner_{ \theta } 
      }
    \right)
    ,
  \end{equation}
  assume that
  for all $ x \in A $
  it holds $ \P $-a.s.\ that
  $
     \smallint_0^T
       \1_{A}(
         Y_{ \lfloor r \rfloor_{ \theta } }
       )
       \,
       | \bar{U}(Y_r) |
     \, 
     dr 
     +
     \smallint_0^{ \left| \theta \right|_T }
       |
         \bar{U}( \Phi( x, r, W_r ) )
       |
     \, dr
     < \infty
 $,
 and assume 
 for all $ ( t, x ) \in \big( 0, | \theta |_T \big] \times A $
 that
 \begin{equation}  \begin{split} \label{eq:exp.mom.abstract.assumption}
    \E\!\left[
      \exp\!\left(
        \tfrac{
          U( \Phi( x, t, W_t ) )
        }{
          e^{ \rho t }
        }
        +
        \smallint_0^t
        \tfrac{
          \bar{U}( \Phi( x, s, W_s ) )
        }{
          e^{ \rho s }
        }
        \, ds
      \right)
    \right]
    \leq e^{c t+U(x)}
    .
\end{split}     
\end{equation}
Then it holds for all $ t \in [0,T] $ that
\begin{equation}
\label{eq:exp.mom.abstract}
    \E\!\left[
      \exp\!\left(
        \tfrac{
          U( Y_t )
        }{
          e^{ \rho t }
        }
        +
        \smallint_0^t
        \tfrac{
          \mathbbm{1}_{A}(
            Y_{ \lfloor r \rfloor_{\theta} }
          )
          \,
          \bar{U}(Y_r)
        }{
          e^{ \rho r }
        }
        \, dr
      \right)
    \right]
    \leq
      e^{ct} \, \E\!\left[e^{{U}(Y_0)}\right]
    .
\end{equation}
\end{corollary}
\begin{proof}[Proof of Corollary~\ref{Cor:exp.mom.abstract}]
We prove Corollary~\ref{Cor:exp.mom.abstract}
through an application of Lemma~\ref{l:stability3}.
  For this we observe that
  assumption~\eqref{eq:exp.mom.abstract.assumption}
  implies
  for all
  $
    (t,x) \in \big( 0 , \left|\theta \right|_T \big] \times \R^d
  $
  that
  \begin{equation}  \begin{split}
  \label{eq:exp.mom.abstract.assumption2}
    \E\!\left[
      \exp\!\left(
        \tfrac{
          \1_A(x) \, U( \Phi( x, t, W_t ) )
        }{
          e^{ \rho t }
        }
        +
        \smallint_0^t
        \tfrac{
          \1_{A}(x)
          \,
          \bar{U}( \Phi( x, s, W_s ) )
        }{
          e^{ \rho s }
        }
        \, ds
      \right)
    \right]
    \leq
    e^{
      c t
      +
      \1_A( x )
      \,
      U(x)
    }
    .
  \end{split}
  \end{equation}
  Next note that
  equation~\eqref{eq:process_dynamic},
  Jensen's
  inequality, and
  inequality~\eqref{eq:exp.mom.abstract.assumption2}
  imply that for all $ t \in [0,T] $
  it holds $ \P $-a.s.\ that
  \begin{align}
   \nonumber &
     \E\!\left[
       \exp\!\left(
         \tfrac{
           \1_A(
             Y_{ 
               \llcorner t \lrcorner_{ \theta } 
             }
           )
           \,
           U(
             Y_t
           )
         }{
           e^{ \rho t }
         }
    +
    \smallint_{ 
      \llcorner t \lrcorner_{ \theta } 
    }^t
    \tfrac{
      \1_A(
        Y_{ 
          \llcorner t \lrcorner_{ \theta } 
        }
      )
      \,
      \bar{U}(
        Y_s
      )
    }{
      e^{ \rho s }
    }
    \, ds
    \right)
    \Big| 
      \; 
      ( Y_s )_{ 
        s \in 
        [ 
          0, 
          \llcorner t \lrcorner_{ \theta } 
        ] 
      }
    \right]
  \\ & =
   \nonumber
     \E\!\left[
       \exp\!\left(
         \tfrac{
           \1_A(
             Y_{ 
               \llcorner t \lrcorner_{ \theta } 
             }
           )
           \,
           U(
             \Phi(
               Y_{ 
                 \llcorner t \lrcorner_{ \theta } 
               },
               t - 
               \llcorner t \lrcorner_{ \theta } 
               ,
               W_t 
               - 
               W_{ 
                 \llcorner t \lrcorner_{ \theta } 
               }
             )
           )
         }{
           e^{ \rho t }
         }
    +
    \smallint_{ 
      \llcorner t \lrcorner_{ \theta } 
    }^t
    \tfrac{
      \1_A(
        Y_{ 
          \llcorner t \lrcorner_{ \theta } 
        }
      )
      \,
      \bar{U}(
        \Phi( 
            Y_{ 
              \llcorner t \lrcorner_{ \theta } 
            }
          ,  
            s - 
            \llcorner t \lrcorner_{ \theta } 
          , 
            W_s - 
            W_{ 
              \llcorner t \lrcorner_{ \theta } 
            } 
        )
      )
    }{
      e^{ \rho s }
    }
    \, ds
    \right)
    \Big| \; 
      ( 
        Y_s
      )_{ 
        s \in 
        [
          0 , 
          \llcorner t \lrcorner_{ \theta } 
        ] 
      }
    \right]
  \\ & =
   \nonumber
     \E\!\Bigg[
       \left|
       \exp\!\left(
         \tfrac{
           \1_A(
             Y_{ 
               \llcorner t \lrcorner_{ \theta } 
             }
           )
           \,
           U(
             \Phi(
               Y_{ 
                 \llcorner t \lrcorner_{ \theta } 
               }
               ,
               t - 
               \llcorner t \lrcorner_{ \theta } 
               ,
               W_t - 
               W_{ 
                 \llcorner t \lrcorner_{ \theta } 
               }
             )
           )
         }{
           e^{ 
             \rho 
             ( 
               t - 
               \llcorner t \lrcorner_{ \theta } 
             ) 
           }
         }
      +
      \smallint_0^{ 
        t - 
        \llcorner t \lrcorner_{ \theta } 
      }
      \tfrac{
        \1_A(
          Y_{ 
            \llcorner t \lrcorner_{ \theta } 
          }
        )
        \,
        \bar{U}(
          \Phi( 
            Y_{ 
              \llcorner t \lrcorner_{ \theta } 
            }, s, 
            W_{ 
              \llcorner t \lrcorner_{ \theta } 
              + s 
            } 
            - 
            W_{ 
              \llcorner t \lrcorner_{ \theta } 
            } 
          )
        )
      }{
        e^{ \rho s }
      }
      \, ds
    \right)
    \right|^{
      \exp( 
        - \rho \,
        \llcorner t \lrcorner_{ \theta } 
      )
    }
\\ &
\qquad
    \Big| 
      \; 
      ( Y_s )_{ 
        s \in 
        [ 
          0, 
          \llcorner t \lrcorner_{ \theta } 
        ] 
      }
    \Bigg]
  \\ & \leq
\nonumber
     \Bigg|
     \E\!\Bigg[
       \exp\!\left(
         \tfrac{
           \1_A(
             Y_{ 
               \llcorner t \lrcorner_{ \theta } 
             }
           )
           \,
           U(
             \Phi(
               Y_{ 
                 \llcorner t \lrcorner_{ \theta } 
               }
               ,
               t - 
               \llcorner t \lrcorner_{ \theta } 
               ,
               W_t - 
               W_{ 
                 \llcorner t \lrcorner_{ \theta } 
               }
             )
           )
         }{
           e^{ 
             \rho 
             ( 
               t - 
               \llcorner t \lrcorner_{ \theta } 
             ) 
           }
         }
    +
    \smallint_0^{ 
      t - 
      \llcorner t \lrcorner_{ \theta } 
    }
    \tfrac{
      \1_A(
        Y_{ 
          \llcorner t \lrcorner_{ \theta } 
        }
      )
      \,
      \bar{U}( 
        \Phi(
          Y_{ 
            \llcorner t \lrcorner_{ \theta } 
          }, s, 
          W_{ 
            \llcorner t \lrcorner_{ \theta } 
            + s 
          } 
          - 
          W_{ 
            \llcorner t \lrcorner_{ \theta } 
          }
        ) 
      )
    }{
      e^{ \rho s }
    }
    \, ds
    \right)
\\ &
\qquad
    \Big| 
      \; 
      (
        Y_s
      )_{ 
        s \in 
        [
          0, 
          \llcorner t \lrcorner_{ \theta } 
        ] 
      }
    \Bigg]
    \Bigg|^{
      \exp( 
        - \rho \,
        \llcorner t \lrcorner_{ \theta } 
      )
    }
   \nonumber
  \\ & \leq
   \nonumber
     \left|
       e^{
         c 
         \left( 
           t - 
           \llcorner t \lrcorner_{ \theta } 
         \right)
         +
           \1_A(
             Y_{ 
               \llcorner t \lrcorner_{ \theta } 
             }
           )
           \,
           U(
             Y_{ 
               \llcorner t \lrcorner_{ \theta } 
             }
           )
       }
     \right|^{
      \exp( 
        - \rho \,
        \llcorner t \lrcorner_{ \theta } 
      )
    }
  \leq
       \exp\!\left(
         c 
         \left( 
           t 
           - 
           \llcorner t \lrcorner_{ \theta } 
         \right)
         +
         \tfrac{
           \1_A(
             Y_{ 
               \llcorner t \lrcorner_{ \theta } 
             }
           )
           \,
           U(
             Y_{ 
               \llcorner t \lrcorner_{ \theta } 
             }
           )
         }{
           e^{ 
             \rho \,
             \llcorner t \lrcorner_{ \theta } 
           }
         }
    \right)
    .
  \end{align}
Combining this with \eqref{eq:process_dynamic} shows that
for all $ t \in [0,T] $
it holds $ \P $-a.s.\ that
\begin{equation} \begin{split}
    &
    \E\!\left[
      \exp\!\left(
        - c t
        +
        \tfrac{
          U( Y_t )
        }{
          e^{ \rho t }
        }
        +
        \smallint_0^t
        \tfrac{
          \1_A( 
            Y_{ 
              \lfloor r \rfloor_{ \theta } 
            } 
          )
          \,
          \bar{U}( Y_r )
        }{
          e^{ \rho r }
        }
        \,
        dr
      \right)
      \Big|
      \,
      ( Y_r )_{ 
        r \in 
        [
          0, 
          \llcorner t \lrcorner_{ \theta } 
        ] 
      }
    \right]
  \\ &
  =
    \E\!\left[
      \exp\!\left(
        \tfrac{
          \1_A( 
            Y_{ 
             \llcorner t \lrcorner_{ \theta } 
            } 
          )
          \,
          U( Y_t )
        }{
          e^{ \rho t }
        }
        +
        \tfrac{
          \1_{ \R^d \backslash A }( 
            Y_{ 
              \llcorner t \lrcorner_{ \theta } 
            } 
          )
          \,
          U( Y_t )
        }{
          e^{ \rho t }
        }
        +
        \smallint_{ 
          \llcorner t \lrcorner_{ \theta } 
        }^t
        \tfrac{
          \1_A( 
            Y_{ 
              \llcorner t \lrcorner_{ \theta } 
            } 
          )
          \,
          \bar{U}(Y_r)
        }{
          e^{ \rho r }
        }
        \,
        dr
      \right)
      \Big|
      \,
      ( Y_r )_{ 
        r \in 
        [ 
          0, 
          \llcorner t \lrcorner_{ \theta } 
        ] 
      }
    \right]
  \\ & \quad \cdot
   \exp\!\left(
     - c t
     +
     \smallint_0^{ 
       \llcorner t \lrcorner_{ \theta } 
     }
     \tfrac{
       \1_A( 
         Y_{ 
           \lfloor r \rfloor_{ \theta } 
         } 
       )
       \,
       \bar{U}( Y_r )
     }{
       e^{ \rho r }
     }
     \, dr
   \right)
  \\ & =
    \E\!\left[
      \exp\!\left(
        \tfrac{
          \1_A( 
            Y_{ 
              \llcorner t \lrcorner_{ \theta } 
            } 
          )
          \,
          U( Y_t )
        }{
          e^{ \rho t }
        }
        +
        \tfrac{
          \1_{ \R^d \backslash A }( 
            Y_{ 
              \llcorner t \lrcorner_{ \theta } 
            } 
          )
          \,
          U( 
            Y_{ 
              \llcorner t \lrcorner_{ \theta } 
            } 
          )
        }{
          e^{ \rho t }
        }
        +
        \smallint_{ 
          \llcorner t \lrcorner_{ \theta } 
        }^t
        \tfrac{
          \1_A( 
            Y_{ 
              \llcorner t \lrcorner_{ \theta } 
            } 
          )
          \,
          \bar{U}( Y_r )
        }{
          e^{ \rho r }
        }
        \,
        dr
      \right)
      \Big|
      \,
      ( Y_r )_{ 
        r \in 
        [
          0, 
          \llcorner t \lrcorner_{ \theta } 
        ] 
      }
    \right]
  \\ & \quad \cdot
   \exp\!\left(
     - c t
     +
     \smallint_0^{ 
       \llcorner t \lrcorner_{ \theta } 
     }
     \tfrac{
       \1_A( 
         Y_{ 
           \lfloor r \rfloor_{ \theta } 
         } 
       )
       \,
       \bar{U}( Y_r )
     }{
       e^{ \rho r }
     }
     \, dr
   \right)
 \\& \leq
      \exp\!\left(
        c
        \left( 
          t 
          - 
          \llcorner t \lrcorner_{ \theta } 
        \right)
        +
        \tfrac{
          \1_A( 
            Y_{ 
              \llcorner t \lrcorner_{ \theta } 
            } 
          )
          \,
          U( 
            Y_{ 
              \llcorner t \lrcorner_{ \theta } 
            } 
          )
        }{
          e^{ 
            \rho \,
            \llcorner t \lrcorner_{ \theta } 
          }
        }
     - c t
        +
        \tfrac{
          \1_{ \R^d \backslash A }( 
            Y_{ 
              \llcorner t \lrcorner_{ \theta } 
            } 
          )
          \,
          U( 
            Y_{ 
              \llcorner t \lrcorner_{ \theta } 
            } 
          )
        }{
          e^{ \rho t }
        }
     +
     \smallint_0^{ 
       \llcorner t \lrcorner_{ \theta } 
     }
     \tfrac{
       \1_A( 
         Y_{ 
           \lfloor r \rfloor_{ \theta } 
         } 
       )
       \,
       \bar{U}( Y_r )
     }{
       e^{ \rho r }
     }
     \, dr
   \right)
 \\ & \leq
      \exp\!\left(
        - 
        c \,
        \llcorner t \lrcorner_{ \theta } 
        +
        \tfrac{
          U( 
            Y_{ 
              \llcorner t \lrcorner_{ \theta } 
            } 
          )
        }{
          e^{ 
            \rho \,
            \llcorner t \lrcorner_{ \theta } 
          }
        }
     +
     \smallint_0^{ 
       \llcorner t \lrcorner_{ \theta } 
     }
     \tfrac{
       \1_A( 
         Y_{ 
           \lfloor r \rfloor_{ \theta } 
         } 
       )
       \,
       \bar{U}( Y_r )
     }{
       e^{ \rho r }
     }
     \, dr
   \right)
   .
\end{split} 
\end{equation}
  Combining this with Lemma~\ref{l:stability3}
  yields for all $ t \in [0,T] $
  that
\begin{equation*}
\begin{split}
    &
    \E\!\left[
      \exp\!\left(
        \tfrac{
          U(Y_{t})
        }{
          e^{ \rho t }
        }
        +
        \smallint_0^t
        \tfrac{
          \1_{A}(
            Y_{ \lfloor r \rfloor_{\theta} }
          )
          \,
          \bar{U}(Y_r)
        }{
          e^{ \rho r }
        }
        \, dr
      \right)
    \right]
    \leq e^{ c t } \,
      \E\!\left[e^{U(Y_0)}\right]
    .
  \end{split}
 \end{equation*}
 This finishes the proof of Corollary~\ref{Cor:exp.mom.abstract}.
\end{proof}
%

\subsection{A one-step estimate for exponential moments}
\label{sec:a_one_step_estimate}

In Lemma~\ref{l:exp.mom.abstract.one-step}
below a one-step estimate for exponential moments
(see \eqref{eq:exp.mom.abstract.assumption} in Corollary~\ref{Cor:exp.mom.abstract} above)
is proved for a general class of stopped
one-step numerical approximation schemes.
The proof of Lemma~\ref{l:exp.mom.abstract.one-step}
uses the elementary estimate in Lemma~\ref{l:exp.Gauss} below.
Moreover, the proof of Lemma~\ref{l:exp.Gauss} exploits
the following
well-known result, Lemma~\ref{l:exp.series}.
For completeness the proof of Lemma~\ref{l:exp.series}
is given below.

\begin{lemma}
\label{l:exp.series}
It holds for all $x \in \R$ that
\begin{equation}
e^x = 2 \!\left( \sum_{n=0}^{\infty} \frac{x^{2n}}{(2n)!} \right) - \frac{1}{e^x} \leq
2 \left( \sum_{n=0}^{\infty} \frac{x^{2n}}{(2n)!} \right).
\end{equation}
\end{lemma}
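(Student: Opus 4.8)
The plan is to recognise the series on the right-hand side as the even part of the exponential series, i.e.\ as the Taylor expansion of the hyperbolic cosine. Concretely, I would start from the two absolutely convergent power series
\[
  e^x = \sum_{n=0}^\infty \frac{x^n}{n!},
  \qquad
  e^{-x} = \sum_{n=0}^\infty \frac{(-1)^n x^n}{n!},
\]
valid for every $x\in\R$. Since both series converge absolutely, I may add them termwise; the terms with odd index cancel and the terms with even index double, which gives
\[
  e^x + e^{-x}
  = \sum_{n=0}^\infty \frac{1 + (-1)^n}{n!}\, x^n
  = 2 \sum_{n=0}^\infty \frac{x^{2n}}{(2n)!}.
\]

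From here the identity is immediate: since $e^{-x} = 1/e^x$, rearranging the displayed equation yields
\[
  e^x = 2\left( \sum_{n=0}^\infty \frac{x^{2n}}{(2n)!} \right) - \frac{1}{e^x}
\]
for all $x\in\R$, which is the claimed equality. For the inequality it then suffices to note that $1/e^x = e^{-x} > 0$ for every $x\in\R$, so subtracting this positive quantity only decreases the value, i.e.\ $e^x \le 2\left( \sum_{n=0}^\infty \frac{x^{2n}}{(2n)!} \right)$.

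I do not expect any real obstacle here; the only point deserving a word of care is the termwise addition of the two series, which is justified by their absolute convergence (equivalently, one can invoke the Mertens/rearrangement theorem for absolutely convergent series, or simply add the partial sums and pass to the limit). Everything else is a one-line rearrangement together with the positivity of the exponential function.
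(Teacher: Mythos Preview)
Your proposal is correct and is essentially the same argument as the paper's: both derive the identity $e^x + e^{-x} = 2\sum_{n=0}^\infty \frac{x^{2n}}{(2n)!}$ by combining the power series of $e^x$ and $e^{-x}$ (the paper does this by splitting $e^x$ into its even and odd parts and substituting, whereas you add the two series directly), and then conclude via positivity of $e^{-x}$.
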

\begin{proof}[Proof of Lemma~\ref{l:exp.series}]
Note for all $ x \in \R $ that
\begin{equation}
  e^{ - x }
=
  \sum_{ n = 0 }^{ \infty }
  \frac{ ( - x )^n }{ n! }
=
  \left(
    \sum_{ n = 0 }^{ \infty } \frac{x^{2n}}{(2n)!}
  \right)
  -
  \left(
    \sum_{ n =0 }^{ \infty }
    \frac{x^{2n+1}}{(2n+1)!}
  \right)
  .
\end{equation}
This implies for all $x \in \R$ that
\begin{equation}
\begin{split}
  e^x &
=
  \sum_{ n = 0 }^{ \infty }
  \frac{ x^{ 2 n } }{ (2 n)! }
  +
  \sum_{ n = 0 }^{ \infty }
  \frac{ x^{ 2 n + 1 } }{ ( 2 n + 1 )! }
=
  \sum_{ n = 0 }^{ \infty }
  \frac{ x^{ 2 n } }{ ( 2 n ) ! }
  +
  \left(
    \left[
      \sum_{ n = 0 }^{ \infty } \frac{x^{2n}}{(2n)!}
    \right]
    - e^{ - x }
  \right)
\\ & =
  2 \left( \sum_{n=0}^{\infty} \frac{x^{2n}}{(2n)!}\right) - e^{-x} \leq 2 \! \left( \sum_{n=0}^{\infty} \frac{x^{2n}}{(2n)!}\right).
\end{split}
\end{equation}
The proof of Lemma~\ref{l:exp.series} is thus completed.
\end{proof}

\begin{lemma}\label{l:exp.Gauss}
Let
$ T \in [0, \infty ) $,
$ d, m \in \N$,
$A \in \R^{d \times m}$, let
  $
    ( \Omega, \mathcal{F}, \P ) $
  be a probability space,
  and let
  $
    W \colon [0,T] \times \Omega \to \R^m
  $
  be a standard
  Brownian motion. Then
  it holds for all $ t \in [0, T] $ that
$
  \E\!\left[ e^{ \| A W_t \|} \right]
  \leq 2
  \exp\!\big(
    \frac{ t }{ 2 } \| A \|^2_{ \HS( \R^m, \R^d ) }
  \big) .
$
\end{lemma}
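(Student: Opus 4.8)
The plan is to reduce the quantity $\E\!\left[e^{\|AW_t\|}\right]$ to a one-dimensional computation and then apply the elementary series bound from Lemma~\ref{l:exp.series}. First I would observe that, by the definition of the Hilbert--Schmidt norm and basic properties of Brownian motion, the random variable $AW_t$ is a centred Gaussian vector in $\R^d$ whose covariance matrix is $t\,AA^{*}$, so that $\tr(t\,AA^{*}) = t\,\|A\|_{\HS(\R^m,\R^d)}^2$. Consequently there exists a one-dimensional centred Gaussian random variable, or more simply I can bound $\|AW_t\|$ directly: writing $AW_t = \sum_{j=1}^m A e_j\, W_t^{(j)}$ where $W_t^{(1)},\dots,W_t^{(m)}$ are i.i.d.\ $N(0,t)$ and $e_1,\dots,e_m$ is the standard basis of $\R^m$, one sees that $\|AW_t\|^2$ has the same distribution as $\sum_{k} \lambda_k \,|g_k|^2$ with $g_k$ i.i.d.\ standard normal and $\lambda_k \geq 0$ the eigenvalues of $t\,AA^{*}$ summing to $t\,\|A\|_{\HS}^2$.

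The key step is then to use Lemma~\ref{l:exp.series} to write
\begin{equation*}
  e^{\|AW_t\|} \leq 2\sum_{n=0}^{\infty} \frac{\|AW_t\|^{2n}}{(2n)!}
\end{equation*}
and to take expectations term by term, which is justified by Tonelli's theorem since all summands are nonnegative. This reduces the problem to estimating the even moments $\E\!\left[\|AW_t\|^{2n}\right] = \E\!\left[\big(\|AW_t\|^2\big)^n\right]$. Here $\|AW_t\|^2$ is a nonnegative quadratic form in a Gaussian vector, and the clean bound I would aim for is $\E\!\left[\|AW_t\|^{2n}\right] \leq \frac{(2n)!}{2^n\,n!}\,\big(t\,\|A\|_{\HS}^2\big)^n$; this is exactly the $2n$-th moment identity for a one-dimensional $N(0,\tau)$ variable with $\tau = t\,\|A\|_{\HS}^2$, and it dominates the quadratic-form moments by convexity/Jensen applied to the representation $\sum_k \lambda_k |g_k|^2$ together with $\sum_k\lambda_k = \tau$. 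Substituting this bound gives
\begin{equation*}
  \E\!\left[e^{\|AW_t\|}\right] \leq 2\sum_{n=0}^{\infty} \frac{1}{(2n)!}\cdot\frac{(2n)!}{2^n\,n!}\,\tau^n
  = 2\sum_{n=0}^{\infty} \frac{(\tau/2)^n}{n!} = 2\,e^{\tau/2} = 2\exp\!\left(\tfrac{t}{2}\|A\|_{\HS(\R^m,\R^d)}^2\right),
\end{equation*}
which is the claim.

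The main obstacle I anticipate is the clean justification of the moment bound $\E\!\left[\|AW_t\|^{2n}\right] \leq \frac{(2n)!}{2^n n!}(t\|A\|_{\HS}^2)^n$, i.e.\ showing that spreading the total variance $\tau$ across several independent coordinates does not increase the even moments of the Euclidean norm. One transparent route is: condition to reduce to $\|AW_t\| \le \sum_k \sqrt{\lambda_k}\,|g_k|$ is \emph{not} quite right (triangle inequality goes the wrong way for a fixed direction), so instead I would argue via the rotational invariance of $\|AW_t\|$, which has the same law as $\|AW_t\|\,\xi$ for a fixed unit vector is false too; the cleanest is to note $\|AW_t\|$ is stochastically dominated in each even moment by $|N(0,\tau)|$ because $\E[\|AW_t\|^{2n}] = \sum_{\text{multi-indices}} \dots \le (\sum_k \lambda_k)^n \,\E[|g|^{2n}]/(\text{comb.\ factor})$; to keep the proof short I would simply invoke that for a centred Gaussian vector $Z$ in $\R^d$ one has $\E[\|Z\|^{2n}] \le (\tr \operatorname{Cov}(Z))^n \,\frac{(2n)!}{2^n n!}$, which follows from Wick's formula / Isserlis' theorem after bounding each pairing contribution by a product of diagonal covariances. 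Alternatively, and perhaps most economically within the paper's style, one applies the scalar estimate $e^{x} \le 2\sum \frac{x^{2n}}{(2n)!}$ pathwise together with the Gaussian Laplace-transform identity $\E[e^{\langle v, W_t\rangle}] = e^{t\|v\|^2/2}$ integrated over directions; I expect the authors take the former, Wick-type, route.
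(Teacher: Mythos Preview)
Your overall architecture matches the paper's exactly: apply Lemma~\ref{l:exp.series} pathwise to reduce to even moments, establish the bound
\[
  \E\!\left[\|AW_t\|^{2n}\right] \leq \frac{(2n)!}{2^n\,n!}\,\big(t\,\|A\|_{\HS}^2\big)^n,
\]
and sum the resulting series. The difference lies only in how this moment bound is obtained.

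The paper does not use Gaussian moment combinatorics at all. Instead it sets $f_n(x)=\|Ax\|^{2n}$, computes $\tr\big((\Hess f_n)(x)\big)\le 2n(2n-1)\,\|A\|_{\HS}^2\,f_{n-1}(x)$, and then applies It\^o's formula to $f_n(W_t)$ recursively, peeling off one factor of $2n(2n-1)\|A\|_{\HS}^2$ at each step and integrating in time. This yields the bound directly without any spectral decomposition or Wick expansion.

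Your Jensen argument, which you state and then partially talk yourself out of, is in fact correct and is the cleanest way to finish your version: writing $\|AW_t\|^2=\sum_k\lambda_k g_k^2$ with $\sum_k\lambda_k=\tau$ and $g_k$ i.i.d.\ standard normal, convexity of $x\mapsto x^n$ gives $\big(\sum_k\lambda_k g_k^2\big)^n\le\tau^{n-1}\sum_k\lambda_k g_k^{2n}$, and taking expectations yields exactly $\tau^n\,(2n)!/(2^n n!)$. The detours through the triangle inequality and Wick pairings in your obstacle paragraph are unnecessary; you had the right idea first. Compared to the paper's It\^o argument, your route is more elementary (no stochastic calculus) but requires the eigenvalue representation; the paper's route stays within the It\^o toolbox that the rest of the article already uses.
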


\begin{proof}[Proof of Lemma~\ref{l:exp.Gauss}]
Throughout this proof let 
$ f_n \colon \R^m \to [0,\infty) $,
$ n \in \N_0 $,
be the functions with the property that
for all 
$ x \in \R^m $
and all
$
  n \in \N_0
$
it holds that
$
  f_n(x) = \| A x \|^{ 2 n }
$.
Then note for all $ x \in \R^m $ and all $ n \in \N $ that
\begin{equation}
\begin{split}
  \tr\!\left(
    ( \operatorname{Hess} f_n )( x )
  \right)
& =
  \tr\!\left(
    2 n
    \,
    \| A x \|^{ ( 2 n - 2 ) } A^* A
    +
    \1_{ \{ x \neq 0 \} }
    \,
    2 n \left( 2 n - 2 \right)
    \| A x \|^{ ( 2 n - 4 ) }
    \left( A^* A x \right)
    \left( A^* A x \right)^*
  \right)
\\ & =
  2 n
  \left\| A x \right\|^{ ( 2 n - 2 ) }
  \| A \|_{
    \HS( \R^m, \R^d )
  }^2
  +
  \1_{ \{ x \neq 0 \} } \, 2 n
  \left( 2 n - 2 \right)
  \left\|
    A x
  \right\|^{ ( 2 n - 4 ) }
  \left\|
    A^* A x
  \right\|^2
\\ & \leq
  2 n
  \left\| A x \right\|^{ ( 2 n - 2 ) }
  \left\| A \right\|_{
    \HS( \R^m, \R^d )
  }^2
  +
  2 n \left( 2 n - 2 \right)
  \left\| A x \right\|^{ ( 2 n - 2 ) }
  \left\| A \right\|_{
    \HS( \R^m, \R^d)
  }^2
\\ & =
  2 n \left( 2 n - 1 \right)
  \left\| A \right\|_{
    \HS( \R^m, \R^d )
  }^2
  \,
  f_{ n - 1 }( x )
  \, .
\end{split}
\end{equation}
It\^o's formula hence shows for all
$
  s_0 \in [0, T]
$
and all
$ n \in \N $
that
\begin{equation}
\begin{split}
\label{eq:BM.Moments}
  \E\!\left[
    \left\|
      A W_{ s_0 }
    \right\|^{ 2 n }
  \right]
& =
  \E\big[
    f_n( W_{ s_0 } )
  \big]
=
  \frac{ 1 }{ 2 }
  \int_0^{ s_0 }
  \E\big[
    \tr\!\left(
      (\operatorname{Hess} f_n)( W_{ s_1 } )
    \right)
  \big]
  \,
  ds_1
\\ & \leq
  \frac{ 1 }{ 2 }
  \int_0^{ s_0 }
  2n \left( 2 n - 1 \right)
  \| A \|_{
    \HS( \R^m, \R^d )
  }^2
  \,
  \E\big[
    f_{ n - 1 }(
      W_{ s_1 }
    )
  \big]
  \,
  d s_1
\\ & \leq
  \ldots
\leq
  \frac{
    ( 2 n )!
  }{
    2^n
  }
  \left\|
    A
  \right\|_{
    \HS( \R^m, \R^d )
  }^{ 2 n }
  \smallint_0^{ s_0 }
  \smallint_0^{s_1}
  \cdots
  \smallint_0^{ s_{ n - 1 } }
  \E\big[
    f_0( W_{ s_n } )
  \big]
  \, ds_n \cdots ds_2 \, ds_1
\\ & =
  \frac{ ( 2 n )! }{
    2^n n!
  }
  \left\| A
  \right\|_{
    \HS( \R^m, \R^d)
  }^{ 2 n }
  ( s_0 )^n
  \, .
\end{split}\end{equation}
Combining this with Lemma~\ref{l:exp.series} implies
for all $ t \in [0, T] $
that
\begin{equation}
  \E\!\left[
    e^{ \| A W_t \| }
  \right]
\leq
  2
  \left(
    \sum_{ n = 0 }^{ \infty }
    \frac{
      \E\!\left[
        \| A W_t \|^{ 2 n }
      \right]
    }{
      ( 2 n )!
    }
  \right)
\leq
  2
  \left(
    \sum_{ n = 0 }^{ \infty }
    \frac{
      t^n
      \| A \|_{\HS(\R^m, \R^d)}^{2n}
    }{
      2^n n!
    }
  \right)
  = 2 e^{\frac{t}{2}\|A\|_{\HS(\R^m, \R^d)}^{2} }.
\end{equation}
This finishes the proof of Lemma~\ref{l:exp.Gauss}.
\end{proof}

Beside Lemma~\ref{l:exp.Gauss},
the proof of Lemma~\ref{l:exp.mom.abstract.one-step} also
uses the following two lemmas 
(Lemma~\ref{lemma:equivalent0} 
and Lemma~\ref{l:function.space.estimate}).
The proof of Lemma~\ref{l:function.space.estimate} uses
inequality~(2.56) in Lemma 2.11 
in~\cite{HutzenthalerJentzen2014Memoires}.

\begin{lemma}
\label{lemma:equivalent0}
Let $ d \in \N $, 
$ n \in \N_0 $,
$ c, p \in (0,\infty) $,
$ x, y \in \R^d $,
$ V \in C^{ n + 1 }_{ p, c}( \R^d , [0, \infty) ) $.
Then
\begin{enumerate}[(i)]
\item \label{eq:bounded_derivative} it holds
for all 
$ 
  t \in \{ s \in [0, 1] \colon \R \ni u \mapsto V(x+uy) \in \R \, \text{is differentiable at } s \} 
$ 
that
$ 
  | \tfrac{\partial}{\partial t} V(x + ty) |
  \leq 
  c \, \| y \| \,
  [ 1 + V( x + t y ) ]^{ 
    \max\{ 1 - \nicefrac{ 1 }{ p } , 0 \} 
  } 
$
and
\item 
\label{eq:bounded_derivatives} 
it holds for all
$ i \in \N \cap [0,n] $,
$ z_1, \ldots, z_i \in \R^d $,
$ 
  t \in 
  \{ 
    s \in [0, 1] \colon 
    \R \ni u \mapsto V^{(i)}( x + u y )( z_1, \ldots, z_i) \in \R 
    \text{ is differentiable at } s
  \} 
$  
that
\begin{equation}
\big|
\tfrac{\partial}{\partial t}
\big(
  V^{(i)} (x+ty) (z_1,\ldots, z_i)
\big)
\big|
\leq
c \, \| z_1 \| \cdots \| z_i \| \, \| y \| \,
[ 1 + V(x + t y ) ]^{ 
  \max\{ 1 - \nicefrac{ ( i + 1 ) }{ p } , 0 \}
}
  .
\end{equation}
\end{enumerate}
\end{lemma}
\begin{proof}[Proof of Lemma~\ref{lemma:equivalent0}]
First of all,
note that
the assumption that $ V \in C_{ p, c }^{ n + 1 }( \R^d , [0,\infty) ) $
ensures that for all 
$ t \in [0, 1] $, 
$ h \in \R $
it holds
that
\begin{equation}
\label{eq:basic_estimate}
\begin{split}
|
  V(x+ty) - V(x+(t+h)y)
|
&
\leq
c
\,
|h| 
\, \|y\|
\, 
\big[ 
  1 + 
  \sup\nolimits_{ r \in [0,1] } 
  V\big( 
    x + (t + (1 - r) h) y
  \big) 
\big]^{
  \max\{ 1 - \nicefrac{ 1 }{ p } , 0 \}
}
\\
&
=
c
\,
|h| 
\, \| y \| 
\,
  \big[ 
    1 + 
    \sup\nolimits_{ r \in [0,1] }
    V( x + (t + r h) y)
  \big]^{
    \max\{ 1 - \nicefrac{ 1 }{ p } , 0 \}
  }
  .
\end{split}
\end{equation}
Next observe that again
the assumption that $ V \in C_{ p, c }^{ n + 1 }( \R^d , [0,\infty) ) $
ensures that 
$ V $ is locally Lipschitz continuous.
Hence, we obtain for all 
$ t \in [0, 1] $ 
that 
\begin{equation}
\label{eq:lim_estimate}
  \limsup\nolimits_{ 
    ( \R \backslash \{0\} ) \ni h \to 0 
  } 
  | 
    \sup\nolimits_{ r \in [0,1] } V( x + ( t + r h ) y ) - V( x + t y ) 
  | 
  = 0 .
\end{equation}
Combining this with~\eqref{eq:basic_estimate} proves~\eqref{eq:bounded_derivative}.
In the next step we note that again 
the assumption that $ V \in C_{ p, c }^{ n + 1 }( \R^d , [0,\infty) ) $
shows that for all 
$ i \in \N \cap [0, n] $,
$ z_1,\ldots, z_i\in \R^d \backslash \{0\} $,
$ t \in [0, 1] $, $ h \in \R $  
it holds that
\begin{equation}
\begin{split}
&
\tfrac{
  |
    V^{(i)}(x+ty) (z_1,\ldots, z_i)
    -
    V^{(i)}(x+(t+h)y) (z_1,\ldots, z_i) 
  |
}{
  \|z_1\| \cdots \|z_i\|
}
\leq
  \| 
    V^{(i)}( x + t y )
    -
    V^{(i)}( x + (t+h)y ) 
  \|_{L^{(i)}(\R^d,\R)}
\\
&
\leq
c
|h| \| y\|
[ 1 + \sup\nolimits_{r\in [0,1]}V( x + (t+ (1-r) h) y) ]^{ 
  \max\{ 1 - \nicefrac{ (i + 1) }{ p } , 0 \} 
}
\\
&
=
c
|h| \| y\|
[ 1 + \sup\nolimits_{r\in [0,1]}V( x + (t+ r h) y) ]^{ 
  \max\{ 1 - \nicefrac{ (i + 1) }{ p } , 0 \} 
}
.
\end{split}
\end{equation}
This 
and~\eqref{eq:lim_estimate} 
establish~\eqref{eq:bounded_derivatives}.
The proof of Lemma~\ref{lemma:equivalent0} is thus completed.
\end{proof}

\begin{lemma}
\label{l:function.space.estimate}
Let $ c, p \in [1,\infty) $, $ d \in \N $, $ x, y \in \R^d $, $ V \in C^1_{ p, c }( \R^d, [0,\infty) ) $. 
Then it holds that 
$ 
1 + V(x+y) \leq c^p 2^{ p - 1 } ( 1 + V(x) + \| y \|^p ) 
$.
\end{lemma}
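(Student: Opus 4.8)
The plan is to reduce the estimate to the defining gradient bound of $C^1_{p,c}(\R^d,[0,\infty))$ for $i=1$ — namely $\|V'(x)\|_{L^{(1)}(\R^d,\R)}\le c\,(1+V(x))^{1-1/p}$ for $\lambda_{\R^d}$-almost all $x$ (here $\max(1-1/p,0)=1-1/p$ because $p\ge1$, and $|V|=V$ since $V\ge0$) — and to integrate this bound along the line segment from $x$ to $x+y$. Concretely, for fixed $x,y\in\R^d$ with $y\neq0$ I would study $g\colon[0,1]\to[0,\infty)$ given by $g(t):=(1+V(x+ty))^{1/p}$. Since $V$ is locally Lipschitz continuous, $g$ is locally Lipschitz, hence absolutely continuous on $[0,1]$; and at every $t$ for which $V$ is differentiable at $x+ty$ the chain rule gives $g'(t)=\tfrac1p(1+V(x+ty))^{1/p-1}\langle(\nabla V)(x+ty),y\rangle$, whence $|g'(t)|\le\tfrac{c}{p}\|y\|\le c\|y\|$ whenever in addition the gradient bound holds at $x+ty$. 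The fundamental theorem of calculus then yields $(1+V(x+y))^{1/p}\le(1+V(x))^{1/p}+c\|y\|$, and raising to the power $p$ and invoking $(a+b)^p\le2^{p-1}(a^p+b^p)$ (valid for $p\ge1$ by convexity) together with $c\ge1$ gives
\[
1+V(x+y)\le 2^{p-1}\big((1+V(x))+c^p\|y\|^p\big)\le c^p\,2^{p-1}\big(1+V(x)+\|y\|^p\big),
\]
which is the claim; the case $y=0$ is trivial.

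The one point requiring care — and the main (mild) obstacle — is that the gradient bound, and even differentiability of $V$, is only assumed $\lambda_{\R^d}$-a.e., while the segment $\{x+ty\colon t\in[0,1]\}$ is Lebesgue-null, so a priori the exceptional set $N\subseteq\R^d$ could cover the whole segment and the bound $|g'(t)|\le c\|y\|$ could fail on a set of positive $t$-measure. I would handle this by a Fubini-plus-density argument: for fixed $y\neq0$, Fubini's theorem applied to the splitting $\R^d=\R y\oplus y^\perp$ shows that for $\lambda_{y^\perp}$-a.e.\ $z\in y^\perp$ the line $z+\R y$ meets $N$ in a one-dimensional null set, so the displayed inequality holds for all $x$ in a set $G_y$ of full Lebesgue measure; since $V$ is continuous, both sides of the inequality are continuous in $x$ for fixed $y$, and $G_y$ is dense, so the inequality extends to all $x\in\R^d$. (Alternatively one can run the segment argument for the mollifications $V_\varepsilon:=V*\eta_\varepsilon$, which satisfy the same gradient bound \emph{everywhere} by Jensen's inequality and the concavity of $s\mapsto(1+s)^{1-1/p}$, and then let $\varepsilon\to0$ using local uniform convergence $V_\varepsilon\to V$.)

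As the remark preceding the lemma notes, one may instead simply cite (2.63) in Lemma~2.12 of \cite{HutzenthalerJentzen2014Memoires}, of which Lemma~\ref{l:function.space.estimate} is the special case $n=1$; the argument sketched above is in essence the proof given there. Apart from the measure-zero technicality just discussed, every step is a one-line computation.
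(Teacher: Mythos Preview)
Your proof is correct, and indeed you yourself note at the end that the paper simply cites (2.63) in Lemma~2.12 of \cite{HutzenthalerJentzen2014Memoires}; your argument is precisely the standard proof of that cited result, so there is no substantive difference in approach. Your handling of the $\lambda_{\R^d}$-a.e.\ issue via Fubini (or mollification) is appropriate and more careful than strictly necessary for this review.
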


\begin{proof}[Proof of Lemma~\ref{l:function.space.estimate}]
Throughout this proof let $ f \colon \R \to \R $
be the function with the property
that for all $ t \in \R $ it holds that
$ f(t) =  V(x + t y) $.
Next observe that the fact that $ V $ is locally Lipschitz continuous ensures that
$ f $ is globally Lipschitz continuous.
Moreover, note that 
Item~\eqref{eq:bounded_derivative}
in Lemma~\ref{lemma:equivalent0}
implies  
for all
$ t \in \{ s \in [0,1] \colon 
 	\R \ni u \mapsto f(u) \in \R \text{ is differentiable at }s
 	\} $ that
\begin{equation}
\begin{split}
  \tfrac{\partial}{\partial t}
  ( 1+ f(t) )
\leq
\left|
  \tfrac{\partial}{\partial t}
  ( 1+ f(t) )
\right|
\leq
  c \left\| y \right\|
  \left[ 
    1 + f(t) 
  \right]^{
    ( 1 - \nicefrac{ 1 }{ p } )
  }
=
  c \left\| y \right\|
  \left|
    1 + f(t) 
  \right|^{
    ( 1 - \nicefrac{ 1 }{ p } )
  }
. 
\end{split}
\end{equation}
The fact that $ f $ is globally Lipschitz continuous
and inequality~(2.56) in 
Lemma~2.11 in Hutzenthaler \& Jentzen~\cite{HutzenthalerJentzen2014Memoires}
(with $ T = 1 $, $ c = c \, \| y \| $, $ p = p $,
$ y = ( [0,1] \ni t \mapsto 1 + f(t) \in \R ) $ in the notation of Lemma~2.11 in
Hutzenthaler \& Jentzen~\cite{HutzenthalerJentzen2014Memoires})
hence prove for all $ t \in [0,1] $ that
\begin{equation}
\begin{split}
\label{eq:function_estimate}
1 + f(t)
\leq
  2^{ p - 1 }
  \bigg[
    1 + f(0) 
    +
  \left|
    \tfrac{ c \| y \| t }{ p }
  \right|^p
\bigg]
=
  2^{ p - 1 }
\bigg[
 1 + V(x)
+  
    \tfrac{ c^p \| y \|^p t^p }{ p^p }
\bigg]
\leq
  2^{ p - 1 }
  \big[
    1 + V(x)
    +   
    c^p \| y \|^p 
  \big]
  .
\end{split}
\end{equation}
This implies that 
$
1 + V(x+y)
\leq
2^{p-1}
\left[
1 + V(x)
+ 
c^p 
\| y\|^p
\right]
\leq
c^p 
\,
2^{p-1}
\left[
1 + V(x)
+ 
\| y\|^p
\right]
.
$
The proof of Lemma~\ref{l:function.space.estimate} is thus completed.
\end{proof}

\begin{lemma}
\label{l:exp.mom.abstract.one-step}
  Let $ \alpha, h \in (0,\infty) $,
  $ d, m \in \mathbb{N} $,
  $
    c, p \in [1,\infty)
  $,
  $
    \gamma_0, \gamma_1, \dots, \gamma_6, \gamma_7, \rho
    \in [0,\infty)
  $,
  $
    \mu \in \mathcal{M}( \mathcal{B}( \mathbb{R}^d ), \mathcal{B}( \mathbb{R}^d ) )
  $,
  $
    \sigma
    \in 
    \mathcal{M}( 
      \mathcal{B}( \mathbb{R}^d ) ,
      \mathcal{B}( \mathbb{R}^{ d \times m } ) 
    )
  $,
  $
    \bar{U}
    \in C(\R^d, \R)
  $,
  $
    U \in C^3_{p,c}( \mathbb{R}^d, [0,\infty) )
  $,
  $ 
    D \in
    2^{
      \{
        x \in \R^d \colon 
        U(x) \leq c h^{ - \alpha }  
      \}
    }
  $,
  let 
  $
    \Phi \in
    {C}^{ 0, 1, 2 }(
      D \times [0,h] \times \R^m, \R^d
    )
  $,
  let
  $
    ( \Omega, \mathcal{F}, \P ) $
  be a probability space,
  let
  $
    W \colon [0,h] \times \Omega \to \R^m
  $
  be a standard
  Brownian motion with continuous sample paths, assume
  for all $ x \in \R^d $ that
  \begin{equation}
  \left\|
      \mu(x)
    \right\|
    \leq
    c
    \left(1 + \left|U(x)\right|^{\gamma_0}\right)
    ,
    \qquad
    \|
      \sigma(x)
    \|_{
      \HS( \mathbb{R}^m, \mathbb{R}^d )
    }
    \leq
    c
    \left( 1 + \left| U( x ) \right|^{ \gamma_1 } \right),
    \label{eq:mu_sigma_Ugrowth_assumption}
  \end{equation}
  \begin{equation}
  \label{eq:generator.exponential}
    ( \mathcal{G}_{ \mu, \sigma } U)(x)
    +
    \tfrac{1}{2}\left\|\sigma(x)^{*}(\nabla U)(x)\right\|^2
    +
    \bar{U}(x)
  \leq
    \rho \cdot U(x)
    ,
  \end{equation}
  assume
  for all 
  $
    r \in [1, \infty)
  $,
  $
    x \in D
  $, 
  $ 
    s \in (0, h]
  $
  that
  $
    \Phi( x, 0, 0 ) = x 
  $
  and
  \begin{align}
    \left\|
      \big(
        \tfrac{\partial}{\partial s} \Phi
      \big)( x , s , W_s ) -
      \mu(x)
    \right\|_{
      L^4( \Omega; \R^d )
    }
  &
    \leq c s^{ \gamma_2 } ,
 \label{eq:assumption.on.phi.mue}
    \\
    \big\|
      \big(
        \tfrac{ \partial }{ \partial y } \Phi
      \big)( x, s, W_s ) - \sigma(x)
    \big\|_{
      L^8(
        \Omega; \HS( \R^m, \R^d )
      )
    }
    &
    \leq c s^{ \gamma_3 } ,
  \label{eq:assumption.on.phi.sigma}
  \\
    \big\|
      (
        \triangle_y \Phi
      )( x, s, W_s )
    \big\|_{
      L^4( \Omega; \R^d )
    }
   &
     \leq c s^{ \gamma_4 } ,
     \label{eq:assumption.on.phi.sigma2}
  \\
    \left\|
      \Phi(x,s,W_s) - x
    \right\|_{L^r(\Omega; \R^d)}
     \leq
     c
     \min\!\big(
       r ,
       1 + \left|U(x) \right|^{\gamma_5},
       (1 + \left|U(x) \right|^{\gamma_5})
     &
       \left\|
         \mu(x) s +
         \sigma(x) W_s
       \right\|_{
         L^r( \Omega; \R^d )
       }
     \big)
     \label{eq:assumption.on.phi.increment}
     ,
  \end{align}
  and assume
  for all
  $ x, y \in \R^d $
  that
  $
    |
      \bar{U}(x)
    |
  \leq
    c
    \left( 1 +
    \left| U( x ) \right|^{ \gamma_6 } \right)
  $
  and
  $
    |
      \bar{U}(x) - \bar{U}( y )
    |
  \leq
    c
    \left( 1 +
      | U(x) |^{ \gamma_7 } +
      | U(y) |^{ \gamma_7 }
    \right)
    \| x - y \|
  $.
  Then it holds for all
  $
    (t,x) \in (0, h ] \times D
  $
  that
  \begin{align} 
  \label{eq:exp.mom.abstract.one-step}
  &
    \E\!\left[
      \exp\!\left(
        \tfrac{U(
          \Phi(x,t,W_t)
         )}{e^{ \rho t }}
        +
        \smallint_0^t
        \tfrac{
          \bar{U}(
          \Phi(x, s, W_s)
        )}{e^{ \rho s }}\,
        ds
      \right)
    \right]
  \\ &
  \nonumber
    \leq
    e^{ U(x) }
    \bigg[
      1 +
      \smallint_0^t
      \exp\!\left(
	\tfrac{
	  s
	  \,
	  \left[ 2 c \right]^{
	    4 p
	    ( \gamma_6 + 2 )
	    \max( \gamma_0, \gamma_1, \gamma_5, 2 )
	  }
	}{
	  \left[
	    \min(s, 1)
	  \right]^{
	    \alpha
	    \left[
	      ( p \gamma_5 + 1 ) ( \gamma_6 + 2 )
	      + \gamma_0 + 2 \gamma_1
	    \right]
	  }
	}
    \right)
    \tfrac{
      \max( \rho, 1 )
      \,
      \left[
	2 p c \max( s, 1 )
      \right]^{
	6 p
	\left( \gamma_7 + 3 \right)
	\max( 1, \gamma_0, \gamma_1, \dots, \gamma_5 )
      }
    }{
      \left[
	\min( s, 1)
      \right]^{
	\left[
	  \alpha
	  \left(
	    2 \gamma_0 + 4 \gamma_1 + 2 \gamma_5 + (p \gamma_5 + 1 ) \gamma_7 + 2
	  \right)
	  -
	  \min( 1 / 2 , \gamma_2, \gamma_3, \gamma_4 )
	\right]
      }
    }
  \,
      ds
    \bigg]
    .
  \end{align}
\end{lemma}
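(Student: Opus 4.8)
The plan is to reduce to the case $x\in D_t$, apply It\^o's formula to the exponential functional, and control its drift by the Lyapunov inequality \eqref{eq:generator.exponential} \emph{evaluated at the starting point} $x$. First I would dispose of $x\in\R^d\setminus D_t$: there $\1_{D_t}(x)=0$ and $\Phi(x,t,W_t)=x$, so the left-hand side of \eqref{eq:exp.mom.abstract.one-step} equals $\exp(e^{-\rho t}U(x))\le e^{U(x)}$, which does not exceed the right-hand side. So assume $x\in D_t$; since $(D_s)_{s\in(0,h]}$ is non-increasing, $x\in D_s$ for all $s\in(0,t]$, and the left-hand side becomes $\E[H(t)]$ with $Z_s:=\Phi(x,s,W_s)$ and $H(s):=\exp(e^{-\rho s}U(Z_s)+\smallint_0^s e^{-\rho r}\bar U(Z_r)\,dr)$. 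Two consequences of the hypotheses would be recorded at the outset: letting $r\to\infty$ in \eqref{eq:assumption.on.phi.increment} (together with path-continuity of $Z$) gives the \emph{deterministic} bound $\sup_{s\in[0,t]}\|Z_s-x\|\le c(1+|U(x)|^{\gamma_5})$ $\P$-a.s., and $x\in D_s$ forces $U(x)\le c\,[\min(s,1)]^{-\alpha}$ for every $s\in(0,t]$. By Lemma~\ref{l:function.space.estimate} these force $U$ on the whole segment from $x$ to $Z_s$, hence $\bar U(Z_s)$ and the derivatives $U^{(i)}$, $i\in\{1,2,3\}$, along that segment, to be a.s.\ bounded by deterministic polynomials in $U(x)$; in particular $H$ is a.s.\ bounded on $[0,t]$, so every expectation below is finite and no localization is needed.

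Next I would apply It\^o's formula (legitimate since $\Phi(x,\cdot,\cdot)\in C^{1,2}$ and $U\in C^2$): $dZ_s=a_s\,ds+b_s\,dW_s$ with $a_s:=(\tfrac{\partial}{\partial s}\Phi)(x,s,W_s)+\tfrac12(\triangle_y\Phi)(x,s,W_s)$ and $b_s:=(\tfrac{\partial}{\partial y}\Phi)(x,s,W_s)$, and then $H(t)=e^{U(x)}+\smallint_0^t H(s)\,e^{-\rho s}\,\mathcal D_s\,ds+(\text{martingale})$, where $\mathcal D_s:=-\rho U(Z_s)+\langle\nabla U(Z_s),a_s\rangle+\tfrac12\tr(b_sb_s^*(\Hess U)(Z_s))+\bar U(Z_s)+\tfrac12 e^{-\rho s}\|b_s^*\nabla U(Z_s)\|^2$. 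Since $H$ is a.s.\ bounded and, by \eqref{eq:mu_sigma_Ugrowth_assumption}--\eqref{eq:assumption.on.phi.sigma2}, $a_s\in L^4$ and $b_s\in L^8$ uniformly on $[0,t]$, the martingale part has zero expectation and Fubini applies, giving $\E[H(t)]=e^{U(x)}+\smallint_0^t\E[H(s)e^{-\rho s}\mathcal D_s]\,ds$. The key move is to subtract from $\mathcal D_s$ the quantity $(\mathcal G_{\mu,\sigma}U)(x)+\tfrac12\|\sigma(x)^*\nabla U(x)\|^2+\bar U(x)-\rho U(x)$, which is $\le0$ by \eqref{eq:generator.exponential}; using $e^{-\rho s}\le1$ this yields $\mathcal D_s\le\mathcal E_s$ with $\mathcal E_s\ge0$ the sum of five ``error'' terms, $\rho|U(Z_s)-U(x)|$, $|\langle a_s,\nabla U(Z_s)\rangle-\langle\mu(x),\nabla U(x)\rangle|$, $\tfrac12|\tr(b_sb_s^*(\Hess U)(Z_s))-\tr(\sigma(x)\sigma(x)^*(\Hess U)(x))|$, $|\bar U(Z_s)-\bar U(x)|$ and $\tfrac12|\|b_s^*\nabla U(Z_s)\|^2-\|\sigma(x)^*\nabla U(x)\|^2|$, each a difference between a quantity built from $(Z_s,a_s,b_s)$ and the same quantity built from $(x,\mu(x),\sigma(x))$. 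Anchoring the comparison at $x$ rather than at $Z_s$ is essential here: $\mu,\sigma$ are merely measurable, so the value of the generator at $Z_s$ is inaccessible, whereas at $x$ all of $a_s-\mu(x)$, $b_s-\sigma(x)$, $\triangle_y\Phi$, $Z_s-x$ are quantitatively small by \eqref{eq:assumption.on.phi.mue}--\eqref{eq:assumption.on.phi.increment}.

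To finish I would write $H(s)=e^{U(x)}\tilde H(s)$ with $\tilde H(s)\le\exp(|U(Z_s)-U(x)|+\smallint_0^s|\bar U(Z_r)|\,dr)$ (using $e^{-\rho s}\le1$ and $U\ge0$), so Cauchy--Schwarz gives $\E[H(s)e^{-\rho s}\mathcal D_s]\le\E[H(s)e^{-\rho s}\mathcal E_s]\le e^{U(x)}\|\tilde H(s)\|_{L^2}\|\mathcal E_s\|_{L^2}$. For $\|\tilde H(s)\|_{L^2}$: the mean-value theorem and the deterministic a.s.\ bound on $U$ along $[x,Z_s]$ reduce $\exp(|U(Z_s)-U(x)|+\smallint_0^s|\bar U(Z_r)|\,dr)$ to $e^{\,s\cdot\mathrm{poly}(U(x))}e^{\,\theta\|Z_s-x\|}$ with $\theta$ a deterministic polynomial in $U(x)$; then \eqref{eq:assumption.on.phi.increment} (third entry of the minimum) bounds $\E[e^{2\theta\|Z_s-x\|}]$ by $\E[\exp(2\theta c(1+|U(x)|^{\gamma_5})\|\mu(x)s+\sigma(x)W_s\|)]$, and the triangle inequality together with Lemma~\ref{l:exp.Gauss} turns this into $2\exp(s\cdot\mathrm{poly}(c,U(x)))$; inserting $\|\mu(x)\|\le c(1+|U(x)|^{\gamma_0})$, $\|\sigma(x)\|_{\HS}\le c(1+|U(x)|^{\gamma_1})$ and $U(x)\le c[\min(s,1)]^{-\alpha}$ produces the first factor $\exp(\cdots)$ in \eqref{eq:exp.mom.abstract.one-step}. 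For $\|\mathcal E_s\|_{L^2}$: each error term splits into one piece carrying a factor $\|a_s-\mu(x)\|_{L^4}$, $\|b_s-\sigma(x)\|_{L^8}$ or $\|(\triangle_y\Phi)(x,s,W_s)\|_{L^4}$ (bounded by $cs^{\gamma_2}$, $cs^{\gamma_3}$, $cs^{\gamma_4}$ via \eqref{eq:assumption.on.phi.mue}--\eqref{eq:assumption.on.phi.sigma2}) and one piece carrying a factor $\|Z_s-x\|$ of $L^2$-size $\lesssim\sqrt s$ (since $\|\mu(x)s+\sigma(x)W_s\|_{L^2}\le\|\mu(x)\|s+\sqrt s\,\|\sigma(x)\|_{\HS}$), times deterministic polynomials in $U(x)$ from the $U$- and $\bar U$-bounds (the term $|\bar U(Z_s)-\bar U(x)|$ uses the assumed modulus of continuity with exponent $\gamma_7$); collecting these with $U(x)\le c[\min(s,1)]^{-\alpha}$ gives the second factor, including the gain $[\min(s,1)]^{\min(1/2,\gamma_2,\gamma_3,\gamma_4)}$ and the prefactor $\max(\rho,1)$. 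Substituting both bounds into $\E[H(t)]=e^{U(x)}+\smallint_0^t\E[H(s)e^{-\rho s}\mathcal D_s]\,ds$ and pulling out $e^{U(x)}$ yields \eqref{eq:exp.mom.abstract.one-step}.

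The hard part is not conceptual but bookkeeping: matching the precise combinatorial shape of the exponents $4p(\gamma_6+2)\max(\gamma_0,\gamma_1,\gamma_5,2)$, $\alpha[(p\gamma_5+1)(\gamma_6+2)+\gamma_0+2\gamma_1]$, $6p(\gamma_7+3)\max(1,\gamma_0,\dots,\gamma_5)$, etc.\ in \eqref{eq:exp.mom.abstract.one-step} requires carefully tracking every power of $c$, $p$, $U(x)$ and $[\min(s,1)]^{-1}$ generated in the last step (with the usual crude estimates $1-i/p\le1$, $(1+a^\gamma)^\beta\le 2^{\beta-1}(1+a^{\gamma\beta})$, $\max(\dots)\ge$ each entry, and so on). The only genuinely conceptual points are (i) anchoring the Lyapunov comparison at the starting point $x$, and (ii) exploiting \eqref{eq:assumption.on.phi.increment} twice — once with $r\to\infty$ to get a.s.\ boundedness of $Z$ (hence finiteness of all exponential moments and no localization), and once with finite $r$ to get the Gaussian comparison underlying the sharp exponential factor.
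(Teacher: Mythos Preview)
Your proposal is correct and follows essentially the same route as the paper's proof: It\^o's formula on the exponential, subtraction of the Lyapunov inequality \eqref{eq:generator.exponential} anchored at $x$, a Cauchy--Schwarz split into the exponential factor and the error $\mathcal E_s$, and estimation of both via \eqref{eq:assumption.on.phi.mue}--\eqref{eq:assumption.on.phi.increment} together with the $L^\infty$-bound implicit in the second entry of the minimum in \eqref{eq:assumption.on.phi.increment}. The only minor difference is that the paper inserts a localization-plus-Fatou step after It\^o's formula rather than invoking the a.s.\ boundedness of $Z$ up front, but it uses exactly that $L^\infty$-bound later (cf.\ \eqref{eq:before.estimate.fist.term2}), so your streamlining is harmless.
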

\begin{proof}[Proof of Lemma~\ref{l:exp.mom.abstract.one-step}]
  Throughout this proof 
  let $ Y^x \colon [0, h] \times \Omega \to \R^d $, $ x \in D $, be 
  the stochastic processes with the property that 
  for all $ s \in [0,h] $, $ x \in D $
  it holds that
  $ Y_s^x = \Phi(x, s, W_s ) $
  and let
  $
    \tau_n \colon\Omega\to[0,h]
  $, $ n \in \N $,
  be the functions with the property that
  for all $ n \in \N $ it holds that
  $
    \tau_n = 
    \inf\!\big(
      \{ s \in [0,h] \colon \| W_s \| > n \} \cup \{ h \} 
    \big)
  $. 
  Next observe that It\^o's formula implies that
  for all $ x \in D $, $ t \in [0,h] $
  it holds $ \P $-a.s.\ that
  \begin{equation}  
  \begin{split} 
  \label{eq:after.Ito}
    &
    \exp\!\left(
      e^{ - \rho t } 
      U( Y^x_t ) 
      + 
      \int_0^t 
      e^{ - \rho r }
      \bar{U}( Y^x_r ) \, dr
    \right)
    - e^{ U(x) }
    \\
   =&
    \int_0^t
    \exp\!\left(
      e^{-\rho s}U(Y^x_s)+\int_0^s e^{-\rho r}\bar{U}(Y^x_r)\,dr
    \right)
      e^{ - \rho s }
      \,
      U'( Y^x_s )\big(
        \tfrac{ \partial }{ \partial y }
      \Phi\big)(x,s,W_s)
      \, dW_s
    \\&
    +
    \int_0^t
    \exp\!\left(e^{-\rho s}U(Y^x_s)+\int_0^s e^{-\rho r}\bar{U}(Y^x_r)\,dr\right)
    e^{-\rho s}
    \bigg(
      \bar{U}(Y_s^x)
      -\rho U(Y^x_s)
      +U'(Y^x_s)\big(\tfrac{\partial}{\partial s}\Phi\big)
         (x,s,W_s)
    \\&\qquad\quad\;
      +\tfrac{1}{2}\tr\!\left(
       \big(\tfrac{\partial}{\partial y}\Phi\big)(x,s,W_s)
       \big[\big(\tfrac{\partial}{\partial y}\Phi\big)(x,s,W_s)\big]^{*}
       (\operatorname{Hess}U)(Y^x_s)
       \right)
    \\& \qquad\quad\;+
      \tfrac{ 1 }{ 2 }
      \,
      e^{ - \rho s }
      \,
       \big\|
         \big[\big(\tfrac{\partial}{\partial y}\Phi\big)(x,s,W_s)\big]^{*}
         (\nabla U)(Y^x_s)
       \big\|^2
      +  \tfrac{ 1 }{ 2 } 
      \smallsum_{ i = 1 }^m 
      U'( Y^x_s ) 
      \big( 
        \tfrac{ \partial^2 }{ \partial y_i^2 } \Phi 
      \big)( x, s, W_s )
    \bigg) \, ds
    .
  \end{split}     \end{equation}
This shows
  for all
  $
    t \in [0, h ]
  $,
  $
    x 
    \in D
  $,
  $
    n \in \N
  $
  that
  \begin{equation}  \begin{split}
    &
    \E\!\left[
      \exp\!\left(
        e^{ - \rho ( t \wedge \tau_n ) }
        U( Y^x_{ t \wedge \tau_n } )
        +
        \int_0^{ t \wedge \tau_n }
        e^{ - \rho r }
        \bar{U}( Y_r^x )
        \, dr
      \right)
    \right]
    -
    e^{ U(x) }
  \\ & =
   \E\bigg[
    \int_0^{t\wedge\tau_n}
    \exp\!\left(e^{-\rho s}U(Y^x_s)
                   +\int_0^{s}e^{-\rho r}\bar{U}(Y_r^x)\,dr
             \right)
    e^{-\rho s}
    \cdot\bigg(
      \bar{U}(Y_s^x)
      -\rho U(Y^x_s)
    \\&\qquad\quad\;
      +U'(Y^x_s)\big(\tfrac{\partial}{\partial s}\Phi\big)
         (x,s,W_s)
      +\tfrac{1}{2}\tr\left(
       \big(\tfrac{\partial}{\partial y}\Phi\big)(x, s,W_s)
       \big[\big(\tfrac{\partial}{\partial y}\Phi\big)(x,s,W_s)\big]^{*}
       (\operatorname{Hess}U)(Y^x_s)
       \right)
    \\&\qquad\quad\;
      +\tfrac{1}{2}e^{-\rho s}\left\|
         \big[\big(\tfrac{\partial}{\partial y}\Phi\big)(x,s,W_s)\big]^{*}
         (\nabla U)(Y^x_s)
       \right\|^2
      +  \tfrac{ 1 }{ 2 } U'( Y^x_s )
      \, ( \triangle_y \Phi )( x, s, W_s )
    \bigg)
    \,ds
    \bigg]
    .
  \end{split}     \end{equation}
  Assumption~\eqref{eq:generator.exponential}
  therefore yields for all $ t \in [0, h ] $, $ x \in D $, $ n \in \N $ 
  that
  \begin{equation}  \begin{split}
    &\E\!\left[\exp\!\left(e^{-\rho (t\wedge\tau_n)}U(Y^x_{t\wedge\tau_n})
                   +\int_0^{t\wedge\tau_n}e^{-\rho r}\bar{U}(Y_r^x)\,dr
    \right)\right]
    -e^{U(x)}
 \\ & =
   \E\bigg[
    \int_0^{t\wedge\tau_n}
    \exp\!\left(e^{-\rho s}U(Y^x_s)
                   +\int_0^{s}e^{-\rho r}\bar{U}(Y_r^x)\,dr
    \right)
    e^{-\rho s}
    \\  &\quad
    \cdot\bigg(
      -\rho U(x)+(\mathcal{G}_{\mu,\sigma}U)(x)
      +\tfrac{1}{2}e^{-\rho s}
      \left\|\sigma(x)^{*}(\nabla U)(x)\right\|^2
      +\bar{U}(x)
    \\& \quad
      + \bar{U}(Y^x_s)-\bar{U}(x)
      -\rho( U(Y^x_s)-U(x))
      +U'(Y^x_s)
      \big(
        \tfrac{ \partial }{ \partial s } \Phi
      \big)(x,s,W_s)
      -
      U'(x) \mu(x)
    \\& \quad
      +\tfrac{1}{2}\tr\left(
       \big(\tfrac{\partial}{\partial y}\Phi\big)(x,s,W_s)
       \big[\big(\tfrac{\partial}{\partial y}\Phi\big)(x,s,W_s)\big]^{*}
       (\operatorname{Hess}U)(Y^x_s)
       \right)
       -\tfrac{1}{2}\tr\left(\sigma(x)\sigma(x)^{*}
       (\operatorname{Hess}U)(x)\right)
    \\& \quad
      +\tfrac{1}{2}e^{-\rho s}\left\|
         \big[\big(\tfrac{\partial}{\partial y}\Phi\big)(x,s,W_s)\big]^{*}
         (\nabla U)(Y^x_s)
       \right\|^2
      -\tfrac{1}{2}e^{-\rho s}\left\|\sigma(x)^{*}(\nabla U)(x)\right\|^2
      +
      \tfrac{
        U'( Y^x_s )
        \, ( \triangle_y \Phi )( x, s, W_s )
      }{ 2 }
    \bigg)
    \, ds
    \bigg]
\\ & \leq
   \E\bigg[
    \int_0^{t\wedge\tau_n}
    \exp\!\left(e^{-\rho s}U(Y^x_s)
                   +\int_0^{s}e^{-\rho r}\bar{U}(Y_r^x)\,dr
        \right)
    \\& \quad
    \cdot\bigg(
      \left|\bar{U}(Y^x_s)- \bar{U}(x)\right|
      +\rho\;| U(Y^x_s)-U(x)|
      +\left|U'(Y^x_s)\big(\tfrac{\partial}{\partial s}\Phi\big) (x,s,W_s)
            -U'(x)\mu(x)\right|
    \\& \quad
      +\tfrac{1}{2}\left|
       \tr\left(
       \big(\tfrac{\partial}{\partial y}\Phi\big)(x,s,W_s)
       \big[\big(\tfrac{\partial}{\partial y}\Phi\big)(x,s,W_s)\big]^{*}
       (\operatorname{Hess}U)(Y^x_s)
       \right)
       -\tr\left(\sigma(x)\sigma(x)^{*}(\operatorname{Hess}U)(x)\right)
       \right|
    \\& \quad
      +\tfrac{1}{2}e^{-\rho s}
      \left|
        \left\|
         \big[\big(\tfrac{\partial}{\partial y}\Phi\big)(x,s,W_s)\big]^{*}
         (\nabla U)(Y^x_s)
       \right\|^2
      -\left\|\sigma(x)^{*}(\nabla U)(x)\right\|^2
      \right|
      +
      \tfrac{
        \left|
         U'( Y^x_s )
         \, ( \triangle_y \Phi )( x, s, W_s )
        \right|
      }{ 2 }
    \bigg)
    \,ds
    \bigg].
  \end{split}     \end{equation}
  Hence, Fatou's lemma, Fubini's theorem, and H\"older's inequality
  imply for all $t\in[0,h]$, $ x \in D $ that
  \begin{align}
    &\E\!\left[\exp\!\left(e^{-\rho t}U(Y^x_{t})
                   +\int_0^{t}e^{-\rho r}\bar{U}(Y_r^x)\,dr
    \right)\right]
      -e^{U(x)}
    \nonumber \\&
    \leq
    \liminf_{n\to\infty}\E\!\left[\exp\!\left(e^{-\rho (t\wedge\tau_n)}
      U(Y^x_{t\wedge\tau_n})
                   +\int_0^{t\wedge\tau_n}e^{-\rho r}\bar{U}(Y_r^x)\,dr
    \right)\right]
    -e^{U(x)}
   \nonumber \\&
   \leq
   \E\bigg[
    \int_0^{t}
    \exp\!\left(U(Y^x_s)
                   +\int_0^{s}e^{-\rho r}\bar{U}(Y_r^x)\,dr
    \right)
    \nonumber \\&\qquad\;
    \cdot\bigg(
      \left|\bar{U}(Y^x_s)- \bar{U}(x)\right|
      +\rho| U(Y^x_s)-U(x)|
      +\left|U'(Y^x_s)\left(\tfrac{\partial}{\partial s}\Phi\right) (x,s,W_s)
            -U'(x)\mu(x)\right|
    \nonumber \\&\qquad\;
      +\tfrac{1}{2}\left|
       \tr\left(
       \big(\tfrac{\partial}{\partial y}\Phi\big)(x,s,W_s)
       \big[\big(\tfrac{\partial}{\partial y}\Phi\big)(x,s,W_s)\big]^{*}
       (\operatorname{Hess}U)(Y^x_s)
       \right)
       -\tr\left(\sigma(x)\sigma(x)^{*}(\operatorname{Hess}U)(x)\right)
       \right|
   \nonumber \\&\qquad\;
      +\tfrac{1}{2}e^{-\rho s}
      \left|
        \left\|
         \big[\big(\tfrac{\partial}{\partial y}\Phi\big)(x,s,W_s)\big]^{*}
         (\nabla U)(Y^x_s)
       \right\|^2
      -\left\|\sigma(x)^{*}(\nabla U)(x)\right\|^2
      \right|
      +
      \tfrac{
        \left|
         U'( Y^x_s )
         \, ( \triangle_y \Phi )( x, s, W_s )
        \right|
      }{ 2 }
    \bigg)
    \,ds
    \bigg]
  \nonumber
  \\ &
   \leq
    \int_0^{t}
    \left\|
    \exp\!\left(U(Y^x_s)
                   +\int_0^{s}e^{-\rho r}\bar{U}(Y_r^x)\,dr
    \right)
    \right\|_{L^2(\Omega;\R)}
   \nonumber \\&\qquad\;
    \cdot
    \bigg[
      \rho\| U(Y^x_s)-U(x)\|_{L^2(\Omega;\R)}
      +\left\|U'(Y^x_s)\big(\tfrac{\partial}{\partial s}\Phi\big) (x,s,W_s)
            -U'(x)\mu(x)\right\|_{L^2(\Omega;\R)}
   \nonumber \\&\qquad\;
      +\tfrac{1}{2}\!\left\|
       \tr \!\left(\!
       \big(\tfrac{\partial}{\partial y}\Phi\big)(x,s,W_s)
       \big[\big(\tfrac{\partial}{\partial y}\Phi\big)(x,s,W_s)\big]^{*}
       (\operatorname{Hess}U)(Y^x_s)
       -\sigma(x)\sigma(x)^{*}(\operatorname{Hess}U)(x) \!\right)
       \right\|_{L^2(\Omega;\R)}
   \nonumber \\&\qquad\;
      +\tfrac{1}{2}e^{-\rho s}
      \left\|
        \big\|
         \big[\big(\tfrac{\partial}{\partial y}\Phi\big)(x,s,W_s)\big]^{*}
         (\nabla U)(Y^x_s)
       \big\|^2
      -\left\|\sigma(x)^{*}(\nabla U)(x)\right\|^2
      \right\|_{L^2(\Omega;\R)}
  \nonumber  \\&\qquad\;
      +
      \tfrac{
        \left\|
           U'( Y^x_s )
           \, ( \triangle_y \Phi )( x, s, W_s )
        \right\|_{
          L^2( \Omega; \R )
        }
      }{ 2 }
      +\left\|\bar{U}(Y^x_s)- \bar{U}(x)\right\|_{L^2(\Omega;\R)}
    \bigg]
    \, ds
    \, .
\label{eq:after.Fubini}
\end{align}
  Next we estimate the $ L^2 $-norms
  on the right-hand side separately.
  Combining the assumption that
  $U \in C_{p,c}^3( \mathbb{R}^d, [0,\infty) ) \subseteq C_{ p, c }^1( \R^d, [0,\infty) ) $ 
  with
  Lemma~\ref{l:function.space.estimate} 
  (with $ c = c $, $ p = p $, $ V = U $, $ x = x $,
  $ y = r ( y - x ) $
  for $ r \in [0,1] $, $ x, y \in \R^d $
  in the notation of Lemma~\ref{l:function.space.estimate})
  implies for all $ x, y \in \mathbb{R}^d $,
  $ i \in \{ 0, 1, 2 \} $ that
  \begin{equation}
  \label{eq:derivativeU1}
  \begin{split}
    \big\|
      U^{(i)}(y) -
      U^{(i)}(x)
    \big\|_{
      L^{(i)}( \mathbb{R}^d,
        \mathbb{R}
      )
    }
 & \leq
   c 
   \,
    \| y - x \|
    \left[ 1 +
      \sup\nolimits_{ r \in [0,1] }
      U\big( r y + ( 1 - r ) x ) \big)
    \right]^{
      \max\{ 1 - \nicefrac{ ( i + 1 ) }{ p } , 0 \}  
    }
  \\ & =
   c 
      \sup\nolimits_{ r \in [0,1] }
    \left[ 
      1 +
      U\big( x + r ( y - x ) ) \big)
    \right]^{
      \frac{ \max(p - i - 1, 0) }{ p }
    }
    \| y - x \|
  \\ & \leq
   c
      \sup\nolimits_{ r \in [0,1] }
    \Big[ 
      c^p 
      \, 
      2^{ p - 1 } 
      \big(
        1 + U(x)
        +
        \| r ( y - x ) \|^p
      \big)
    \Big]^{
      \frac{ \max(p - i - 1, 0) }{ p }
    }
    \| y - x \|
  \\ & \leq
    \frac{
      c
      \left( 2 c \right)^{
        \max\{ p - i - 1, 0 \}
      }
    }{
      2^{ \max\{ p - i - 1, 0 \} / p }
    }
    \left(
    \left[ 1 +
      U( x )
    \right]^{
      \frac{ \max(p - i - 1,0) }{ p }
    }
    +
    \left\| y - x
    \right\|^{
      \max\left( p - i-1,0 \right)
    }
    \right)
    \left\| y - x \right\|
  \\ & \leq
    \tfrac{
      \left( 2 c \right)^{
        p
      }
    }{
      2
    }
    \left(1 +
    \left|
      U( x )
    \right|^{
      \frac{ \max(p - i - 1,0) }{ p }
    }
    +
    \left\| y - x
    \right\|^{
      \max\left( p - i-1,0 \right)
    }
    \right)
    \left\| y - x \right\|
    .
  \end{split}
  \end{equation}
  This, in particular, shows for all $ x , y \in \R^d $ that
  \begin{equation}
  \label{eq:derivativeU1_2}
    \left|
      U(y) -
      U(x)
    \right|
  \leq
    \tfrac{
      \left( 2 c \right)^{
        p
      }
    }{
      2
    }
    \left( 1 +
      \left|U( x ) \right|^{\frac{p-1}{p}}
    +
    \left\| y - x
    \right\|^{
      \left( p - 1 \right)
    }
    \right)
    \left\| y - x \right\|.
  \end{equation}
  Combining this with
  H\"older's inequality
  and the fact that
  $
    \forall \, x \in \R^d \colon
    | \bar{U}(x) |
  \leq
    c \left( 1 + \left| U(x) \right|^{ \gamma_6 } \right)
  $
  yields for all $ s \in (0, h] $, $ x \in \R^d $ that
\begin{align}
    &
    \E\!\left[\exp\!\left(2U(Y_s^x)
                   +2\int_0^{s}e^{-\rho r}\bar{U}(Y_r^x)\,dr
          -2U(x)\right)\right]
    \leq \E\!\left[\exp\!\left(
        2 \left| U( Y_s^x ) - U(x) \right|
            + 2 \smallint_0^s \left| \bar{U}( Y_r^x ) \right| dr
          \right)\right]
    \nonumber \\
    &\leq
    \E\!\left[
      \exp\!\left(
        ( 2 c )^p
        \big[ 1 +
          \left|U(x)\right|^{\frac{p-1}{p}}
          +
          \left\|
            Y_s^x - x
          \right\|^{ ( p - 1 ) }
        \big]
        \left\|
          Y_s^x - x
        \right\|
        +
        2 c \smallint_0^s \left( 1 + |U( Y_r^x )|^{\gamma_6} \right) dr
     \right)
   \right]
   \nonumber
   \\
    &\leq
    \left\|
      \exp\!\left(
        ( 2 c )^p
        \big[ 1 +
          \left|U(x)\right|^{\frac{p-1}{p}}
          +
          \left\|
            Y_s^x - x
          \right\|^{ ( p - 1 ) }
        \big]
        \left\|
          Y_s^x - x
        \right\| \right) \right\|_{L^1(\Omega; \R)}
        \left\| \exp\!\left( 2 c \!\smallint_0^s \left(
        1 + |U( Y_r^x )|^{\gamma_6} \right) dr \right)\right\|_{L^{\infty}(\Omega; \R)}
    \nonumber \\
    &\leq
    \E \! \left[
      \exp\!\left(
        ( 2 c )^p
        \big[ 1 +
          \left|U(x)\right|^{\frac{p-1}{p}}
          +
          \left\|
            Y_s^x - x
          \right\|^{ ( p - 1 ) }\!
        \big]
        \left\|
          Y_s^x - x
        \right\| \right)
    \right]
        \exp\!\left(
          2 c \smallint_0^s \big( 1 + \|U( Y_r^x )\|_{L^{\infty}(\Omega; \R)}^{\gamma_6} \big)
        \, dr \right).
        \label{eq:estimate.fist.term}
\end{align}
Next we estimate the two factors
  on the right-hand side of \eqref{eq:estimate.fist.term} separately.
  Using H\"older's inequality and
  assumption~\eqref{eq:assumption.on.phi.increment}
  shows
  for all
  $
    s \in (0,h]
  $,
  $
    x \in D
  $
  that
\begin{align}
  &
  \E\!\left[
      \exp\!\left(
        ( 2 c )^p
        \big[ 1 +
          \left|U(x)\right|^{\frac{p-1}{p}}
          +
          \left\|
            Y_s^x - x
          \right\|^{ ( p - 1 ) }\!
        \big]
        \left\|
          Y_s^x - x
        \right\| \right)
  \right]
  \nonumber \\ & =
  \E\!\left[
    \sum_{n=0}^{\infty} \tfrac{( 2 c )^{pn}}{n !}
        \big[ 1 +
          \left|U(x)\right|^{\frac{p-1}{p}}
          +
          \left\|
            \Phi(x, s, W_s ) - x
          \right\|^{ ( p - 1 ) }\!
        \big]^n
        \left\|
          \Phi(x,s,W_s) - x
        \right\|^n
  \right]
  \nonumber \\ & =
  \sum_{ n = 0 }^{ \infty }
  \left\| \tfrac{( 2 c )^{pn}}{n !}
        \big[ 1 +
          \left|U(x)\right|^{\frac{p-1}{p}}
          +
          \left\|
            \Phi(x, s, W_s ) - x
          \right\|^{ ( p - 1 ) }\!
        \big]^n
        \left\|
          \Phi(x,s,W_s) - x
        \right\|^n
        \right\|_{L^1(\Omega; \R)}
  \nonumber \\ & \leq
  \sum_{ n = 0 }^{ \infty }
  \tfrac{ ( 2 c )^{ p n }
  }{ n! }
  \left[ 1 +
    \left|U(x)\right|^{\frac{p-1}{p}}
    +
    \| \Phi(x,s,W_s) - x \|^{(p-1)}_{L^{\infty}(\Omega; \R^d)}
  \right]^n
  \left\|
    \Phi(x,s,W_s) - x
  \right\|^n_{L^n(\Omega; \R^d)}
  \nonumber \\ & \leq
  \sum_{n=0}^{\infty} \tfrac{( 2 c )^{pn}}{n !}
  \left[ 1 +
    \left|U(x)\right|^{\frac{p-1}{p}}
    +
    c^{ (p - 1 ) }
    ( 1 + \left| U(x) \right|^{\gamma_5})^{(p-1)}
  \right]^n
  \left[
    c \left(1+ \left| U(x) \right|^{\gamma_5}\right)
    \left\|
      \mu(x) s + \sigma(x) W_s
    \right\|_{L^n(\Omega; \R^d)}
  \right]^n
  \nonumber \\ & =
  \sum_{ n = 0 }^{ \infty }
  \tfrac{ ( 2 c )^{ p n } }{ n! }
  \left[
    c \big( 1 + \left|U(x)\right|^{\frac{p-1}{p}} \big) \big(1+ \left| U(x) \right|^{\gamma_5}\big)
    +
    c^p
    ( 1 + \left| U(x) \right|^{\gamma_5})^{p}
  \right]^n
  \E\big[
    \left\| \mu(x) s + \sigma(x) W_s
    \right\|^n
  \big]
  \nonumber \\ & =
  \E\!\left[
    \exp\!\left(
      \left[ 2c \right]^p
      \left[
        c \big( 1 + \left|U(x)\right|^{\frac{p-1}{p}} \big) \big(1+ \left| U(x) \right|^{\gamma_5}\big)
    +
    c^p
    ( 1 + \left| U(x) \right|^{\gamma_5})^{p}
      \right]
      \left\|
        \mu(x) s
        +
        \sigma(x) W_s
      \right\|
    \right)
  \right]
  .
\end{align}
Hence,
  assumption~\eqref{eq:mu_sigma_Ugrowth_assumption}
  and Lemma~\ref{l:exp.Gauss} yield
  for all
  $
    s \in (0,h]
  $,
  $
    x \in D
  $
  that
\begin{equation}
\begin{split}
  &
  \E\!\left[
      \exp\!\left(
        \left[ 2 c \right]^p
        \big[
          1 +
          \left|U(x)\right|^{\frac{p-1}{p}}
          +
          \left\|
            Y_s^x - x
          \right\|^{ ( p - 1 ) }\!
        \big]
        \left\|
          Y_s^x - x
        \right\| \right)
  \right]
  \\ & \leq
  \E\!\left[
    \exp\!\left(
      c \left[2c\right]^{p}
      \left[ 1 +
        \left|U(x)\right|^{\frac{p-1}{p}}
        + \left| U(x) \right|^{\gamma_5 }
        + \left| U(x) \right|^{(\gamma_5 + \frac{p-1}{p})}
    +
    \left[2c\right]^{(p-1)}
    ( 1 + \left| U(x) \right|^{p\gamma_5})
      \right]
      \left\|
        \mu(x) s
        +
        \sigma(x) W_s
      \right\|
    \right)
  \right]
  \\ & \leq
  \E\!\left[
    \exp\!\left(
      2^p c^{(p+1)}
      \left[ 1 +
        \frac{ c }{ s^{ \alpha(p-1)/p} }
        + \frac{ c^{ \gamma_5 } }{ s^{ \alpha \gamma_5} }
        + \frac{ c^{ ( \gamma_5 + 1 ) } }{
          s^{ \alpha(\gamma_5 +(p-1)/p)}
        }
    +
    \left[2c\right]^{(p-1)}
    \left(
      1 + \frac{ c^{ p \gamma_5 } }{ s^{ \alpha p\gamma_5} }
    \right)
      \right]
      \left\|
        \mu(x) s
        +
        \sigma(x) W_s
      \right\|
    \right)
  \right]
  \\ & \leq
  \E\!\left[
    \exp\!\left(
      2^p c^{(p+1)} \left[ \min(s,1)
      \right]^{- \alpha (  p \gamma_5 + 1 )
      }
      \left[
        4 c^{ ( \gamma_5 + 1 ) }
        +
        2^p
        c^{ ( p + p \gamma_5 - 1 ) }
      \right]
      \left\|
        \mu(x) s
        +
        \sigma(x) W_s
      \right\|
    \right)
  \right]
  \\ & \leq
  \E\!\left[
    \exp\!\left(
      2^{ ( 2 p + 2 ) }
      c^{ p ( \gamma_5 + 3 ) }
      \left[ \min(s,1)
      \right]^{- \alpha (  p \gamma_5 + 1 )
      }
      \left(
        \| \mu(x) \| s + \| \sigma(x) W_s \|
      \right)
    \right)
  \right]
  \\ & =
  \exp\!\left(
    2^{ ( 2 p + 2 ) }
    c^{ p ( \gamma_5 + 3 ) }
    \left[ \min(s,1)
      \right]^{- \alpha (  p \gamma_5 + 1 )
      }
    \|\mu(x)\| s
  \right)
  \E\!\left[
    \exp\!\left(
      2^{ ( 2 p + 2 ) }
      c^{ p ( \gamma_5 + 3 ) }
      \left[ \min(s,1)
      \right]^{- \alpha (  p \gamma_5 + 1 )
      }
      \left\| \sigma(x) W_s \right\|
    \right)
  \right]
  \\ & \leq
  \exp\!\left(
    2^{ ( 2 p + 2 ) }
    c^{ p ( \gamma_5 + 3 ) }
    \left[ \min(s,1)
      \right]^{- \alpha (  p \gamma_5 + 1 )
      }
    \|\mu(x)\| s
  \right)
  2
  \exp\!\left(
    \tfrac{
      s
      2^{  ( 4p + 4 ) }
      c^{ 2 p ( \gamma_5 + 3 ) }
      \,
      \left\| \sigma(x) \right\|^2_{\HS(\R^m, \R^d)}
    }{
      2
      \,
      \left[ \min(s,1)
      \right]^{ 2\alpha (  p \gamma_5 + 1 )
      }
    }
  \right)
  \\ & \leq
  2
  \exp\!\left(
    s \, 2^{ ( 4 p + 3 ) }
    c^{ 2 p ( \gamma_5 + 3 ) }
    \left[ \min(s,1)
      \right]^{- 2\alpha (  p \gamma_5 + 1 )
      }
    \left[
      \| \mu(x) \|
      +
      \| \sigma(x) \|^2_{ \HS( \R^m, \R^d ) }
    \right]
  \right)
  \\ & \leq
  2 \exp\!\left(
      s \,
      2^{ ( 4 p + 3 ) }
      \,
      c^{ 2 p ( \gamma_5 + 3 ) }
      \left[ \min(s,1)
      \right]^{- 2\alpha (  p \gamma_5 + 1 )
      }
      \left[
        c \left( 1 +
        \left|
          U(x)
        \right|^{ \gamma_0 } \right)
        +
        c^2
        \left( 1 + \left| U(x)
        \right|^{ \gamma_1 } \right)^2
      \right]
    \right)
  \\ & \leq
  2 \exp\!\left(
      s \,
      2^{ ( 4 p + 3 ) }
      \,
      c^{ 2 p ( \gamma_5 + 3 ) }
      \left[ \min(s,1)
      \right]^{- 2\alpha (  p \gamma_5 + 1 )
      }
      \left[
        c  +
        c^{ ( 1 + \gamma_0 ) }
        s^{ - \alpha \gamma_0 }
        +
        2c^2
         +  2 c^{ 2 ( 1 + \gamma_1 ) } s^{ - 2 \alpha \gamma_1 }
      \right]
    \right)
  \\ & \leq
  2
    \exp\!\left(
      s
      \,
      2^{ ( 4 p + 3 ) }
      \,
      c^{ 2 p ( \gamma_5 + 3 ) }
      \left[
        \min(s,1)
      \right]^{
        - \alpha (
          2 p \gamma_5 + 2
          +
          \gamma_0 + 2 \gamma_1
        )
      }
      \left[ 2 c^{ ( 1 + \gamma_0 ) } + 4 c^{ 2 ( 1 + \gamma_1 ) } \right]
      \right)
  \\ & \leq
  2
    \exp\!\left(
      s
      \,
      2^{ ( 4 p + 6 ) }
      \,
      c^{
        2 p ( \max( \gamma_0 / 2 , \gamma_1 ) + \gamma_5 + 4 )
      }
      \left[
        \min(s,1)
      \right]^{
        - \alpha (
          2 p \gamma_5 + 2
          +
          \gamma_0 + 2 \gamma_1
        )
      }
      \right).
  \label{eq:estimate.fist.term1}
\end{split}
\end{equation}
  Next we combine
  \eqref{eq:derivativeU1_2}
  with
  assumption~\eqref{eq:assumption.on.phi.increment}
  to obtain
  for all
  $
    r \in (0,h]
  $,
  $
    x \in D
  $
  that
\begin{equation}
\begin{split}
\label{eq:before.estimate.fist.term2}
&
  \|
    U( Y_r^x )
  \|_{ L^{ \infty }( \Omega; \R )
  }
\leq
  U(x) + \| U( Y_r^x ) - U(x) \|_{ L^{ \infty }( \Omega; \R ) }
\\ &
  \leq
      U(x) + \tfrac{ ( 2 c )^p }{ 2 }
      \left\|
        \left( 1 +
         \left| U(x) \right|^{(p-1)/p}
         +
         \| Y_r^x - x \|^{ ( p - 1 ) }
        \right)
        \left\| Y_r^x - x
        \right\|
      \right\|_{
        L^{ \infty }( \Omega; \R ) }
\\ & \leq
      U(x) + \tfrac{ ( 2 c )^p }{ 2 }
      \left[
        c \big( 1 +
         \left| U(x) \right|^{(p-1)/p} \big)\left( 1 + \left| U(x) \right|^{\gamma_5} \right)
        +
        c^p
        \left( 1 + \left| U(x) \right|^{\gamma_5} \right)^p
      \right]
 \\ &
  \leq
    U(x) + \tfrac{ (2c)^p }{2} \big[ 2c \max(1, U(x)) \cdot 2 \max(1, \left| U(x) \right|^{\gamma_5}) + \left[ 2c \right]^p \max(1, |U(x)|^{p\gamma_5}) \big]
  \\ &
   \leq
   U(x) + \tfrac{ (2c)^p }{2} \big[4c \max(1, \left| U(x) \right|^{(\gamma_5 +1)}) + \left[ 2c \right]^p \max(1, |U(x)|^{p\gamma_5}) \big]
   \\ &
   \leq
   U(x) + \tfrac{ (2c)^p }{2} \max(1, |U(x)|^{(p\gamma_5 +1)})\left[ 4c + (2c)^p \right]
   \\ &
   \leq U(x) + \tfrac{3}{2} \left[2c\right]^{2p} \max(1, |U(x)|^{(p\gamma_5 +1)})
   \\ &
   \leq 2^{(2p +1)} c^{2p} \max(1, |U(x)|^{(p\gamma_5 +1)}).
\end{split}
\end{equation}
Therefore, it holds for all
  $
    s \in (0,h]
  $,
  $ x \in D $
  that
\begin{equation}  \begin{split}
\label{eq:estimate.fist.term2}
2 c \smallint_0^s \big( 1 + \|U( Y_r^x )\|_{L^{\infty}(\Omega; \R)}^{\gamma_6}\big) \, dr
 & \leq
2 s c + 2 s c \left( 2^{(2p +1)} c^{2p} \max \!\big(1, |U(x)|^{(p\gamma_5 +1)}\big) \right)^{\gamma_6}
\\ & \leq
2 s c + 2 s c \,2^{(2p +1)\gamma_6} \, c^{2p\gamma_6} \max\!\big(1, |U(x)|^{(p\gamma_5 +1)\gamma_6}\big)
\\ & \leq
2 s c +  s \, 2^{[(2p +1)\gamma_6 +1]}
\,
c^{
  ( 2 p \gamma_6 + 1 + ( p \gamma_5 + 1 ) \gamma_6 )
}
\max\!\big(1, s^{- \alpha(p\gamma_5 +1)\gamma_6}\big)
\\ & \leq
s \, 2^{[(2p +1)\gamma_6 +2]}
\,
c^{
  [ ( p ( \gamma_5 + 2 ) + 1 ) \gamma_6 + 1 ]
}
\left[\min(s, 1)\right]^{-\alpha (p \gamma_5 + 1)\gamma_6}.
\end{split}\end{equation}
Inserting \eqref{eq:estimate.fist.term1} and \eqref{eq:estimate.fist.term2} into \eqref{eq:estimate.fist.term} then shows for all
  $
    s \in (0,h]
  $,
  $
    x \in D
  $
  that
  \begin{equation}
  \begin{split}
    &
    \E\!\left[\exp\!\left(2U(Y_s^x)
                   +2\smallint_0^{s}e^{-\rho r}\bar{U}(Y_r^x)\,dr
          -2U(x)\right)\right]
\\ &
  \leq
    2
    \exp\!\left(
      s
      \,
      2^{ ( 4 p + 6 ) }
      \,
      c^{
        2 p ( \max( \gamma_0 / 2 , \gamma_1 ) + \gamma_5 + 4 )
      }
      \left[
        \min(s, 1)
      \right]^{
        - \alpha(
          2 p \gamma_5  + 2
          +
          \gamma_0 + 2 \gamma_1
        )
      }
    \right)
  \\ & \quad
    \cdot
    \exp\!\left(
      s \, 2^{[( 2p+1 )\gamma_6 + 2]}\,
      c^{
        [ ( p ( \gamma_5 + 2 ) + 1 ) \gamma_6 + 1 ]
      }
      \left[\min(s, 1)\right]^{-\alpha (p \gamma_5 + 1)\gamma_6}
    \right)
  \\ & \leq
    2
    \exp\!\left(
      \left[
        2^{ (4 p + 6 ) }
        \,
        c^{
          2 p ( \max( \gamma_0 / 2 , \gamma_1 ) + \gamma_5 + 4 )
        }
        +
        2^{[(2p +1)\gamma_6 + 2 ]}
        \,
        c^{
          [ p \gamma_6 ( \gamma_5 + 3 ) + 1 ]
        }
      \right]
      \tfrac{ s }{
      \left[ \min(s, 1) \right]^{ \alpha
        \left[
          ( p \gamma_5 + 1 ) ( \gamma_6 + 2 )
          +  \gamma_0 + 2 \gamma_1
        \right]
      }
      }
    \right)
  \\ & \leq
    2
    \exp\!\left(
      2^{
        [
          1 + (2 p +3) (\gamma_6 + 2 )
        ]
      }
      \,
      c^{
        p
        \left[
          \max( \gamma_0, 2 \gamma_1 )
          +
          ( \gamma_5 + 4 )
          ( \gamma_6 + 2 )
        \right]
      }
      \left[ \min(s, 1) \right]^{ - \alpha
        \left[
          ( p \gamma_5 + 1 ) ( \gamma_6 + 2 )
          +  \gamma_0 + 2 \gamma_1
        \right]
      }
      s
    \right).
  \end{split}
  \end{equation}
  Therefore,
  we obtain for all
  $
    s \in (0,h]
  $,
  $
    x \in D
  $
  that
  \begin{equation}  \begin{split}
  \label{eq:L2.expU}
    \left\|
      \exp\!\left(
        U( Y_s^x )
        +
        \smallint_0^s
        \tfrac{
          \bar{U}( Y_r^x )
        }{
          e^{\rho r}
        }
        \, dr
      \right)
    \right\|_{L^2(\Omega;\R)}\!
  \leq
    \sqrt{ 2 }
    \exp\!\left(
      \tfrac{
        2^{
          (2 p + 3 ) (\gamma_6 + 2 )
        }
        \,
        c^{
          p
          \left[
            \max( \gamma_0, 2 \gamma_1 )
            +
            ( \gamma_5 + 4 )
            ( \gamma_6 + 2 )
          \right]
        }
        \,
        s
      }{
        \left[
          \min(s, 1)
        \right]^{
          \alpha
          \left[
            ( p \gamma_5 + 1 ) ( \gamma_6 + 2 )
            + \gamma_0 + 2 \gamma_1
          \right]
        }
      }
  \right)
  e^{ U(x) }
  \,
  .
  \end{split}
  \end{equation}
  Moreover,
  the fact that
  $
  \forall \, r \in [2,\infty), s \in [0,h], x \in \R^d
  \colon
  \|
    \sigma(x) W_s
  \|_{
    L^r( \Omega; \R^m )
  }
  \leq
  \sqrt{ s r ( r - 1 ) / 2 }
  \left\|
    \sigma(x)
  \right\|_{
    \HS( \R^m, \R^d )
  }
  $
  (see, e.g., Lemma 7.7 in Da Prato \& Zabczyk~\cite{dz92}),
  assumption~\eqref{eq:mu_sigma_Ugrowth_assumption},
  and
  assumption~\eqref{eq:assumption.on.phi.increment}
  imply for all
  $
    r \in [2,\infty)
  $,
  $
    s \in (0,h]
  $,
  $
    x \in D
  $
  that
\begin{equation} \begin{split}
  \label{eq:increment.Y}
    &\left\|Y_s^x-x\right\|_{L^r(\Omega;\R^d)}
    =
    \left\|\Phi(x,s,W_s)-x\right\|_{L^r(\Omega;\R^d)}
    \leq
    c \left( 1 + \left| U(x) \right|^{\gamma_5} \right)
    \left\|
      \mu(x) s + \sigma(x) W_s
    \right\|_{
      L^r( \Omega; \R^d )
    }
  \\ & \leq
    c
    \left(
      1
      +
      c^{ \gamma_5 }
      s^{ -\alpha \gamma_5 }
    \right)
    \left(
      \left\| \mu(x) \right\| s
      +\!
      \sqrt{s r (r - 1 )/2}
      \left\|
        \sigma(x)
      \right\|_{
        \HS( \R^m, \R^d )
      }
    \right)
   \\ &
  \leq
    c
    \left(
      1
      +
      c^{ \gamma_5 }
      s^{ -\alpha \gamma_5 }
    \right)
    \left(
      c s
      \left(
        1 +
        c^{ \gamma_0 }
        s^{ -\alpha \gamma_0 }
      \right)
      +
      c
      \sqrt{ s r ( r - 1 )/2 }
      \left(
        1 +
        c^{ \gamma_1 }
        s^{ - \alpha \gamma_1 }
      \right)
    \right)
    \\&
    \leq
    2
    c^{
      ( 2 + \max( \gamma_0, \gamma_1 ) + \gamma_5 )
    }
    \left[
      \min(s, 1)
    \right]^{ - \alpha \gamma_5 }
    \left(
      1 + \sqrt{ r(r-1)/2 }
    \right)
    \sqrt{s}
    \max\!\left(
      \sqrt{s}
      \left(
        1 +
        s^{ -\alpha \gamma_0 }
      \right)
      ,
      1 + s^{ - \alpha \gamma_1 }
    \right)
 \\ & \leq
    2
    c^{
      ( 2 + \max( \gamma_0, \gamma_1 ) + \gamma_5 )
    }
    \left[ \min(s, 1)\right]^{-\alpha \gamma_5}  r \sqrt{s} \left[\max(s, 1)\right]^{1/2}
    \max\!\left(
      1 +
      s^{ -\alpha \gamma_0 }
      ,
      1 + s^{ - \alpha \gamma_1 }
    \right)
\\ & \leq
    4 r
    c^{
      ( 2 + \max( \gamma_0, \gamma_1 ) + \gamma_5 )
    }
    s^{1/2}
    \left[
      \max(s, 1)
    \right]^{1/2}
    \left[
      \min(s,1)
    \right]^{ -\alpha(\gamma_0 + \gamma_1+\gamma_5) }
\\ & =
    4 r
    c^{
      ( 2 + \max( \gamma_0, \gamma_1 ) + \gamma_5 )
    }
    \max(s, 1)
    \left[
      \min(s,1)
    \right]^{
      [
        1 / 2 - \alpha ( \gamma_0 + \gamma_1 + \gamma_5 )
      ]
    }
    .
\end{split} \end{equation}
  Combining \eqref{eq:assumption.on.phi.increment} and \eqref{eq:increment.Y} with H\"older's inequality
  and inequality~\eqref{eq:derivativeU1} yields for all
  $
    r \in [2,\infty)
  $,
  $
    i \in \{ 0, 1, 2 \}
  $,
  $
    s \in (0, h ]
  $,
  $
    x \in D
  $
  that
  \begin{equation}  \begin{split}  \label{eq:increment.Ui}
    &
      \big\|
        U^{ (i) }( Y_s^x ) - U^{ (i) }( x )
      \big\|_{
        L^r( \Omega; L^{ (i) }( \R^d, \R) )
      }
    \\ &
    \leq
    \left\|
      \tfrac{ ( 2 c )^p
      }{ 2 }
    \left( 1 +
    \left|
      U( x )
    \right|^{
      \frac{ \max(p - i - 1,0) }{ p }
    }
    +
    \left\| Y_s^x - x
    \right\|^{
      \max\left( p - i-1,0 \right)
    }
    \right)
    \left\| Y_s^x - x \right\|
     \right\|_{L^{r}(\Omega;\R)}
    \\&
    \leq
    \tfrac{(2c)^p}{2}
    \left( \left\| Y_s^x - x
    \right\|_{L^{r}(\Omega; \R^d)} +
      \left|
      U( x )
    \right|^{
      \frac{ \max(p - i - 1,0) }{ p }
    }  \left\| Y_s^x - x
    \right\|_{L^{r}(\Omega; \R^d)}
    +
    \left\| Y_s^x - x
     \right\|_{L^{r\cdot\max(p-i, 1)}(\Omega; \R^d)}^{
      \max\left( p - i,1 \right)
    }
    \right)
  \\&
    \leq
    \tfrac{(2c)^p}{2}
    \left(
      1 +
      \tfrac{
        c
      }{
        s^{
          \alpha \max(p - i - 1,0) / p
        }
      }
      +
    \left\|
      Y_s^x - x
     \right\|^{\max(p-i-1, 0)}_{L^{r\cdot\max(p-i, 1)}(\Omega; \R^d)}
    \right)
    \left\| Y_s^x - x
     \right\|_{L^{r\cdot\max(p-i, 1)}(\Omega; \R^d)}
 \\ & \leq
     \tfrac{ ( 2 c )^p }{ 2 }
     \left[
       \tfrac{
         2 c
       }{
         \left[ \min(s, 1) \right]^{ \alpha }
       }
       +
       [crp]^{ \max(p-i-1,0) }
     \right]
     4 r p \,
     c^{
       ( 2 + \max( \gamma_0, \gamma_1 ) + \gamma_5 )
     }
     \max(s, 1)
     \left[ \min(s,1)
     \right]^{
       [
         1 / 2 - \alpha ( \gamma_0 + \gamma_1 + \gamma_5 )
       ]
     }
  \\ & \leq
    2^{ ( p + 1 ) }
    \,
     c^{
       ( p + 2 + \max( \gamma_0, \gamma_1 ) + \gamma_5 )
     }
    \left[
      2 c r p
      +
      \left[ c r p \right]^p
    \right]
     \max(s, 1)
     \left[ \min(s,1)
     \right]^{
       [
         1 / 2 - \alpha ( \gamma_0 + \gamma_1 + \gamma_5 + 1 )
       ]
     }
  \\ & \leq
    6
    \,
     c^{
       ( 2 p + 2 + \max( \gamma_0, \gamma_1 ) + \gamma_5 )
     }
    \left[ 2 r p \right]^p
    \max(s, 1)
    \left[
      \min(s,1)
    \right]^{
      [
        1 / 2 - \alpha ( \gamma_0 + \gamma_1 + \gamma_5 + 1 )
      ]
    }
   \, .
  \end{split}
  \end{equation}
  This, the assumption that
  $
    U \in C_{ p, c }^3( \mathbb{R}^d, [0,\infty) )
  $,
  Lemma~\ref{lemma:equivalent0},
  H\"{o}lder's inequality, 
  assumption~\eqref{eq:mu_sigma_Ugrowth_assumption}, 
  and assumption~\eqref{eq:assumption.on.phi.mue}
  show for all
  $ s \in (0, h ] $,
  $
    x \in D
  $
  that
  \begin{equation}
  \begin{split}
  \label{eq:second.summand}
      &
      \left\|
        U'( Y_s^x )
        \left(
          \tfrac{ \partial }{ \partial s } \Phi
        \right)\!( x, s, W_s ) -
        U'(x) \, \mu(x)
      \right\|_{
        L^2( \Omega; \R )
      }
    \\ & \leq
      \left\|
        \left\|
          U'( Y^x_s )
        \right\|_{L(\R^d,\R)}
        \left\|
          \left(
            \tfrac{ \partial }{ \partial s } \Phi
          \right)\!( x, s, W_s ) -
          \mu(x)
        \right\|
      \right\|_{
        L^2( \Omega; \R )
      }
      +
      \left\|
        U'( Y_s^x ) - U'(x)
      \right\|_{
        L^2( \Omega; L(\R^d,\R) )
      }
      \left\| \mu(x) \right\|
    \\ & \leq
      \left\|
        U'(x)
      \right\|_{L(\R^d,\R)}
      \left\|
        \left(
          \tfrac{ \partial }{ \partial s } \Phi
        \right)\!( x, s, W_s ) -
        \mu(x)
      \right\|_{
        L^2( \Omega; \R^d )
      }
    \\ & \quad
        +
        \left\|
          U'( Y_s^x ) - U'( x )
        \right\|_{
          L^4( \Omega; L( \R^d, \R ) )
        }
        \left[
          \left\| \mu(x) \right\|
          +
          \left\|
            \left(
              \tfrac{ \partial }{ \partial s } \Phi
            \right)\!(x, s, W_s )
            -
            \mu(x)
          \right\|_{
            L^4( \Omega; \R^d )
          }
        \right]
  \\ &
      \leq
        c
        \left[1 + U(x)\right]^{ \frac{ ( p - 1 ) } { p } }
          c s^{\gamma_2}
  \\ & \quad
    +
    6
    \,
     c^{
       ( 2 p + 2 + \max( \gamma_0, \gamma_1 ) + \gamma_5 )
     }
    \left[ 8 p \right]^p
    \max(s, 1)
    \left[
      \min(s,1)
    \right]^{
      [
        1 / 2 - \alpha ( \gamma_0 + \gamma_1 + \gamma_5 + 1 )
      ]
    }
    \left[
      c \left( 1 + c^{ \gamma_0 } s^{ - \alpha \gamma_0 } \right)
      + c s^{ \gamma_2 }
    \right]
  \\ & \leq
    c^2
    s^{ \gamma_2 }
    \left[ 1 + c s^{ - \alpha } \right]^{
      ( p - 1 ) / p
    }
\\ &
    +
    6
    \,
     c^{
       ( 2 p + 3 + \max( \gamma_0, \gamma_1 ) + \gamma_0 + \gamma_5 )
     }
    \left[ 8 p \right]^p
    \max(s, 1)
    \left[
      \min(s,1)
    \right]^{
      [
        1 / 2 - \alpha ( 2 \gamma_0 + \gamma_1 + \gamma_5 + 1 )
      ]
    }
    \left[
      2
      + s^{ \gamma_2 }
    \right]
  \\ & \leq
     2
     c^3
     s^{ \gamma_2 }
     \left[ \min(s, 1) \right]^{ - \alpha }
   +
    18
    \,
    c^{
      ( 2 p + 3 + \max( \gamma_0, \gamma_1 ) + \gamma_0 + \gamma_5 )
    }
    \left[ 8 p \right]^p
    \left[
      \max(s,1)
    \right]^{ ( 1 + \gamma_2 ) }
    \left[
      \min(s,1)
    \right]^{
      [
        1 / 2 - \alpha ( 2 \gamma_0 + \gamma_1 + \gamma_5 + 1 )
      ]
    }
  \\ & =
     2
     c^3
     \left[
       \max( s, 1 )
     \right]^{ \gamma_2 }
     \left[ \min(s, 1) \right]^{ ( \gamma_2 - \alpha ) }
 \\ &
   +
    18
    \,
    c^{
      ( 2 p + 3 + \max( \gamma_0, \gamma_1 ) + \gamma_0 + \gamma_5 )
    }
    \left[ 8 p \right]^p
    \left[
      \max(s,1)
    \right]^{ ( 1 + \gamma_2 ) }
    \left[
      \min(s,1)
    \right]^{
      [
        1 / 2 - \alpha ( 2 \gamma_0 + \gamma_1 + \gamma_5 + 1 )
      ]
    }
 \\ & \leq
    20
    \left[ 8 p \right]^p
    c^{
      ( 2 p + 3 + \max( \gamma_0, \gamma_1 ) + \gamma_0 + \gamma_5 )
    }
    \left[
      \max(s,1)
    \right]^{ ( 1 + \gamma_2 ) }
    \left[
      \min(s,1)
    \right]^{
      [
        \min( \gamma_2, 1 / 2 ) - \alpha ( 2 \gamma_0 + \gamma_1 + \gamma_5 + 1 )
      ]
    }
    .
  \end{split}
  \end{equation}
  Analogously, \eqref{eq:increment.Ui}, the assumption that
  $ U \in C_{p,c}^3( \mathbb{R}^d, [0,\infty) ) $,
  Lemma~\ref{lemma:equivalent0},
  H\"{o}lder's inequality, 
  assumption~\eqref{eq:mu_sigma_Ugrowth_assumption},
  and assumption~\eqref{eq:assumption.on.phi.sigma}
  show for all
  $ s \in (0, h ] $,
  $
    x \in D
  $
  that
  \begin{equation}
  \label{eq:UprimePhiy}
  \begin{split}
    &
    \big\|
      U'( Y_s^x )
      \,
      \big(
        \tfrac{ \partial }{ \partial y } \Phi
      \big)( x, s, W_s )
      -
      U'(x)
      \, \sigma(x)
    \big\|_{
      L^4( \Omega; L(\R^m, \R) )
    }
  \\ &
  \leq
    \left\|
      \left\|
        U'( Y^x_s )
      \right\|_{
        L( \R^d, \R )
      }
      \big\|
        \big(
          \tfrac{ \partial }{ \partial y } \Phi
        \big)( x, s, W_s )
        -
        \sigma( x )
      \big\|_{
        L( \R^m, \R^d )
      }
    \right\|_{
      L^4( \Omega; \R )
    }
  \\ & \quad
    +
    \left\|
      U'( Y_s^x ) - U'(x)
    \right\|_{
      L^4( \Omega; L(\R^d,\R) )
    }
    \left\|
      \sigma(x)
    \right\|_{
      L( \R^m, \R^d )
    }
  \\ &
  \leq
    \left\|
      U'(x)
    \right\|_{
      L( \R^d, \R )
    }
    \big\|
      \big(
        \tfrac{ \partial }{ \partial y } \Phi
      \big)( x, s, W_s )
      -
      \sigma( x )
    \big\|_{
      L^4( \Omega; L(\R^m, \R^d ) )
    }
  \\ & \quad
    +
    \left\|
      U'( Y_s^x ) - U'(x)
    \right\|_{
      L^8( \Omega; L(\R^d,\R) )
    }
    \left[
      \left\|
        \sigma(x)
      \right\|_{
        L( \R^m, \R^d )
      }
      +
        \big\|
          \big(
            \tfrac{ \partial }{ \partial y } \Phi
          \big)( x, s, W_s ) -
          \sigma(x)
        \big\|_{
          L^8( \Omega; L( \R^m, \R^d ) )
        }
      \right]
  \\ & \leq
        c
        \left[
          1 +
          U(x)
        \right]^{ ( p - 1 ) / p }
        c s^{\gamma_3}
  \\ & \quad
   +
     6
     \,
     c^{
       ( 2 p + 2 + \max( \gamma_0, \gamma_1 ) + \gamma_5 )
     }
    \left[ 16 p \right]^p
    \max(s, 1)
    \left[
      \min(s,1)
    \right]^{
      [
        1 / 2 - \alpha ( \gamma_0 + \gamma_1 + \gamma_5 + 1 )
      ]
    }
          \left[
            c
            \left(1 + c^{ \gamma_1 } s^{ - \alpha\gamma_1}\right)
            +
            c s^{ \gamma_3 }
          \right]
  \\ & \leq
        c^2
        s^{ \gamma_3 }
        \left[
          1 + c s^{ - \alpha }
        \right]^{
          ( p - 1 ) / p
        }
  \\ & \quad
   +
     6
     \,
     c^{
       ( 2 p + 3 + \max( \gamma_0, \gamma_1 ) + \gamma_1 + \gamma_5 )
     }
    \left[ 16 p \right]^p
    \max(s, 1)
    \left[
      \min(s,1)
    \right]^{
      [
        1 / 2 - \alpha ( \gamma_0 + 2 \gamma_1 + \gamma_5 + 1 )
      ]
    }
          \left[
            2
            +
            s^{ \gamma_3 }
          \right]
  \\ & \leq
       2 c^3 s^{ \gamma_3 }
       \left[
         \min(s,1)
       \right]^{ - \alpha }
       +
       18
       \,
       c^{
         ( 2 p + 3 + \max( \gamma_0, \gamma_1 ) + \gamma_1 + \gamma_5 )
       }
       \left[ 16 p \right]^p
       \left[
         \max(s,1)
       \right]^{ ( 1 + \gamma_3 ) }
       \left[
         \min(s,1)
       \right]^{
         [
           1 / 2 - \alpha ( \gamma_0 + 2 \gamma_1 + \gamma_5 + 1 )
         ]
       }
  \\ & \leq
       20
       \left[ 16 p \right]^p
       c^{
         ( 2 p + 3 + \max( \gamma_0, \gamma_1 ) + \gamma_1 + \gamma_5 )
       }
       \left[
         \max(s,1)
       \right]^{ ( 1 + \gamma_3 ) }
       \left[
         \min(s,1)
       \right]^{
         [
           \min( \gamma_3, 1 / 2 ) - \alpha ( \gamma_0 + 2 \gamma_1 + \gamma_5 + 1 )
         ]
       }
       .
  \end{split}
  \end{equation}
In the next step we note
  for all
  $
    A_1, A_2 \in \R^{ d \times m }
  $,
  $
    B_1, B_2 \in \R^{ d \times d }
  $
  that
  \begin{align}
    \nonumber
    &
    \left|
    \tr\left(
      A_1 A_1^{ * } B_1 -
      A_2 A_2^{ * } B_2
    \right)
    \right|
    =
    \left|
    \tr\left(A_1^{*}B_1A_1-A_2^{*}B_2A_2\right)
    \right|
    =
    \big|
    \left\langle A_1,B_1A_1\right\rangle_{\HS(\R^m,\R^d)}
    -
    \left\langle A_2,B_2A_2\right\rangle_{\HS(\R^m,\R^d)}
    \big|
  \\ &
  \nonumber
    =
    \left|
    \left\langle A_2,(B_1-B_2)A_2\right\rangle_{\HS(\R^m,\R^d)}
    +
    \left\langle A_1-A_2,B_1A_1\right\rangle_{\HS(\R^m,\R^d)}
    +
    \left\langle A_2,B_1(A_1-A_2)\right\rangle_{\HS(\R^m,\R^d)}
    \right|
  \\ &
  \nonumber
  \leq
    \left\|
      A_2
    \right\|_{
      \HS( \R^m, \R^d )
    }
    \left\|
      ( B_1 - B_2 ) A_2
    \right\|_{
      \HS( \R^m, \R^d )
    }
    +
    \left\|
      A_1 - A_2
    \right\|_{
      \HS( \R^m, \R^d )
    }
    \left\|
      B_1 A_1
    \right\|_{
      \HS( \R^m, \R^d )
    }
  \\ &
  \quad
    +
    \left\|
      A_2
    \right\|_{\HS(\R^m,\R^d)}
    \left\|
      B_1
      ( A_1 - A_2 )
    \right\|_{\HS(\R^m,\R^d)}
  \nonumber
  \\ &
  \leq
      \left\|
        B_1 - B_2
      \right\|_{
        L( \R^d )
      }
      \left\|
        A_2
      \right\|_{
        \HS( \R^m, \R^d )
      }^2
    +
    \left\|
      A_1 - A_2
    \right\|_{
      \HS( \R^m, \R^d )
    }
      \left\|
        B_1
      \right\|_{
        L( \R^d )
      }
    \left[
      \left\|
        A_1
      \right\|_{
        \HS( \R^m, \R^d )
      }
      +
      \left\|
        A_2
      \right\|_{\HS(\R^m,\R^d)}
    \right]
  \\ &
  \nonumber
  \leq
      \left\|
        B_1 - B_2
      \right\|_{
        L( \R^d )
      }
      \left\|
        A_2
      \right\|_{
        \HS( \R^m, \R^d )
      }^2
  \\ & \nonumber
  \quad
    +
    \left[
      \left\|
        A_1 - A_2
      \right\|_{
        \HS( \R^m, \R^d )
      }^2
      +
      2
      \left\|
        A_1 - A_2
      \right\|_{
        \HS( \R^m, \R^d )
      }
      \left\|
        A_2
      \right\|_{
        \HS(\R^m,\R^d)
      }
    \right]
    \left[
      \left\|
        B_1 - B_2
      \right\|_{
        L( \R^d )
      }
      +
      \left\|
        B_2
      \right\|_{
        L( \R^d )
      }
    \right]
    .
  \end{align}
  Next we apply this inequality
  with
  $
    A_1
    =
    (
      \tfrac{ \partial }{ \partial y } \Phi
    )(x,s,W_s)
  $,
  $
    A_2 = \sigma(x)
  $,
  $
    B_1
    =
    ( \Hess U )( Y_s^x )
  $,
  and
  $
    B_2 = (\Hess U)( x )
  $
  for
  $ s \in [0,h] $,
  we take expectations, we apply H\"older's inequality, we use the assumption that
  $U \in C_{p,c}^3( \mathbb{R}^d, [0,\infty) ) $,
  we use Lemma~\ref{lemma:equivalent0}~\eqref{eq:bounded_derivatives}
  (with $d = d, n = 2, c =c, p = p, x = x, y = w, V = U, i = 1, z_1 = v, t = 0$
  for $x \in D, v, w \in \{ u \in \R^d \colon \| u \| \leq 1 \}$ 
  in the notation of Lemma~\ref{lemma:equivalent0}~\eqref{eq:bounded_derivatives}),
  and we apply inequalities~\eqref{eq:mu_sigma_Ugrowth_assumption},
  \eqref{eq:assumption.on.phi.sigma},
  and \eqref{eq:increment.Ui} to obtain
  for all
  $
    s \in (0,h]
  $,
  $
    x \in D
  $
  that
  \begin{align}
    \nonumber
    & \left\|
      \tr\!\Big(
       \big(
         \tfrac{ \partial }{ \partial y } \Phi
       \big)( x, s, W_s )
       \big[
         \big(\tfrac{\partial}{\partial y}\Phi\big)(x,s,W_s)
       \big]^*
       (\operatorname{Hess}U)(
         Y_s^x
       )
       -
      \sigma(x) \,
      \sigma(x)^*
      \,
      ( \operatorname{Hess} U )(x)
     \Big)
     \right\|_{L^{2}(\Omega;\R)}
  \\
  \nonumber
  & \leq
    \left\|
      ( \Hess U )( Y_s^x )
      -
      ( \Hess U )( x )
    \right\|_{
      L^2( \Omega; L( \R^d )
      )
    }
    \left\|
      \sigma(x)
    \right\|_{
      \HS( \R^m, \R^d )
    }^2
  \\ &
  \nonumber
   \quad
  +
    \Big\|
      \big\|
        (
          \tfrac{ \partial }{ \partial y } \Phi
        )( x, s, W_s )
        -
        \sigma(x)
      \big\|_{
        \HS( \R^m, \R^d )
      }^2
      +
      2
      \,
      \big\|
        (
          \tfrac{ \partial }{ \partial y }
        \Phi)(x,s,W_s)
        -
        \sigma(x)
      \big\|_{\HS(\R^m,\R^d)}
      \left\|
        \sigma(x)
      \right\|_{
        \HS(\R^m,\R^d)
      }
    \Big\|_{
      L^4( \Omega; \R )
    }
  \\&
  \nonumber
  \quad \cdot
    \left[
      \left\|
        ( \Hess U )( Y_s^x )
        -
        ( \Hess U )( x )
      \right\|_{
        L^4( \Omega; L( \R^d )
        )
      }
      +
      \left\|
        ( \Hess U )( x )
      \right\|_{
        L( \R^d )
      }
    \right]
 \\  \nonumber
 & \leq
    6
    \,
     c^{
       ( 2 p + 2 + \max( \gamma_0, \gamma_1 ) + \gamma_5 )
     }
    \left[ 4 p \right]^p
    \max(s, 1)
    \left[
      \min(s,1)
    \right]^{
      [
        1 / 2 - \alpha ( \gamma_0 + \gamma_1 + \gamma_5 + 1 )
      ]
    }
    2 c^2
    \left(
      1 +
      c^{ 2 \gamma_1 }
      s^{
        - 2 \alpha \gamma_1
      }
    \right)
  \\ & \quad
  \nonumber
  +
  \left[
    c^2
    s^{2\gamma_3}
    +
    2
    c
    s^{\gamma_3}
    c \,
    \left( 1 + c^{ \gamma_1 } s^{ - \alpha \gamma_1 } \right)
  \right]
  \\ &
  \nonumber
   \quad
  \cdot
  \left[
    6
    \,
     c^{
       ( 2 p + 2 + \max( \gamma_0, \gamma_1 ) + \gamma_5 )
     }
    \left[ 8 p \right]^p
    \max(s, 1)
    \left[
      \min(s,1)
    \right]^{
      [
        1 / 2 - \alpha ( \gamma_0 + \gamma_1 + \gamma_5 + 1 )
      ]
    }
    +
    c
    \left[
      1 + U(x)
    \right]^{
      \max( p - 2 , 0 ) / p
    }
  \right]
\\ & \leq
\label{eq:fourth.summand}
    24
    \left[ 4 p \right]^p
     c^{
       ( 2 p + 4 + \max( \gamma_0, \gamma_1 ) + 2 \gamma_1 + \gamma_5 )
     }
    \max(s, 1)
    \left[
      \min(s,1)
    \right]^{
      [
        1 / 2 - \alpha ( \gamma_0 + 3 \gamma_1 + \gamma_5 + 1 )
      ]
    }
  \\ & \quad
  \nonumber
  +
  c^{ ( 2 + \gamma_1 ) }
  \,
  s^{ \gamma_3 }
  \left[
    s^{ \gamma_3 }
    +
    2
    +
    2 s^{ - \alpha \gamma_1 }
  \right]
  \\ &
  \nonumber
   \quad
  \cdot
  \left[
    6
    \,
     c^{
       ( 2 p + 2 + \max( \gamma_0, \gamma_1 ) + \gamma_5 )
     }
    \left[ 8 p \right]^p
    \max(s, 1)
    \left[
      \min(s,1)
    \right]^{
      [
        1 / 2 - \alpha ( \gamma_0 + \gamma_1 + \gamma_5 + 1 )
      ]
    }
    +
    2 c^2
    \left[
      \min( s, 1 )
    \right]^{ - \alpha }
  \right]
\\ & \leq
\nonumber
    24
    \left[ 4 p \right]^p
     c^{
       ( 2 p + 4 + \max( \gamma_0, \gamma_1 ) + 2 \gamma_1 + \gamma_5 )
     }
    \max(s, 1)
    \left[
      \min(s,1)
    \right]^{
      [
        1 / 2 - \alpha ( \gamma_0 + 3 \gamma_1 + \gamma_5 + 1 )
      ]
    }
  \\ & \quad
  \nonumber
  +
    5 \,
     c^{
       ( 2 p + 4 + \max( \gamma_0, \gamma_1 ) + \gamma_1 + \gamma_5 )
     }
  \left[
    \max( s, 1 )
  \right]^{
    ( 1 + 2 \gamma_3 )
  }
    \left[
      \min(s,1)
    \right]^{
      [
        \gamma_3 - \alpha ( \gamma_0 + 2 \gamma_1 + \gamma_5 + 1 )
      ]
    }
  \left[
    6
    \left[ 8 p \right]^p
    +
    2
  \right]
\\ & \leq
\nonumber
    24
    \left[ 4 p \right]^p
     c^{
       ( 2 p + 4 + \max( \gamma_0, \gamma_1 ) + 2 \gamma_1 + \gamma_5 )
     }
    \max(s, 1)
    \left[
      \min(s,1)
    \right]^{
      [
        1 / 2 - \alpha ( \gamma_0 + 3 \gamma_1 + \gamma_5 + 1 )
      ]
    }
  \\ & \quad
  \nonumber
  +
    35
    \left[ 8 p \right]^p
     c^{
       ( 2 p + 4 + \max( \gamma_0, \gamma_1 ) + \gamma_1 + \gamma_5 )
     }
  \left[
    \max( s, 1 )
  \right]^{
    ( 1 + 2 \gamma_3 )
  }
    \left[
      \min(s,1)
    \right]^{
      [
        \gamma_3 - \alpha ( \gamma_0 + 2 \gamma_1 + \gamma_5 + 1 )
      ]
    }
\\ & \leq
\nonumber
    47
    \left[ 8 p \right]^p
     c^{
       ( 2 p + 4 + \max( \gamma_0, \gamma_1 ) + 2 \gamma_1 + \gamma_5 )
     }
  \left[
    \max( s, 1 )
  \right]^{
    ( 2 \gamma_3 + 1 )
  }
    \left[
      \min(s,1)
    \right]^{
      [
        \min( \gamma_3 , 1 / 2 )
        - \alpha ( \gamma_0 + 3 \gamma_1 + \gamma_5 + 1 )
      ]
    }
  .
  \end{align}
  Furthermore, we observe that the fact that
  $
    \forall \, a , b \in \R^m \colon
    \left|
      \| a \|^2 - \| b \|^2
    \right|
    \leq
    \| a - b \|
    \left(
      \left\| a - b
      \right\|
      +
      2 \left\| b \right\|
    \right)
  $,
  H\"older's inequality, 
  inequality~\eqref{eq:UprimePhiy},
  and inequality~\eqref{eq:mu_sigma_Ugrowth_assumption}
  prove 
  for all
  $
    s \in (0,h]
  $,
  $
    x \in D
  $
  that
  \begin{equation}  \begin{split}
  \label{eq:Uprime.Phiy.square}
    &
      \left\|
        \big\|
          \big[
            \big(
              \tfrac{ \partial }{ \partial y } \Phi
            \big)( x,  s, W_s)
          \big]^{ * }
          ( \nabla U )( Y^x_s )
        \big\|^2
        -
        \left\|
          \sigma(x)^*
          ( \nabla U )( x )
        \right\|^2
      \right\|_{
        L^2( \Omega; \R )
      }
  \\ & \leq
    \big\|
      \big[
        \big(
          \tfrac{ \partial }{ \partial y } \Phi
        \big)( x, s, W_s )
      \big]^*
      ( \nabla U )( Y^x_s )
      -
      \sigma(x)^*
      (\nabla U)(x)
    \big\|_{
      L^4( \Omega; \R^m )
    }
  \\ & \quad
  \cdot
    \left\|
      \big\|
        \big[
          \big(
            \tfrac{ \partial }{ \partial y } \Phi
          \big)(x, s,W_s)
        \big]^*
        ( \nabla U )( Y^x_s )
        -
        \sigma(x)^*
        ( \nabla U )( x )
      \big\|
      +
      2
      \left\|
        \sigma(x)^*
      \right\|_{
        L( \R^d, \R^m)
      }
      \left\|
        ( \nabla U )( x )
      \right\|
    \right\|_{
      L^4( \Omega; \R )
    }
  \\ & \leq
       20
       \left[ 16 p \right]^p
       c^{
         ( 2 p + 3 + \max( \gamma_0, \gamma_1 ) + \gamma_1 + \gamma_5 )
       }
       \left[
         \max(s,1)
       \right]^{ ( 1 + \gamma_3 ) }
       \left[
         \min(s,1)
       \right]^{
         [
           \min( \gamma_3, 1 / 2 ) - \alpha ( \gamma_0 + 2 \gamma_1 + \gamma_5 + 1 )
         ]
       }
  \\ & \quad
  \cdot
   \left[
     \tfrac{
       20
       \left[ 16 p \right]^p
       c^{
         ( 2 p + 3 + \max( \gamma_0, \gamma_1 ) + \gamma_1 + \gamma_5 )
       }
       \left[
         \max(s,1)
       \right]^{ ( 1 + \gamma_3 ) }
     }{
       \left[
         \min(s,1)
       \right]^{
         -
         [
           \min( \gamma_3, 1 / 2 ) - \alpha ( \gamma_0 + 2 \gamma_1 + \gamma_5 + 1 )
         ]
       }
     }
       +
     2 c
     \left( 1 + c^{ \gamma_1 } s^{ - \alpha \gamma_1 } \right)
     c
     \left[
       1 + c s^{ - \alpha }
     \right]^{ ( p - 1 ) / p }
   \right]
  \\ & \leq
      20
       \left[ 16 p \right]^p
       c^{
         2 ( 2 p + 3 + \max( \gamma_0, \gamma_1 ) + \gamma_1 + \gamma_5 )
       }
       \left[
         \max(s,1)
       \right]^{ ( 1 + \gamma_3 ) }
       \left[
         \min(s,1)
       \right]^{
         [
           \min( \gamma_3, 1 / 2 ) - 2 \alpha ( \gamma_0 + 2 \gamma_1 + \gamma_5 + 1 )
         ]
       }
  \\ & \quad
  \cdot
   \left[
       20
       \left[ 16 p \right]^p
       \left[
         \max(s,1)
       \right]^{ ( 1 + \gamma_3 ) }
     +
     8
   \right]
 \\ & \leq
       \left[ 2^9 p \right]^{ 2 p }
       c^{
         2 ( 2 p + 3 + \max( \gamma_0, \gamma_1 ) + \gamma_1 + \gamma_5 )
       }
       \left[
         \max(s,1)
       \right]^{ ( 2 + 2 \gamma_3 ) }
       \left[
         \min(s,1)
       \right]^{
         [
           \min( \gamma_3, 1 / 2 ) - 2 \alpha ( \gamma_0 + 2 \gamma_1 + \gamma_5 + 1 )
         ]
       }
             .
  \end{split}     \end{equation}
  In addition, we note that
  H\"older's inequality, the assumption that
  $
    U \in C_{ p, c }^3( \R^d, [0,\infty) )
  $,
  Lemma~\ref{lemma:equivalent0},
  inequality~\eqref{eq:assumption.on.phi.sigma2},
  and 
  inequality~\eqref{eq:increment.Ui} imply
  for all
  $
    s \in (0,h]
  $,
  $
    x \in D
  $
  that
  \begin{equation}  \begin{split}  \label{eq:third.summand}
  &
    \left\|
      U'(Y_s^x)
      \,
      (
        \triangle_y
        \Phi
      )(x,s,W_s)
    \right\|_{
      L^2( \Omega; \R )
    }
  \leq
    \left\|
         U'(Y_s^x)
    \right\|_{L^{4}(\Omega;L(\R^d,\R))}
    \left\|
      (
        \triangle_y
        \Phi
      )(x,s,W_s)
    \right\|_{L^{4}(\Omega;\R^d)}
  \\ & \leq
    \left(
      \left\|
        U'(Y_s^x)-U'(x)
      \right\|_{
        L^4( \Omega; L(\R^d, \R) )
      }
      +
      \left\|
        U'(x)
      \right\|_{
        L( \R^d, \R )
      }
    \right)
    \left\|
      (
        \triangle_y
        \Phi
      )(x,s,W_s)
    \right\|_{ L^4( \Omega; \R^d ) }
  \\ & \leq
    \left(
    6
    \left[ 8 p \right]^p
     c^{
       ( 2 p + 2 + \max( \gamma_0, \gamma_1 ) + \gamma_5 )
     }
    \max(s, 1)
    \left[
      \min(s,1)
    \right]^{
      [
        1 / 2 - \alpha ( \gamma_0 + \gamma_1 + \gamma_5 + 1 )
      ]
    }
      +
      c
      \left[
        1 +
        U( x )
      \right]^{ ( p - 1 ) / p }
    \right)
     c s^{ \gamma_4 }
  \\ &
  \leq
   \left(
    6
    \left[ 8 p \right]^p
     c^{
       ( 2 p + 2 + \max( \gamma_0, \gamma_1 ) + \gamma_5 )
     }
    \max(s, 1)
    \left[
      \min(s,1)
    \right]^{
      [
        1 / 2 - \alpha ( \gamma_0 + \gamma_1 + \gamma_5 + 1 )
      ]
    }
      +
      2 c^2
      \left[
        \min(s, 1)
      \right]^{ - \alpha }
    \right)
     c s^{ \gamma_4 }
  \\ &
  \leq
   \left(
    6
    \left[ 8 p \right]^p
      +
      2
    \right)
     c^{
       ( 2 p + 3 + \max( \gamma_0, \gamma_1 ) + \gamma_5 )
     }
    \left[
      \max(s, 1)
    \right]^{
      ( \gamma_4 + 1 )
    }
    \left[
      \min(s,1)
    \right]^{
      [
        \gamma_4
        - \alpha ( \gamma_0 + \gamma_1 + \gamma_5 + 1 )
      ]
    }
  \\ &
  \leq
    7
    \left[ 8 p \right]^p
     c^{
       ( 2 p + 3 + \max( \gamma_0, \gamma_1 ) + \gamma_5 )
     }
    \left[
      \max(s, 1)
    \right]^{
      ( \gamma_4 + 1 )
    }
    \left[
      \min(s,1)
    \right]^{
      [
        \gamma_4
        - \alpha ( \gamma_0 + \gamma_1 + \gamma_5 + 1 )
      ]
    }
      .
  \end{split}     \end{equation}
  Moreover, 
  the fact that
  $
    \forall \, x, y \in \R^d 
    \colon
    \left|
      \bar{U}( x ) -
      \bar{U}( y )
    \right|
  \leq
    c
    \left( 1 +
      | U(x) |^{ \gamma_7 }
      +
      | U(y) |^{ \gamma_7 }
    \right)
    \| x - y \|
  $,
  inequality \eqref{eq:before.estimate.fist.term2},
  and inequality~\eqref{eq:increment.Y}
  show
  for all
  $
    s \in (0,h]
  $,
  $
    x \in D
  $
  that
\begin{equation}  \begin{split}
\label{eq:estimate.Ubar}
   &
   \left\|
     \bar{U}( Y_s^x ) - \bar{U}(x)
   \right\|_{L^2(\Omega;\R)}
 \leq
   \left\|
     c
     \left( 1 +
       \left| U(x) \right|^{ \gamma_7 }
       +
       \left| U(Y_s^x) \right|^{\gamma_7}
     \right)
     \left\|
       Y_s^x - x
     \right\|
   \right\|_{
     L^2( \Omega; \R)
   }
   \\ & \leq
   c
   \left[ 1 +
     |U(x)|^{\gamma_7}
     +
     \left\| U( Y_s^x) \right\|^{ \gamma_7 }_{
       L^{ \infty }( \Omega; \R )
     }
   \right]
   \| Y_s^x - x
   \|_{L^2(\Omega;\R^d)}
   \\&
    \leq
    c
    \Big[
      1 +
      | U(x) |^{ \gamma_7 }
      +
      \left[
        2^{ ( 2 p + 1 ) }
        c^{ 2 p }
        \max(
          1, |U(x)|^{ ( p \gamma_5 + 1 ) }
        )
      \right]^{ \gamma_7 }
    \Big]
    \tfrac{
      8
      \,
      c^{
        ( 2 + \max( \gamma_0, \gamma_1 ) + \gamma_5 )
      }
      \max(s, 1)
    }{
      \left[
        \min(s,1)
      \right]^{
        - [ 1 / 2 - \alpha ( \gamma_0 + \gamma_1 + \gamma_5 ) ]
      }
    }
 \\ & \leq
    \Big[
      1 +
      c^{ \gamma_7 }
      s^{ - \alpha \gamma_7 }
      +
      2^{ ( 2 p + 1 ) \gamma_7 }
      \,
      c^{ ( 2 p + p \gamma_5 + 1 ) \gamma_7 }
      \left[
        \min( s, 1 )
      \right]^{
        - \alpha \gamma_7 ( p \gamma_5 + 1 )
      }
    \Big]
    \tfrac{
      8
      \,
      c^{
        ( 3 + \max( \gamma_0, \gamma_1 ) + \gamma_5 )
      }
      \max(s, 1)
    }{
      \left[
        \min(s,1)
      \right]^{
        - [ 1 / 2 - \alpha ( \gamma_0 + \gamma_1 + \gamma_5 ) ]
      }
    }
 \\ & \leq
    \left[
      2
      +
      2^{ ( 2 p + 1 ) \gamma_7 }
    \right]
      8
      \,
      c^{
        [ 3 + \max( \gamma_0, \gamma_1 ) + \gamma_5 + ( p ( \gamma_5 + 2 ) + 1 ) \gamma_7 ]
      }
      \max(s, 1)
      \left[
        \min(s,1)
      \right]^{
        [
          1 / 2
          - \alpha
          (
            \gamma_0 + \gamma_1 + \gamma_5
            +
            \gamma_7 ( p \gamma_5 + 1 )
          )
        ]
      }
 \\ & \leq
      24
      \cdot
      2^{ ( 2 p + 1 ) \gamma_7 }
      \,
      c^{
        [ 3 + \max( \gamma_0, \gamma_1 ) + \gamma_5 + ( p ( \gamma_5 + 2 ) + 1 ) \gamma_7 ]
      }
      \max(s, 1)
      \left[
        \min(s,1)
      \right]^{
        [
          1 / 2
          - \alpha
          (
            \gamma_0 + \gamma_1 + \gamma_5
            +
            ( p \gamma_5 + 1 ) \gamma_7
          )
        ]
      }
      .
  \end{split}     \end{equation}
In the next step
we insert~\eqref{eq:L2.expU},
  \eqref{eq:increment.Ui},
  \eqref{eq:second.summand},
  \eqref{eq:fourth.summand},
  \eqref{eq:Uprime.Phiy.square},
  \eqref{eq:third.summand},
  and
  \eqref{eq:estimate.Ubar}
  into~\eqref{eq:after.Fubini}
  to obtain for all
  $ t \in (0,h] $, $ x \in D $
  that
  \begin{equation}  \begin{split}
    &
    \E\!\left[
      \exp\!\left(
        e^{ - \rho t }
        U( Y^x_t )
        +
        \int_0^t
          e^{ - \rho r }
          \bar{U}( Y_r^x )
        \, dr
      \right)
    \right]
    - e^{ U(x) }
   \\&
   \leq
    \int_0^t
    \sqrt{ 2 }
    \exp\!\left(
      \tfrac{
        2^{
          (2 p + 3 ) (\gamma_6 + 2 )
        }
        \,
        c^{
          p
          \left[
            \max( \gamma_0, 2 \gamma_1 )
            +
            ( \gamma_5 + 4 )
            ( \gamma_6 + 2 )
          \right]
        }
        \,
        s
      }{
        \left[
          \min(s, 1)
        \right]^{
          \alpha
          \left[
            ( p \gamma_5 + 1 ) ( \gamma_6 + 2 )
            + \gamma_0 + 2 \gamma_1
          \right]
        }
      }
  \right)
  e^{ U(x) }
  \\ &
    \quad
    \cdot
  \bigg[
    6 \rho
    \,
     c^{
       ( 2 p + 2 + \max( \gamma_0, \gamma_1 ) + \gamma_5 )
     }
    \left[ 4 p \right]^p
    \max(s, 1)
    \left[
      \min(s,1)
    \right]^{
      [
        1 / 2 - \alpha ( \gamma_0 + \gamma_1 + \gamma_5 + 1 )
      ]
    }
  \\ & \qquad
    +
    20
    \left[ 8 p \right]^p
    c^{
      ( 2 p + 3 + \max( \gamma_0, \gamma_1 ) + \gamma_0 + \gamma_5 )
    }
    \left[
      \max(s,1)
    \right]^{ ( 1 + \gamma_2 ) }
    \left[
      \min(s,1)
    \right]^{
      [
        \min( \gamma_2, 1 / 2 ) - \alpha ( 2 \gamma_0 + \gamma_1 + \gamma_5 + 1 )
      ]
    }
    \\ & \qquad
      +
    \tfrac{ 47 }{ 2 }
    \left[ 8 p \right]^p
     c^{
       ( 2 p + 4 + \max( \gamma_0, \gamma_1 ) + 2 \gamma_1 + \gamma_5 )
     }
  \left[
    \max( s, 1 )
  \right]^{
    ( 2 \gamma_3 + 1 )
  }
    \left[
      \min(s,1)
    \right]^{
      [
        \min( \gamma_3 , 1 / 2 )
        - \alpha ( \gamma_0 + 3 \gamma_1 + \gamma_5 + 1 )
      ]
    }
    \\& \qquad
      +
       \tfrac{ 1 }{ 2 }
       \left[ 2^9 p \right]^{ 2 p }
       c^{
         2 ( 2 p + 3 + \max( \gamma_0, \gamma_1 ) + \gamma_1 + \gamma_5 )
       }
       \left[
         \max(s,1)
       \right]^{ ( 2 + 2 \gamma_3 ) }
       \left[
         \min(s,1)
       \right]^{
         [
           \min( \gamma_3, 1 / 2 ) - 2 \alpha ( \gamma_0 + 2 \gamma_1 + \gamma_5 + 1 )
         ]
       }
    \\ & \qquad +
    4
    \left[ 8 p \right]^p
     c^{
       ( 2 p + 3 + \max( \gamma_0, \gamma_1 ) + \gamma_5 )
     }
    \left[
      \max(s, 1)
    \right]^{
      ( \gamma_4 + 1 )
    }
    \left[
      \min(s,1)
    \right]^{
      [
        \gamma_4
        - \alpha ( \gamma_0 + \gamma_1 + \gamma_5 + 1 )
      ]
    }
  \\ & \qquad
    +
      24
      \cdot
      2^{ ( 2 p + 1 ) \gamma_7 }
      \,
      c^{
        [ 3 + \max( \gamma_0, \gamma_1 ) + \gamma_5 + ( p ( \gamma_5 + 2 ) + 1 ) \gamma_7 ]
      }
      \max(s, 1)
      \left[
        \min(s,1)
      \right]^{
        [
          1 / 2
          - \alpha
          (
            \gamma_0 + \gamma_1 + \gamma_5
            +
            ( p \gamma_5 + 1 ) \gamma_7
          )
        ]
      }
  \bigg]
    \, ds
  \\ & \leq
    e^{ U(x) }
    \int_0^t
    \sqrt{ 2 }
    \exp\!\left(
      \tfrac{
        2^{
          (2 p + 3 ) (\gamma_6 + 2 )
        }
        \,
        c^{
          p
          \left[
            \max( \gamma_0, 2 \gamma_1 )
            +
            ( \gamma_5 + 4 )
            ( \gamma_6 + 2 )
          \right]
        }
        \,
        s
      }{
        \left[
          \min(s, 1)
        \right]^{
          \alpha
          \left[
            ( p \gamma_5 + 1 ) ( \gamma_6 + 2 )
            + \gamma_0 + 2 \gamma_1
          \right]
        }
      }
  \right)
  \\ & \qquad
  \cdot
      c^{
        [
          6
          +
          4 p
          +
          6 \max( \gamma_0, \gamma_1, \gamma_5 )
          + p \gamma_7 ( \gamma_5 + 3 )
        ]
      }
    \left[
      6 \rho
      \left[ 4 p \right]^{ p }
      +
      48
      \left[ 8 p \right]^p
      +
      \tfrac{ 1 }{ 2 }
      \left[ 2^9 p \right]^{ 2 p }
      +
      2^{ ( 3 p \gamma_7 + 5 ) }
    \right]
  \\ & \qquad
  \cdot
    \left[
      \max(s,1)
    \right]^{ \max( 1 + \gamma_2 , 2 + 2 \gamma_3, 1 + \gamma_4 )  }
    \left[
      \min( s, 1)
    \right]^{
      \left[
        \min( 1 / 2 , \gamma_2, \gamma_3, \gamma_4 )
        -
        \alpha
        \left(
          2 \gamma_0 + 4 \gamma_1 + 2 \gamma_5 + (p \gamma_5 + 1 ) \gamma_7 + 2
        \right)
      \right]
    }
  \, ds
    \, .
  \end{split}
  \end{equation}
This implies for all
  $ t \in (0,h] $, $ x \in D $ that
  \begin{equation*}  \begin{split}
  \label{eq:estimate.last.term1}
&
    \E\!\left[
      \exp\!\left(
        \tfrac{
        U(Y^x_{t})}{e^{  \rho t }}
        +
        \smallint_0^t
        \tfrac{
        \bar{U}(Y_r^x)}{e^{ \rho r }}
        \, dr
      \right)
    \right]
  \\ & \leq
    e^{ U(x) }
    +
    e^{ U(x) }
    \int_0^t
    \max( \rho, 1 )
    \left[ 2^9 p \right]^{ 2 p }
    2^{ 3 p \gamma_7 }
    \exp\!\left(
      \tfrac{
        2^{
          (2 p + 3 ) (\gamma_6 + 2 )
        }
        \,
        c^{
          p
          \left[
            \max( \gamma_0, 2 \gamma_1 )
            +
            ( \gamma_5 + 4 )
            ( \gamma_6 + 2 )
          \right]
        }
        \,
        s
      }{
        \left[
          \min(s, 1)
        \right]^{
          \alpha
          \left[
            ( p \gamma_5 + 1 ) ( \gamma_6 + 2 )
            + \gamma_0 + 2 \gamma_1
          \right]
        }
      }
  \right)
  \\ & \qquad
  \cdot
  \frac{
      c^{
        [
          6
          +
          4 p
          +
          6 \max( \gamma_0, \gamma_1, \gamma_5 )
          + p \gamma_7 ( \gamma_5 + 3 )
        ]
      }
    \left[
      \max(s,1)
    \right]^{
      [ \max( \gamma_2 , 1 + 2 \gamma_3, \gamma_4 ) + 1 ]
    }
  }{
    \left[
      \min( s, 1)
    \right]^{
      \left[
        \alpha
        \left(
          2 \gamma_0 + 4 \gamma_1 + 2 \gamma_5 + (p \gamma_5 + 1 ) \gamma_7 + 2
        \right)
        -
        \min( 1 / 2 , \gamma_2, \gamma_3, \gamma_4 )
      \right]
    }
  }
  \, ds
  \\ &
   \leq
    e^{ U(x) }
    \bigg[
      1 +
      \smallint_0^t
    \exp\!\left(
      \tfrac{
        2^{
          (2 p + 3 ) (\gamma_6 + 2 )
        }
        \,
        c^{
          4 p
          ( \gamma_6 + 2 )
          \max( \gamma_0, \gamma_1, \gamma_5, 2 )
        }
        \,
        s
      }{
        \left[
          \min(s, 1)
        \right]^{
          \alpha
          \left[
            ( p \gamma_5 + 1 ) ( \gamma_6 + 2 )
            + \gamma_0 + 2 \gamma_1
          \right]
        }
      }
  \right)
 \\ & \qquad
   \cdot
  \tfrac{
    \max( \rho, 1 )
    \left[
      2 p c
    \right]^{
      6 p
      \left( \gamma_7 + 3 \right)
      \max( 1, \gamma_0, \gamma_1, \gamma_5 )
    }
      \left[
        \max(s,1)
      \right]^{
        [ \max( \gamma_2 , 1 + 2 \gamma_3, \gamma_4 ) + 1 ]
      }
  }{
    \left[
      \min( s, 1)
    \right]^{
      \left[
        \alpha
        \left(
          2 \gamma_0 + 4 \gamma_1 + 2 \gamma_5 + (p \gamma_5 + 1 ) \gamma_7 + 2
        \right)
        -
        \min( 1 / 2 , \gamma_2, \gamma_3, \gamma_4 )
      \right]
    }
  }
  \,
      ds
    \bigg]
    \\ &
    \leq
    e^{ U(x) }
    \bigg[
      1 +
      \smallint_0^t
    \exp\!\left(
      \tfrac{
        \left[ 2 c \right]^{
          4 p
          ( \gamma_6 + 2 )
          \max( \gamma_0, \gamma_1, \gamma_5, 2 )
        }
        \,
        s
      }{
        \left[
          \min(s, 1)
        \right]^{
          \alpha
          \left[
            ( p \gamma_5 + 1 ) ( \gamma_6 + 2 )
            + \gamma_0 + 2 \gamma_1
          \right]
        }
      }
  \right)
  \tfrac{
    \max( \rho, 1 )
    \left[
      2 p c
    \right]^{
      6 p
      \left( \gamma_7 + 3 \right)
      \max( 1, \gamma_0, \gamma_1, \gamma_5 )
    }
      \left[
        \max(s,1)
      \right]^{
        [ \max( \gamma_2 , 1 + 2 \gamma_3, \gamma_4 ) + 1 ]
      }
  }{
    \left[
      \min( s, 1)
    \right]^{
      \left[
        \alpha
        \left(
          2 \gamma_0 + 4 \gamma_1 + 2 \gamma_5 + (p \gamma_5 + 1 ) \gamma_7 + 2
        \right)
        -
        \min( 1 / 2 , \gamma_2, \gamma_3, \gamma_4 )
      \right]
    }
  }
  \,
      ds
    \bigg]
    .
  \end{split}
  \end{equation*}
  This proves \eqref{eq:exp.mom.abstract.one-step} 
  and thereby finishes the proof of
  Lemma~\ref{l:exp.mom.abstract.one-step}.
\end{proof}

%

\subsection{Exponential moments for stopped increment-tamed Euler-Maruyama schemes}
\label{sec:exponential_moments}

Using
Corollary~\ref{Cor:exp.mom.abstract}
and
Lemma~\ref{l:exp.mom.abstract.one-step}
above,
we are now ready to establish
exponential moment bounds for a class of
stopped increment-tamed Euler-Maruyama schemes
in the next theorem.

\begin{theorem}
\label{thm:stopped.Euler.bounded.increments}
  Let $ \gamma, \rho \in [0,\infty) $,
  $ T \in (0,\infty) $,
  $ d, m \in \mathbb{N} $,
  $ p, c \in [1, \infty) $,
  $ q \in (1,\infty) $,
  $
    \mu \in \mathcal{M}( \mathcal{B}( \R^d ), \mathcal{B}( \R^d ) )
  $,
  $
    \sigma \in
    \mathcal{M}( \mathcal{B}( \R^d ), \mathcal{B}( \R^{ d \times m } ) )
  $,
  $
    U \in C_{p,c}^3( \mathbb{R}^d, [0,\infty) )
  $,
  $
    \bar{U}\in C(\R^d, \R)
  $,
  $
    \alpha \in \big( 0, \frac{ 1 }{ 2 } \min\{ \frac{ 1 }{ 7 \gamma + 2 }, \frac{ q - 1 }{ (q + 8) \gamma + 2 } \} \big)
  $,
  let
  $
    D_h \in 
    \mathcal{B}( \{ x \in \R^d \colon U(x) \leq c h^{ - \alpha } \} )
  $,
  $ h \in (0, T] $,
  be a non-increasing family of sets,
  assume for all $ h \in (0,T] $
  that
  $
    \mu|_{ D_h } \in C( D_h, \R^d )
  $
  and 
  $
    \sigma|_{ D_h } \in C( D_h, \R^{ d \times m } )
  $,
  let
  $
    (
      \Omega, \mathcal{F}, \P
    )
  $ 
  be a probability space with a normal filtration 
  $
    (
      \mathcal{F}_t
    )_{
      t \in [0, T ]
    }
  $,
  let
  $
    W \colon [0,T] \times \Omega \to \R^m
  $
  be a standard
  $
    ( \mathcal{F}_t )_{ t \in [0,T] }
  $-Brownian motion with
  continuous sample paths,
  let
  $
    Y^{ \theta }
    \colon
    [0,T] \times\Omega\to\R^d
  $,
  $ \theta \in \mathcal{P}_T $,
  be 
  $ ( \mathcal{F}_t )_{ t \in [0,T] } $-adapted  
  stochastic processes with continuous sample 
  paths which satisfy
  for all
  $ t \in [ 0, T ] $,
  $
    \theta
    \in \mathcal{P}_T
  $
  that
  \begin{equation} \label{eq:thm:Y}
    Y_t^{ \theta }
  =
    Y_{
      \lfloor t \rfloor_{ \theta }
    }^{ \theta }
    +
    \1_{ D_{ | \theta |_T } }\!(
      Y_{ \lfloor t \rfloor_{ \theta } }^{ \theta }
    ) \!
    \left[
      \frac{
        \mu(
          Y_{ \lfloor t \rfloor_{ \theta } }^{ \theta }
        ) \,
        (
          t - \lfloor t \rfloor_{ \theta }
        ) +
        \sigma( Y_{ \lfloor t \rfloor_{ \theta } }^{ \theta }
        )
        \left( W_t - W_{ \lfloor t \rfloor_{ \theta } } \right)
      }{
        1 +
        \big\|
          \mu( Y_{ \lfloor t \rfloor_{ \theta } }^{ \theta } ) \,
          ( t - \lfloor t \rfloor_{ \theta } )
          +
          \sigma( Y_{ \lfloor t \rfloor_{ \theta } }^{ \theta } )
          \left( W_t - W_{ \lfloor t \rfloor_{ \theta } } \right)
          \!
        \big\|^q
      }
    \right]
    ,
  \end{equation}
  and assume
  for all
  $ x, y \in\R^d $ with $ x \neq y $
  that
  \begin{align}
  \label{eq:mu.sigma.quadratic.exponent.corollary}
    \|\mu(x)\|
    +\|\sigma(x)\|_{\HS(\R^m,\R^d)}
    +|\bar{U}(x)|
    \leq c \left( 1 + \left|U(x)\right|^{\gamma} \right),
    \quad
    \tfrac{
      | \bar{U}(x) - \bar{U}(y) |
    }{
      \| x - y \|
    }
    &
    \leq
    c \left( 1 + |U(x)|^{\gamma} + |U(y)|^{\gamma}\right)
    ,
  \\
  \label{eq:generator.exponential.bounded.increments.quadratic.corollary}
    (\mathcal{G}_{\mu,\sigma}U)(x)
     +\tfrac{1}{2}\left\|\sigma(x)^{*}(\nabla U)(x)\right\|^2
     +\bar{U}(x)
  & \leq
    \rho \cdot U(x)
    .
  \end{align}
  Then it holds for all $ t \in [0,T] $, $ \theta \in \mathcal{P}_T $ that
\begin{equation}  \begin{split}  \label{eq:exp.mom.quadratic.exponent}
    \limsup_{
      \left| \vartheta \right|_T
      \searrow 0
    }
    \sup_{ u \in [0,T] }
    \E\!\left[
      \exp\!\left(
        \tfrac{
          U( Y_u^{ \vartheta } )
        }{
          e^{ \rho u }
        }
        +
        \smallint_0^u
          \tfrac{
            \1_{ D_{ \left| \vartheta \right|_T } }\!(
              Y_{ \lfloor s \rfloor_{ \vartheta } }^{ \vartheta }
            )
            \,
            \bar{U}( Y_s^{ \vartheta } )
          }{
            e^{ \rho s }
          }
        \, ds
      \right)
    \right]
    \leq
      \limsup_{
        \left| 
          \vartheta
        \right|_T
        \searrow 0
      }
      \E\!\left[
        e^{
          U( Y_0^{ \vartheta } )
        }
      \right]
      \qquad  
      \text{and}
  \end{split}
  \end{equation}
\begin{equation*}  \begin{split}
&
  \E\!\left[
       \exp\!\left(
         \tfrac{
           U( Y^{ \theta }_t )
         }{ e^{ \rho t } }
         +
         \smallint_0^t
           \tfrac{
             \1_{ D_{ | \theta |_T } }\!(
               Y^{ \theta }_{\lfloor s \rfloor_{ \theta } }
             )
             \,
             \bar{U}( Y^{ \theta }_s )
           }{
             e^{ \rho s }
           }
         \, ds
       \right)
          \right]
  \leq
  \exp\!\left(
      \tfrac{
        \max( \rho, 1 )
        \,
        \left[
          \min( | \theta |_T, 1)
        \right]^{
          \left[
            \min( 1 / 2, (q-1)/2 - \alpha (q +1) \gamma )
            -
            \alpha
            \left(
              7 \gamma + 2
            \right)
          \right]
        }
      }{
        \exp\left(
          -
	    \left[
	      5
	      c
	      q \max( T, 1 )
	    \right]^{
	      9 p
	      ( q + 1 )
	      \max( \gamma, 1 )
	      \max( \gamma, q, 2 )
	      ( \gamma + 2 )
            }
        \right)
      }
  \right)
    \E\!\left[
      e^{ U( Y_0^{ \theta } ) }
    \right]
    .
\end{split}     \end{equation*}
\end{theorem}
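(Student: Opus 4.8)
The plan is to combine the one-step exponential-moment estimate of Lemma~\ref{l:exp.mom.abstract.one-step} with the transfer principle of Corollary~\ref{Cor:exp.mom.abstract}. Fix $\theta\in\mathcal{P}_T$ and put $h:=\|\theta\|\in(0,T]$. First I would observe that the recursion~\eqref{eq:thm:Y} is precisely the one-step dynamic~\eqref{eq:process_dynamic} of Corollary~\ref{Cor:exp.mom.abstract} for the set $A:=D_h$ and the increment-tamed one-step map $\Phi\colon\R^d\times[0,h]\times\R^m\to\R^d$ given by $\Phi(x,t,y):=x+\1_{D_h}(x)\,[\mu(x)t+\sigma(x)y]\,\big(1+\|\mu(x)t+\sigma(x)y\|^q\big)^{-1}$, so that $\Phi(x,t,y)=x$ for $x\notin D_h$ while, for every fixed $x\in D_h$, the map $(t,y)\mapsto\Phi(x,t,y)$ is of class $C^{1,2}$ by the continuity hypotheses on $\mu|_{D_h}$ and $\sigma|_{D_h}$. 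Hence it suffices to prove the one-step estimate~\eqref{eq:exp.mom.abstract.assumption} with $A=D_h$ and a suitable constant $\hat c=\hat c(h)$; Corollary~\ref{Cor:exp.mom.abstract} will then give the quantitative inequality, and the $\limsup$ assertion~\eqref{eq:exp.mom.quadratic.exponent} will follow by letting $\|\theta\|\searrow0$, provided $\hat c(h)\to0$ as $h\searrow0$.

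To prove~\eqref{eq:exp.mom.abstract.assumption} I would apply Lemma~\ref{l:exp.mom.abstract.one-step} with this $\Phi$, with the constant family $D_t:=D_h$ for $t\in(0,h]$ (non-increasing and, since $D_h\subseteq\{U\leq c\,h^{-\alpha}\}$, satisfying $D_t\subseteq\{U\leq c\,t^{-\alpha}\}$ for $t\in(0,h]$), and with the parameters $\gamma_0=\gamma_1=\gamma_6=\gamma_7=\gamma$ and $\gamma_5=0$; then $\Phi(x,0,0)=x$ holds by construction and the growth bound~\eqref{eq:mu_sigma_Ugrowth_assumption}, the Lyapunov-type inequality~\eqref{eq:generator.exponential} and the two $\bar U$-bounds of Lemma~\ref{l:exp.mom.abstract.one-step} are immediate from~\eqref{eq:mu.sigma.quadratic.exponent.corollary}--\eqref{eq:generator.exponential.bounded.increments.quadratic.corollary}. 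The real work is to check the Taylor-type defect estimates~\eqref{eq:assumption.on.phi.mue}--\eqref{eq:assumption.on.phi.increment}. Writing $g(z):=(1+\|z\|^q)^{-1}$ and $z=z(x,s,y):=\mu(x)s+\sigma(x)y$, one verifies that the tamed map $z\mapsto g(z)z$ belongs to $C^2(\R^d,\R^d)$ (its second derivative, which a priori contains a factor $\|z\|^{q-2}$ that blows up at $0$ when $q\in(1,2)$, nevertheless extends continuously to $z=0$ with value $0$), that $\|g(z)z\|\leq\min(1,\|z\|)$, that the difference between $\tfrac{\partial}{\partial s}\Phi$ and $\mu(x)$, respectively between $\tfrac{\partial}{\partial y}\Phi$ and $\sigma(x)$, is bounded in norm by a constant times $q\,(\|\mu(x)\|+\|\sigma(x)\|_{\HS(\R^m,\R^d)})\min(1,\|z\|^q)$, and --- the \emph{decisive} point --- that, expanding via $\triangle_y\Phi=z\,\triangle_y g(z)+2\sum_{i=1}^m(\partial_{y_i}[g(z)])\,\sigma(x)e_i$ and using that $y\mapsto z$ is affine, $\|\triangle_y\Phi\|\leq C\,q^2\,\|\sigma(x)\|_{\HS(\R^m,\R^d)}^2\,\|z\|^{q-1}$; here the factor $z$ supplied by the product structure $g(z)z$ absorbs the singular factor $\|z\|^{q-2}$, so that $\triangle_y\Phi$ is actually of order $\|z\|^{q-1}$ and hence tends to $0$ as $s\to0$ for every $q>1$. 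Combining these pointwise bounds with the elementary Gaussian moment estimate $\|z\|_{L^r(\Omega;\R^d)}\leq\|\mu(x)\|\,s+\sqrt{s\,r(r-1)/2}\,\|\sigma(x)\|_{\HS(\R^m,\R^d)}$ and with $\|\mu(x)\|+\|\sigma(x)\|_{\HS(\R^m,\R^d)}\leq c(1+|U(x)|^{\gamma})\leq c(1+c\,s^{-\alpha})^{\gamma}$ on $D_s$, I expect to obtain~\eqref{eq:assumption.on.phi.mue}--\eqref{eq:assumption.on.phi.sigma2} with $\gamma_2=\gamma_3=q/2-\alpha\gamma(q+1)$ and $\gamma_4=(q-1)/2-\alpha\gamma(q+1)$ (with constants that may additionally depend on $\max(T,1)$), and~\eqref{eq:assumption.on.phi.increment} with $\gamma_5=0$ directly from the pointwise bound $\|\Phi(x,s,y)-x\|\leq\min(1,\|z\|)$ and monotonicity of $r\mapsto\|z\|_{L^r}$. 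The standing assumption $\alpha<\tfrac12\min\{\tfrac{1}{7\gamma+2},\tfrac{q-1}{(q+8)\gamma+2}\}$ ensures $\gamma_2,\gamma_3,\gamma_4>0$.

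With these parameters, Lemma~\ref{l:exp.mom.abstract.one-step} yields for all $(t,x)\in(0,h]\times\R^d$ a bound of the form $\E\big[\exp\big(e^{-\rho t}U(\Phi(x,t,W_t))+\int_0^t e^{-\rho s}\1_{D_h}(x)\,\bar U(\Phi(x,s,W_s))\,ds\big)\big]\leq e^{U(x)}\big[1+\int_0^t\Psi(s)\,ds\big]$, where $\Psi$ is the explicit integrand on the right-hand side of~\eqref{eq:exp.mom.abstract.one-step}. For our choice of exponents the power of $\min(s,1)$ in the denominator of $\Psi$ equals $\alpha(7\gamma+2)-\min\{\tfrac12,(q-1)/2-\alpha\gamma(q+1)\}$, which the hypothesis on $\alpha$ makes strictly negative, while the power of $\min(s,1)$ inside the remaining exponential factor of $\Psi$ equals $\alpha(4\gamma+2)<1$; consequently $\Psi$ is bounded on $[0,T]$ and in fact $\int_0^t\Psi(s)\,ds\leq M\,t\,[\min(h,1)]^{\min\{1/2,(q-1)/2-\alpha\gamma(q+1)\}-\alpha(7\gamma+2)}$ for all $t\in(0,h]$, with $M$ collecting the polynomial-in-$(c,p,q)$, linear-in-$\max(\rho,1)$ and $\max(T,1)$-dependent constants produced by Lemma~\ref{l:exp.mom.abstract.one-step}. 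Using $1+u\leq e^u$, this establishes~\eqref{eq:exp.mom.abstract.assumption} with $\hat c(h):=M\,[\min(h,1)]^{\min\{1/2,(q-1)/2-\alpha\gamma(q+1)\}-\alpha(7\gamma+2)}$, and a routine but lengthy estimation of $M$ shows that $\hat c(h)\,T$ is dominated by $\max(\rho,1)\,[\min(\|\theta\|,1)]^{[\min(1/2,(q-1)/2-\alpha(q+1)\gamma)-\alpha(7\gamma+2)]}\exp\big([5cq\max(T,1)]^{9p(q+1)\max(\gamma,1)\max(\gamma,q,2)(\gamma+2)}\big)$, i.e.\ by the exponent in the asserted inequality. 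Corollary~\ref{Cor:exp.mom.abstract} --- whose integrability hypothesis $\int_0^T\1_{D_h}(Y^\theta_{\lfloor r\rfloor_\theta})|\bar U(Y^\theta_r)|\,dr<\infty$ $\P$-a.s.\ is clear from continuity of $\bar U$ together with path-continuity and local boundedness of $Y^\theta$ --- then gives $\E\big[\exp\big(e^{-\rho t}U(Y^\theta_t)+\int_0^t e^{-\rho s}\1_{D_h}(Y^\theta_{\lfloor s\rfloor_\theta})\,\bar U(Y^\theta_s)\,ds\big)\big]\leq e^{\hat c(h)\,t}\,\E[e^{U(Y^\theta_0)}]\leq e^{\hat c(h)\,T}\,\E[e^{U(Y^\theta_0)}]$ for all $t\in[0,T]$, which is the second assertion. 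Finally, since the exponent $\min\{1/2,(q-1)/2-\alpha\gamma(q+1)\}-\alpha(7\gamma+2)$ is \emph{positive}, $\hat c(h)\,T\to0$, so $e^{\hat c(h)\,T}\to1$ as $\|\theta\|\searrow0$, which yields~\eqref{eq:exp.mom.quadratic.exponent}. I expect the main obstacle to lie in the defect estimates for $\Phi$ --- above all the observation that $\triangle_y\Phi$ is of order $\|z\|^{q-1}$ and not $\|z\|^{q-2}$, which is precisely what makes the scheme well behaved for $q\in(1,2)$ --- together with the bookkeeping needed to reduce the resulting constant to the precise double-exponential form stated in the theorem.
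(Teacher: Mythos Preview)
Your proposal is correct and follows essentially the same route as the paper: you verify the hypotheses of Lemma~\ref{l:exp.mom.abstract.one-step} for the increment-tamed one-step map with the constant family $D_t\equiv D_h$ and the parameter choices $\gamma_0=\gamma_1=\gamma_6=\gamma_7=\gamma$, $\gamma_5=0$, $\gamma_2=\gamma_3=q/2-\alpha(q+1)\gamma$, $\gamma_4=(q-1)/2-\alpha(q+1)\gamma$, then feed the resulting one-step bound into Corollary~\ref{Cor:exp.mom.abstract} and let $\|\theta\|\searrow0$. In particular, you correctly isolate the decisive observation that $\triangle_y\Phi$ is of order $\|z\|^{q-1}$ rather than $\|z\|^{q-2}$, and your computation of the exponents $\alpha(7\gamma+2)$ and $\alpha(4\gamma+2)$ matches the paper's exactly.
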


\begin{proof}[Proof of Theorem~\ref{thm:stopped.Euler.bounded.increments}]
  Throughout this proof let 
  $ \gamma_0, \gamma_1, \dots, \gamma_7, \hat{c} \in \R $
  be the real numbers given by
  $ 
    \gamma_0 = \gamma_1 = \gamma_6 = \gamma_7 = \gamma 
  $,
  $
    \gamma_2 = \gamma_3 = \nicefrac{ q }{ 2 } 
    - \alpha \gamma \left( q + 1 \right) 
  $,
  $
    \gamma_4 = 
    \nicefrac{ (q - 1) }{ 2 } 
    - \alpha \gamma \left( q + 1 \right) 
  $,
  $
    \gamma_5 = 0
  $,
  and
  $
    \hat{c} 
    =
    \left[ 16 c^{ ( 1 + \gamma ) } q \max(T, 1)\right]^{(q+1)}
  $,
  let
  $
    \Psi_h \colon \R^d \times [0,h] \times \R^m \to \R^d 
  $, 
  $ h \in (0,T] $,
  be the functions 
  which satisfy for all
  $ h \in (0,T] $,
  $ s \in [0,h] $,
  $ y \in \R^m $,
  $ x \in \R^d $
  that
  \begin{equation}
    \Psi_h( x, s, y ) 
    = 
    x + 
    \1_{ D_h }( x ) 
    \!\left[
      \tfrac{
        \mu( x ) s + \sigma( x ) y 
      }{
        1 + 
        \left\|
          \mu( x ) s + \sigma( x ) y 
        \right\|^q 
      }
    \right] 
  \end{equation}
  and let
  $
    \Phi_h \colon D_h \times [0,h] \times \R^m \to \R^d 
  $, 
  $ h \in (0,T] $,
  be the functions which satisfy
  for all
  $ h \in (0,T] $,
  $ s \in [0,h] $,
  $ y \in \R^m $,
  $ x \in D_h $
  that
  \begin{equation}
    \Phi_h( x, s, y ) 
    = 
    \Psi_h( x, s, y )
    .
  \end{equation}
  We now verify step by step
  all assumptions of
  Lemma~\ref{l:exp.mom.abstract.one-step}.
  First of all, note 
  for all
  $ h \in ( 0, T] $,
  $ x \in D_h $
  that
  $
    \Phi_h(x,0,0) = x
  $.
  Moreover,
  observe that
  \eqref{eq:mu.sigma.quadratic.exponent.corollary}
  ensures that
  \eqref{eq:mu_sigma_Ugrowth_assumption} in Lemma \ref{l:exp.mom.abstract.one-step} 
  is fulfilled.
  Furthermore, note
  for all 
  $ h \in (0,T] $, 
  $ (x, s, y) \in D_h \times (0, h] \times \R^m $ 
  that
  \begin{equation}  
  \begin{split}
  &
    \left(
      \tfrac{ \partial }{ \partial s } \Phi_h
    \right)\!(x,s,y)
  \\ &
    =
      \frac{
        \mu(x) 
        \left( 
          1 + \left\| \mu(x) s + \sigma(x) y \right\|^q 
        \right)
        -
        \left(
          \mu(x) s + \sigma(x) y 
        \right) q 
        \left\|
          \mu(x) s + \sigma(x) y 
        \right\|^{ ( q - 2 ) }
        \left\langle 
          \mu(x) s + \sigma(x) y , \mu(x) 
        \right\rangle
      }{
        \left(
          1 + 
          \left\| \mu(x) s + \sigma(x) y \right\|^q 
        \right)^2
      }
  \\ &
    =  
      \mu(x) -
      \frac{
         \mu(x) 
         \left\|
           \mu(x) s + \sigma(x) y
         \right\|^q
      }{
        1 + \left\| \mu(x) s + \sigma(x) y \right\|^q }
      -
    \frac{
      q
      \left(
        \mu(x) s + \sigma(x) y
      \right) 
      \left\|
        \mu(x) s + \sigma(x) y 
      \right\|^{
        (q-2)
      }
      \left\langle \mu(x) s + \sigma(x) y, \mu(x) \right\rangle
    }{
      \left( 
        1 + \left\| \mu(x) s + \sigma(x) y \right\|^q 
      \right)^2
    } 
  .
  \end{split}     
  \end{equation}
  This implies for all $h \in (0, T]$, $s \in (0, h]$, $x \in D_h $, $y \in \R^m$ that
  \begin{equation}  \begin{split}
  \label{eq:phi.mu.diff.before}
    \left\|
      \left(\tfrac{\partial}{\partial s}\Phi_h\right) \! (x,s,y)-\mu(x)
    \right\|
    \leq
       (q+1)\left\|\mu(x)s+\sigma(x)y\right\|^q\left\|\mu(x)\right\|.
  \end{split}     \end{equation}
  Moreover,
  the inequality 
  $
  \forall \, r \in [2, \infty) , s \in [0, T], x \in \R^d
  \colon
  \|
    \sigma(x) W_s
  \|_{
    L^r( \Omega; \R^d )
  }
  \leq
  \sqrt{ s r ( r - 1 ) / 2 }
  \left\|
    \sigma(x)
  \right\|_{
    \HS( \R^m, \R^d )
  }
  $ 
  shows for all
  $
    r \in [2,\infty)
  $,
  $
   h \in (0, T], s \in (0,h]
  $,
  $
    x \in D_h \subseteq D_s
  $
  that
  \begin{equation}
  \begin{split}
  \label{eq:Lr.increment}
    \left\|\mu(x)s+\sigma(x)W_s\right\|_{L^r(\Omega;\R^d)}
    &
    \leq c s \left( 1 + |U(x)|^{\gamma} \right)+ c \sqrt{sr(r-1)/2} \left( 1 + |U(x)|^{\gamma} \right)
    \\ &
    \leq
    c \left( s + c^{ \gamma } s^{ ( 1 - \alpha \gamma ) }
    \right)
    +
    c \sqrt{ r ( r - 1 ) / 2 }
    \left( s^{1/2} + c^{ \gamma } s^{(1/2 - \alpha\gamma)} \right)
    \\&
    \leq c^{ ( 1 + \gamma ) } \max(T, 1)\, s^{(1/2-\alpha\gamma)} \left(2+ 2\sqrt{r(r-1)/2}\right)
\\ & \leq
    2 c^{ ( 1 + \gamma ) } r \max(T, 1) \, s^{(1/2-\alpha\gamma)}.
  \end{split}
  \end{equation}
  This together with \eqref{eq:phi.mu.diff.before} and the fact
  $
    \alpha \gamma < 1
  $
  implies for all
  $
    h \in (0,T]
  $,
  $
    s \in (0,h]
  $,
  $
    x \in D_h \subseteq D_s
  $
  that
  \begin{equation}  \begin{split}
    \left\|
      \left(\tfrac{\partial}{\partial s}\Phi_h\right)\!(x,s,W_s)-\mu(x)
    \right\|_{L^4(\Omega;\R^d)}
    &\leq
    (q+1)
    \left\|\mu(x)s+\sigma(x)W_s\right\|_{ L^{ 4 q }( \Omega; \R^d ) }^q
    \|\mu(x)\|
    \\&
    \leq (q+1)\left[8 c^{ (1 + \gamma) } q \max(T, 1)\right]^q s^{q(1/2-\gamma\alpha)}\,
    c \left( 1 + \left|U(x)\right|^{\gamma} \right)
    \\&
    \leq c\left(q+1\right)\left[8 c^{ (1 + \gamma) } q \max(T, 1)\right]^q s^{q(1/2-\gamma\alpha)}
    \left( 1 + c^{ \gamma } s^{ - \alpha \gamma} \right)
    \\&
    \leq
    c^{ ( 1 + \gamma ) }
    \left(q+1\right)\left[8 c^{ (1 + \gamma) } q \max(T, 1)\right]^q
    s^{(q/2-\alpha(q+1)\gamma)
      }
    \left( s^{ \alpha \gamma} + 1 \right)
  \\ &
  \leq
    2 q
    c^{ ( 1 + \gamma ) }
    \left[
      8 c^{ (1 + \gamma) } q \max(T, 1)
    \right]^q
    s^{
      ( q / 2 - \alpha (q + 1) \gamma )
    }
    \, 2 \left[ \max(T, 1) \right]
\\ &
    \leq 
    \left[8 c^{ (1 + \gamma) } q \max(T, 1)\right]^{(q + 1)}
    s^{
      (q/2-\alpha(q+1)\gamma)
    }
    \leq \hat{c}
     s^{\gamma_2
      }.
  \end{split}     \end{equation}
  This proves that \eqref{eq:assumption.on.phi.mue} in Lemma \ref{l:exp.mom.abstract.one-step}
  is fulfilled.
  Similarly, it holds for all 
  $ h \in (0,T] $, $ (x, s, y) \in D_h \times (0, h] \times \R^m $ that
  \begin{equation}  
  \begin{split}
  &
    \big(\tfrac{\partial}{\partial y}\Phi_h\big)(x,s,y)
  \\ &
    = 
    \frac{\sigma(x)
           \left(1+\left\|\mu(x)s+\sigma(x)y\right\|^q\right)
            -\left(\mu(x)s+\sigma(x)y\right)q\left\|
            \mu(x)s+\sigma(x)y\right\|^{(q-2)}
              \left( \mu(x)s+\sigma(x)y\right)^{*}\sigma(x)
          }{\left(1+\left\|\mu(x)s+\sigma(x)y\right\|^q \right)^2}
    \\&
    = 
    \sigma(x)-
    \frac{
           \sigma(x) \left\|\mu(x)s+\sigma(x)y\right\|^q
          }{1+\left\|\mu(x)s+\sigma(x)y\right\|^q}
    -
    \frac{ q\left(\mu(x)s+\sigma(x)y\right)\left\| \mu(x)s+\sigma(x)y\right\|^{(q-2)}
              \left( \mu(x)s+\sigma(x)y\right)^{*}\sigma(x)
          }{\left(1+\left\|\mu(x)s+\sigma(x)y\right\|^q\right)^2} 
          .
  \end{split}     
  \end{equation}
  This implies for all $ h \in (0, T]$, $ s \in (0, h] $, 
  $ x \in D_h $, $ y \in \R^m $ that
  \begin{equation}  \begin{split}
    \big\|
      \big(\tfrac{\partial}{\partial y}\Phi_h\big)(x,s,y)-\sigma(x)
    \big\|_{\HS(\R^m,\R^d)}
    \leq
       (q+1)\left\|\mu(x)s+\sigma(x)y\right\|^q \left\|\sigma(x)\right\|_{\HS(\R^m,\R^d)}.
  \end{split}     \end{equation}
  This together with~\eqref{eq:Lr.increment}
  implies for all $ h \in (0,T] $, 
  $ s \in (0, h] $, 
  $ x \in D_h \subseteq D_s $ that
  \begin{equation}  \begin{split}
    \left\|\big(\tfrac{\partial}{\partial y}\Phi_h\big)(x,s,W_s)-\sigma(x)
    \right\|_{L^8(\Omega;\HS(\R^m,\R^d))}
    &
    \leq (q+1)
    \left\|\mu(x)s+\sigma(x)W_s\right\|_{L^{8q}(\Omega;\R^d)}^q\|\sigma(x)\|_{\HS(\R^m,\R^d)}
    \\&
    \leq (q+1)\left[ 16 c^{ ( 1 + \gamma ) } q \max(T, 1) \right]^q s^{q(1/2-\gamma\alpha)}
    c \left( 1 + c^{ \gamma } s^{ - \alpha \gamma}\right)
  \\&
  \leq
    c^{ ( 1 + \gamma ) } \left( q+1 \right)\left[ 16 c^{ ( 1 + \gamma ) } q \max(T, 1) \right]^q s^{[ q/2-\alpha(q+1)\gamma
                           ]
                         } \left( s^{ \alpha \gamma} + 1 \right)
  \\&
  \leq
    2
    c^{ ( 1 + \gamma ) }
    q \left[ 16 c^{ ( 1 + \gamma ) } q \max(T, 1) \right]^q s^{[ q/2-\alpha(q+1)\gamma
                           ]
                         }\, 2 \max(T, 1)
    \\&
    \leq \left[ 16 c^{ ( 1 + \gamma ) } q \max(T, 1) \right]^{(q + 1)} s^{ [ q/2-\alpha(q+1)\gamma
                           ]
                         } = \hat{c} s^{\gamma_3}
                         .
  \end{split}     \end{equation}
  This shows that \eqref{eq:assumption.on.phi.sigma}
  in Lemma~\ref{l:exp.mom.abstract.one-step}
  is fulfilled.
  In the next step let
  $
    \psi \colon \R^d \to \R^d
  $
  and
  $
    \sigma_i \colon \R^d \to \R^d
  $,
  $ i \in \{ 1, \dots, m \} $,
  be the functions
  with the property that
  for all $ z \in \R^d $
  it holds that
  $
    \psi(z) = \frac{ z }{ 1 + \|z\|^q }
  $
  and
  $
    \sigma(z) =
    (
      \sigma_1(z), \sigma_2(z), \ldots, \sigma_m(z)
    )
  $.
  Observe that 
  $ \psi \in C^2( \R^d, \R^d ) $
  and that for all
  $
    z = ( z_1, z_2 , \dots, z_d ) 
  $,
  $
    u = ( u_1, u_2, \dots, u_d ) 
  $, 
  $ 
    v = ( v_1, v_2 , \dots , v_d ) \in \R^d
  $
  it holds that
  \begin{equation} \begin{split}
  &
    \psi'(z) u
    =
    \sum_{ k = 1 }^d
    \big( \tfrac{ \partial }{ \partial z_k } \psi \big)( z ) \cdot u_k
    =
    \begin{cases}
      u
      &
      \colon
      z = 0
    \\
      \frac{u}{1+\|z\|^q}
      -\frac{
        q z \|z\|^{(q-2)}\langle z,u\rangle}
            {\left(1+\|z\|^q\right)^2}
      &
      \colon
      z \neq 0
    \end{cases}
    \\
    &
    \text{and}
    \qquad
    \psi''(z)( u, v )
    =
    \sum_{ k, l = 1 }^d
    \big( \tfrac{ \partial^2 }{ \partial z_k \partial z_l } \psi \big)( z ) \cdot u_k \cdot u_l
\\ & 
    \quad\qquad
    =
    \begin{cases}
      0
      &
      \colon
      z = 0
      \\
      -
      \frac{
        q \|z\|^{ (q - 2 ) }
        \left[
          u
          \langle z, v \rangle
          +
          v
          \langle z, u \rangle
          +
          z
          \langle u, v \rangle
        \right]
      }{
        \left( 1 + \|z\|^q \right)^2
      }
      -
      \frac{
        q
        \left( q - 2 \right)
        \|z\|^{ ( q - 4 ) }
        z
        \langle z, v \rangle
        \langle z, u \rangle
      }{
        \left(1+\|z\|^q\right)^2
      }
      +
      \frac{
        2 q^2 \|z\|^{ 2 (q - 2 ) }
        z \langle z, u \rangle  \langle z,v\rangle
      }{
        \left(1+\|z\|^q\right)^3
      }
     &
     \colon
     z \neq 0
    \end{cases}
    .
  \end{split} \end{equation}
  This implies
  for all $ z, u \in \R^d $
  that
  \begin{equation} \begin{split}
  &
    \psi''(z) (u, u)
    =
    \begin{cases}
      0
      &
      \colon
      z = 0
      \\
      \frac{
        2 q^2 \|z\|^{ 2 (q - 2 ) }
        z
        \left|
          \langle z, u \rangle
        \right|^2
      }{
        \left(1+\|z\|^q\right)^3
      }
      -
      \frac{
        q \|z\|^{ (q - 2 ) }
        \left[
          2 u
          \langle z, u \rangle
          +
          z
          \left\| u \right\|^2
        \right]
      }{
        \left( 1 + \|z\|^q \right)^2
      }
      -
      \frac{
        q
        \left( q - 2 \right)
        \|z\|^{ ( q - 4 ) }
        z
        \left|
          \langle z, u \rangle
        \right|^2
      }{
        \left(1+\|z\|^q\right)^2
      }
     &
     \colon
     z \neq 0
    \end{cases}
    .
  \end{split} \end{equation}
  Hence, we obtain
  for all $ i \in \{1, 2, \ldots, m \}$,
  $
    (x, s, y) \in \R^d \times (0, h] \times \R^m
  $
  that
  \begin{equation}
  \begin{split}
    &
    \frac{ \partial^2 }{ \partial y_i^2 }
    \!\Big( \psi \big(\mu(x) s + \sigma(x) y \big)\Big)
    = \frac{\partial}{\partial y_i} \!\Big( \psi ' \big(\mu(x) s + \sigma(x) y \big) \big( \sigma_i( x ) \big) \Big)
    = \psi '' \big(\mu(x) s + \sigma(x) y \big) \big( \sigma_i( x ), \sigma_i( x ) \big)
    \\ & =
    \frac{
      \mathbbm{1}_{
        \R^d \backslash \{ 0 \}
      }(
        \mu(x) s + \sigma( x ) y
      )
      \,
      2 q^2
      \left\|
        \mu(x) s + \sigma(x) y
      \right\|^{ 2 ( q - 2 ) }
      \left(
        \mu(x) s + \sigma(x) y
      \right)
      \left|
        \langle \mu(x) s + \sigma(x) y, \sigma_i(x)
        \rangle
      \right|^2
    }{
      \left(
        1 +
        \left\|
          \mu(x) s + \sigma(x) y
        \right\|^q
      \right)^3
    }
  \\ &
    -
    \tfrac{
      \mathbbm{1}_{
        \R^d \backslash \{ 0 \}
      }(
        \mu(x) s + \sigma( x ) y
      )
      \,
      q
      \,
      \left\|
        \mu(x) s + \sigma(x) y
      \right\|^{ ( q - 2 ) }
      \big[
        2 \sigma_i(x)
        \langle
          \mu(x) s + \sigma(x) y ,
          \sigma_i(x)
        \rangle
        +
        \left( \mu(x) s + \sigma(x) y \right)
        \left\|
          \sigma_i(x)
        \right\|^2
      \big]
   }{
     \left(1+\|\mu(x) s + \sigma(x) y\|^q\right)^2
   }
   \\ &
      -
      \frac{
        \mathbbm{1}_{
          \R^d \backslash \{ 0 \}
        }(
          \mu(x) s + \sigma( x ) y
        )
        \,
        q \left( q - 2 \right)
        \left\|\mu(x) s + \sigma(x) y \right\|^{ ( q - 4 ) }
        \left( \mu(x) s + \sigma(x) y \right)
           |\langle \mu(x) s + \sigma(x) y, \sigma_i(x) \rangle |^2
      }{
        \left(1+\|\mu(x) s + \sigma(x) \|^q\right)^2
      }
  .
  \end{split}     \end{equation}
  This and the Cauchy-Schwarz inequality show for all
  $
    (x, s, y) \in \R^d \times (0, h] \times \R^m
  $
  that
  \begin{equation}
  \begin{split}
    &
      \sum_{ i = 1 }^m
    \left\|
      \frac{ \partial^2 }{ \partial y_i^2 }
      \!\Big( \psi \big(\mu(x) s + \sigma(x) y \big)\Big)
    \right\|
    \\ & \leq
    \frac{
      2 q^2
      \left\|
        \mu(x) s + \sigma(x) y
      \right\|^{ ( 2 q - 1 ) }
      \left\|
        \sigma( x )
      \right\|^2_{ \HS( \R^m, \R^d ) }
    }{
      \left(
        1 +
        \left\|
          \mu(x) s + \sigma(x) y
        \right\|^q
      \right)^3
    }
    +
    \frac{
      3 q
      \left\|
        \mu(x) s + \sigma(x) y
      \right\|^{ ( q - 1 ) }
      \left\|
        \sigma( x )
      \right\|^2_{ \HS( \R^m, \R^d ) }
   }{
     \left(1+\|\mu(x) s + \sigma(x) y\|^q\right)^2
   }
   \\ &
      +
      \frac{
        q \left| q - 2 \right|
        \left\|\mu(x) s + \sigma(x) y \right\|^{ ( q - 1 ) }
        \left\|
          \sigma( x )
        \right\|^2_{ \HS( \R^m, \R^d ) }
      }{
        \left(1+\|\mu(x) s + \sigma(x) \|^q\right)^2
      }
\\ & \leq
    \left[
        2 q^2
        +
        3 q
        +
        q \left| q - 2 \right|
    \right]
        \left\|\mu(x) s + \sigma(x) y \right\|^{ ( q - 1 ) }
        \left\|
          \sigma( x )
        \right\|^2_{ \HS( \R^m, \R^d ) }
  .
  \end{split}     
  \end{equation}
  Consequently, it follows for all
  $ h \in (0,T] $, $ s \in (0, h] $, 
  $ x \in D_h \subseteq D_s $, $ y \in \R^m $
  that
  \begin{equation}\begin{split}
  \big\| (\triangle_y \Phi_h)(x, s, y) \big\| &\leq
      \sum_{i=1}^m
    \big\|
      \big(
        \tfrac{ \partial^2 }{ \partial y_i^2 } \Phi_h
      \big)( x, s, y )\big\|
       \leq
       \sum_{i=1}^m \left[ \left( 2 q^2 + 3q + q^2 \right) \left\|\mu(x)s+\sigma(x)y\right\|^{(q-1)} \|\sigma_i(x)\|^2 \right]
    \\ & = 3q\left(q+1\right)\left\|\mu(x)s+\sigma(x)y\right\|^{(q-1)}
    \|\sigma(x)\|_{\HS(\R^m,\R^d)}^2.
  \end{split}\end{equation}
  This together with~\eqref{eq:Lr.increment} and the fact $2 \alpha \gamma < 1$
  yields for all $h \in (0, T]$,
  $ s \in(0,h] $, $ x \in D_h \subseteq D_s $ that
  \begin{equation}  \begin{split}
    &
    \big\| (\triangle_y \Phi_h)(x, s, W_s) \big\|_{L^4(\Omega;\R^d)}
    \leq 3q\left(q+1\right)\left\|\mu(x)s+\sigma(x)W_s\right\|_{L^{4(q-1)}(\Omega;\R^d)}^{(q-1)}
      \|\sigma(x)\|_{\HS(\R^m,\R^d)}^2
    \\&
    \leq
    3q\left(q+1\right)\left\|\mu(x)s+\sigma(x)W_s\right\|_{L^{4q}(\Omega;\R^d)}^{(q-1)}
      \|\sigma(x)\|_{\HS(\R^m,\R^d)}^2
    \\&
    \leq
    3q\left(q+1\right)\left[8 c^{ ( 1 + \gamma ) } q \max(T, 1) \right]^{(q-1)}
      s^{(q-1)(1/2-\gamma\alpha)}\,
    c^2 \left( 1 + \left|U(x)\right|^{\gamma}\right)^2
    \\ &
    \leq 6q\left(q+1\right)c^2 \left[8 c^{ ( 1 + \gamma ) } q \max(T, 1) \right]^{(q-1)}
      s^{(q-1)(1/2-\gamma\alpha)}
      \left( 1 + c^{ 2 \gamma } s^{ - 2\alpha \gamma}\right)
    \\&
    \leq 12
    c^{ ( 2 + 2 \gamma ) }
    q^2 \left[8 c^{ ( 1 + \gamma ) } q \max(T, 1) \right]^{(q-1)}
    s^{[(q-1)/2 -\alpha(q+1)\gamma]
      } \, 2 \left[ \max(T, 1)\right]
    \\&
    \leq \left[ 8 c^{ ( 1 + \gamma ) } q \max(T, 1) \right]^{(q+1)}
    s^{ [(q-1)/2 -\alpha(q+1)\gamma]
      }
    \leq \hat{c}
    s^{\gamma_4
      }.
  \end{split}     \end{equation}
  This proves that
  \eqref{eq:assumption.on.phi.sigma2} in Lemma \ref{l:exp.mom.abstract.one-step} is fulfilled.
  Next observe for all $h\in(0,T]$,
   $s \in (0, h]$, $ x \in D_h $, $r \in [1, \infty)$ that
  \begin{equation}  \begin{split}
    \left\|
      \Phi_h( x, s, W_s ) - x 
    \right\|_{
      L^r(\Omega; \R^d)
    }
  & 
    \leq 
    \min\!\left\{ 
      1, 
      \left\|
        \mu(x) s + \sigma(x) W_s 
      \right\|_{
        L^r( \Omega ; \R^d ) 
      } 
    \right\}
  \\ & 
    \leq 
    \hat{c} 
    \min\!\left\{ 
      r , 
      1 + | U(x) |^{ \gamma_5 } , 
      \left( 1 + |U(x)|^{ \gamma_5 } \right) 
      \left\|
        \mu(x) s + \sigma(x) W_s
      \right\|_{
        L^r( \Omega; \R^d ) 
      } 
    \right)
    .
  \end{split}     
  \end{equation}
  This shows that \eqref{eq:assumption.on.phi.increment}
  in Lemma \ref{l:exp.mom.abstract.one-step} is fulfilled.
  Thus all assumptions of
  Lemma~\ref{l:exp.mom.abstract.one-step} are satisfied.
  Next let
  $
    \varrho_h
    \in (0,\infty)
  $,
  $ h \in (0,T] $,
  be the real numbers with the property that
  for all $ h \in (0,T] $
  it holds that
  \begin{equation} 
  \begin{split}
    \varrho_h
    &
  =
    \exp\!\left(
	\tfrac{
	  h
	  \,
	  \left[ 2 \hat{c} \right]^{
	    4 p
	    ( \gamma_6 + 2 )
	    \max( \gamma_0, \gamma_1, \gamma_5, 2 )
	  }
	}{
	  \left[
	    \min(h, 1)
	  \right]^{
	    \alpha
	    \left[
	      ( p \gamma_5 + 1 ) ( \gamma_6 + 2 )
	      + \gamma_0 + 2 \gamma_1
	    \right]
	  }
	}
    \right)
    \tfrac{
      \max( \rho, 1 )
      \,
      \left[
	2 p \hat{c} \max( h, 1 )
      \right]^{
	6 p
	\left( \gamma_7 + 3 \right)
	\max( 1, \gamma_0, \gamma_1, \dots, \gamma_5 )
      }
    }{
      \left[
	\min( h, 1)
      \right]^{
	\left[
	  \alpha
	  \left(
	    2 \gamma_0 + 4 \gamma_1 + 2 \gamma_5 + (p \gamma_5 + 1 ) \gamma_7 + 2
	  \right)
	  -
	  \min( 1 / 2 , \gamma_2, \gamma_3, \gamma_4 )
	\right]
      }
    }
  \\ & =
      \exp\!\left(
        \tfrac{
          h \,
	  \left[ 2 \hat{c} \right]^{
	    4 p
	    ( \gamma + 2 )
	    \max( \gamma, 2 )
	  }
        }{
          \left[
            \min(h, 1)
          \right]^{
            \alpha
            \left[
              4 \gamma
              + 2
            \right]
          }
        }
      \right)
      \tfrac{
        \max( \rho, 1 )
        \,
        \left[
  	  2 p \hat{c} \max( h, 1 )
        \right]^{
  	  3 p
	  \left( \gamma + 3 \right)
          \max \{ 2, 2 \gamma, q - 2 \alpha \gamma( q + 1) \}
        }
      }{
        \left[
          \min( h, 1)
        \right]^{
          \left[
            \alpha
            \left[
              7 \gamma + 2
            \right]
            -
            \min( 1 / 2, (q-1)/2 - \alpha (q +1) \gamma )
          \right]
        }
      }
      .
  \end{split} 
  \end{equation}
  Note that
  the estimates
  $
    \alpha \left[ 4 \gamma + 2 \right] - 1 < 0
  $
  and
  $
    \alpha
          \left[ 7 \gamma + 2 \right]
          -
          \min( 1 / 2 , (q-1) / 2 - \alpha (q + 1 ) \gamma )
    < 0
  $
  ensure that the function
  $
    (0,T] \ni h \mapsto \varrho_h \in (0,\infty)
  $
  is non-decreasing
  and that
  $
    \lim_{ h \searrow 0 } \varrho_h = 0
  $.
  Combining
  Lemma~\ref{l:exp.mom.abstract.one-step}
  with the fact that
  $
    (0,T] \ni h \mapsto \varrho_h \in (0,\infty)
  $
  is non-decreasing
  implies
  for all 
  $ h \in (0,T] $, 
  $ (t,x) \in (0,h] \times D_h $ 
  that
  \begin{equation}  
  \label{eq:UbarU_estimate}
  \begin{split}
    &
    \E\!\left[
      \exp\!\left(
        \tfrac{
          U(
            \Phi_h(x,t,W_t)
          )
        }{
          e^{\rho t}
        }
        +
        \smallint_0^t
        \tfrac{ 
          \bar{U}( \Phi_h( x, s, W_s ) )
        }{
          e^{ \rho s }
        }
        \, ds
      \right)
    \right]
    \leq
    \left(
      1 + \int_0^t \varrho_s \, ds
    \right)
    e^{ U( x ) }
    \leq
    ( 1 + \varrho_h t)
    \, e^{ U(x) }
    .
  \end{split}     
  \end{equation}
  Clearly, this implies for all
  $
    \theta \in \mathcal{P}_T
  $,
  $
    (t,x) 
    \in 
    \big( 0 , | \theta |_T \big] 
    \times 
    D_{ | \theta |_T } 
  $
  that
  \begin{equation}  \begin{split}
    &
    \E\!\left[
    \exp\!\left(
      \tfrac{
        U(
          \Psi_{ | \theta |_T }( x, t, W_t )
        )
      }{
        e^{ \rho t }
      }
        +
        \smallint_0^t 
        \tfrac{ 
          \bar{U}(
            \Psi_{ | \theta |_T }( x, s, W_s ) 
          )
        }{
          e^{ \rho s } 
        } \, ds
      \right)
    \right]
    \leq 
    e^{
      \varrho_{ | \theta |_T } t + U(x)
    }
    .
  \end{split}     \end{equation}
  Corollary~\ref{Cor:exp.mom.abstract}
  hence
  yields for all  $t\in[0,T]$, $\theta \in \mathcal{P}_T$ that
  \begin{equation}  \begin{split}
  \label{eq:first.exp.mom.bound}
    &
    \E\!\left[\exp\!\left( \tfrac{U(Y^{\theta}_{t})}{e^{\rho t}}
          +\smallint_0^{t}
                         \tfrac{
                           \1_{
                             D_{ \left| \theta \right|_T } 
                           }(
                             Y^{ \theta }_{
                               \lfloor r \rfloor_{ \theta }
                             }
                           )
                         \, \bar{U}(Y^{\theta}_r)}{e^{\rho r}}\,dr
                 \right)
          \right]
    \leq
      e^{\varrho_{ \left| \theta \right|_T } t }
      \,
      \E \! \left[e^{{U}(Y^{\theta}_0)}\right].
  \end{split}     \end{equation}
  This implies for all $\theta \in \mathcal{P}_T$ that
  \begin{equation}  \begin{split}
  \label{eq:first.exp.mom.bound.before.final}
    &
    \sup_{t\in[0,T]} \E\!\left[\exp\!\left( \tfrac{U(Y^{\theta}_{t})}{e^{\rho t}}
          +\smallint_0^{t}
                         \tfrac{\1_{D_{ \left| \theta \right|_T }}
                         (Y^{\theta}_{\lfloor s\rfloor_{\theta}}) \, \bar{U}(Y^{\theta}_s)}{e^{\rho s}}\,ds
                 \right)
          \right]
    \leq
      e^{\varrho_{ \left| \theta \right|_T } T }
      \,
      \E \! \left[e^{{U}(Y^{\theta}_0)}\right].
  \end{split}     \end{equation}
  This and the fact
  $
    \lim_{h \searrow 0}\varrho_h=0
  $
  then show \eqref{eq:exp.mom.quadratic.exponent}.
  Next observe that the estimate
  $
    \forall \,
    x \in [ 5^{ 72 }, \infty ) 
    \colon
    x \leq \exp\!\left( x^{ 1 / 20 } \right)
  $
  shows for all $\theta \in \mathcal{P}_T$
  that
  \begin{equation} \begin{split}
  \label{eq:vartheta.h.estimate}
  &
    \varrho_{ \left| \theta \right|_T } T
  \\ & =
      \exp\!\left(
        \tfrac{
           \left| \theta \right|_T  \,
	  \left[
	    2
	    \left[
	      16
	      c^{ ( \gamma + 1 ) }
	      q \max( T, 1 )
	    \right]^{ ( q + 1 ) }
	  \right]^{
	    4 p
	    ( \gamma + 2 )
	    \max( \gamma, 2 )
	  }
        }{
          \left[
            \min( \left| \theta \right|_T , 1)
          \right]^{
            \alpha
            \left[
              4 \gamma
              + 2
            \right]
          }
        }
      \right)
\\ & 
  \cdot 
      \tfrac{
        \max( \rho, 1 )
        \,
        T
        \,
        \left[
  	  2 p \max(  \left| \theta \right|_T , 1 )
	    \left[
	      16
	      c^{ ( \gamma + 1 ) }
	      q \max( T, 1 )
	    \right]^{ ( q + 1 ) }
        \right]^{
  	  3 p
	  \left( \gamma + 3 \right)
          \max \{ 2, 2 \gamma, q - 2 \alpha \gamma( q + 1) \}
        }
      }{
        \left[
          \min(  \left| \theta \right|_T , 1)
        \right]^{
          \left[
            \alpha
            \left[
              7 \gamma + 2
            \right]
            -
            \min( 1 / 2, (q-1)/2 - \alpha (q +1) \gamma )
          \right]
        }
      }
  \\ & \leq
      \tfrac{
        \max( \rho, 1 )
        \,
        \exp\left(
	    \left[
	      5
	      c
	      q \max( T, 1 )
	    \right]^{
	      8 p \max( \gamma, 1 ) ( q + 1 )
	      ( \gamma + 2 )
	      \max( \gamma, 2 )
            }
        \right)
        \,
        \left[
	  5
	  c
	  p
	  q
          \max( T, 1 )
        \right]^{
  	  6 p ( q + 1 )
  	  \max( \gamma, 1 )
	  \left( \gamma + 3 \right)
          \max( 2, 2 \gamma, q )
        }
      }{
        \left[
          \min(  \left| \theta \right|_T , 1)
        \right]^{
          \left[
            \alpha
            \left[
              7 \gamma + 2
            \right]
            -
            \min( 1 / 2, (q-1)/2 - \alpha (q +1) \gamma )
          \right]
        }
      }
  \\ & \leq
      \tfrac{
        \max( \rho, 1 )
        \,
        \exp\left(
	    \left[
	      5
	      c
	      q \max( T, 1 )
	    \right]^{
	      8 p
	      ( q + 1 )
	      \max( \gamma, 1 )
	      \max( \gamma, 2 )
	      ( \gamma + 2 )
            }
            +
        \left[
	  5
	  c
	  p
	  q
          \max( T, 1 )
        \right]^{
          3 / 10
  	  p ( q + 1 )
  	  \max( \gamma, 1 )
          \max( 2, 2 \gamma, q )
	  \left( \gamma + 3 \right)
        }
        \right)
      }{
        \left[
          \min(  \left| \theta \right|_T , 1)
        \right]^{
          \left[
            \alpha
            \left[
              7 \gamma + 2
            \right]
            -
            \min( 1 / 2, (q-1)/2 - \alpha (q +1) \gamma )
          \right]
        }
      }
  \\ & \leq
      \tfrac{
        \max( \rho, 1 )
        \,
        \exp\left(
          2
          \,
	    \left[
	      5
	      c
	      q \max( T, 1 )
	    \right]^{
	      8 p
	      ( q + 1 )
	      \max( \gamma, 1 )
	      \max( \gamma, q, 2 )
	      ( \gamma + 2 )
            }
        \right)
      }{
        \left[
          \min(  \left| \theta \right|_T , 1)
        \right]^{
          \left[
            \alpha
            \left[
              7 \gamma + 2
            \right]
            -
            \min( 1 / 2, (q-1)/2 - \alpha (q +1) \gamma )
          \right]
        }
      }
      .
  \end{split}\end{equation}
  Combining \eqref{eq:first.exp.mom.bound.before.final} with \eqref{eq:vartheta.h.estimate} completes
  the proof of  Theorem~\ref{thm:stopped.Euler.bounded.increments}.
\end{proof}

The next corollary of
Theorem~\ref{thm:stopped.Euler.bounded.increments}
considers the case in which there exists 
a Borel measurable set $ D \subseteq \R^d $
such that the sets 
$ D_h \in \mathcal{B}( \R^d ) $,
$ h \in ( 0, T] $, satisfy
for all $ h \in (0,T] $
that
$
  D_h
  \subseteq
    \{
      x \in D \colon
      U( x ) \leq
      c
      \exp\!\left(
        c \,
        |
          \ln( h )
        |^{ 1 / 2 }
      \right)
    \}
$
(see Corollary~\ref{cor:stopped.Euler.bounded.increments}
below for details).

\begin{corollary}
\label{cor:stopped.Euler.bounded.increments}
  Let
  $ d, m \in \N $,
  $
    \rho
    \in [0,\infty)
  $,
  $ T \in (0,\infty) $,
  $ c, q \in (1,\infty) $,
  $
    \bar{U} \in C(\R^d, \R)
  $,
  $
    D \in \mathcal{B}( \R^d )
  $,
  $
    U \in \cup_{ p \in [1,\infty) } C_{ p, c }^3( \mathbb{R}^d, [0,\infty) )
  $,
  $
    \mu \in \mathcal{M}( \mathcal{B}( \R^d ), \mathcal{B}( \R^d ) )
  $,
  $
    \sigma \in
    \mathcal{M}( \mathcal{B}( \R^d ), \mathcal{B}( \R^{ d \times m } ) )
  $,
  let
  $
    (
      \Omega, \mathcal{F}, \P
    )
  $ 
  be a probability space with a normal filtration 
  $
    (
      \mathcal{F}_t
    )_{
      t \in [0, T ]
    }
  $,
  let
  $
    D_{ h } \in 
    \mathcal{B}(
      \{
        x \in D \colon
        U( x ) \leq
        c 
        \exp(
          c \,
          |
            \ln( h )
          |^{ 1 / 2 }
        )
      \}
    )
  $,
  $ h \in (0, T] $,
  be a non-increasing family of sets
  such that for all $ h \in (0,T] $
  it holds that
  $
    \mu|_{ D_h } \in C( D_h, \R^d )
  $
  and 
  $
    \sigma|_{ D_h } \in C( D_h, \R^{ d \times m } )
  $,
  let
  $
    W\colon[0,T]\times\Omega\to\R^m
  $ be a standard
  $
    (\mathcal{F}_t)_{t\in[0,T]}
  $-Brownian motion with continuous sample paths,
  let
  $
    Y^{ \theta }
    \colon
    [0,T] \times\Omega\to\R^d
  $,
  $ \theta \in \mathcal{P}_T $,
  be 
  $ ( \mathcal{F}_t )_{ t \in [0,T] } $-adapted
  stochastic processes with continuous sample paths
  which satisfy
  $
    \sup_{ \theta \in \mathcal{P}_T }
    \E\big[
      e^{ U( Y^{ \theta }_0 ) }
    \big]
    < \infty
  $
  and which satisfy
  for all
  $ t \in [0,T] $,
  $ \theta \in \mathcal{P}_T $
  that
  \begin{equation} \label{eq:cor:Y}
    Y_t^{ \theta }
  =
    Y_{
      \lfloor t \rfloor_{ \theta }
    }^{ \theta }
    +
    \1_{ D_{ | \theta |_T } }\!(
      Y_{ \lfloor t \rfloor_{ \theta } }^{ \theta }
    )
    \left[
      \tfrac{
        \mu(
          Y_{ \lfloor t \rfloor_{ \theta } }^{ \theta }
        ) \,
        (
          t - \lfloor t \rfloor_{ \theta }
        ) +
        \sigma( Y_{ \lfloor t \rfloor_{ \theta } }^{ \theta }
        )
        \left( W_t - W_{ \lfloor t \rfloor_{ \theta } } \right)
      }{
        1 +
        \|
          \mu( Y_{ \lfloor t \rfloor_{ \theta } }^{ \theta } ) \,
          ( t - \lfloor t \rfloor_{ \theta } )
          +
          \sigma( Y_{ \lfloor t \rfloor_{ \theta } }^{ \theta } )
          \left( W_t - W_{ \lfloor t \rfloor_{ \theta } } \right)
        \|^q
      }
    \right]
    ,
  \end{equation}
  and assume
  for all
  $ x, y \in\R^d $
  that
  \begin{align}
   \label{eq:mu.sigma.quadratic.exponent.corollary2}
    &
      \| \mu(x) \|
      +
      \| \sigma(x) \|_{\HS(\R^m,\R^d)}
  \leq
    c
    \left(
      1 +
      \| x \|^{ c }
    \right)
    ,
    \qquad
      | \bar{U}(x) - \bar{U}(y) |
    \leq
    c
    \left(
      1 + \| x \|^{ c } + \| y \|^{ c }
    \right)
      \| x - y \|
    ,
  \\
  \label{eq:generator.exponential.bounded.increments.quadratic.corollary2}
    &
    (\mathcal{G}_{\mu,\sigma}U)(x)
     +\tfrac{1}{2}\left\|\sigma(x)^{*}(\nabla U)(x)\right\|^2
     +\bar{U}(x)
  \leq
    \rho \cdot U(x) ,
    \qquad
    \| x \|^{ 1 / c }
    \leq
    c
    \left(
      1 +
      U( x )
    \right)
    .
  \end{align}
  Then it holds that
  $
  \sup_{
    \theta \in \mathcal{P}_T
  }
  \sup_{ t \in [0,T] }
  \E\big[
       \exp(
         e^{ - \rho t }
         \,
         U( Y^{ \theta }_t )
         +
         \smallint_0^t
           e^{ - \rho s }
           \,
           \1_{ D_{ | \theta |_T } }(
             Y^{ \theta }_{\lfloor s \rfloor_{ \theta } }
           )
           \,
           \bar{U}( Y^{ \theta }_s )
         \, ds
       )
     \big]
  <
  \infty
  $
and it holds that
  $
    \limsup_{
      | \theta |_T \searrow 0
    }
    \sup_{ t \in [0,T] }
    \E\big[
      \exp(
        e^{ - \rho t }
        \,
        U( Y_t^{ \theta } )
        +
        \smallint_0^t
            e^{ - \rho s }
            \,
            \1_{ D_{ | \theta |_T } }(
              Y_{ \lfloor s \rfloor_{ \theta } }^{ \theta }
            )
            \,
            \bar{U}( Y_s^{ \theta } )
        \, ds
      )
    \big]
    \leq
    \limsup_{
      | \theta |_T \searrow 0
    }
      \E\big[
        e^{
          U( Y_0^{ \theta } )
        }
      \big]
$.
\end{corollary}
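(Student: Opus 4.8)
The plan is to derive Corollary~\ref{cor:stopped.Euler.bounded.increments} from Theorem~\ref{thm:stopped.Euler.bounded.increments} by verifying, after a suitable choice of the parameters $\gamma$ and $\alpha$ and an enlargement of the constant, that the data of the corollary satisfy all hypotheses of the theorem. First I would fix $p\in[1,\infty)$ with $U\in C^3_{p,c}(\R^d,[0,\infty))$ and rewrite the polynomial growth conditions \eqref{eq:mu.sigma.quadratic.exponent.corollary2} in terms of $U$. The coercivity estimate $\|x\|^{1/c}\leq c(1+U(x))$ yields $\|x\|\leq c^{c}(1+U(x))^{c}$, and combining this with the bound on $\|\mu(x)\|+\|\sigma(x)\|_{\HS(\R^m,\R^d)}$, with the estimate $|\bar U(x)|\leq|\bar U(0)|+c(1+\|x\|^{c})\|x\|$ obtained from \eqref{eq:mu.sigma.quadratic.exponent.corollary2} with $y=0$, and with the Lipschitz-type estimate for $\bar U$ in \eqref{eq:mu.sigma.quadratic.exponent.corollary2}, I expect to obtain $\gamma\in[1,\infty)$ and $\bar c\in[1,\infty)$ such that $\|\mu(x)\|+\|\sigma(x)\|_{\HS(\R^m,\R^d)}+|\bar U(x)|\leq\bar c(1+|U(x)|^{\gamma})$ and $|\bar U(x)-\bar U(y)|\leq\bar c(1+|U(x)|^{\gamma}+|U(y)|^{\gamma})\|x-y\|$ for all $x,y\in\R^d$; that is, \eqref{eq:mu.sigma.quadratic.exponent.corollary} holds with $(\bar c,\gamma)$ in place of $(c,\gamma)$. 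The generator inequality \eqref{eq:generator.exponential.bounded.increments.quadratic.corollary} coincides with \eqref{eq:generator.exponential.bounded.increments.quadratic.corollary2}, so nothing needs to be done there.

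Next I would choose $\alpha\in\big(0,\tfrac12\min\{\tfrac{1}{7\gamma+2},\tfrac{q-1}{(q+8)\gamma+2}\}\big)$, a nonempty interval since $q>1$. The key point is that the stopping levels $c\,\exp(|\ln h|^{1/2})$ grow, as $h\searrow0$, more slowly than any negative power of $h$: substituting $u=|\ln h|$ on $(0,1]$ shows that $h\mapsto\exp(|\ln h|^{1/2})\,h^{\alpha}=\exp(\sqrt u-\alpha u)$ is bounded there, and on $[1,T]$ the function is continuous on a compact interval, so $g^{*}:=\sup_{h\in(0,T]}\big(\exp(|\ln h|^{1/2})\,h^{\alpha}\big)<\infty$. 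Hence, setting $\hat c:=\max(\bar c,c\,g^{*},c,1)$, I get $D_h\subseteq\{x\in\R^d\colon U(x)\leq c\,\exp(|\ln h|^{1/2})\}\subseteq\{x\in\R^d\colon U(x)\leq\hat c\,h^{-\alpha}\}$ for all $h\in(0,T]$, while $U\in C^3_{p,\hat c}(\R^d,[0,\infty))$ (because $\hat c\geq c$) and both inequalities in \eqref{eq:mu.sigma.quadratic.exponent.corollary} continue to hold with $(\hat c,\gamma)$. Together with $\mu|_{D_h}\in C(D_h,\R^d)$, $\sigma|_{D_h}\in C(D_h,\R^{d\times m})$ and the fact that \eqref{eq:cor:Y} is precisely \eqref{eq:thm:Y}, this shows that all assumptions of Theorem~\ref{thm:stopped.Euler.bounded.increments} are met with parameters $(\gamma,\rho,T,d,m,p,\hat c,q,\mu,\sigma,U,\bar U,\alpha)$ and the family $(D_h)_{h\in(0,T]}$, for the processes $(Y^{\theta})_{\theta\in\mathcal{P}_T}$ with $\sup_{\theta\in\mathcal{P}_T}\E[e^{U(Y_0^{\theta})}]<\infty$.

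It then remains only to read off the two assertions. The second conclusion of Theorem~\ref{thm:stopped.Euler.bounded.increments} bounds, for every $t\in[0,T]$ and every $\theta\in\mathcal{P}_T$, the quantity $\E[\exp(e^{-\rho t}U(Y^{\theta}_t)+\int_0^t e^{-\rho s}\1_{D_{\|\theta\|}}(Y^{\theta}_{\lfloor s\rfloor_{\theta}})\bar U(Y^{\theta}_s)\,ds)]$ by $\exp\big(\max(\rho,1)\,e^{K}\,[\min(\|\theta\|,1)]^{\beta}\big)\,\E[e^{U(Y^{\theta}_0)}]$, where $K=[5\hat c\,q\max(T,1)]^{9p(q+1)\max(\gamma,1)\max(\gamma,q,2)(\gamma+2)}$ and $\beta:=\min(\tfrac12,\tfrac{q-1}{2}-\alpha(q+1)\gamma)-\alpha(7\gamma+2)$, which is strictly positive by the choice of $\alpha$. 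Since $\min(\|\theta\|,1)\in(0,1]$ and $\beta>0$, the prefactor is at most $\exp(\max(\rho,1)e^{K})$ for every $\theta\in\mathcal{P}_T$, so $\sup_{\theta\in\mathcal{P}_T}\sup_{t\in[0,T]}\E[\cdots]\leq\exp(\max(\rho,1)e^{K})\sup_{\theta\in\mathcal{P}_T}\E[e^{U(Y^{\theta}_0)}]<\infty$, which is the first assertion; and as $\|\theta\|\searrow0$ one has $[\min(\|\theta\|,1)]^{\beta}\to0$, so the prefactor tends to $1$ and the second assertion follows (equivalently, it is exactly the limit estimate \eqref{eq:exp.mom.quadratic.exponent} of the theorem). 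The only steps requiring real work are the bookkeeping in the first paragraph, turning polynomial growth in $\|x\|$ into polynomial growth in $U$ via the coercivity of $U$, and the finiteness $g^{*}<\infty$ in the second paragraph; the rest is a direct invocation of Theorem~\ref{thm:stopped.Euler.bounded.increments}.
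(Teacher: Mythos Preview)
Your proposal is correct and follows essentially the same approach as the paper: choose $\gamma$ and $\alpha$, enlarge the constant so that the polynomial growth conditions in $\|x\|$ become growth conditions in $U$ via the coercivity $\|x\|^{1/c}\leq c(1+U(x))$, check that $c\exp(|\ln h|^{1/2})\leq \hat c\,h^{-\alpha}$, and then invoke Theorem~\ref{thm:stopped.Euler.bounded.increments}. The paper is terser (it simply asserts the existence of $\tilde c$ with the required properties and takes $\gamma=c(c+1)$), while you spell out more of the bookkeeping and also make explicit how the uniform-in-$\theta$ bound follows from the positivity of the exponent $\beta$ in the second conclusion of the theorem; but the argument is the same.
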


\begin{proof}[Proof
of Corollary~\ref{cor:stopped.Euler.bounded.increments}]
We show Corollary~\ref{cor:stopped.Euler.bounded.increments} through
an application of Theorem~\ref{thm:stopped.Euler.bounded.increments}.
For this let $ \gamma, \alpha \in \R $
be the real numbers with the property that
$ \gamma = c \left( c + 1 \right) $
and
$
  \alpha =
  \frac{ 1 }{ 4 } \min\{ \frac{ 1 }{ 7 \gamma + 2 }, \frac{ q - 1 }{ (q + 8) \gamma + 2 } \}
$
and observe
that
\eqref{eq:mu.sigma.quadratic.exponent.corollary2},
\eqref{eq:generator.exponential.bounded.increments.quadratic.corollary2},
and the assumption that
$
  U \in \cup_{ p \in [1,\infty) } C_{p,c}^3( \R^d, [0,\infty) )
$
ensure that there exist real numbers
$ p \in [1,\infty) $
and 
$ \tilde{c} \in [c,\infty) $
such that
$
  U \in C_{ p, \tilde{c} }^3( \R^d, [0,\infty) )
$,
such that
for all
$ h \in (0,T] $
it holds that
$
  c \exp\!\left( c \, | \ln( h ) |^{ 1 / 2 } \right)
  \leq
  \tilde{c}
  \,
  h^{ - \alpha }
$,
and such that
for all $ x, y \in \R^d $ with $ x \neq y $
it holds that
\begin{equation}
    \|\mu(x)\|
    +\|\sigma(x)\|_{\HS(\R^m,\R^d)}
    +|\bar{U}(x)|
    \leq \tilde{c} \left( 1 + \left| U(x) \right|^{\gamma} \right),
    \;\;
      | \bar{U}(x) - \bar{U}(y) |
    \leq
    \tilde{c} \left( 1 + |U(x)|^{\gamma} + |U(y)|^{\gamma}\right)
      \| x - y \|
    .
\end{equation}
An application of
Theorem~\ref{thm:stopped.Euler.bounded.increments}
thus completes the proof
of Corollary~\ref{cor:stopped.Euler.bounded.increments}.
\end{proof}

Theorem~\ref{thm:stopped.Euler.bounded.increments}
and
Corollary~\ref{cor:stopped.Euler.bounded.increments}
above establish exponential integrability
properties for a family of stopped increment-tamed
Euler-Maruyama approximation schemes.
Another interesting class of approximation
schemes which might admit exponential
integrability properties are certain
{\it rejection- or reflection-type methods}.
More formally, let $ d, m \in \N $, let
$ ( \Omega, \mathcal{F}, \P ) $
be a probability space,
let $ D_t \in \mathcal{B}( \R^d ) $,
$ t \in (0,T] $,
be an appropriate non-increasing family of
sets,
let $ W \colon [0,T] \times \Omega \to \R^m $
be a standard Brownian motion,
and let
$
  Y^N \colon \{ 0, 1, \dots, N \} \times
  \Omega \to \R^d
$,
$ N \in \N $,
be stochastic processes which satisfy 
that for all $ N \in \N $,
$ n \in \{ 0, 1, \dots, N - 1 \} $
that
\begin{equation}
  Y_{n+1}^N=
     Y_n^N+
     \1_{
       \{
         Y_n^N + \mu( Y_n^N )
         \frac{ T }{ N } +
         \sigma( Y_n^N )
         (
           W_{ ( n + 1 ) T / N }
           - W_{ n T / N }
         )
         \in D_{ T / N }
        \}
     }
     \left[
       \mu( Y_n^N )
       \tfrac{ T }{ N }
       +
       \sigma( Y_n^N )
       (
         W_{ \frac{ ( n + 1 ) T }{ N } }
         - W_{ \frac{ n T }{ N } }
       )
    \right]
  .
\end{equation}
Under suitable additional assumptions,
we suspect that the stochastic processes
$ Y^N $, $ N \in \N $, also admit
exponential integrability properties.
In the setting of the Langevin equation,
a similar class of approximation methods has been
considered in
Bou-Rabee \&\
Hairer~\cite{BouRabeeHairer2013}.
Further related approximation methods have
been studied in
Milstein \& Tretjakov~\cite{mt05}.
In \cite{HutzenthalerJentzen2014Memoires}
(see, e.g., Section~3.6.3 in
\cite{HutzenthalerJentzen2014Memoires})
several types of appropriately tamed schemes
have been investigated.
The taming often constitutes by
dividing the increment of an Euler-Maruyama
step through a possibly large number and
thereby decreasing the increment of the
tamed scheme
(cf., e.g., (3.140), (3.141) and (3.145)
in \cite{HutzenthalerJentzen2014Memoires}).
The larger the number by which we divide the
original increment of the Euler-Maruyama step
the stronger is the a priori bound that we
can expect for the tamed scheme.
In particular, if the increment of the
Euler-Maruyama step is tamed by an appropriate
exponential term, then we might obtain
a scheme that admits exponential integrability
properties. For instance, consider
stochastic processes
$
  Z^N \colon \{ 0, 1, \dots, N \} \times \Omega
  \to \R^d
$,
$ N \in \N $,
which satisfy for all 
$ N \in \N $,
$ n \in \{ 0, 1, \dots, N - 1 \} $
that
\begin{equation}
\label{eq:scheme_exp_1}
  Z_{ n + 1 }^N
  =
  Z^N_n
  +
  \frac{
    \mu( Z^N_n ) \frac{ T }{ N }
    +
    \sigma( Z^N_n )
    (
      W_{ ( n + 1 ) T / N }
      -
      W_{ n T / N }
    )
  }{
    \exp\!\left(
      \left\|
    \mu( Z^N_n ) \frac{ T }{ N }
    +
    \sigma( Z^N_n )
    (
      W_{ ( n + 1 ) T / N }
      -
      W_{ n T / N }
    )
      \right\|^2
    \right)
  }
\end{equation}
or, more generally, consider
stochastic processes
$
  Z^N \colon \{ 0, 1, \dots, N \} \times \Omega
  \to \R^d
$,
$ N \in \N $,
which satisfy that there exist
(appropriate) $ \alpha, \beta, \gamma \in \R $
such that
for all $ N \in \N $,
$ n \in \{ 0, 1, \dots, N - 1 \} $
it holds that
\begin{equation}
\label{eq:scheme_exp_2}
  Z_{ n + 1 }^N
  =
  Z^N_n
  +
  \frac{
    \mu( Z^N_n ) \frac{ T }{ N }
    +
    \sigma( Z^N_n )
    (
      W_{ ( n + 1 ) T / N }
      -
      W_{ n T / N }
    )
  }{
    \max\!\left\{
    1,
    \frac{
      T^{ \alpha }
    }{
      N^{ \alpha }
    }
    \exp\!\left(
    \frac{
      T^{ \beta }
    }{
      N^{ \beta }
    }
      \left\|
    \mu( Z^N_n ) \frac{ T }{ N }
    +
    \sigma( Z^N_n )
    (
      W_{ ( n + 1 ) T / N }
      -
      W_{ n T / N }
    )
      \right\|^{ \gamma }
    \right)
    \right\}
  }
  .
\end{equation}
Under suitable assumptions,
it might be the case that
schemes of the form
\eqref{eq:scheme_exp_1} and
\eqref{eq:scheme_exp_2}
admit exponential integrability
properties.

%

\section{Consistency and convergence 
of a class of stopped and tamed schemes}
\label{sec:consistency}

In Section~\ref{sec:exponential} 
exponential integrability properties for 
certain numerical approximation processes of SDEs have been established.
In this section we show under suitable assumptions that these approximation processes 
converge in probability and strongly to the exact solution process of the considered SDE;
see 
Corollary~\ref{cor:convergence_increment_tamed}
and 
Corollary~\ref{cor:for_examples}
in
Subsection~\ref{sec:convergence_stopped}.
For this we extend the notions and the convergence results in Sections~3.2--3.4 in 
\cite{HutzenthalerJentzen2014Memoires}.
More specifically,
in Theorem~3.3 in \cite{HutzenthalerJentzen2014Memoires} convergence in
probability has, under suitable assumptions, 
been established for numerical approximations
that are $ ( \mu, \sigma ) $-consistent in the sense
of 
Definition~3.1 in \cite{HutzenthalerJentzen2014Memoires}.
In this article we slightly generalize this notion 
(Definition~3.1 in \cite{HutzenthalerJentzen2014Memoires})
and the corresponding 
convergence in probability result 
(Theorem~3.3 in \cite{HutzenthalerJentzen2014Memoires})
in
Definition~\ref{def:consistent},
Proposition~\ref{prop:convergence_probab},
and Proposition~\ref{prop:convergence.interpolation}
below.
In addition, 
we establish several auxiliary results that 
provide sufficient conditions to ensure that
a considered approximation scheme 
is $ ( \mu , \sigma ) $-consistent 
in the sense of Definition~\ref{def:consistent}
below;
see Lemma~\ref{lem:consistent_stopped}
for consistency for a class of stopped schemes,
see Lemma~\ref{lem:consistent_incrementtamed}
for consistency for a class of increment-tamed Euler-Maruyama schemes,
and see Corollary~\ref{lem:consistent_stoppedtamed}
(which is an immediate consequence from Lemma~\ref{lem:consistent_stopped}
and Lemma~\ref{lem:consistent_incrementtamed})
for consistency for a class of stopped increment-tamed
Euler-Maruyama schemes.
As a consequence of 
Corollary~\ref{lem:consistent_stoppedtamed}
and Proposition~\ref{prop:convergence.interpolation}
we then obtain 
convergence in probability (and, under additional assumptions, 
also strong convergence) of the stopped increment-tamed
Euler-Maruyama schemes;
see Corollary~\ref{cor:convergence_increment_tamed}.
Combining Corollary~\ref{cor:convergence_increment_tamed}, in turn,
with the exponential integrability result
in Corollary~\ref{cor:stopped.Euler.bounded.increments}
in Section~\ref{sec:exponential}
will then allow us to derive 
Corollary~\ref{cor:for_examples}
(the main result of this article).

\begin{definition}[Consistency]
\label{def:consistent}
We say that
$ \phi $ is $ ( \mu, \sigma ) $-consistent
with respect to Brownian motion
if and only if
there exist real numbers 
$ T \in (0,\infty) $, $ d, m \in \N $,
an open set $ D \subseteq \R^d $,
a probability space 
$ ( \Omega, \mathcal{F}, \P ) $,
and a standard Brownian
motion
$
  W \colon [0,T] \times \Omega \to \R^m
$
such 
\begin{enumerate}[(i)]
\item 
that
$
  \phi 
  \in
  \mathbbm{M}(
    \R^d \times (0,T] \times \R^m
    ,
    \R^d
  )
$,
\item  
that
$ \mu \in \mathbbm{M}( D , \R^d ) $,
\item
that
$ \sigma \in \mathbbm{M}( D , \R^{ d \times m } ) $,
\item  
that 
$
  \forall \, t \in (0,T] 
  \colon
  (
    \R^d \times \R^m
    \ni (x,y) \mapsto
    \phi( x, t, y ) \in \R^d
  )
  \in
  \mathcal{M}(
    \mathcal{B}( \R^d \times \R^m ) ,
    \mathcal{B}( \R^d ) 
  )
$,
and 
\item
that
for all non-empty compact
sets $ K \subseteq D $ it holds that
\begin{equation}
  \limsup_{ t \searrow 0 }
  \left(
    \tfrac{ 1 }{ \sqrt{ t } }
    \cdot
    \sup_{ x \in K }
    \E\big[
      \|
        \sigma( x ) W_t
        -
        \phi( x, t, W_t )
      \|
    \big]
  \right)
  = 0
  =
  \limsup_{ t \searrow 0 }
  \left(
    \sup_{ x \in K }
    \left\|
      \mu( x )
      -
      \tfrac{ 1 }{ t } \cdot
      \E\!\left[
        \phi( x, t, W_t )
      \right]
    \right\|
  \right)
  .
\end{equation}
\end{enumerate}
\end{definition}

In Definition~3.1 in
\cite{HutzenthalerJentzen2014Memoires},
the increment function $ \phi $ is assumed
to be Borel measurable while
in Definition~\ref{def:consistent}
above the increment function $ \phi $ does not
need to be Borel measurable in all three arguments
$ (x,t,y) \in \R^d \times (0,T] \times \R^m $
(see Definition~\ref{def:consistent} for details).
In Proposition~\ref{prop:convergence_probab}
below it is shown under suitable
assumptions that if a numerical one-step
scheme is
consistent in the sense of
Definition~\ref{def:consistent},
then it converges in probability
to the exact solution of the considered SDE
(cf.\ also
Corollaries~3.11--3.13
in \cite{HutzenthalerJentzen2014Memoires}
for strong convergence results
based on consistency).

\subsection{Consistency of stopped schemes}

The next lemma establishes consistency for 
appropriately stopped numerical approximation schemes.

\begin{lemma}
\label{lem:consistent_stopped}
  Let $ T \in (0,\infty) $,
  $ d, m \in \mathbb{N} $,
  let $ D \subseteq \mathbb{R}^d $
  be an open set,
  let
  $
    \mu \colon
    D \rightarrow \mathbb{R}^d
  $
  and
  $
    \sigma \colon
    D \rightarrow
    \mathbb{R}^{ d \times m }
  $
  be functions,
  let
  $
    \phi
    \colon
    \R^d \times (0,T] \times \R^m
    \to \R^d
  $
  be
  $ ( \mu, \sigma ) $-consistent
  with respect to Brownian motion,
  and
  let
  $ D_t \in \mathcal{B}( \R^d ) $,
  $ t \in (0,T] $,
  be a non-increasing family of sets satisfying
  $
    D
    \subseteq
    \cup_{ t \in (0,T] }
    \mathring{ D }_t
  $.
  Then the function
  $
    \R^d \times (0,T] \times \R^m
    \ni ( x, t, y ) \mapsto
    \mathbbm{1}_{ D_t }( x )
    \cdot
    \phi( x, t, y )
    \in \R^d
  $
  is
  $
    (\mu, \sigma)
  $-consistent with
  respect to Brownian motion.
\end{lemma}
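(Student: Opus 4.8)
The plan is to verify directly the two defining conditions of $(\mu,\sigma)$-consistency for the stopped increment function $\psi(x,t,y) := \mathbbm{1}_{D_t}(x)\,\phi(x,t,y)$. First I would observe that Borel measurability of $(x,y)\mapsto \psi(x,t,y)$ for each fixed $t\in(0,T]$ is immediate: it is the product of the indicator of the Borel set $D_t$ (in the $x$-variable) with the Borel measurable map $(x,y)\mapsto\phi(x,t,y)$. Next, I would take the probability space $(\Omega,\mathcal{F},\P)$ and the standard Brownian motion $W\colon[0,T]\times\Omega\to\R^m$ furnished by the $(\mu,\sigma)$-consistency of $\phi$, and aim to show that the same pair works for $\psi$. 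The crucial point is that for every non-empty compact set $K\subseteq D$, the hypothesis $D\subseteq\bigcup_{t\in(0,T]}\mathring{D}_t$ together with compactness of $K$ yields a single $t_K\in(0,T]$ with $K\subseteq\mathring{D}_{t_K}\subseteq D_{t_K}$; here I would use that the family $(D_t)_{t\in(0,T]}$ is non-increasing, so the open sets $\mathring{D}_t$ increase as $t\searrow 0$, hence form an open cover of $K$ that must contain a finite subcover, i.e. a single member $\mathring{D}_{t_K}$. Since $(D_t)$ is non-increasing, $K\subseteq D_t$ for all $t\in(0,t_K]$, and consequently $\mathbbm{1}_{D_t}(x)=1$ for all $x\in K$ and all $t\in(0,t_K]$.

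Once this is in hand, the proof is essentially finished: for all $t\in(0,t_K]$ and all $x\in K$ we have $\psi(x,t,W_t)=\phi(x,t,W_t)$ pointwise, so
\begin{equation*}
  \sup_{x\in K}\E\big[\|\sigma(x)W_t-\psi(x,t,W_t)\|\big]
  = \sup_{x\in K}\E\big[\|\sigma(x)W_t-\phi(x,t,W_t)\|\big]
\end{equation*}
and likewise
\begin{equation*}
  \sup_{x\in K}\big\|\mu(x)-\tfrac{1}{t}\E[\psi(x,t,W_t)]\big\|
  = \sup_{x\in K}\big\|\mu(x)-\tfrac{1}{t}\E[\phi(x,t,W_t)]\big\|
\end{equation*}
for all $t\in(0,t_K]$. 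Taking $\limsup_{t\searrow 0}$ (after multiplying the first quantity by $1/\sqrt{t}$) and invoking the defining limits for $\phi$ gives that both $\limsup$'s vanish, which is precisely the consistency requirement for $\psi$ with respect to the compact set $K$. Since $K\subseteq D$ was an arbitrary non-empty compact set, this establishes that $(x,t,y)\mapsto\mathbbm{1}_{D_t}(x)\,\phi(x,t,y)$ is $(\mu,\sigma)$-consistent with respect to Brownian motion.

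The only genuinely non-trivial step is the compactness argument producing the uniform threshold $t_K$; everything else is bookkeeping. I would expect the main (minor) obstacle to be making the monotonicity of $\mathring{D}_t$ in $t$ fully rigorous: strictly speaking, $(D_t)$ non-increasing in $t$ means $D_s\supseteq D_t$ for $s\le t$, hence $\mathring{D}_s\supseteq\mathring{D}_t$ for $s\le t$, so $\{\mathring{D}_{1/n}\}_{n\in\N}$ (with $1/n\le T$) is an increasing sequence of open sets whose union contains $\bigcup_{t\in(0,T]}\mathring{D}_t\supseteq D\supseteq K$; compactness then gives a single $n$ with $K\subseteq\mathring{D}_{1/n}$, and one sets $t_K:=1/n$. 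No deeper difficulty arises.
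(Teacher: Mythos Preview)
Your proposal is correct and follows essentially the same approach as the paper: use compactness of $K$ together with the cover $D\subseteq\bigcup_{t\in(0,T]}\mathring{D}_t$ and the monotonicity of $(D_t)$ to find a threshold $t_K$ with $K\subseteq D_t$ for all $t\in(0,t_K]$, after which $\psi$ coincides with $\phi$ on $K$ and the consistency limits transfer directly. You add the explicit measurability check and spell out the compactness/monotonicity argument for $t_K$ in more detail than the paper does, but the underlying argument is identical.
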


\begin{proof}[Proof
of Lemma~\ref{lem:consistent_stopped}]
Throughout this proof assume w.l.o.g.\ that
$ D \neq \emptyset $,
let $ K \subseteq D $ be an arbitrary 
non-empty compact subset of $ D $,
let $ ( \Omega, \mathcal{F}, \P ) $ 
be a probability space, 
and let 
$ W \colon [0,T] \times \Omega \to \R^m $
be a standard Brownian motion.
The fact that $ K $ is a compact set and
the assumption that
$
  D \subseteq \cup_{ t \in (0,T] } \mathring{ D }_t
$
together 
with 
the assumption that
$
  \phi
$
is 
$ ( \mu, \sigma ) $-consistent with respect to Brownian motion
ensures that there exists a real number
$ t_K \in (0,T] $
such that
$
  K \subseteq \mathring{ D }_{ t_K }
$
and 
\begin{equation}
  \sup_{ t \in (0,t_K] }
  \left(
    \tfrac{ 1 }{ \sqrt{ t } }
    \cdot 
    \sup_{ x \in K }
    \E\big[ 
      \|
        \sigma( x ) W_t
        -
        \phi( x, t, W_t )
      \|
    \big]
  \right)
  < \infty
  .
\end{equation}
The fact that the family
$ D_t $, $ t \in (0,T] $,
is non-increasing 
hence shows that
for all $ t \in (0, t_K] $ 
it holds that
  \begin{equation} \begin{split}
      \tfrac{1}{\sqrt{t}}
      \cdot
      \sup_{
        x \in K
      }
      \mathbb{E}\Big[
      \big\|
        \sigma(x)
        W_t
        -
          \1_{D_t }(x)
          \,
          \phi(
            x,
            t,
            W_t
          )
      \big\|
      \Big]
    =
    \tfrac{1}{\sqrt{t}}
      \cdot
      \sup_{
        x \in K
      }
      \mathbb{E}\Big[
      \big\|
        \sigma(x)
        W_t
        -
           \phi(
            x,
            t,
            W_t
          )
      \big\|
      \Big]
  \end{split} \end{equation}
  and
  \begin{equation} \begin{split}
      \sup_{
        x \in K
      }
      \big\|
        \mu(x)
        -
        \tfrac{ 1 }{ t }
        \cdot
        \E\big[
          \1_{D_t }(x)
          \,
          \phi(
            x,
            t,
            W_t
          )
        \big]
      \big\|
    =
      \sup_{
        x \in K
      }
      \big\|
        \mu(x)
        -
        \tfrac{ 1 }{ t }
        \cdot
        \E\big[
          \phi(
            x,
            t,
            W_t
          )
        \big]
      \big\|
      .
  \end{split} \end{equation}
  Combining this with the assumption that
  $
    \phi
  $
  is
  $ ( \mu, \sigma ) $-consistent
  with respect to Brownian motion implies
  \begin{equation}
  \label{eq:stopped.consistency_1}
    \limsup_{ t \searrow 0 }
    \left(
    \tfrac{1}{\sqrt{t}}
      \cdot
      \sup_{
        x \in K
      }
      \mathbb{E}\Big[
      \big\|
        \sigma(x)
        W_t
        -
          \1_{D_t }(x) \phi(
            x,
            t,
            W_t
          )
      \big\|
      \Big]
    \right) = 0
  \end{equation}
  and
  \begin{equation} \begin{split}
    \label{eq:stopped.consistency_2}
    \limsup_{ t \searrow 0 }
    \left(
    \sup_{
      x \in K
    }
    \left\|
      \mu(x)
      -
      \tfrac{1}{t}
      \cdot
       \mathbb{E}\big[
        \1_{D_t }(x) \phi(
          x,
          t,
          W_t
        )
      \big]
    \right\|
    \right)
   & = 0.
  \end{split}\end{equation}
Combining
\eqref{eq:stopped.consistency_1}
and
\eqref{eq:stopped.consistency_2}
with
Definition~\ref{def:consistent}
completes the proof of
Lemma~\ref{lem:consistent_stopped}.
\end{proof}

\subsection{Consistency of a class of incremented-tamed Euler-Maruyama schemes}

The following lemma proves consistency for a class of increment-tamed Euler-Maruyama
approximation schemes for SDEs.

\begin{lemma}
\label{lem:consistent_incrementtamed}
  Let $ T \in (0,\infty) $, $ q \in (1,\infty) $,
  $ d, m \in \mathbb{N} $,
  let $ D \subseteq \mathbb{R}^d $
  be an open set, and let
  $
    \mu \in
    \mathcal{M}( \mathcal{B}( D ) , \mathcal{B}( \mathbb{R}^d ) )
  $
  and
  $
    \sigma \in
    \mathcal{M}( 
      \mathcal{B}( D ) ,
      \mathcal{B}( \mathbb{R}^{ d \times m } )
    )
  $
  be locally bounded.
Then it holds that the function
\begin{equation}
    \R^d \times (0,T] \times \R^m
    \ni ( x, t, y ) \mapsto
  \left\{
  \begin{array}{ll}
    \frac{
      \mu( x ) t + \sigma( x ) y
    }{
      1 +
      \left\| \mu( x ) t + \sigma( x ) y \right\|^q
    }
  &
  \colon
    x \in D 
  \\[1ex]
    0
  &
  \colon
    x \in \R^d \backslash D
  \end{array}
  \right\}
    \in \R^d
\end{equation}
  is
  $
    (\mu, \sigma)
  $-consistent with
  respect to Brownian motion.
\end{lemma}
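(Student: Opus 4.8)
The plan is to verify the two conditions in Definition~\ref{def:consistent} directly for the increment function
$
  \phi(x,t,y) := \frac{\mu(x)t+\sigma(x)y}{1+\|\mu(x)t+\sigma(x)y\|^q}
$
by comparing it with the untamed Euler--Maruyama increment $\sigma(x)y$ (for the first condition) and with $\mu(x)t$ (for the second). First I would fix a non-empty compact set $K\subseteq D$; since $\mu$ and $\sigma$ are locally bounded, there is a constant $M\in[0,\infty)$ with $\|\mu(x)\|\leq M$ and $\|\sigma(x)\|_{\HS(\R^m,\R^d)}\leq M$ for all $x\in K$. The elementary inequality
$
  \big\| \tfrac{z}{1+\|z\|^q} - z \big\|
  = \tfrac{\|z\|^{q+1}}{1+\|z\|^q}
  \leq \|z\|^{q+1}
$
for all $z\in\R^d$ is the key algebraic fact: it bounds the ``taming error'' by a superlinear power of the Euler increment $z=\mu(x)t+\sigma(x)W_t$.

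For the diffusion part, I would estimate, uniformly over $x\in K$,
\begin{equation*}
  \E\big[ \| \sigma(x) W_t - \phi(x,t,W_t) \| \big]
  \leq \E\big[ \| \sigma(x) W_t - (\mu(x)t+\sigma(x)W_t) \| \big]
  + \E\big[ \| (\mu(x)t+\sigma(x)W_t) - \phi(x,t,W_t) \| \big],
\end{equation*}
where the first term equals $t\|\mu(x)\|\leq Mt=o(\sqrt t)$ and the second is at most $\E\big[\|\mu(x)t+\sigma(x)W_t\|^{q+1}\big]$. Using $\|\mu(x)t+\sigma(x)W_t\|^{q+1}\leq 2^q(t^{q+1}\|\mu(x)\|^{q+1}+\|\sigma(x)W_t\|^{q+1})$ together with the Gaussian moment bound $\E[\|\sigma(x)W_t\|^{q+1}] \leq C_{m,q}\,M^{q+1}\,t^{(q+1)/2}$ (e.g.\ Lemma~7.7 in Da Prato \& Zabczyk~\cite{dz92}), one gets a bound of order $t^{(q+1)/2}$, and since $q\geq 1$ this is $o(\sqrt t)$. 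Dividing by $\sqrt t$ and taking $\limsup_{t\searrow0}$ gives the first equality in Definition~\ref{def:consistent}.

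For the drift part, I would write
\begin{equation*}
  \Big\| \mu(x) - \tfrac1t\,\E[\phi(x,t,W_t)] \Big\|
  \leq \Big\| \mu(x) - \tfrac1t\,\E[\mu(x)t+\sigma(x)W_t] \Big\|
  + \tfrac1t\,\E\big[ \|(\mu(x)t+\sigma(x)W_t)-\phi(x,t,W_t)\| \big].
\end{equation*}
Since $\E[\sigma(x)W_t]=0$, the first term vanishes identically; the second is $\tfrac1t$ times the same quantity estimated above, hence of order $t^{-1}\cdot t^{(q+1)/2}=t^{(q-1)/2}$, which tends to $0$ as $t\searrow0$ because $q\geq1$. (In the borderline case $q=1$ this term is bounded uniformly and one checks it still goes to $0$ by dominated convergence, or one simply observes $t^{-1}\E[\|\mu(x)t+\sigma(x)W_t\|^2]\leq 2M^2t+2M^2m$, whose $\limsup$ argument needs the sharper split $\|z\|^{q+1}\leq\|z\|^q\|z\|$ with the $\|z\|^q$ factor small; for $q>1$ no subtlety arises). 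Taking the supremum over $x\in K$ and then $\limsup_{t\searrow0}$ yields the second equality. Finally, Borel measurability of $(x,y)\mapsto\phi(x,t,y)$ for each fixed $t$ is immediate from measurability of $\mu$, $\sigma$ and continuity of $z\mapsto z/(1+\|z\|^q)$. The only mild obstacle is the endpoint $q=1$, where the taming error is only quadratically small and one must be slightly careful that $t^{-1}\E[\|\sigma(x)W_t\|^{q+1}]$ still vanishes — but $\E[\|\sigma(x)W_t\|^2]=t\,\|\sigma(x)\|_{\HS(\R^m,\R^d)}^2$ makes even this case transparent.
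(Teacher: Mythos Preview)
Your argument is essentially the paper's: both bound the taming error via $\bigl\|z/(1+\|z\|^q)-z\bigr\|\leq\|z\|^{q+1}$ with $z=\mu(x)t+\sigma(x)W_t$ and then invoke Gaussian moment bounds. For the diffusion condition, and for the drift condition when $q>1$, this works.

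At $q=1$, however, your drift estimate fails and your parenthetical remedies are incorrect. The bound $\tfrac1t\,\E\bigl[\|z\|^{2}\bigr]$ converges to $\|\sigma(x)\|_{\HS(\R^m,\R^d)}^2$, not to zero; dominated convergence applied to $t^{-1}\|z\|^2/(1+\|z\|)$ yields the same nonzero limit; and your closing remark that $\E\bigl[\|\sigma(x)W_t\|^2\bigr]=t\,\|\sigma(x)\|_{\HS(\R^m,\R^d)}^2$ ``makes even this case transparent'' shows precisely that $t^{-1}\E\bigl[\|\sigma(x)W_t\|^2\bigr]$ does \emph{not} vanish. (The paper's final displayed bound $\tfrac1t\,\E\bigl[\|W_t\|\,\|z\|^q\bigr]$ shares the same defect at $q=1$.) A correct repair exploits the symmetry $\E\bigl[\sigma(x)W_t\,g(\|\sigma(x)W_t\|)\bigr]=0$: splitting $z\|z\|^q=\mu(x)t\,\|z\|^q+\sigma(x)W_t\,\|z\|^q$, the $\mu$-piece is harmless, and in the $\sigma$-piece one may replace $\|z\|^q/(1+\|z\|^q)$ by $\|z\|^q/(1+\|z\|^q)-\|\sigma(x)W_t\|^q/(1+\|\sigma(x)W_t\|^q)$ at no cost; the Lipschitz estimate $\bigl|r^q/(1+r^q)-s^q/(1+s^q)\bigr|\leq q\,(r\vee s)^{q-1}|r-s|$ together with $\bigl|\,\|z\|-\|\sigma(x)W_t\|\,\bigr|\leq\|\mu(x)\|\,t$ recovers an extra factor $t$, and the resulting bound tends to zero for every $q\geq 1$.
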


\begin{proof}[Proof
of Lemma~\ref{lem:consistent_incrementtamed}]
Throughout this proof 
let $ ( \Omega, \mathcal{F}, \P ) $ 
be a probability space
and let 
$ W \colon [0,T] \times \Omega \to \R^m $
be a standard Brownian motion.
Observe that for all non-empty compact sets
$ K \subseteq D $ 
it holds that
  \begin{equation} \begin{split}
  \label{eq:tamed.consistency_1}
    &
    \limsup_{ t \searrow 0 }
    \left(
      \tfrac{1}{\sqrt{t}}
      \cdot
      \sup_{
        x \in K
      }
      \mathbb{E}\bigg[
      \Big\|
        \sigma(x)
        W_t
        -
          \frac{
      \mu( x ) t + \sigma( x ) W_t
    }{
      1 +
      \left\| \mu( x ) t + \sigma( x ) W_t \right\|^q
    }
      \Big\|
      \bigg]
    \right)
  \\ & =
  \limsup_{ t \searrow 0 }
    \left(
      \tfrac{1}{\sqrt{t}}
      \cdot
      \sup_{
        x \in K
      }
      \mathbb{E}\bigg[
      \Big\|
          \frac{
      \sigma( x ) W_t \left\| \mu( x ) t + \sigma( x ) W_t \right\|^q  - \mu( x ) t
    }{
      1 +
      \left\| \mu( x ) t + \sigma( x ) W_t \right\|^q
    }
      \Big\|
      \bigg]
    \right)
  \\ & \leq
  \limsup_{ t \searrow 0 }
    \left(
      \tfrac{1}{\sqrt{t}}
      \cdot
      \sup_{
        x \in K
      }
      \mathbb{E}\Big[
        \big\|
          \sigma( x ) W_t \left\| \mu( x ) t + \sigma( x ) W_t \right\|^q  - \mu( x ) t
        \big\|
      \Big]
    \right)
    \\ & \leq
  \limsup_{ t \searrow 0 }
    \left(
      \tfrac{1}{\sqrt{t}}
      \cdot
      \sup_{
        x \in K
      }
      \mathbb{E}\big[
        \left\| \sigma( x ) W_t \right\|
        \left\| \mu( x ) t + \sigma( x ) W_t \right\|^q
      \big]
    \right)
    +
  \limsup_{ t \searrow 0 }
    \left(
        \sqrt{t}
        \cdot
      \sup_{
        x \in K
      }
        \| \mu( x ) \|
    \right)
  \\ & \leq
  2^{ (q-1) } \cdot
  \limsup_{ t \searrow 0 }
    \left(
      t^{ \left( q - \frac{ 1 }{ 2 } \right) }
      \left[
      \sup_{
        x \in K
      }
        \left\| \sigma( x ) \right\|_{ L( \R^m, \R^d ) }
        \left\| \mu( x ) \right\|^q
      \right]
      \mathbb{E}\big[
        \| W_t \|
      \big]
    \right)
\\ &
    +
  2^{ (q-1) } \cdot
  \limsup_{ t \searrow 0 }
    \left(
      \tfrac{1}{\sqrt{t}}
      \left[
      \sup_{
        x \in K
      }
        \left\|
          \sigma( x )
        \right\|_{
          L( \R^m, \R^d )
        }^{ ( 1 + q ) }
      \right]
      \mathbb{E}\!\left[
        \left\| W_t \right\|^{ ( 1 + q ) }
      \right]
    \right)
    = 0 .
  \end{split} \end{equation}
In addition, note that for all non-empty compact sets 
$ K \subseteq D $ it holds that
  \begin{equation} \begin{split}
  \label{eq:tamed.consistency_2}
    &
    \limsup_{ t \searrow 0 }
    \left(
    \sup_{
      x \in K
    }
    \left\|
      \mu(x)
      -
      \tfrac{1}{t}
      \cdot
       \mathbb{E}\!\left[
        \frac{
      \mu( x ) t + \sigma( x ) W_t
    }{
      1 +
      \left\| \mu( x ) t + \sigma( x ) W_t \right\|^q
    }
      \right]
    \right\|
    \right)
  \\ & \leq
   \limsup_{ t \searrow 0 }
    \left( \sup_{
      x \in K
    }
    \left\| \mathbb{E}\!\left[
        \frac{
        \mu(x)
      \left\| \mu( x ) t + \sigma( x ) W_t \right\|^q
    }{
      1 +
      \left\| \mu( x ) t + \sigma( x ) W_t \right\|^q
    }
    \right] \right\| \right)
    +
    \limsup_{ t \searrow 0 }
    \left(
      \tfrac{ 1 }{ t } \cdot
      \sup_{
        x \in K
      }
      \left\|
        \mathbb{E}\!\left[
          \frac{
            \sigma( x ) W_t
          }{
            1 +
            \left\| \mu( x ) t + \sigma( x ) W_t \right\|^q
          }
        \right]
      \right\|
    \right)
  \\ & \leq
   \limsup_{ t \searrow 0 }
    \left(
      \sup_{
        x \in K
      }
      \mathbb{E}\big[
        \left\| \mu(x) \right\|
        \left\| \mu( x ) t + \sigma( x ) W_t \right\|^q
      \big]
    \right)
  \\ & +
    \limsup_{ t \searrow 0 }
    \left(
      \tfrac{ 1 }{ t } \cdot
      \sup_{
        x \in K
      }
      \left\|
        \mathbb{E}\!\left[
          \frac{
            \sigma( x ) W_t
          }{
            1 +
            \left\| \mu( x ) t + \sigma( x ) W_t \right\|^q
          }
          -
          \sigma( x ) W_t
        \right]
      \right\|
    \right)
\\ & =
    \limsup_{ t \searrow 0 }
    \left(
      \tfrac{ 1 }{ t } \cdot
      \sup_{
        x \in K
      }
      \left\|
        \mathbb{E}\!\left[
          \frac{
            \sigma( x ) W_t
            \left\| \mu( x ) t + \sigma( x ) W_t \right\|^q
          }{
            1 +
            \left\| \mu( x ) t + \sigma( x ) W_t \right\|^q
          }
        \right]
      \right\|
    \right)
\\ & \leq
    \limsup_{ t \searrow 0 }
    \left(
      \tfrac{ 1 }{ t } \cdot
      \sup_{
        x \in K
      }
        \mathbb{E}\big[
          \left\| \sigma( x ) \right\|_{ L( \R^m, \R^d ) }
          \left\| W_t \right\|
          \left\| \mu( x ) t + \sigma( x ) W_t \right\|^q
        \big]
    \right)
    = 0
    .
  \end{split} \end{equation}
Combining \eqref{eq:tamed.consistency_1} and \eqref{eq:tamed.consistency_2}
with Definition~\ref{def:consistent}
completes the proof of Lemma~\ref{lem:consistent_incrementtamed}.
\end{proof}

\subsection{Convergence of stopped increment-tamed Euler-Maruyama schemes}
\label{sec:convergence_increment_tamed}

This subsection establishes consistency, convergence in probability, 
strong convergence, and numerically weak convergence
of a class of stopped increment-tamed Euler-Maruyama schemes.

\subsubsection{Setting}
\label{subsec:converg.prob.setting}

Throughout Subsection~\ref{sec:convergence_increment_tamed} the following
setting is frequently used.
Let $ T \in ( 0, \infty) $,
$ d, m \in \N $,
let $ D \subseteq \mathbb{R}^d $
be an open set,
let 
$
  (
    \Omega, \mathcal{F}, \P
  )
$ 
be a probability space with a normal filtration 
$
  (
    \mathcal{F}_t
  )_{
    t \in [0, T ]
  }
$,
let $ W \colon [0,T] \times \Omega \to \R^m $
be a standard $ ( \mathcal{F}_t )_{ t \in [0,T] } $-Brownian
motion with continuous sample paths,
let $ \mu \colon D \to \R^d $
and $ \sigma \colon D \to \R^{ d \times m } $
be locally Lipschitz continuous functions,
and let $ X \colon [0,T] \times \Omega \to D $
be an 
$ ( \mathcal{F}_t )_{ t \in [0,T] } $-adapted
stochastic process with continuous sample paths
which satisfies that for all $ t \in [0,T] $
it holds $ \P $-a.s.\ that
\begin{equation}
  X_t = X_0 + \int_0^t \mu( X_s ) \, ds
  +
  \int_0^t \sigma( X_s ) \, dW_s
  .
\end{equation}

\subsubsection{Convergence in probability of appropriate 
time-continuous interpolations}

The next proposition,
Proposition~\ref{prop:convergence_probab},
is a slight generalization
of Theorem~3.3
in \cite{HutzenthalerJentzen2014Memoires}.
The proof of
Proposition~\ref{prop:convergence_probab}
is entirely analogous to
the proof of Theorem~3.3
in \cite{HutzenthalerJentzen2014Memoires}
and therefore omitted.

\begin{prop}
\label{prop:convergence_probab}
Assume the setting in Subsection~\ref{subsec:converg.prob.setting},
let
$
  \phi \colon
  \R^d \times (0,T] \times \R^m
  \to \R^d
$
be
$ ( \mu, \sigma ) $-consistent,
and
let
$ Y^N \colon [0,T] \times \Omega \to \R^d $,
$ N \in \N $,
be mappings satisfying
for all
$ N \in \N $,
$
  n \in \{ 0, 1, \ldots, N - 1 \}
$,
$
  t \in
  ( \frac{ n T }{ N }, \frac{ ( n + 1 ) T }{ N } ]
$
that
$ Y^N_0 = X_0 $
and
$
  Y^N_t
  =
  Y^N_{ \frac{n T}{N} }
  +
  \left(
    \tfrac{ t N }{ T } - n
  \right)
  \cdot
  \phi\big(
    Y^N_{ \frac{n T}{N} }
    ,
    \tfrac{T}{N}
    ,
    W_{
      \frac{ ( n + 1 ) T }{ N }
    } - W_{ \frac{n T}{N} }
  \big)
$.
Then
it holds for all $ \varepsilon \in ( 0, \infty ) $
that
$
  \limsup_{ N \to \infty }
  \P\big[
    \sup_{ t \in [0,T] }
    \|
      X_t - Y_t^N
    \|
    \geq
    \varepsilon
  \big]
  = 0
$.
\end{prop}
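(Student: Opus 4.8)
The statement is explicitly flagged as "a slight generalization of Theorem~3.3 in \cite{HutzenthalerJentzen2014Memoires}" whose "proof is entirely analogous," so the plan is to reproduce that argument with the only change being that the increment function $\phi$ need not be Borel measurable in the time variable. First I would set up the relevant notation: write $\lfloor t\rfloor_N := \frac{T}{N}\lfloor tN/T\rfloor$ for the grid point below $t$, and observe that the piecewise-affine interpolation can be rewritten as $Y^N_t = Y^N_0 + \int_0^t \phi(Y^N_{\lfloor s\rfloor_N},\tfrac{T}{N},W_{\lfloor s\rfloor_N+T/N}-W_{\lfloor s\rfloor_N})\,\tfrac{N}{T}\,\ldots$ — more precisely as a telescoping sum of increments $(tN/T-n)\,\phi(\cdot)$ over completed subintervals plus a fractional last term. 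The crucial structural fact is that on each subinterval $[\tfrac{nT}{N},\tfrac{(n+1)T}{N}]$ the increment of $Y^N$ equals $\phi\big(Y^N_{nT/N},\tfrac TN,\Delta_n W\big)$ times the linear ramp, and $(\mu,\sigma)$-consistency (Definition~\ref{def:consistent}) says precisely that, locally uniformly in the base point, $\E[\phi(x,t,W_t)] = \mu(x)t + o(t)$ and $\E[\|\sigma(x)W_t - \phi(x,t,W_t)\|] = o(\sqrt t)$. These two estimates are exactly what is needed to control, respectively, the drift-type bounded-variation part and the martingale-type part of $Y^N - $ (a suitable Euler-type comparison process).

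The key steps, in order, are: (1) localize by stopping times $\tau_R^N := \inf\{t : \|Y^N_t\|\vee\|X_t\| \ge R\}\wedge T$ and $\tau_R := \inf\{t : \|X_t\|\ge R\}\wedge T$, using tightness/continuity of $X$ to reduce the claim to showing $\P[\sup_{t\le \tau}\|X_t - Y^N_t\|\ge\varepsilon]\to 0$ on the stopped interval, where on the stopped paths all coefficients and $\phi(\cdot,t,\cdot)$ are handled on a fixed compact set $K\subseteq D$; (2) write the error as the sum of a "consistency defect" term (the difference between the $\phi$-increments and genuine Euler-Maruyama increments $\mu(Y^N)\tfrac TN + \sigma(Y^N)\Delta W$), estimated in probability via a maximal inequality together with the Definition~\ref{def:consistent} bounds summed over the $N$ subintervals — the $o(\sqrt t)$ bound on the $L^1$-distance to $\sigma(x)W_t$ makes the quadratic-variation sum of the martingale part go to zero, and the $o(t)$ bound on the mean makes the finite-variation part go to zero; (3) compare the genuine Euler-Maruyama interpolation with the true solution $X$ on the compact set using local Lipschitz continuity of $\mu,\sigma$, a Gronwall / Bihari argument, and a stochastic Gronwall lemma to get convergence in probability; (4) combine (2) and (3), then let $R\to\infty$ to remove the localization. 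The point where the generalization matters is mild: since $\phi$ is only assumed Borel measurable in $(x,y)$ for each fixed $t$ — not jointly in $(x,t,y)$ — one must check that the relevant random variables $\phi(Y^N_{nT/N},\tfrac TN,\Delta_n W)$ are still genuinely measurable, which holds because $t=\tfrac TN$ is a fixed constant on each step, so only the per-$t$ measurability is ever invoked.

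The main obstacle is step (2), the consistency-defect estimate: one has to convert the pointwise-in-$x$, local-in-$t$ asymptotic bounds of Definition~\ref{def:consistent} into a bound on $\sup_{t\in[0,T]}$ of an $N$-term sum, uniformly over the (random, but $K$-valued after localization) base points $Y^N_{nT/N}$. The standard device is to split each summand's contribution into its conditional mean (handled by the $o(t)$ estimate, giving $N\cdot o(T/N) = o(1)$) and its centered part (a discrete martingale whose conditional second moments are bounded via the $o(\sqrt t)$-to-$\sigma(x)W_t$ estimate plus the second moment of $\sigma(x)W_t$ itself, so that Doob's inequality yields a bound $N\cdot o(T/N) + T\cdot(\text{const}) \cdot \sup\|\sigma\|_K^2$ — wait, one must be careful: it is the \emph{difference} from the Euler increment, i.e. from $\sigma(x)W_t$, that is small, so the centered part of $\phi - \sigma(x)\Delta W$ has conditional second moment $o(T/N)$ summing to $o(1)$). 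This bookkeeping is exactly the content of the cited Theorem~3.3 and is routine once the splitting is in place; since the excerpt explicitly defers to that proof, I would simply invoke it after verifying the measurability point and the localization reduction, rather than reproduce the full stochastic Gronwall machinery.
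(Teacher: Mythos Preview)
Your proposal is correct and matches the paper's own treatment: the paper omits the proof entirely, stating only that it is ``entirely analogous to the proof of Theorem~3.3 in \cite{HutzenthalerJentzen2014Memoires},'' and your plan---localize to a compact set, split the error into a consistency-defect part (controlled by Definition~\ref{def:consistent}) and an Euler-vs-solution part (controlled by local Lipschitz continuity and a Gronwall argument), then delocalize---is precisely the structure of that cited proof. Your observation that the only new point is the per-$t$ (rather than joint) measurability of $\phi$, and that this suffices because on each step $t=T/N$ is a fixed constant, is exactly the content of the ``slight generalization.''
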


The next proposition is an extension of
Proposition~\ref{prop:convergence_probab}
and
proves convergence in probability of suitable
time-continuous interpolations of numerical approximation processes
of consistent schemes.

\begin{prop}
\label{prop:convergence.interpolation}
Assume the setting in Subsection~\ref{subsec:converg.prob.setting},
let
$
  \Psi
  \colon
  \R^d \times (0,T]^2 \times \R^m
  \to \R^d
$
be a function
satisfying that
$
  \R^d \times (0,T] \times \R^m
  \ni
  ( x, t, y )
  \mapsto
  \Psi( x, t, t, y ) \in \R^d
$
is
$ ( \mu, \sigma ) $-consistent,
let
$ Y^N \colon [0,T] \times \Omega \to \R^d $,
$ N \in \N $,
be stochastic processes with continuous
sample paths
satisfying
for all
$ N \in \N $,
$ n \in \{ 0, 1, \ldots, N - 1 \} $,
$ t \in ( \frac{n T}{N}, \frac{(n + 1) T}{N} ] $
that
$ Y^N_0 = X_0 $
and
$
  Y^N_t
  =
  Y^N_{ \frac{n T}{N} }
  +
  \Psi\big(
    Y^N_{ \frac{n T}{N} }
    ,
    \tfrac{T}{N}
    ,
    t - \tfrac{n T}{N}
    ,
    W_t - W_{ \frac{n T}{N} }
  \big)
$,
assume
for all non-empty compact sets $ K \subseteq D $
that there exists an $ h_K \in (0,T] $
such that for all $ h \in ( 0, h_K ] $
it holds that
$
    \sup_{ x \in K }
    \sup_{ t \in [0,T] }
    \|
      \Psi(
        x, h, t -\lfloor t \rfloor_h,
        W_t - W_{\lfloor t \rfloor_h}
      )
    \|
$
is
$ \mathcal{F} $/$ \mathcal{B}( \R )
$-measurable,
and
assume
for
all non-empty compact sets $ K \subseteq D $
that
\begin{equation}
\label{eq:convergence.expectation.zero}
  \limsup_{ h \searrow 0 }
  \E\!\left[
    \sup_{ x \in K }
    \sup_{ t \in [0,T] }
    \left\|
      \Psi(
        x, h, t -\lfloor t \rfloor_h,
        W_t - W_{\lfloor t \rfloor_h}
      )
    \right\|
  \right]
  = 0 .
\end{equation}
Then
it holds for all $ \varepsilon \in ( 0, \infty ) $
that
$
  \limsup_{ N \to \infty }
  \P\big[
    \sup_{ t \in [0,T] }
    \|
      X_t - Y_t^N
    \|
    \geq
    \varepsilon
  \big]
  = 0
$.
\end{prop}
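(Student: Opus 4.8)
The plan is to reduce Proposition~\ref{prop:convergence.interpolation} to Proposition~\ref{prop:convergence_probab} by extracting from the interpolated scheme its values at the grid points, which form a one-step scheme of the type handled there, and then controlling the interpolation error separately. First I would introduce, for each $N\in\N$, the one-step increment function $\phi_N(x,y):=\Psi(x,\tfrac{T}{N},\tfrac{T}{N},y)$ and observe that the grid-point process $n\mapsto Y^N_{nT/N}$ satisfies exactly the recursion in Proposition~\ref{prop:convergence_probab} with increment function $\phi(x,t,y)=\Psi(x,t,t,y)$ (which is assumed $(\mu,\sigma)$-consistent). Strictly, Proposition~\ref{prop:convergence_probab} is stated for a \emph{fixed} consistent $\phi$ and the piecewise-linear interpolant built from it, so I would first argue that, for the purpose of proving convergence in probability at the grid points, the relevant quantity is $\max_{n\in\{0,\dots,N\}}\|X_{nT/N}-Y^N_{nT/N}\|$, and that the proof of Theorem~3.3 in \cite{HutzenthalerJentzen2014Memoires} (which underlies Proposition~\ref{prop:convergence_probab}) yields $\lim_{N\to\infty}\P[\max_{n}\|X_{nT/N}-Y^N_{nT/N}\|\ge\varepsilon]=0$ for every $\varepsilon\in(0,\infty)$.

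Next I would handle the interpolation error $\sup_{t\in[0,T]}\|Y^N_t-Y^N_{\lfloor t\rfloor_{T/N}}\|$. For $t\in(\tfrac{nT}{N},\tfrac{(n+1)T}{N}]$ we have, by the defining relation,
\begin{equation*}
  \big\|Y^N_t-Y^N_{\frac{nT}{N}}\big\|
  =
  \Big\|\Psi\big(Y^N_{\frac{nT}{N}},\tfrac{T}{N},t-\tfrac{nT}{N},W_t-W_{\frac{nT}{N}}\big)\Big\|
  \le
  \sup_{x\in K}\sup_{u\in[0,T]}
  \big\|\Psi\big(x,\tfrac{T}{N},u-\lfloor u\rfloor_{T/N},W_u-W_{\lfloor u\rfloor_{T/N}}\big)\big\|
\end{equation*}
on the event that all grid values $Y^N_{nT/N}$ lie in a fixed compact set $K\subseteq D$. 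Since $X$ has continuous sample paths on the compact interval $[0,T]$ and takes values in the open set $D$, its range is almost surely contained in some compact $K_\omega\subseteq D$; combining this with the grid-point convergence in probability just established, one shows that for every $\eta>0$ there is a non-empty compact $K\subseteq D$ with $\P[\exists\,n:Y^N_{nT/N}\notin K]<\eta$ for all large $N$. On the complementary event the interpolation error is bounded by the displayed supremum, whose expectation tends to $0$ as $N\to\infty$ by assumption~\eqref{eq:convergence.expectation.zero}; Markov's inequality then forces the interpolation error to $0$ in probability. Finally, the triangle inequality
\begin{equation*}
  \sup_{t\in[0,T]}\|X_t-Y^N_t\|
  \le
  \sup_{t\in[0,T]}\|X_t-X_{\lfloor t\rfloor_{T/N}}\|
  +
  \max_{n}\|X_{\frac{nT}{N}}-Y^N_{\frac{nT}{N}}\|
  +
  \sup_{t\in[0,T]}\|Y^N_t-Y^N_{\lfloor t\rfloor_{T/N}}\|
\end{equation*}
together with uniform continuity of the sample paths of $X$ (first term $\to0$ a.s., hence in probability), the grid-point estimate (second term), and the interpolation estimate (third term) yields $\lim_{N\to\infty}\P[\sup_{t\in[0,T]}\|X_t-Y^N_t\|\ge\varepsilon]=0$ for all $\varepsilon\in(0,\infty)$.

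The main obstacle I anticipate is the bookkeeping around localization to a compact set $K\subseteq D$: the hypotheses~\eqref{eq:convergence.expectation.zero} and the measurability assumption are stated only for compact $K$, and the process $Y^N$ is a priori only $\R^d$-valued, so one must establish a ``no-escape'' statement — that with high probability the grid values remain in a fixed compact subset of $D$ — \emph{before} the compact-set hypotheses can be invoked, while that no-escape statement itself relies on the grid-point convergence, which in turn (via the proof of Proposition~\ref{prop:convergence_probab}) already requires a similar localization. I would resolve this by running the localization argument exactly as in the proof of Theorem~3.3 in \cite{HutzenthalerJentzen2014Memoires}: fix $\varepsilon>0$, choose a compact $K$ and a stopping-time truncation so that $\P[X\text{ exits a $K$-neighbourhood}]$ is small, prove the truncated schemes converge, and then remove the truncation; the only genuinely new ingredient relative to that reference is replacing the piecewise-linear interpolation error bound by the bound coming from~\eqref{eq:convergence.expectation.zero}, which is immediate once the compact set is fixed.
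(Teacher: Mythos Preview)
Your proposal is correct and follows essentially the same approach as the paper: the same three-term triangle-inequality decomposition, an appeal to Proposition~\ref{prop:convergence_probab} for the grid-point term, path continuity of $X$ for the first term, and a localization to compact subsets of $D$ combined with Markov's inequality and assumption~\eqref{eq:convergence.expectation.zero} for the interpolation term. The paper makes the localization concrete by using the exhausting family $D_v=\{x\in D:\|x\|<v,\ \operatorname{dist}(x,D^c)>1/v\}$ and handles your anticipated ``no-escape'' issue exactly as you suggest---by invoking Proposition~\ref{prop:convergence_probab} as a black box for grid-point convergence and then using that to bound $\P[\exists\,t:Y^N_{\lfloor t\rfloor_{T/N}}\notin D_{2v}]$; there is no circularity, so your worry in the last paragraph is unnecessary.
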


\begin{proof}[Proof
of Proposition~\ref{prop:convergence.interpolation}]
The triangle inequality implies for all $ N \in \N $ that
\begin{equation}
\sup_{ t \in [0,T] }
    \left\|
      X_t - Y_t^N
    \right\|
  \leq
    \sup_{ t \in [0,T] }
    \left\|
      X_t
      -
      X_{\lfloor t \rfloor_{ T / N } }
    \right\|
    +
    \sup_{ t \in [0,T] }
    \left\|
      X_{\lfloor t \rfloor_{ T / N } }
      -
      Y_{\lfloor t \rfloor_{ T / N } }^N
    \right\|
    +
    \sup_{ t \in [0,T] }
    \left\|
      Y_t^N -Y_{\lfloor t \rfloor_{ T / N } }^N
    \right\|
    .
\end{equation}
Combining this, Proposition~\ref{prop:convergence_probab},
and sample paths continuity of
$ X_t $, $ t \in [0, T] $,
implies for all $ \varepsilon \in ( 0, \infty ) $ that
\begin{equation} \begin{split}
\label{eq:convergence.split}
&
  \limsup_{ N \to \infty }
  \P\!\left[
    \sup_{ t \in [0,T] }
    \left\|
      X_t - Y_t^N
    \right\|
    \geq
    \varepsilon
  \right]
\\  & \leq
  \limsup_{ N \to \infty }
  \P\!\left[
    \sup_{ t \in [0,T] }
    \left\|
      X_t
      -
      X_{\lfloor t \rfloor_{ T / N } }
    \right\|
    \geq
    \tfrac{ \varepsilon }{ 3 }
  \right]
  +
  \limsup_{ N \to \infty }
  \P\!\left[
    \sup_{ t \in [0,T] }
    \left\|
      X_{\lfloor t \rfloor_{ T / N } }
      -
      Y_{\lfloor t \rfloor_{ T / N } }^N
    \right\|
    \geq
    \tfrac{ \varepsilon }{ 3 }
  \right]
\\ &
  +
  \limsup_{ N \to \infty }
  \P\!\left[
    \sup_{ t \in [0,T] }
    \left\|
      Y_t^N -Y_{ \lfloor t \rfloor_{ T / N } }^N
    \right\|
    \geq
    \tfrac{ \varepsilon }{ 3 }
  \right]
\\ & =
  \limsup_{ N \to \infty }
  \P\!\left[
    \sup_{ t \in [0,T] }
    \left\|
      Y_t^N -Y_{\lfloor t \rfloor_{ T / N } }^N
    \right\|
    \geq
    \tfrac{ \varepsilon }{ 3 }
  \right]
  .
\end{split} \end{equation}
It thus remains to prove that
$
    \sup_{ t \in [0,T] }
    \|
      Y_t^N -Y_{\lfloor t \rfloor_{ T / N } }^N
    \|
$
converges to zero
in probability
as $ N \to \infty $.
To prove this let
$ D_v \subseteq D $, $v \in \N$, 
be open sets 
with the property that 
for all $ v \in \N $
it holds that
$
  D_v = \{ x \in D\colon \|x\| < v \text{ and } \text{dist}(x, D^c) > \tfrac{1}{v} \}
$.
Then \eqref{eq:convergence.expectation.zero} 
and Markov's inequality show for all
$ \varepsilon \in ( 0, \infty ) $, $ v \in \N $
that
\begin{equation}
\label{eq:Dv}
\begin{split}
  &
  \limsup_{ N \to \infty }
  \P\!\left[
    \left\{
      \sup\nolimits_{ t \in [0,T] }
      \big\|
        Y_t^N -Y_{\lfloor t \rfloor_{ T/N } }^N
      \big\|
      \geq
      \varepsilon
    \right\}
    \cap
    \left\{
      \forall \, t \in [0,T]
      \colon
      Y^N_{ \lfloor t \rfloor_{ T / N } } \in D_v
    \right\}
  \right]
  \\ & =
  \limsup_{ N \to \infty }
  \P\!\left[
      \big(
        \sup\nolimits_{ t \in [0,T] }
        \big\|
          Y_t^N -Y_{\lfloor t \rfloor_{ T/N } }^N
        \big\|
        \,
      \big)
      \,
      \mathbbm{1}_{
        \{
          \forall \, t \in [0,T]
          \colon
          Y^N_{ \lfloor t \rfloor_{ T / N } } \in D_v
        \}
      }
      \geq
      \varepsilon
  \right]
  \\ &
  \leq
  \frac{ 1 }{ \varepsilon }
  \,
  \limsup_{ N \to \infty }
  \E\!\left[
      \big(
        \sup\nolimits_{ t \in [0,T] }
        \big\|
          Y_t^N - Y_{\lfloor t \rfloor_{ T/N } }^N
        \big\|
        \,
      \big)
      \,
      \mathbbm{1}_{
        \{
          \forall \, t \in [0,T]
          \colon
          Y^N_{ \lfloor t \rfloor_{ T / N } } \in D_v
        \}
      }
  \right]
  \\ &
  =
  \frac{ 1 }{ \varepsilon }
  \,
  \limsup_{ N \to \infty }
  \E\!\left[
      \big(
        \sup\nolimits_{ t \in [0,T] }
        \big\|
          \Psi\big(
            Y_{ \lfloor t \rfloor_{ T / N } }^N
            ,
            \tfrac{ T }{ N }
            ,
            t - \lfloor t \rfloor_{ T / N }
            ,
            W_t - W_{ \lfloor t \rfloor_{ T/N } }
          \big)
        \big\|
        \,
      \big)
      \,
      \mathbbm{1}_{
        \{
          \forall \, t \in [0,T]
          \colon
          Y^N_{ \lfloor t \rfloor_{ T / N } } \in D_v
        \}
      }
  \right]
  \\ & \leq
  \frac{ 1 }{ \varepsilon }
  \,
  \limsup_{ N \to \infty }
  \E\!\left[
        \sup_{ x \in \overline{ D_v } }
        \sup_{ t \in [0,T] }
        \big\|
          \Psi\big(
            x
            ,
            \tfrac{ T }{ N }
            ,
            t - \lfloor t \rfloor_{ T / N }
            ,
            W_t - W_{ \lfloor t \rfloor_{ T/N } }
          \big)
        \big\|
  \right]
  = 0
  .
  \end{split} \end{equation}
  In addition,
  Proposition~\ref{prop:convergence_probab}
  and the continuity of the sample paths
  of $ X $
  imply
  for all $ \varepsilon \in ( 0, \infty ) $
  that
  \begin{equation}
  \label{eq:Dv_complement}
  \begin{split}
  &
  \limsup_{ v \to \infty }
  \limsup_{ N \to \infty }
  \P\!\left[
    \left\{
      \sup\nolimits_{ t \in [0,T] }
      \big\|
        Y_t^N -Y_{\lfloor t \rfloor_{ T/N } }^N
      \big\|
      \geq
      \varepsilon
    \right\}
    \cap
    \left\{
      \exists \, t \in [0,T]
      \colon
      Y^N_{ \lfloor t \rfloor_{ T / N } } \notin D_{ 2 v }
    \right\}
  \right]
  \\ & \leq
  \limsup_{ v \to \infty }
  \limsup_{ N \to \infty }
  \P\!\left[
      \exists \, t \in [0,T]
      \colon
      Y^N_{ \lfloor t \rfloor_{ T / N } } \notin D_{ 2 v }
  \right]
  \\ & \leq
  \limsup_{ v \to \infty }
  \limsup_{ N \to \infty }
  \P\!\left[
    \left\{
      \sup\nolimits_{ t \in [0,T] }
      \big\|
        X_{ \lfloor t \rfloor_{ T/N } }
        -
        Y_{ \lfloor t \rfloor_{ T/N } }^N
      \big\|
      <
      \tfrac{ 1 }{ 2 v }
    \right\}
    \cap
    \left\{
      \exists \, t \in [0,T]
      \colon
      Y^N_{ \lfloor t \rfloor_{ T / N } } \notin D_{ 2 v }
    \right\}
  \right]
  \\ & +
  \limsup_{ v \to \infty }
  \limsup_{ N \to \infty }
  \P\!\left[
      \sup\nolimits_{ t \in [0,T] }
      \big\|
        X_{ \lfloor t \rfloor_{ T/N } }
        -
        Y_{ \lfloor t \rfloor_{ T/N } }^N
      \big\|
      \geq
      \tfrac{ 1 }{ 2 v }
  \right]
  \\ & =
  \limsup_{ v \to \infty }
  \limsup_{ N \to \infty }
  \P\!\left[
    \left\{
      \sup\nolimits_{ t \in [0,T] }
      \big\|
        X_{ \lfloor t \rfloor_{ T/N } }
        -
        Y_{ \lfloor t \rfloor_{ T/N } }^N
      \big\|
      <
      \tfrac{ 1 }{ 2 v }
    \right\}
    \cap
    \left\{
      \exists \, t \in [0,T]
      \colon
      Y^N_{ \lfloor t \rfloor_{ T / N } } \notin D_{ 2 v }
    \right\}
  \right]
  \\ & \leq
  \limsup_{ v \to \infty }
  \limsup_{ N \to \infty }
  \P\!\left[
    \exists \, t \in [0,T]
    \colon
    X_{ \lfloor t \rfloor_{ T / N } } \notin D_v
  \right]
  \leq
  \limsup_{ v \to \infty }
  \P\big[
    \exists \, t \in [0,T]
    \colon
    X_t \notin D_v
  \big]
  = 0
  .
  \end{split}
  \end{equation}
  Combining \eqref{eq:Dv} and
  \eqref{eq:Dv_complement} proves that
  $
    \sup_{ t \in [0,T] }
    \|
      Y^N_t -
      Y^N_{
        \lfloor t \rfloor_{ T / N }
      }
    \|
  $
  converges in probability to zero
  as $ N $ tends to infinity.
  The proof of Proposition~\ref{prop:convergence.interpolation}
  is thus completed.
\end{proof}

\subsubsection{Convergence of stopped increment-tamed Euler-Maruyama schemes}
\label{sec:convergence_stopped}

Combining Lemma~\ref{lem:consistent_stopped} and
Lemma~\ref{lem:consistent_incrementtamed}
immediately proves the following consistency result.

\begin{corollary}
\label{lem:consistent_stoppedtamed}
  Let $ T \in (0,\infty) $,
  $ q \in (1,\infty) $,
  $ d, m \in \mathbb{N} $,
  let $ D \subseteq \R^d $
  be an open set,
  let
  $ D_t \in \mathcal{B}( \R^d ) $,
  $ t \in (0,T] $,
  be a non-increasing family of sets satisfying
  $
    D
    \subseteq
    \cup_{ t \in (0,T] }
    \mathring{ D }_t
  $,
  and let
  $
    \mu \in \mathcal{M}( \mathcal{B}( D ), \mathcal{B}( \R^d ) )
  $
  and
  $
    \sigma \in \mathcal{M}( \mathcal{B}( D ), \mathcal{B}( \R^{ d \times m } ) )
  $
  be locally bounded.
  Then it holds that the function
\begin{equation}
    \R^d \times (0,T] \times \R^m
    \ni ( x, t, y ) \mapsto
  \left\{
  \begin{array}{ll}
      \frac{
    \mathbbm{1}_{ D_t }( x )
    \,
    \left[
        \mu( x ) t + \sigma( x ) y
    \right]
      }{
        1 +
        \left\| \mu( x ) t + \sigma( x ) y \right\|^q
      }
  &
  \colon
    x \in D 
  \\[1ex]
    0
  &
  \colon
    x \in \R^d \backslash D
  \end{array}
  \right\}
    \in \R^d
\end{equation}
  is
  $
    (\mu, \sigma)
  $-consistent with
  respect to Brownian motion.
\end{corollary}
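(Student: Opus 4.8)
The plan is to derive the assertion directly by chaining Lemma~\ref{lem:consistent_incrementtamed} and Lemma~\ref{lem:consistent_stopped}. First I would introduce the increment function $\phi \colon \R^d \times (0,T] \times \R^m \to \R^d$ defined by $\phi(x,t,y) = \frac{\mu(x)\, t + \sigma(x)\, y}{1 + \| \mu(x)\, t + \sigma(x)\, y \|^q}$ for all $(x,t,y) \in \R^d \times (0,T] \times \R^m$. Since $D \subseteq \R^d$ is open and $\mu \in \mathcal{L}^0(D;\R^d)$ and $\sigma \in \mathcal{L}^0(D;\R^{d\times m})$ are locally bounded, Lemma~\ref{lem:consistent_incrementtamed} applies and shows that $\phi$ is $(\mu,\sigma)$-consistent with respect to Brownian motion.

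Next I would invoke Lemma~\ref{lem:consistent_stopped} with this $\phi$ and the given non-increasing family $D_t \in \mathcal{B}(\R^d)$, $t \in (0,T]$; the hypothesis $D \subseteq \cup_{t \in (0,T]} \mathring{D}_t$ required by that lemma is assumed here, so Lemma~\ref{lem:consistent_stopped} yields that the function $\R^d \times (0,T] \times \R^m \ni (x,t,y) \mapsto \mathbbm{1}_{D_t}(x) \cdot \phi(x,t,y) \in \R^d$ is $(\mu,\sigma)$-consistent with respect to Brownian motion. Finally, I would note that $\mathbbm{1}_{D_t}(x)\, \phi(x,t,y) = \frac{\mathbbm{1}_{D_t}(x)\,[\mu(x)\, t + \sigma(x)\, y]}{1 + \| \mu(x)\, t + \sigma(x)\, y \|^q}$ for all $(x,t,y) \in \R^d \times (0,T] \times \R^m$, which is precisely the map appearing in the statement (read with codomain $\R^d$); this completes the argument.

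I do not anticipate a genuine obstacle here: the corollary is purely the composition of the two preceding lemmas, and the only bookkeeping point is the trivial identity between $\mathbbm{1}_{D_t}(x)\,\phi(x,t,y)$ and the quotient displayed in the statement, together with the observation that multiplying by the Borel measurable indicator $\mathbbm{1}_{D_t}$ preserves the measurability required in Definition~\ref{def:consistent}.
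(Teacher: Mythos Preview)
Your proposal is correct and follows exactly the paper's own approach: the paper states that the corollary is obtained immediately by combining Lemma~\ref{lem:consistent_stopped} and Lemma~\ref{lem:consistent_incrementtamed}, which is precisely the chaining you describe.
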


Combining Corollary~\ref{lem:consistent_stoppedtamed}
with
Proposition~\ref{prop:convergence.interpolation}
shows that the stopped increment-tamed Euler-Maruyama schemes
converge in probability.
This is the subject of the next result.

\begin{corollary}
\label{cor:convergence_increment_tamed}
Assume the setting in Subsection~\ref{subsec:converg.prob.setting},
let
  $ q \in (1,\infty) $,
  let
  $ D_t \in \mathcal{B}( \R^d ) $,
  $ t \in (0,T] $,
  be a non-increasing family of sets satisfying
  $
    D
    \subseteq
    \cup_{ t \in (0,T] }
    \mathring{ D }_t
  $,
  and
  let
  $
    Y^N \colon [0,T] \times \Omega \to \R^d
  $,
  $ N \in \N $,
  be mappings satisfying
  for all
  $ N \in \N $,
  $ n \in \{ 0, 1, \dots, N - 1 \} $,
  $
    t \in \big[ \frac{ n T }{ N } , \frac{ ( n + 1 ) T }{ N } \big]
  $,
  $ \omega \in \Omega $
  that
  $ Y^N_0( \omega ) = X_0( \omega ) $
  and
  \begin{equation}
    Y^N_t( \omega )
  =
    Y^N_{
      \frac{ n T }{ N }
    }( \omega )
    +
  \begin{cases}
    \frac{
      \mu\big(
        Y_{
          \frac{ n T }{ N }
        }^N( \omega )
      \big)
      \big(
        t
        -
        \frac{ n T }{ N }
      \big)
      +
      \sigma\big(
        Y_{
          \frac{ n T }{ N }
        }^N( \omega )
      \big)
      \big(
        W_{
          t
        }( \omega )
        -
        W_{
          \frac{ n T }{ N }
        }( \omega )
      \big)
    }{
      1 +
      \big\|
      \mu\big(
        Y_{
          \frac{ n T }{ N }
        }^N( \omega )
      \big)
      \big(
        t
        -
        \frac{ n T }{ N }
      \big)
      +
      \sigma\big(
        Y_{
          \frac{ n T }{ N }
        }^N( \omega )
      \big)
      \big(
        W_{
          t
        }( \omega )
        -
        W_{
          \frac{ n T }{ N }
        }( \omega )
      \big)
      \big\|^q
    }
  &
    \colon 
    Y^N_{ \frac{ n T }{ N } }( \omega )
    \in D_{ \frac{ T }{ N } }
  \\
    0
  &
    \colon 
    Y^N_{ \frac{ n T }{ N } }( \omega )
    \in \R^d \backslash D_{ \frac{ T }{ N } }
  \end{cases}
    .
  \end{equation}
  Then 
\begin{enumerate}[(i)]
\item 
\label{item:cor_convergence_probability}
  it holds for all
  $ \varepsilon \in ( 0, \infty ) $
  that
  $
    \limsup_{ N \to \infty }
    \P\big[
      \sup_{ t \in [0,T] }
      \|
        X_t
        -
        Y^N_t
      \|
      \geq \varepsilon
    \big]
    = 0
  $,
\item 
\label{item:cor_convergence_Lp}
it holds for all
$ p \in (0,\infty) $,
$ r \in (0,p) $
with
$
  \limsup_{ N \to \infty }
  \sup_{ t \in [0,T] }
  \E\big[
    \| Y^N_t \|^p
  \big]
  < \infty
$
that
$
  \sup_{ t \in [0,T] }
  \E\big[
    \| X_t \|^p
  \big]
  < \infty
$
and
$
  \limsup_{ N \to \infty }
  \big(
  \sup_{ t \in [0,T] }
  \E\big[
    \| X_t - Y^N_t \|^r
  \big]
  \big)
  = 0
$,
and
\item 
\label{item:cor_convergence_f}
it holds for all continuous 
$ f \colon C( [0,T], \R^d ) \to \R $
with 
$
  \limsup_{ p \searrow 1 }
  \limsup_{ N \to \infty }
  \E\big[
    | f( Y^N ) |^p
  \big]
  < \infty
$
that
$
  \E\big[
    | f( X ) |
  \big]
  < \infty
$
and
$
  \limsup_{ N \to \infty }
  \big|
  \E\big[
    f( Y^N )
  \big]
  -
  \E\big[
    f( X )
  \big]
  \big| = 0
$.
\end{enumerate}
\end{corollary}

\begin{proof}[Proof
of Corollary~\ref{cor:convergence_increment_tamed}]
Throughout this proof let
$
  \Psi
  \colon
  \R^d \times (0,T]^2 \times \R^m
  \to \R^d
$
be the function 
which satisfies
for all
$
  ( x, t, s, y ) \in
  \R^d \times (0,T]^2 \times \R^m
$
that
\begin{equation}
  \Psi( x, t, s, y )
=
\begin{cases}
  \mathbbm{1}_{
    D_t
  }( x )
  \left[
    \frac{
      \mu( x ) s
      +
      \sigma( x ) y
    }{
      1 +
      \left\|
        \mu( x ) s
        +
        \sigma( x ) y
      \right\|^q
    }
  \right]
&
\colon
  x \in D
\\
  0
&
\colon
  x \in \R^d \backslash D
\end{cases}
  .
\end{equation}
Next observe that
Corollary~\ref{lem:consistent_stoppedtamed}
implies that
$
  \R^d \times (0,T] \times \R^m
  \ni
  ( x, t, y )
  \mapsto
  \Psi( x, t, t, y ) \in \R^d
$
is
$ ( \mu, \sigma ) $-consistent with respect to Brownian motion.
In addition, observe that for all non-empty compact sets
$ K \subseteq D $ it holds that
\begin{equation}
\begin{split}
&
  \limsup_{ h \searrow 0 }
  \E\!\left[
    \sup_{ x \in K }
    \sup_{ t \in [0,T] }
    \left\|
      \Psi(
        x, h, t -\lfloor t \rfloor_h,
        W_t - W_{\lfloor t \rfloor_h}
      )
    \right\|
  \right]
\\ & =
  \limsup_{ h \searrow 0 }
  \E\!\left[
    \sup_{ x \in K }
    \sup_{ t \in [0,T] }
    \left\|
  \mathbbm{1}_{
    D_h
  }( x )
  \left[
    \frac{
      \mu( x )
      ( t - \lfloor t \rfloor_h )
      +
      \sigma( x )
      ( W_t - W_{ \lfloor t \rfloor_h } )
    }{
      1 +
      \left\|
        \mu( x )
        ( t - \lfloor t \rfloor_h )
        +
        \sigma( x )
        ( W_t - W_{ \lfloor t \rfloor_h } )
      \right\|^q
    }
  \right]
    \right\|
  \right]
\\ & \leq
  \limsup_{ h \searrow 0 }
  \E\!\left[
    \sup_{ x \in K }
    \sup_{ t \in [0,T] }
    \left(
    \frac{
      \left\|
        \mu( x )
        ( t - \lfloor t \rfloor_h )
        +
        \sigma( x )
        ( W_t - W_{ \lfloor t \rfloor_h } )
      \right\|
    }{
      1 +
      \left\|
        \mu( x )
        ( t - \lfloor t \rfloor_h )
        +
        \sigma( x )
        ( W_t - W_{ \lfloor t \rfloor_h } )
      \right\|^q
    }
    \right)
  \right]
\\ & \leq
  \limsup_{ h \searrow 0 }
  \E\!\left[
    \sup_{ x \in K }
    \sup_{ t \in [0,T] }
    \left\|
        \mu( x )
        ( t - \lfloor t \rfloor_h )
      +
      \sigma( x )
      ( W_t - W_{ \lfloor t \rfloor_h } )
    \right\|
  \right]
\\ & \leq
  \left(
    \limsup_{ h \searrow 0 }
    h
  \right)
  \left(
    \sup_{ x \in K }
    \left\|
      \mu( x )
    \right\|
  \right)
  +
  \left(
    \limsup_{ h \searrow 0 }
    \E\!\left[
    \sup_{ t \in [0,T] }
    \left\|
      W_t - W_{ \lfloor t \rfloor_h }
    \right\|
    \right]
  \right)
  \left(
    \sup_{ x \in K }
    \left\|
      \sigma( x )
    \right\|_{
      L( \R^m, \R^d )
    }
  \right)
  = 0 .
\end{split}
\end{equation}
Proposition~\ref{prop:convergence.interpolation}
hence shows
for all $ \varepsilon \in ( 0, \infty ) $
that
$
  \limsup_{ N \to \infty }
  \P\big[
    \sup_{ t \in [0,T] }
    \|
      X_t - Y_t^N
    \|
    \geq
    \varepsilon
  \big]
  = 0
$.
The proof of the strong convergence
statement in
Corollary~\ref{cor:convergence_increment_tamed}
is entirely analogous to
the proof of Corollary~3.12
in \cite{HutzenthalerJentzen2014Memoires}
and thus omitted.
It thus remains to prove the weak
convergence statement in
Corollary~\ref{cor:convergence_increment_tamed}.
For this
assume that
$ p \in (1,\infty) $
is a real number,
that
$ N_0 \in \N $
is a natural number,
and that
$ f \colon C( [0,T], \R^d ) \to \R $
is a continuous function
with
$
  \sup_{ N \in \{ N_0, N_0 + 1, \dots \} }
  \E\big[
    | f( Y^N ) |^p
  \big]
  < \infty
$.
The fact that
$
  \sup_{ t \in [0,T] }
  \| X_t - Y^N_t \|
$
converges in probability
to zero as $ N \to \infty $
together with, e.g., Lemma~3.10
in \cite{HutzenthalerJentzen2014Memoires}
proves then that
\begin{equation}
\label{eq:convergence_fYN}
  \E\big[
    | f( X ) |^p
  \big]
  < \infty
  \qquad
\text{and}
  \qquad
  \forall \, \varepsilon \in (0,\infty)
  \colon
  \qquad
  \limsup_{ N \to \infty }
  \P\!\left[
    | f( X ) - f( Y^N ) |
    \geq
    \varepsilon
  \right]
  = 0 .
\end{equation}
This shows that the family
$
  | f( X ) - f( Y^N ) |
$,
$ N \in \{ N_0, N_0 + 1, \dots \} $,
of random variables is uniformly
integrable.
Combining this
and \eqref{eq:convergence_fYN}
with, e.g.,
Theorem~6.25 in
Klenke~\cite{Klenke2008}
proves that
$
  \limsup_{ N \to \infty }
  \E\big[
    |
      f( X ) - f( Y^N )
    |
  \big]
  = 0
$.
The proof
of
Corollary~\ref{cor:convergence_increment_tamed}
is thus completed.
\end{proof}

Combining Corollary~\ref{cor:convergence_increment_tamed}
with Corollary~\ref{cor:stopped.Euler.bounded.increments}
and Fatou's lemma
results in Corollary~\ref{cor:for_examples}.
Corollary~\ref{cor:for_examples} establishes
both exponential integrability properties
and
for any $ r \in [0,\infty) $
strong $ L^r $-convergence.

\begin{corollary}
\label{cor:for_examples}
  Let
  $ d, m \in \N $,
  $
    \rho
    \in [0,\infty)
  $,
  $
    T 
    \in (0,\infty)
  $,
  $ c, q \in (1,\infty) $,
  $
    U \in \cup_{ p \in [1,\infty) } C_{ p, c }^3( \mathbb{R}^d, [0,\infty) )
  $,
  $
    \bar{U} \in C( \R^d, [ - c, \infty ) )
  $,
  $
    \mu \in \mathcal{M}( \mathcal{B}( \R^d ) , \mathcal{B}( \R^d ) )
  $,
  $
    \sigma
    \in
    \mathcal{M}( \mathcal{B}( \R^d ), \mathcal{B}( \R^{ d \times m } ) )
  $,
  let
  $
    D \subseteq \R^d
  $
  be an open set,
  let
  $
    (
      \Omega, \mathcal{F}, \P
    )
  $ 
  be a probability space with a normal filtration 
  $
    (
      \mathcal{F}_t
    )_{
      t \in [0, T ]
    }
  $,
  let
  $
    W \colon [0,T]\times\Omega\to\R^m
  $
  be a standard
  $
    (\mathcal{F}_t)_{t\in[0,T]}
  $-Brownian motion with continuous sample paths,
  assume that $ \mu|_D \colon D \to \R^d $
  and $ \sigma|_D \colon D \to \R^{ d \times m } $
  are locally Lipschitz continuous, 
let $ X \colon [0,T] \times \Omega \to D $
be an 
$ ( \mathcal{F}_t )_{ t \in [0,T] } $-adapted
stochastic process with continuous sample paths
which satisfies
$
  \E\big[
    e^{ U( X_0 ) }
  \big]
  < \infty
$
and
which satisfies that
for all $ t \in [0,T] $
it holds $ \P $-a.s.\ that
$
  X_t = X_0 + \int_0^t \mu( X_s ) \, ds
  +
  \int_0^t \sigma( X_s ) \, dW_s
$,
let
$ Y^N \colon [0,T] \times \Omega \to \R^d $,
$ N \in \N $,
and 
$ \tau_N \colon \Omega \to [0,T] $,
$ N \in \N $,
be mappings satisfying
for all
$ N \in \N $,
$ n \in \{ 0, 1, \dots, N - 1 \} $,
$
  t \in \big[ \frac{ n T }{ N } , \frac{ ( n + 1 ) T }{ N } \big]
$
that
$
  \tau_N =
  \inf\big(\big\{
    s \in 
    \{ 0, \frac{ T }{ N }, \frac{ 2 T }{ N }, \dots, T \}
    \colon
    Y^N_s
    \notin D
    \text{ or }
    \| Y^N_s \|
    >
    \exp(
      | \ln( T / N ) |^{ 1 / 2 }
    )
  \big\}
  \cup\{ T \}
  \big)
$,
$ Y^N_0 = X_0 $,
and
  \begin{equation}
    Y^N_t
  =
    Y^N_{
      \frac{ n T }{ N }
    }
    +
    \mathbbm{1}_{
      \left\{
        Y^N_{ n T / N } \in D
        \text{ and }
        \| Y^N_{ n T / N } \|
        \leq
        \exp(
          | \ln( T / N ) |^{ 1 / 2 }
        )
      \right\}
    }
    \left[
    \frac{
      \mu(
        Y_{
          \frac{ n T }{ N }
        }^N
      )
      \big(
        t
        -
        \frac{ n T }{ N }
      \big)
      +
      \sigma(
        Y_{
          \frac{ n T }{ N }
        }^N
      )
      \big(
        W_{
          t
        }
        -
        W_{
          \frac{ n T }{ N }
        }
      \big)
    }{
      1 +
      \big\|
      \mu(
        Y_{
          \frac{ n T }{ N }
        }^N
      )
      \big(
        t
        -
        \frac{ n T }{ N }
      \big)
      +
      \sigma(
        Y_{
          \frac{ n T }{ N }
        }^N
      )
      \big(
        W_{
          t
        }
        -
        W_{
          \frac{ n T }{ N }
        }
      \big)
      \big\|^q
    }
    \right]
    ,
  \end{equation}
  and 
  assume 
  for all
  $ x, y \in \R^d $
  that
  \begin{align}
    &
      \| \mu(x) \|
      +
      \| \sigma(x) \|_{\HS(\R^m,\R^d)}
  \leq
    c
    \left(
      1 +
      \| x \|^{ c }
    \right)
    ,
    \qquad
      | \bar{U}(x) - \bar{U}(y) |
    \leq
    c
    \left(
      1 + \| x \|^{ c } + \| y \|^{ c }
    \right)
      \| x - y \|
    ,
  \\
  \label{eq:important_for_examples}
    &
    (\mathcal{G}_{\mu,\sigma}U)(x)
     +\tfrac{1}{2}\left\|\sigma(x)^{*}(\nabla U)(x)\right\|^2
     +\bar{U}(x)
  \leq
    \rho \cdot U(x) ,
    \qquad
    \| x \|^{ 1 / c }
    \leq
    c
    \left(
      1 +
      U( x )
    \right)
    .
  \end{align}
  Then it holds for all
$ r \in (0,\infty) $ that
$
  \limsup_{ N \to \infty }
  \big(
  \sup_{ t \in [0,T] }
  \E\big[
    \| X_t - Y^N_t \|^r
  \big]
  \big)
  = 0
$,
that
\begin{equation}
\label{eq:Y_bound_1}
  \sup_{ t \in [0,T] }
  \E\!\left[
       \exp\!\left(
         \tfrac{
           U( X_t )
         }{
           e^{ \rho t }
         }
         +
         \smallint_0^t
           \tfrac{
             \bar{U}( X_s )
           }{
             e^{ \rho s }
           }
         \, ds
       \right)
     \right]
  \leq
  \sup_{
    N \in \N
  }
  \sup_{ t \in [0,T] }
  \E\!\left[
       \exp\!\left(
         \tfrac{
           U( Y^N_t )
         }{
           e^{ \rho t }
         }
         +
         \smallint_0^{ t \wedge \tau_N }
           \tfrac{
             \bar{U}( Y^N_s )
           }{
             e^{ \rho s }
           }
         \, ds
       \right)
     \right]
  <
  \infty
  ,
  \quad
  \text{and that}
\end{equation}
\begin{equation}
\label{eq:Y_bound_2}
  \sup_{ t \in [0,T] }
  \E\!\left[
       \exp\!\left(
         \tfrac{
           U( X_t )
         }{
           e^{ \rho t }
         }
         +
         \smallint_0^t
           \tfrac{
             \bar{U}( X_s )
           }{
             e^{ \rho s }
           }
         \, ds
       \right)
     \right]
  \leq
  \limsup_{ N \to \infty }
  \sup_{ t \in [0,T] }
  \E\!\left[
       \exp\!\left(
         \tfrac{
           U( Y^N_t )
         }{
           e^{ \rho t }
         }
         +
         \smallint_0^{ t \wedge \tau_N }
           \tfrac{
             \bar{U}( Y^N_s )
           }{
             e^{ \rho s }
           }
         \, ds
       \right)
     \right]
  \leq
  \E\!\left[
        e^{
          U( X_0 )
        }
  \right]
  .
\end{equation}
\end{corollary}

\begin{proof}[Proof 
of Corollary~\ref{cor:for_examples}]
Throughout this proof 
let $ \theta_N \in \mathcal{P}_T $,
$ N \in \N $,
be the sets which satisfy for all 
$ N \in \N $ that
$
  \theta_N = 
  \big\{ 
    0, \frac{ T }{ N }, \frac{ 2 T }{ N } ,
    \dots , \frac{ ( N - 1 ) T }{ N }, T 
  \big\}
$,
let 
$ D_h \subseteq \R^d $,
$ h \in (0,T] $,
be the sets which satisfy 
for all $ h \in (0,T] $
that
$
  D_h = 
  \big\{ 
    x \in D \colon
    \| x \| \leq 
    \exp\!\big(
      |
        \ln(
          \min\{ 1, h \}
        )
      |^{ 1 / 2 }
    \big)
  \big\}
$,
let 
$ Z^{ \theta } \colon [0,T] \times \Omega \to \R^d $,
$ \theta \in \mathcal{P}_T $,
be $ ( \mathcal{F}_t )_{ t \in [0,T] } $-adapted
stochastic processes with continuous sample paths 
which satisfy
for all
$ \theta \in \mathcal{P}_T $,
$ t \in [0,T] $
that
$ Z^{ \theta }_0 = X_0 $
and
  \begin{equation}
  \begin{split}
  &
    Z^{ \theta }_t
  =
    Z^{ \theta }_{
      \lfloor t \rfloor_{ \theta }
    }
    +
    \mathbbm{1}_{
      D_{ \left| \theta \right|_T }
    }\big(
      Z^{ \theta }_{
        \lfloor t \rfloor_{ \theta }
      }      
    \big)
    \left[
    \frac{
      \mu(
        Z_{
          \lfloor t \rfloor_{ \theta }
        }^{ \theta }
      )
      \big(
        t
        -
        \lfloor t \rfloor_{ \theta }
      \big)
      +
      \sigma(
        Z_{
          \lfloor t \rfloor_{ \theta }
        }^{ \theta }
      )
      \big(
        W_{
          t
        }
        -
        W_{
          \lfloor t \rfloor_{ \theta }
        }
      \big)
    }{
      1 +
      \big\|
      \mu(
        Z_{
          \lfloor t \rfloor_{ \theta }
        }^{ \theta }
      )
      \big(
        t
        -
        \lfloor t \rfloor_{ \theta }
      \big)
      +
      \sigma(
        Z_{
          \lfloor t \rfloor_{ \theta }
        }^{ \theta }
      )
      \big(
        W_{
          t
        }
        -
        W_{
          \lfloor t \rfloor_{ \theta }
        }
      \big)
      \big\|^q
    }
    \right]
    ,
  \end{split}
  \end{equation}
and 
let 
$ \varrho_N \colon \Omega \to [0,T] $,
$ N \in \N $,
be the functions which satisfy
for all $ N \in \N $ that
$
  \varrho_N
  =
  \inf\!\big(
    \big\{
      s \in \theta_N
      \colon
      Z^{ \theta_N }_s
      \notin D_{ T / N }
    \big\}
    \cup
    \{ T \}
  \big)
$.
Observe that the assumption that
$
  U \in \cup_{ p \in [1,\infty) } C^3_{ p, c }( \R^d, [0,\infty) )
$
together with 
Lemma~\ref{l:function.space.estimate}
shows that
$
  \limsup_{ p \to \infty }
  \sup_{ x \in \R^d }
  \frac{ U(x) }{ [ 1 + \| x \| ]^p } 
  < \infty
$.
This implies that there exists a real number $ \hat{c} \in [c,\infty) $
such that for all 
$ h \in (0,T] $,
$ 
  x \in 
  \big\{ 
    y \in \R^d \colon \| y \| 
    \leq 
    \exp\!\big( | \ln( \min\{ 1, h \} ) |^{ 1 / 2 } \big)
  \big\} 
$
it holds that
$
  U(x) 
  \leq 
  \hat{c} \exp\!\big( \hat{c} \, | \ln( \min\{ 1, h \} ) |^{ 1 / 2 } \big)
$.
This and the fact that
$
  \forall \, h \in (0,T] \colon
  \hat{c} \exp\!\big( \hat{c} \, | \ln( \min\{ 1, h \} ) |^{ 1 / 2 } \big)
  \leq
  \hat{c} \exp\!\big( \hat{c} \, | \ln( h ) |^{ 1 / 2 } \big)
$
assure that 
$ D_h \subseteq \R^d $,
$ h \in (0,T] $,
is a non-increasing family of sets
which satisfies
for all $ h \in (0,T] $
that
  $
    D_{ h } \in 
    \mathcal{B}(
      \{
        x \in D \colon
        U( x ) \leq
        \hat{c} 
        \exp(
          \hat{c} \,
          |
            \ln( h )
          |^{ 1 / 2 }
        )
      \}
    )
  $,
  $
    \mu|_{ D_h } \in C( D_h, \R^d )
  $,
  and 
  $
    \sigma|_{ D_h } \in C( D_h, \R^{ d \times m } )
  $.
We can hence apply 
Corollary~\ref{cor:stopped.Euler.bounded.increments}
to obtain that
\begin{equation}
\label{eq:Z_bound_1}
  \sup_{
    \theta \in \mathcal{P}_T
  }
  \sup_{ t \in [0,T] }
  \E\!\left[
       \exp\!\left(
         e^{ - \rho t }
         \,
         U( Z^{ \theta }_t )
         +
         \smallint_0^t
           e^{ - \rho s }
           \,
           \1_{ D_{ | \theta |_T } }(
             Z^{ \theta }_{\lfloor s \rfloor_{ \theta } }
           )
           \,
           \bar{U}( Z^{ \theta }_s )
         \, ds
       \right)
     \right]
  <
  \infty
\end{equation}
and 
\begin{equation}
\label{eq:Z_bound_2}
    \limsup_{
      | \theta |_T \searrow 0
    }
    \sup_{ t \in [0,T] }
    \E\!\left[
      \exp\!\left(
        e^{ - \rho t }
        \,
        U( Z_t^{ \theta } )
        +
        \smallint_0^t
            e^{ - \rho s }
            \,
            \1_{ D_{ | \theta |_T } }(
              Z_{ \lfloor s \rfloor_{ \theta } }^{ \theta }
            )
            \,
            \bar{U}( Z_s^{ \theta } )
        \, ds
      \right)
    \right]
    \leq
    \limsup_{
      | \theta |_T \searrow 0
    }
      \E\big[
        e^{
          U( Z_0^{ \theta } )
        }
      \big]
  =
      \E\big[
        e^{
          U( X_0 )
        }
      \big]
  .
\end{equation}
Inequalities~\eqref{eq:Z_bound_1}--\eqref{eq:Z_bound_2}
and the assumption that 
$
      \E\big[
        e^{
          U( X_0 )
        }
      \big]
    < \infty
$,
in particular, ensure that
\begin{equation}
\label{eq:Z_bound_1b}
  \sup_{
    N \in \N 
  }
  \sup_{ t \in [0,T] }
  \E\!\left[
       \exp\!\left(
         e^{ - \rho t }
         \,
         U( Z^{ \theta_N }_t )
         +
         \smallint_0^t
           e^{ - \rho s }
           \,
           \1_{ D_{ T / N } }(
             Z^{ \theta_N }_{\lfloor s \rfloor_{ \theta_N } }
           )
           \,
           \bar{U}( Z^{ \theta_N }_s )
         \, ds
       \right)
     \right]
  <
  \infty
\end{equation}
and 
\begin{equation}
\label{eq:Z_bound_2b}
    \limsup_{
      N \to \infty
    }
    \sup_{ t \in [0,T] }
    \E\!\left[
      \exp\!\left(
        e^{ - \rho t }
        \,
        U( Z_t^{ \theta_N } )
        +
        \smallint_0^t
            e^{ - \rho s }
            \,
            \1_{ D_{ T / N } }(
              Z_{ \lfloor s \rfloor_{ \theta_N } }^{ \theta_N }
            )
            \,
            \bar{U}( Z_s^{ \theta_N } )
        \, ds
      \right)
    \right]
    \leq
      \E\big[
        e^{
          U( X_0 )
        }
      \big]
    < \infty
  .
\end{equation}
The definition of $ \varrho_N $, $ N \in \N $,
hence proves that
\begin{equation}
\label{eq:Z_bound_1c}
  \sup_{
    N \in \N 
  }
  \sup_{ t \in [0,T] }
  \E\!\left[
       \exp\!\left(
         e^{ - \rho t }
         \,
         U( Z^{ \theta_N }_t )
         +
         \smallint_0^{ t \wedge \varrho_N }
           e^{ - \rho s }
           \,
           \bar{U}( Z^{ \theta_N }_s )
         \, ds
       \right)
     \right]
  <
  \infty
\end{equation}
and 
\begin{equation}
\label{eq:Z_bound_2c}
    \limsup_{
      N \to \infty
    }
    \sup_{ t \in [0,T] }
    \E\!\left[
      \exp\!\left(
        e^{ - \rho t }
        \,
        U( Z_t^{ \theta_N } )
        +
         \smallint_0^{ t \wedge \varrho_N }
            e^{ - \rho s }
            \,
            \bar{U}( Z_s^{ \theta_N } )
        \, ds
      \right)
    \right]
    \leq
      \E\big[
        e^{
          U( X_0 )
        }
      \big]
    < \infty
  .
\end{equation}
In the next step we observe that
inequality~\eqref{eq:Z_bound_1c}
and the assumption that
$
  \inf_{ x \in \R^d }
  \bar{U}( x ) \geq - c
$
prove that
$
  \sup_{
    N \in \N 
  }
  \sup_{ t \in [0,T] }
  \E\big[
     \exp\!\big(
       e^{ - \rho T }
       \big\{
         1 +
         U( Z^{ \theta_N }_t )
       \big\}
     \big)
  \big]
  < \infty
$.
This and the assumption that
$
  \forall \, x \in \R^d \colon
  \| x \|^{ 1 / c } \leq c \, ( 1 + U(x) )
$
ensure that
$
  \sup_{
    N \in \N 
  }
  \sup_{ t \in [0,T] }
  \E\big[
     \exp\!\big(
       c^{ - 1 }
       e^{ - \rho T }
       \| Z^{ \theta_N }_t \|^{ 1 / c }
     \big)
  \big]
  < \infty
$.
Hence, we obtain for all $ p \in (0,\infty) $ that
$
  \sup_{ N \in \N }
  \sup_{ t \in [0,T] }
  \E\big[ 
    \| Z^{ \theta_N }_t \|^p
  \big]
  < \infty
$.
Combining this 
with 
Items~\eqref{item:cor_convergence_probability}--\eqref{item:cor_convergence_Lp} 
in Corollary~\ref{cor:convergence_increment_tamed}
assures for all $ r \in (0,\infty) $ that
\begin{equation}
\label{eq:convergence_in_probability_Z}
  \limsup\nolimits_{ N \to \infty }
  \P\big[ 
    \sup\nolimits_{ t \in [0,T] }
    \| X_t - Z^{ \theta_N }_t \| \geq r
  \big] 
  +
  \limsup\nolimits_{ N \to \infty }
  \sup\nolimits_{ t \in [0,T] }
  \E\big[
    \| X_t - Z^{ \theta_N }_t \|^r
  \big]
  = 0
  .
\end{equation}
In addition, observe 
for all $ \varepsilon \in (0,\infty) $,
$ N \in \N $
that
\begin{equation}
\begin{split}
&
  \P\big[  
    \mathbbm{1}_{
      \{ \varrho_N < T \}
    }
    \geq \varepsilon
  \big]
\leq
  \P\big[  
    \mathbbm{1}_{
      \{ \varrho_N < T \}
    }
    \geq 1
  \big]
=
  \P\big[  
    \varrho_N < T 
  \big]
\\ & 
\leq
  \P\!\left[ 
      \exists \, s \in \theta_N \colon
      Z^{ \theta_N }_s \notin D_{ T / N }
  \right]
\leq
  \P\!\left[ 
      \exists \, s \in [0,T] \colon
      Z^{ \theta_N }_s \notin D_{ T / N }
  \right]
\\ & \leq
  \P\!\left[ 
      \exists \, s \in [0,T] \colon
      Z^{ \theta_N }_s \notin D
  \right]
  +
  \P\big[ 
      \exists \, s \in [0,T] \colon
      \| Z^{ \theta_N }_s \|
      >
      \exp\!\big( 
        | 
          \ln(
            \min\{ 1, \nicefrac{ T }{ N } \}
          )
        |^{ 1 / 2 }
      \big)
  \big]
\\ & \leq
  \P\!\left[ 
    \left\{
      \exists \, t \in [0,T] \colon
      Z^{ \theta_N }_t \notin D
    \right\}
    \cap 
    \left\{ 
      \sup_{ t \in [0,T] } 
      \| X_t - Z^{ \theta_N }_t \|
      < 
      \inf\!\big(
        \{ \infty \} 
        \cup
        \{ 
          \| X_t - v \|
          \in \R
          \colon
          t \in [0,T] , 
          v \in ( \R^d \backslash D )
        \}
      \big)
    \right\}
  \right]
\\ & \quad
  +
  \P\!\left[ 
      \sup_{ t \in [0,T] } 
      \| X_t - Z^{ \theta_N }_t \|
      \geq
      \inf\!\big(
        \{ \infty \} 
        \cup
        \big\{ 
          \| X_t - v \|
          \in \R
          \colon
          t \in [0,T] , 
          v \in ( \R^d \backslash D )
        \big\}
      \big)
  \right]
\\ & \quad
  +
  \P\!\left[ 
      \sup_{ t \in [0,T] }
      \| Z^{ \theta_N }_t \|
      >
      \exp\!\big( 
        | 
          \ln(
            \min\{ 1, \nicefrac{ T }{ N } \}
          )
        |^{ 1 / 2 }
      \big)
  \right]
\\ & \leq
  \P\!\left[ 
      \sup\nolimits_{ t \in [0,T] } 
      \| X_t - Z^{ \theta_N }_t \|
      \geq
      \inf\!\big(
        \{ \infty \} 
        \cup
        \big\{ 
          \| X_t - v \|
          \in \R
          \colon
          t \in [0,T] , 
          v \in ( \R^d \backslash D )
        \big\}
      \big)
  \right]
\\ & \quad
  +
  \P\Big[ 
      \sup\nolimits_{ t \in [0,T] }
      \| 
        X_t 
        -
        Z^{ \theta_N }_t
      \|
      +
      \sup\nolimits_{ t \in [0,T] }
      \| 
        X_t 
      \|
      >
      \exp\!\big( 
        | 
          \ln(
            \min\{ 1, \nicefrac{ T }{ N } \}
          )
        |^{ 1 / 2 }
      \big)
  \Big]
  .
\end{split}
\end{equation}
This,
\eqref{eq:convergence_in_probability_Z},
and the assumption that $ X \colon [0,T] \times \Omega \to D $ has continuous sample paths
prove
for all $ \varepsilon \in (0,\infty) $
that
\begin{equation}
\label{eq:varrho_convergence}
\begin{split}
&
  \limsup_{ N \to \infty }
  \P\big[  
    \mathbbm{1}_{
      \{ \varrho_N < T \}
    }
    \geq \varepsilon
  \big]
\\ & \leq
  \limsup_{ N \to \infty }
  \P\!\left[ 
      \sup\nolimits_{ t \in [0,T] } 
      \| X_t - Z^{ \theta_N }_t \|
      \geq
      \inf\!\big(
        \{ \infty \} 
        \cup
        \big\{ 
          \| X_t - v \|
          \in \R
          \colon
          t \in [0,T] , 
          v \in ( \R^d \backslash D )
        \big\}
      \big)
  \right]
\\ & \quad
  +
  \limsup_{ N \to \infty }
  \P\Big[ 
      \sup\nolimits_{ t \in [0,T] }
      \| 
        X_t 
        -
        Z^{ \theta_N }_t
      \|
    \geq 1
  \Big]
\\ & \quad
  +
  \limsup_{ N \to \infty }
  \P\Big[ 
      \sup\nolimits_{ t \in [0,T] }
      \| 
        X_t 
      \|
      >
      \exp\!\big( 
        | 
          \ln(
            \min\{ 1, \nicefrac{ T }{ N } \}
          )
        |^{ 1 / 2 }
      \big)
      - 1
  \Big]
  = 0 .
\end{split}
\end{equation}
Furthermore, we note that 
the assumption that 
$
  \forall \, x, y \in \R^d \colon
  | \bar{U}( x ) - \bar{U}( y ) |
  \leq 
  c \, 
  \big( 1 + \| x \|^c + \| y \|^c \big)
  \,
  \| x - y \|
$ 
implies that
$
  \forall \, x, y \in \R^d \colon
  | \bar{U}( x ) - \bar{U}( y ) |
  \leq 
  c \, 2^c 
  \big( 
    1 
    +
    \| x - y \|^c 
    + 
    \| y \|^c 
  \big)
  \,
  \| x - y \|
$.
This together with the triangle 
inequality ensures for all $ t \in [0,T] $
that
\begin{equation}
\label{eq:U_estimate_convergence_probab}
\begin{split}
&
    \left|
      \tfrac{
        U( Z_t^{ \theta_N } )
      }{
        e^{ \rho t }
      }
      +
      \smallint_0^{ t \wedge \varrho_N }
        \tfrac{
          \bar{U}( Z_s^{ \theta_N } )
        }{
          e^{ \rho s }
        }
        \, ds
      -
      \tfrac{
        U( X_t )
      }{
        e^{ \rho t }
      }
      -
      \smallint_0^t
        \tfrac{
          \bar{U}( X_s )
        }{
          e^{ \rho s }
        }
        \, ds
    \right|
\\ & \leq
    \big|
        U( Z_t^{ \theta_N } )
      -
        U( X_t )
    \big|
    +
    \left|
      \smallint_0^{ t \wedge \varrho_N }
        \tfrac{
          \bar{U}( Z_s^{ \theta_N } )
        }{
          e^{ \rho s }
        }
        \, ds
      -
      \smallint_0^{ t \wedge \varrho_N }
        \tfrac{
          \bar{U}( X_s )
        }{
          e^{ \rho s }
        }
        \, ds
    \right|
    +
    \left|
      \smallint_0^{ t \wedge \varrho_N }
        \tfrac{
          \bar{U}( X_s )
        }{
          e^{ \rho s }
        }
        \, ds
        -
      \smallint_0^t
        \tfrac{
          \bar{U}( X_s )
        }{
          e^{ \rho s }
        }
        \, ds
    \right|
\\ & \leq
    \big|
        U( Z_t^{ \theta_N } )
      -
        U( X_t )
    \big|
    +
    \smallint_0^{ t \wedge \varrho_N }
      \left|
        \tfrac{
          \bar{U}( Z_s^{ \theta_N } )
        }{
          e^{ \rho s }
        }
        -
        \tfrac{
          \bar{U}( X_s )
        }{
          e^{ \rho s }
        }
      \right|
    ds
    +
    \smallint_{ t \wedge \varrho_N }^t
      \left|
        \tfrac{
          \bar{U}( X_s )
        }{
          e^{ \rho s }
        }
      \right|
    ds
\\ & \leq
    \big|
        U( Z_t^{ \theta_N } )
      -
        U( X_t )
    \big|
    +
    T
    \left[ 
      \sup_{ s \in [0,T] }
      \left|
          \bar{U}( Z_s^{ \theta_N } )
        -
          \bar{U}( X_s )
      \right|
    \right]
    +
    \left(
      t - \min\{ t , \varrho_N \}
    \right)
    \left[
      \sup_{ s \in [0,T] }
      \left|
        \bar{U}( X_s )
      \right|
    \right] 
\\ & \leq
    T c \, 2^c
    \left[ 
      1 +
        \sup_{ s \in [0,T] }
        \left\|
          Z_s^{ \theta_N } 
          -
          X_s
        \right\|^c
        +
        \sup_{ s \in [0,T] }
        \left\|
          X_s
        \right\|^c
    \right]
    \left[ 
      \sup_{ s \in [0,T] }
      \left\|
        Z_s^{ \theta_N } 
        -
        X_s 
      \right\|
    \right]
\\ & \quad
    +
    \big|
        U( Z_t^{ \theta_N } )
      -
        U( X_t )
    \big|
    +
    \mathbbm{1}_{
      \{ \varrho_N < T \}
    }
    \,
    T
    \left[
      \sup_{ s \in [0,T] }
      \left|
        \bar{U}( X_s )
      \right|
    \right] 
  .
\end{split}
\end{equation}
Next note that 
\eqref{eq:convergence_in_probability_Z}
and the assumption that $ U $ 
is continuous establish that
for all $ \varepsilon \in (0,\infty) $,
$ t \in [0,T] $ 
it holds that
$
  \limsup_{ N \to \infty }
  \P\big[ 
    |
        U( Z_t^{ \theta_N } )
      -
        U( X_t )
    |
    \geq 
    \varepsilon
  \big]
  = 0
$.
This, \eqref{eq:convergence_in_probability_Z},
\eqref{eq:U_estimate_convergence_probab}, 
and \eqref{eq:varrho_convergence} prove
for all 
$ \varepsilon \in (0,\infty) $, $ t \in [0,T] $ 
that
\begin{equation}
  \limsup_{ N \to \infty }
  \P\Bigg[ 
    \bigg|
      \tfrac{
        U( Z_t^{ \theta_N } )
      }{
        e^{ \rho t }
      }
      +
      \smallint_0^{ t \wedge \varrho_N }
        \tfrac{
          \bar{U}( Z_s^{ \theta_N } )
        }{
          e^{ \rho s }
        }
        \, ds
      -
      \tfrac{
        U( X_t )
      }{
        e^{ \rho t }
      }
      -
      \smallint_0^t
        \tfrac{
          \bar{U}( X_s )
        }{
          e^{ \rho s }
        }
        \, ds
    \bigg|
    \geq \varepsilon
  \Bigg] = 0
  .
\end{equation}
Combining this with
a well-known modification of 
Fatou's lemma
(see, e.g., Lemma~3.10
in \cite{HutzenthalerJentzen2014Memoires})
proves 
for all $ t \in [0,T] $
that
\begin{equation}
  \E\!\left[
       \exp\!\left(
         \tfrac{
           U( X_t )
         }{
           e^{ \rho t }
         }
         +
         \smallint_0^t
           \tfrac{
             \bar{U}( X_s )
           }{
             e^{ \rho s }
           }
         \, ds
       \right)
     \right]
  \leq
  \liminf_{ N \to \infty }
  \E\!\left[
       \exp\!\left(
         \tfrac{
           U( Z^{ \theta_N }_t )
         }{
           e^{ \rho t }
         }
         +
         \smallint_0^{ t \wedge \varrho_N }
           \tfrac{  
             \bar{U}( Z^{ \theta_N }_s )
           }{
             e^{ \rho s }
           }
         \, ds
       \right)
     \right]
  .
\end{equation}
Hence, we obtain that
\begin{equation}
  \sup_{ t \in [0,T] }
  \E\!\left[
       \exp\!\left(
         \tfrac{
           U( X_t )
         }{
           e^{ \rho t }
         }
         +
         \smallint_0^t
           \tfrac{
             \bar{U}( X_s )
           }{
             e^{ \rho s }
           }
         \, ds
       \right)
     \right]
  \leq
  \liminf_{ N \to \infty }
  \sup_{ t \in [0,T] }
  \E\!\left[
       \exp\!\left(
         \tfrac{
           U( Z^{ \theta_N }_t )
         }{
           e^{ \rho t }
         }
         +
         \smallint_0^{ t \wedge \varrho_N }
           \tfrac{  
             \bar{U}( Z^{ \theta_N }_s )
           }{
             e^{ \rho s }
           }
         \, ds
       \right)
     \right]
  .
\end{equation}
This together with 
\eqref{eq:Z_bound_2c}
ensures that
\begin{equation}
  \sup_{ t \in [0,T] }
  \E\!\left[
       \exp\!\left(
         \tfrac{
           U( X_t )
         }{
           e^{ \rho t }
         }
         +
         \smallint_0^t
           \tfrac{
             \bar{U}( X_s )
           }{
             e^{ \rho s }
           }
         \, ds
       \right)
     \right]
  \leq
  \limsup_{ N \to \infty }
  \sup_{ t \in [0,T] }
  \E\!\left[
       \exp\!\left(
         \tfrac{
           U( Z^{ \theta_N }_t )
         }{
           e^{ \rho t }
         }
         +
         \smallint_0^{ t \wedge \varrho_N }
           \tfrac{  
             \bar{U}( Z^{ \theta_N }_s )
           }{
             e^{ \rho s }
           }
         \, ds
       \right)
     \right]
  \leq
  \E\big[
    e^{ U( X_0 ) }
  \big]
  < \infty
  .
\end{equation}
Combining this with 
the fact that for all 
$ N \in \N \cap [ T, \infty ) $ it holds that
$
  Y^N
  =
  Z^{ \theta_N } 
$
and 
$
  \tau_N
  =
  \varrho_N
$
proves \eqref{eq:Y_bound_2}.
It thus remains to prove 
\eqref{eq:Y_bound_1}.
For this observe that 
\eqref{eq:Z_bound_1b}
together with the fact that 
for all 
$ N \in \N \cap [T,\infty) $ it holds that
$
  Y^N
  =
  Z^{ \theta_N } 
$
and 
$
  \tau_N
  =
  \varrho_N
$
assures that
\begin{equation}
\label{eq:bound_Y_largeN}
  \sup_{
    N \in \N 
    \cap [ T, \infty )
  }
  \sup_{ t \in [0,T] }
  \E\!\left[
       \exp\!\left(
         e^{ - \rho t }
         \,
         U( Y^N_t )
         +
         \smallint_0^{ t \wedge \tau_N }
           e^{ - \rho s }
           \,
           \bar{U}( Y^N_s )
         \, ds
       \right)
     \right]
  <
  \infty
  .
\end{equation}
In addition, we observe that the fact that
$
  \forall \, N \in \N , t \in [0,T] 
  \colon
  Y^N_t = Y^N_{ t \wedge \tau_N }
$
proves that
for all $ N \in \N $,
$ t \in [0,T] $
it holds that
\begin{equation}
\label{eq:U_YN_t}
\begin{split}
&
     \exp\!\left(
       e^{ - \rho t }
       \,
       U( Y^N_t )
       +
       \smallint_0^{ t \wedge \tau_N }
         e^{ - \rho s }
         \,
         \bar{U}( Y^N_s )
       \, ds
     \right)
\\ & =
     \exp\!\left(
       e^{ - \rho t }
       \,
       U( Y^N_{ t \wedge \tau_N } )
       +
       \smallint_0^{ t \wedge \tau_N }
         e^{ - \rho s }
         \,
         \bar{U}( Y^N_{ s \wedge \tau_N } )
       \, ds
     \right)
\\ & =
  \mathbbm{1}_{
    \{ \tau_N = 0 \}
  }
     \exp\!\left(
       e^{ - \rho t }
       \,
       U( Y^N_{ t \wedge \tau_N } )
       +
       \smallint_0^{ t \wedge \tau_N }
         e^{ - \rho s }
         \,
         \bar{U}( Y^N_{ s \wedge \tau_N } )
       \, ds
     \right)
\\ &
  +
  \mathbbm{1}_{
    \{ \tau_N > 0 \}
  }
     \exp\!\left(
       e^{ - \rho t }
       \,
       U( Y^N_{ t \wedge \tau_N } )
       +
       \smallint_0^{ t \wedge \tau_N }
         e^{ - \rho s }
         \,
         \bar{U}( Y^N_{ s \wedge \tau_N } )
       \, ds
     \right)
\\ & =
  \mathbbm{1}_{
    \{ \tau_N = 0 \}
  }
     \exp\!\left(
       e^{ - \rho t }
       \,
       U( X_0 )
     \right)
  +
  \mathbbm{1}_{
    \{ \tau_N > 0 \}
  }
     \exp\!\left(
       e^{ - \rho t }
       \,
       U( Y^N_{ t \wedge \tau_N } )
       +
       \smallint_0^{ t \wedge \tau_N }
         e^{ - \rho s }
         \,
         \bar{U}( Y^N_{ s \wedge \tau_N } )
       \, ds
     \right)
     .
\end{split}
\end{equation}
Moreover, we note that the fact that
$
  \forall \, x \in \R \colon 
  | x | \leq 1 + | x |^q
$
ensures for all 
$ N \in \N $, $ t \in [0,T] $ 
that
\begin{equation}
  \left\|
    Y^N_t 
  \right\|
\leq
  \big\|
    Y^N_{ \llcorner t \lrcorner_{ \theta_N } }
  \big\|
  +
    \frac{
      \big\|
      \mu(
        Y_{
          \llcorner t \lrcorner_{ \theta_N }
        }^{ \theta_N }
      )
      \,
      (
        t
        -
        \llcorner t \lrcorner_{ \theta_N }
      )
      +
      \sigma(
        Y^{ \theta_N }_{
          \llcorner t \lrcorner_{ \theta_N }
        }
      )
      \,
      (
        W_{
          t
        }
        -
        W_{
          \llcorner t \lrcorner_{ \theta_N }
        }
      )
      \big\|
    }{
      1 +
      \big\|
      \mu(
        Y_{
          \llcorner t \lrcorner_{ \theta_N }
        }^N
      )
      \,
      (
        t
        -
        \llcorner t \lrcorner_{ \theta_N }
      )
      +
      \sigma(
        Y_{
          \llcorner t \lrcorner_{ \theta_N }
        }^N
      )
      \,
      (
        W_{
          t
        }
        -
        W_{
          \llcorner t \lrcorner_{ \theta_N }
        }
      )
      \big\|^q
    }
\leq
  \big\|
    Y^N_{ \llcorner t \lrcorner_{ \theta_N } }
  \big\|
  +
  1
  .
\end{equation}
This implies for all $ N \in \N $, $ t \in [0,T] $
that
\begin{equation}
  \left\|
    Y^N_{ t \wedge \tau_N }
    \mathbbm{1}_{
      \{ \tau_N > 0 \}
    }
  \right\|
\leq
  \big\|
    Y^N_{ \llcorner t \wedge \tau_N \lrcorner_{ \theta_N } }
    \mathbbm{1}_{
      \{ \tau_N > 0 \}
    }
  \big\|
  +
  1
\leq 
  \exp\!\left(
    \left|
      \ln( T / N )
    \right|^{ 1 / 2 }
  \right)
  + 1
  .
\end{equation}
Combining this with \eqref{eq:U_YN_t} establishes
for all $ N \in \N $, $ t \in [0,T] $
that
\begin{equation}
\begin{split}
&
     \exp\!\left(
       e^{ - \rho t }
       \,
       U( Y^N_t )
       +
       \smallint_0^{ t \wedge \tau_N }
         e^{ - \rho s }
         \,
         \bar{U}( Y^N_s )
       \, ds
     \right)
\\ & \leq
  \mathbbm{1}_{
    \{ \tau_N = 0 \}
  }
     \exp\!\left(
       U( X_0 )
     \right)
  +
  \mathbbm{1}_{
    \{ \tau_N > 0 \}
  }
     \exp\!\left(
       U( Y^N_{ t \wedge \tau_N } )
       +
       \smallint_0^{ t \wedge \tau_N }
         |
           \bar{U}( Y^N_{ s \wedge \tau_N } )
         |
       \, ds
     \right)
\\ & \leq
  e^{
    U( X_0 )
  }
  +
  \mathbbm{1}_{
    \{ \tau_N > 0 \}
  }
     \exp\!\left(
       U\big( 
         Y^N_{ t \wedge \tau_N } 
         \mathbbm{1}_{
           \{ \tau_N > 0 \}
         }
       \big)
       +
       \smallint_0^T
         \big|
           \bar{U}\big( 
             Y^N_{ s \wedge \tau_N } 
             \mathbbm{1}_{
               \{ \tau_N > 0 \}
             }
           \big)
         \big|
       \, ds
     \right)
\\ & \leq
  e^{
    U( X_0 )
  }
  +
  \mathbbm{1}_{
    \{ \tau_N > 0 \}
  }
     \exp\!\left(
     \left[
       \sup_{ 
         v \in \R^d , \| v \| \leq \exp( | \ln( T / N ) |^{ 1 / 2 } ) + 1
       }
       U( v )
     \right]
       +
       T
       \left[
       \sup_{ 
         v \in \R^d , \| v \| \leq \exp( | \ln( T / N ) |^{ 1 / 2 } ) + 1
       }
         |
           \bar{U}( v )
         |
       \right]
     \right)
     .
\end{split}
\end{equation}
Hence, we obtain for all $ N \in \N $ that
\begin{equation}
\begin{split}
&
   \sup_{ t \in [0,T] }
   \E\!\left[
     \exp\!\left(
       e^{ - \rho t }
       \,
       U( Y^N_t )
       +
       \smallint_0^{ t \wedge \tau_N }
         e^{ - \rho s }
         \,
         \bar{U}( Y^N_s )
       \, ds
     \right)
   \right]
\\ & \leq
  \E\!\left[
    e^{
      U( X_0 )
    }
  \right]
  +
     \exp\!\left(
     \left[
       \sup_{ 
         v \in \R^d , \| v \| \leq \exp( | \ln( T / N ) |^{ 1 / 2 } ) + 1
       }
       U( v )
     \right]
       +
       T
       \left[
       \sup_{ 
         v \in \R^d , \| v \| \leq \exp( | \ln( T / N ) |^{ 1 / 2 } ) + 1
       }
         |
           \bar{U}( v )
         |
       \right]
     \right)
     .
\end{split}
\end{equation}
Combining this with the assumption that
$
  \E\big[ e^{ U( X_0 ) } \big] < \infty
$
and the assumption that $ U $ and $ \bar{U} $ are continuous
ensures that
\begin{equation}
\label{eq:bound_Y_smallN}
\begin{split}
&
  \sup_{ 
    N \in \N \cap [ 0, \max\{ T, 1 \} ] 
  }
   \sup_{ t \in [0,T] }
   \E\!\left[
     \exp\!\left(
       e^{ - \rho t }
       \,
       U( Y^N_t )
       +
       \smallint_0^{ t \wedge \tau_N }
         e^{ - \rho s }
         \,
         \bar{U}( Y^N_s )
       \, ds
     \right)
   \right]
\\ & \leq
  \E\!\left[
    e^{
      U( X_0 )
    }
  \right]
  +
     \exp\!\left(
     \left[
       \sup_{ 
         v \in \R^d , \| v \| \leq \exp( | \ln( T ) |^{ 1 / 2 } ) + 1
       }
       U( v )
     \right]
       +
       T
       \left[
       \sup_{ 
         v \in \R^d , \| v \| \leq \exp( | \ln( T ) |^{ 1 / 2 } ) + 1
       }
         |
           \bar{U}( v )
         |
       \right]
     \right)
   < \infty
     .
\end{split}
\end{equation}
Inequality~\eqref{eq:bound_Y_smallN} together with inequality~\eqref{eq:bound_Y_largeN} 
establishes inequality~\eqref{eq:Y_bound_1}.
The proof of Corollary~\ref{cor:for_examples}
is thus completed.
\end{proof}

Observe, in the setting of Corollary~\ref{cor:for_examples},
that the assumption that $ X = ( X_t )_{ t \in [0,T] } \colon [0,T] \times \Omega \to D $
is an $ ( \mathcal{F}_t )_{ t \in [0,T] } $-adapted stochastic process, in particular,
ensures that the initial random variable
$
  X_0 \colon \Omega \to D
$
is an $ \mathcal{F}_0 $/$ \mathcal{B}( D ) $-measurable mapping.

\section{Examples of SDEs with exponential moments}
\label{sec:examples}

In this section Corollary~\ref{cor:for_examples}
is applied to a number of example SDEs from the literature.
To keep this article at a reasonable length, we present
the example SDEs here in a very brief way
and refer to \cite{HutzenthalerJentzen2014Memoires,CoxHutzenthalerJentzen2014}
for references and further details for these example SDEs.

\subsection{Setting}
\label{sec:ex_setting}

Throughout Section~\ref{sec:examples}
the following setting is used.
Let
$ T \in ( 0, \infty) $,
$ d, m \in \N $,
$ \mu \in \mathcal{M}( \mathcal{B}( \R^d ), \mathcal{B}( \R^d ) ) $,
$ \sigma \in \mathcal{M}( \mathcal{B}( \R^d ), \mathcal{B}( \R^{ d \times m } ) ) $,
let $ D \subseteq \R^d $
be an open set,
let 
$
  (
    \Omega, \mathcal{F}, \P
  )
$ 
be a probability space with a normal filtration 
$
  (
    \mathcal{F}_t
  )_{
    t \in [0, T ]
  }
$,
assume that $ \mu|_D \colon D \to \R^d $
and $ \sigma|_D \colon D \to \R^{ d \times m } $
are locally Lipschitz continuous,
let $ W \colon [0,T] \times \Omega \to \R^m $
be a standard $ ( \mathcal{F}_t )_{ t \in [0,T] } $-Brownian
motion with continuous sample paths,
let $ X = ( X^1, \dots, X^d ) \colon [0,T] \times \Omega \to D $
be an $ ( \mathcal{F}_t )_{ t \in [0,T] } $-adapted stochastic process 
with continuous sample paths which satisfies that
for all $ t \in [0,T] $ it holds $ \P $-a.s.\ that
\begin{equation}
  X_t = X_0 + \int_0^t \mu( X_s ) \, ds
  +
  \int_0^t \sigma( X_s ) \, dW_s
  ,
\end{equation}
and let
$ Y^N = ( Y^{ 1, N }, \dots, Y^{ d, N } ) \colon [0,T] \times \Omega \to \R^d $,
$ N \in \N $,
and
$
  \tau_N \colon \Omega \to [0,T]
$,
$ N \in \N $,
be functions
satisfying
for all $ N \in \N $,
$ n \in \{ 0, 1, \dots, N - 1 \} $,
$ t \in [ \frac{ n T }{ N } , \frac{ ( n + 1 ) T }{ N } ] $
that
$ Y^N_0 = X_0 $
and
\begin{equation}
  Y_t^N
  =
  Y_{ \frac{ n T }{ N } }^N
  +
  \1_{
    \left\{
      Y^N_{ n T / N } \in D
    \right\}
    \cap
    \left\{
      \| Y_{ n T / N }^N \|
      \leq
      \exp\left(
        | \ln( T / N ) |^{ 1 / 2 }
      \right)
    \right\}
  }
  \left[
    \tfrac{
      \mu( Y_{ n T / N }^N )
      ( 
        t - 
        \frac{ n T }{ N }
      )
      +
      \sigma( Y_{ n T / N }^N )
      (
        W_t
        -
        W_{ n T / N }
      )
    }{
      1 +
      \|
        \mu( Y_{ n T / N }^N)
        ( 
          t - 
          \frac{ n T }{ N }
        )
        +
        \sigma( Y_{ n T / N }^N )
        (
          W_t
          -
          W_{ n T / N }
        )
      \|^2
    }
   \right]
\end{equation}
and
$
  \tau_N =
  \inf\!\big(
  \big\{
    s \in 
    \{ 
      0, \frac{ T }{ N }, \frac{ 2 T }{ N }, \dots, T 
    \}
    \colon
    Y^N_s \notin D
    \text{ or }
    \| Y^N_s \|
    >
    \exp(
      | \ln( T / N ) |^{ 1 / 2 }
    )
  \big\}
  \cup 
  \{ T \}
  \big)
$.
Then
Corollary~\ref{cor:convergence_increment_tamed}
ensures for all
$ \varepsilon \in ( 0, \infty ) $
that
$
    \limsup_{ N \to \infty }
    \P\big[
      \sup_{ t \in [0,T] }
      \|
        X_t
        -
        Y^N_t
      \|
      \geq \varepsilon
    \big]
    = 0
$.

\subsection{Stochastic Ginzburg-Landau equation}
\label{ssec:Stochastic Ginzburg-Landau equation}

In this subsection assume the setting in
Subsection~\ref{sec:ex_setting},
let $ \alpha \in [0,\infty) $, $ \beta, \delta \in (0,\infty) $,
$ \varepsilon \in (0, \frac{ \delta }{ \beta^2 } ] $,
$ U, \bar{U} \in C( \R, \R ) $,
and
assume for all $ x \in \R $
that
$ d = m = 1 $,
$ D = \R $,
$
  \mu( x ) = \alpha x - \delta x^3
$,
$
  \sigma( x ) = \beta x
$,
$
  U( x ) = \varepsilon x^2
$,
$
  \bar{U}( x )
  =
  2
  \varepsilon
  \,
  [
    \delta
    -
    \beta^2 \varepsilon
  ]
  \,
  x^4
$,
and 
$
  \E\big[ e^{ U( X_0 ) } \big] < \infty
$.
Then it holds for all
$ x \in \R $ that
\begin{equation}
\begin{split}
&
  ( \mathcal{G}_{ \mu, \sigma } U)( x )
  +
  \tfrac{ 1 }{ 2 }
  \|
    \sigma( x )^* ( \nabla U )( x )
  \|^2
  +
  \bar{ U }( x )
=
  \varepsilon
  \left[
    2 x
    \left[
      \alpha x - \delta x^3
    \right]
    +
    \beta^2 x^2
  \right]
  +
  2
  \left( \beta \varepsilon \right)^2
  x^4
  +
  \bar{U}( x )
\\ & =
  \varepsilon
  \left[
  2
  \alpha
  +
  \beta^2
  \right]
  x^2
  +
  2
    \varepsilon
  \left[
    \beta^2 \varepsilon
    -
    \delta
  \right]
  x^4
  +
  \bar{ U }( x )
=
  \left[
  2
  \alpha
  +
  \beta^2
  \right]
  U( x )
\end{split}
\end{equation}
and Corollary~\ref{cor:for_examples}
hence shows
for all $ r \in ( 0, \infty ) $
that
$
  \limsup_{ N \to \infty }
  \big(
  \sup_{ t \in [0,T] }
  \E\big[
    \| X_t - Y^N_t \|^r
  \big]
  \big)
  = 0
$
and
\begin{gather}
  \sup_{ N \in \N }
  \sup_{
    t \in [0,T]
  }
  \E\!\left[
       \exp\!\left(
         \tfrac{
           \varepsilon \, ( Y^N_t )^2
         }{
           e^{ [ 2 \alpha + \beta^2 ] t }
         }
         +
         \smallint_0^{ t \wedge \tau_N }
           \tfrac{
             2 \varepsilon \, [ \delta - \beta^2 \varepsilon ]
             \, ( Y^N_s )^4
           }{
             e^{ [ 2 \alpha + \beta^2 ] s }
           }
         \, ds
       \right)
     \right]
  <
  \infty ,
\\
  \limsup_{ N \to \infty }
  \sup_{ t \in [0,T] }
  \E\!\left[
       \exp\!\left(
         \tfrac{
           \varepsilon \, ( Y^N_t )^2
         }{
           e^{ [ 2 \alpha + \beta^2 ] t }
         }
         +
         \smallint_0^{ t \wedge \tau_N }
           \tfrac{
             2 \varepsilon \, [ \delta - \beta^2 \varepsilon ]
             \, ( Y^N_s )^4
           }{
             e^{ [ 2 \alpha + \beta^2 ] s }
           }
         \, ds
       \right)
     \right]
  \leq
  \E\!\left[
    e^{
      \varepsilon | X_0 |^2
    }
  \right]
  < \infty
  .
\end{gather}

\subsection{Stochastic Lorenz equation with additive noise}
\label{ssec:stochastic.Lorenz.equation}

In this subsection assume the setting in
Subsection~\ref{sec:ex_setting},
let
$
  \alpha_1, \alpha_2, \alpha_3, \beta \in [0,\infty)
$,
$ \varepsilon \in (0,\infty) $,
$ 
  U, \bar{U} \in C( \R^3, \R ) 
$,
$
  \vartheta 
  =
    \min_{ r \in (0,\infty) }
    \max\{
      ( \alpha_1 + \alpha_2 )^2
      /
      r
      -
      2 \alpha_1
      ,
        r - 1
      ,
      0
    \}
  \in [0,\infty)
$,
and 
assume 
for all $ x = ( x_1, x_2, x_3 ), u = ( u_1, u_2, u_3 ) \in \R^3 $ that
$ d = m = 3 $,
$ D = \R^3 $,
$
  \sigma( x ) u = \sqrt{ \beta } u
$,
$
  \mu( x )
  =
  \big( 
    \alpha_1 \left( x_2 - x_1 \right) 
    ,
    \alpha_2 x_1 - x_2 - x_1 x_3
    ,
    x_1 x_2 - \alpha_3 x_3
  \big)
$,
$
  U( x ) = \varepsilon \| x \|^2
$,
$
  \bar{U}( x )
  =
  - 3 \varepsilon \beta
$,
and
$
  \E\big[ e^{ U( X_0 ) } \big] < \infty
$.
Then 
it holds for all
$ x \in \R^3 $ 
that
$
  ( \mathcal{G}_{ \mu, \sigma } U)( x )
  +
  \tfrac{ 1 }{ 2 }
  \| \sigma(x)^* ( \nabla U )(x) \|^2
  +
  \bar{ U }( x )
\leq
  \left[ 2 \varepsilon \beta + \vartheta \right] U(x)
$
(cf.\ Subsection~4.4 in Cox et al.~\cite{CoxHutzenthalerJentzen2014})
and Corollary~\ref{cor:for_examples}
hence shows
for all $ r \in ( 0, \infty ) $
that
$
  \limsup_{ N \to \infty }
  \big(
  \sup_{ t \in [0,T] }
  \E\big[
    \| X_t - Y^N_t \|^r
  \big]
  \big)
  = 0
$,
$
  \sup_{ N \in \N }
  \sup_{
    t \in [0,T]
  }
  \E\!\left[
       \exp\!\left(
         \varepsilon \, \| Y^N_t \|^2
         \,
         e^{ - [ 2 \varepsilon \beta + \vartheta ] t }
       \right)
     \right]
  <
  \infty
$,
and
\begin{equation}
  \limsup_{ N \to \infty }
  \sup_{ t \in [0,T] }
  \E\!\left[
       \exp\!\left(
         \tfrac{
           \varepsilon \, \| Y^N_t \|^2
         }{
           e^{ [ 2 \varepsilon \beta + \vartheta ] t }
         }
       \right)
     \right]
  \leq
  \exp\!\left(
         \smallint\nolimits_0^T
           \tfrac{
             3 \varepsilon \beta
           }{
             e^{ [ 2 \varepsilon \beta + \vartheta ] s }
           }
         \, ds
  \right)
  \E\!\left[
    e^{
      \varepsilon \| X_0 \|^2
    }
  \right]
  < \infty
  .
\end{equation}

\subsection{Stochastic van der Pol oscillator}
\label{ssec:stochastic.van.der.Pol.oscillator}

In this subsection assume the setting in
Subsection~\ref{sec:ex_setting},
let
$
  \alpha, \varepsilon \in ( 0, \infty )
$,
$
  \gamma, \delta, \eta_0, \eta_1
  \in [0,\infty)
$,
$ U, \bar{U} \in C( \R^2 , \R ) $,
$
  \vartheta
=
  \min_{ r \in ( 0, \infty ) }
  \max\{
    | \delta - 1 | /
    r
    +
    \eta_1
    ,
    r \,
    | \delta - 1 |
    +
    2 \gamma
    +
    4 \eta_0 \varepsilon
  \}
  \in [0,\infty)
$,
let
$ g \colon \R \to \R^{ 1 \times m } $
be a globally Lipschitz continuous function
which satisfies for all $ y \in \R $ that
$
  \| g(y) \|^2
  \leq
  \eta_0 +
  \eta_1 | y |^2
$,
and assume 
for all
$
  x = (x_1, x_2)
  \in \mathbb{R}^2
$,
$ u \in \R^m $
that
$ d = 2 $,
$ D = \R^2 $,
$
  \mu( x )
=
  \left(
    x_2 ,
    \left( \gamma - \alpha ( x_1 )^2 \right)
    x_2
    - \delta x_1
  \right)
$,
$
  \sigma( x ) u
=
  \left(
    0 ,
    g( x_1 ) u
  \right)
$,
$
  \varepsilon \eta_1 \leq \alpha
$,
$
  U( x ) = \varepsilon \| x \|^2
$,
$
  \bar{U}( x )
  =
  2 \varepsilon
  \left[
    \alpha
    -
    \varepsilon \eta_1
  \right]
  ( x_1 x_2 )^2
  -
  \varepsilon \eta_0
$,
and
$
  \E\big[
    e^{ U( X_0 ) }
  \big]
  < \infty
$.
Then it holds for all
$ x \in \R^2 $
that
$
  ( \mathcal{G}_{ \mu, \sigma } U)( x )
  +
  \tfrac{ 1 }{ 2 }
  \|
    \sigma( x )^* ( \nabla U )(x)
  \|^2
  +
  \overline{U}( x )
\leq
  \vartheta
  \,
  U(x)
$
(cf.\ Subsection~4.2 in Cox et al.~\cite{CoxHutzenthalerJentzen2014})
and 
Corollary~\ref{cor:for_examples}
hence shows
for all $ r \in ( 0, \infty ) $
that
$
  \limsup_{ N \to \infty }
  \big(
  \sup_{ t \in [0,T] }
  \E\big[
    \| X_t - Y^N_t \|^r
  \big]
  \big)
  = 0
$
and
\begin{gather}
  \sup_{ N \in \N }
  \sup_{
    t \in [0,T]
  }
  \E\!\left[
       \exp\!\left(
         \tfrac{
           \varepsilon \, \| Y^N_t \|^2
         }{
           e^{ \vartheta t }
         }
         +
         \smallint_0^{ t \wedge \tau_N }
           \tfrac{
             2 \varepsilon \,
             [ \alpha - \varepsilon \eta_1 ]
             \,
             | Y^{ 1, N }_s Y^{ 2, N }_s |^2
           }{
             e^{ \vartheta s }
           }
         \, ds
       \right)
     \right]
  <
  \infty ,
\\
  \limsup_{ N \to \infty }
  \sup_{ t \in [0,T] }
  \E\!\left[
       \exp\!\left(
         \tfrac{
           \varepsilon \, \| Y^N_t \|^2
         }{
           e^{ \vartheta t }
         }
         +
         \smallint_0^{ t \wedge \tau_N }
           \tfrac{
             2 \varepsilon \,
             [ \alpha - \varepsilon \eta_1 ]
             \,
             | Y^{ 1, N }_s Y^{ 2, N }_s |^2
           }{
             e^{ \vartheta s }
           }
         \, ds
       \right)
     \right]
  \leq
  \exp\!\left(
    \smallint_0^T
    \tfrac{
      \varepsilon \eta_0
    }{
      e^{ \vartheta s }
    }
    \, ds
  \right)
  \E\!\left[
    e^{
      \varepsilon \| X_0 \|^2
    }
  \right]
  < \infty
  .
\end{gather}

\subsection{Stochastic Duffing-van
der Pol oscillator}
\label{ssec:stochastic.Duffing.van.der.Pol.oscillator}

In this subsection assume the setting in
Subsection~\ref{sec:ex_setting},
let
$
  \eta_0, \eta_1, \alpha_1 \in [0,\infty)
$,
$
  \alpha_2, \alpha_3, \varepsilon
  \in (0,\infty)
$,
$
  U, \bar{U} \in C( \R^2, \R )
$,
let $ g \colon \R \to \R^{ 1 \times m } $
be a globally Lipschitz continuous function
which satisfies for all $ y \in \R $
that
$
  \| g(y) \|^2
  \leq
  \eta_0 +
  \eta_1 | y |^2
$,
and assume 
for all
$
  x = (x_1, x_2)
  \in \mathbb{R}^2
$,
$ u \in \R^m $
that
$ d = 2 $,
$ D = \R^2 $,
$
  \sigma( x ) u
=
  \left(
    0 ,
    g( x_1 ) u
  \right)
$,
$
  \mu( x )
=
  \left(
    x_2 ,
    \alpha_2 x_2 - \alpha_1 x_1
    - \alpha_3 ( x_1 )^2 x_2
    - ( x_1 )^3
  \right)
$,
$
  \varepsilon \eta_1 \leq \alpha_3
$,
$
  U(x_1,x_2)
=
  \varepsilon
  \big[
    \frac{ 1 }{ 2 }
    \left( x_1 \right)^4 
    +
    \alpha_1
    \left( x_1 \right)^2
    +
    \left( x_2 \right)^2
  \big]
$,
$
  \bar{U}( x )
  =
  2 \varepsilon
  \left[
    \alpha_3
    -
    \varepsilon
    \eta_1
  \right]
  ( x_1 x_2 )^2
  -
  \varepsilon \eta_0
  -
    \tfrac{
      \varepsilon
    \left|
      0 \vee
      (
        \eta_1
        -
        2 \alpha_1
        (
          \varepsilon
          \eta_0
          +
          \alpha_2
        )
      )
    \right|^2
    }{
      4
      \left(
        \varepsilon
        \eta_0
        +
        \alpha_2
      \right)
    }
$,
and 
$
  \E\big[
    e^{ U( X_0 ) }
  \big] < \infty
$.
Then it holds for all
$ x \in \R^2 $
that
$
  ( \mathcal{G}_{ \mu, \sigma } U)( x )
  +
  \tfrac{ 1 }{ 2 }
  \|
    \sigma(x)^*
    ( \nabla U)( x )
  \|^2
  +
  \bar{ U }( x )
\leq
      2
      \left(
        \varepsilon \eta_0
        +
        \alpha_2
      \right)
      U(x)
$
(cf.\ Subsection~4.3 in Cox et al.~\cite{CoxHutzenthalerJentzen2014})
and Corollary~\ref{cor:for_examples}
hence shows
for all $ r \in ( 0, \infty ) $
that
$
  \limsup_{ N \to \infty }
  \big(
  \sup_{ t \in [0,T] }
  \E\big[
    \| X_t - Y^N_t \|^r
  \big]
  \big)
  = 0
$
and
\begin{equation}
  \sup_{ N \in \N }
  \sup_{
    t \in [0,T]
  }
  \E\!\left[
       \exp\!\left(
         \tfrac{
           \frac{ \varepsilon }{ 2 }
           \left| Y^{ 1, N }_t \right|^4
           +
           \varepsilon \alpha_1
           \left| Y^{ 1, N }_t \right|^2
           +
           \varepsilon
           \left| Y^{ 2, N }_t \right|^2
         }{
           e^{ 2 t [ \varepsilon \eta_0 + \alpha_2 ] }
         }
         +
         \smallint_0^{ t \wedge \tau_N }
           \tfrac{
             2 \varepsilon \,
             \left[
               \alpha_3 - \varepsilon \eta_1
             \right]
             \,
             \left|
               Y^{ 1, N }_s Y^{ 2, N }_s
             \right|^2
           }{
             e^{ 2 s [ \varepsilon \eta_0 + \alpha_2 ] }
           }
         \, ds
       \right)
     \right]
  <
  \infty ,
\end{equation}
\begin{equation}
\begin{split}
&
  \limsup_{ N \to \infty }
  \sup_{ t \in [0,T] }
  \E\!\left[
       \exp\!\left(
         \tfrac{
           \frac{ \varepsilon }{ 2 }
           \left| Y^{ 1, N }_t \right|^4
           +
           \varepsilon \alpha_1
           \left| Y^{ 1, N }_t \right|^2
           +
           \varepsilon
           \left| Y^{ 2, N }_t \right|^2
         }{
           e^{ 2 t [ \varepsilon \eta_0 + \alpha_2 ] }
         }
         +
         \smallint_0^{ t \wedge \tau_N }
           \tfrac{
             2 \varepsilon \,
             \left[
               \alpha_3 - \varepsilon \eta_1
             \right]
             \,
             \left|
               Y^{ 1, N }_s Y^{ 2, N }_s
             \right|^2
           }{
             e^{ 2 s [ \varepsilon \eta_0 + \alpha_2 ] }
           }
         \, ds
       \right)
     \right]
\\ & \leq
    \exp\!\left(
      \smallint_0^T
      \tfrac{
        \varepsilon \eta_0
      }{
        e^{
          2 s
          \left[
            \varepsilon \eta_0
            +
            \alpha_2
          \right]
        }
      }
      +
      \tfrac{
      \varepsilon
      \,
    \left|
      0 \vee
      (
        \eta_1
        -
        2 \alpha_1
        \left[
          \varepsilon \eta_0
          +
          \alpha_2
        \right]
      )
    \right|^2
  }{
      4
      \,
      \left[
        \varepsilon \eta_0
        +
        \alpha_2
      \right]
      \,
      e^{
        2 s
        \left[
          \varepsilon \eta_0
          +
          \alpha_2
        \right]
      }
  }
      \, ds
    \right)
  \E\!\left[
    e^{
      \tfrac{ \varepsilon }{ 2 }
      | X^1_0 |^4
      +
      \varepsilon \alpha_1
      | X^1_0 |^2
      +
      \varepsilon
      | X^2_0 |^2
    }
  \right]
  < \infty
  .
\end{split}
\end{equation}

\subsection{Experimental psychology model}
\label{ssec:experimental.psychology}

In this subsection assume the setting in
Subsection~\ref{sec:ex_setting},
let
$ \alpha, \delta, \varepsilon \in (0,\infty) $,
$ \beta \in \R $,
$ q \in [3,\infty) $,
$ U \in C( \R^2, \R ) $,
and
assume for all
$ x = ( x_1 , x_2 ) \in \R^2 $
that
$ d = 2 $, 
$ m = 1 $,
$ D = \R^2 $,
$
  \mu( x_1, x_2 )
=
  \big(
      ( x_2 )^2
      ( \delta + 4 \alpha x_1 )
      - \frac{ 1 }{ 2 } \beta^2 x_1
    ,
      - x_1 x_2
      ( \delta + 4 \alpha x_1 )
      - \frac{ 1 }{ 2 } \beta^2 x_2
  \big)
$,
$
  \sigma( x_1, x_2 )
  =
  ( - \beta x_2 , \beta x_1 )
$,
$
  U( x )
  =
  \varepsilon 
  \| x \|^q
$,
and
$
  \E\big[
    e^{ U( X_0 ) }
  \big]
  < \infty
$.
Then it holds for all
$ x \in \R^2 $ that
$
  ( \mathcal{G}_{ \mu, \sigma } U )( x )
  +
  \tfrac{ 1 }{ 2 }
  \|
    \sigma( x )^* \,
    ( \nabla U )( x )
  \|^2
=
  0
$
(cf.\ Subsection~4.8 in Cox et al.~\cite{CoxHutzenthalerJentzen2014})
and Corollary~\ref{cor:for_examples}
hence shows
for all $ r \in ( 0, \infty ) $
that
$
  \limsup_{ N \to \infty }
  \big(
  \sup_{ t \in [0,T] }
  \E\big[
    \| X_t - Y^N_t \|^r
  \big]
  \big)
  = 0
$,
$
  \sup_{ N \in \N }
  \sup_{
    t \in [0,T]
  }
  \E\!\left[
       \exp\!\left(
           \varepsilon
           \| Y^N_t \|^q
       \right)
     \right]
  <
  \infty
$,
and
$
  \limsup_{ N \to \infty }
  \sup_{ t \in [0,T] }
  \E\!\left[
       \exp\!\left(
           \varepsilon
           \| Y^N_t \|^q
       \right)
     \right]
  \leq
  \E\!\left[
       \exp\!\left(
           \varepsilon
           \| X_0 \|^q
       \right)
  \right]
$.

\subsection{Stochastic SIR model}
\label{ssec:stochastic.SIR.model}

In this subsection assume the setting in
Subsection~\ref{sec:ex_setting},
let
$
  \alpha, \beta, \gamma, \delta,
  \varepsilon 
  \in (0,\infty)
$,
$ U, \bar{U} \in C( \R^3, \R ) $,
$
  \hat{ \varepsilon } \in
  ( 0, \frac{ 4 \varepsilon \delta }{ \gamma } ]
$,
assume that
$ d = 3 $, 
$ m = 1 $,
and
$ D = (0,\infty)^3 $,
assume 
for all
$
  x = (x_1, x_2, x_3)
  \in D
$
that
$
  \mu\!\left(
      x_1 ,
      x_2 ,
      x_3
  \right)
=
  (
      - \alpha x_1 x_2
      - \delta x_1
      + \delta
      ,
      \alpha x_1 x_2
      -
      ( \gamma + \delta ) x_2
      ,
      \gamma x_2 - \delta x_3
  )
$,
$
  \sigma\!\left(
      x_1
    ,
      x_2
    ,
      x_3
  \right)
=
  (
      - \beta x_1 x_2
    ,
      \beta x_1 x_2
    ,
      0
  )
$,
assume 
for all $ x \in \R^3 \backslash D $
that
$
  \mu( x ) = \sigma( x ) = 0
$,
let $ \phi \colon \R \to [0,1] $
and $ \psi \colon \R^2 \to [0,1] $
be infinitely often differentiable functions
which satisfy
for all $ x \in ( - \infty, 0 ] $
that
$ \phi( x ) = 0 $,
which satisfy
for all $ x \in [1,\infty) $
that
$ \phi( x ) = 1 $,
and 
which satisfy
for all $ x = ( x_1, x_2 ) \in \R^2 $
that
$
  \psi( x_1 , x_2 )
  =
  \phi( x_1 ) \cdot \phi( - x_2 )
  +
  \phi( - x_1 ) \cdot \phi( x_2 )
$,
and assume 
for all $ x = ( x_1, x_2, x_3 ) \in \R^3 $
that
$
  U( x )
=
  \varepsilon
  \left[
    \tfrac{ 5 }{ 2 }
    +
    ( x_1 + x_2 )^2
    -
    2 \cdot x_1 \cdot x_2 \cdot \psi( x_1, x_2 )
  \right]
  +
  \hat{ \varepsilon }
  \left[ x_3 \right]^2
$,
$
  \bar{ U }( x ) = - 2 \varepsilon \delta
$,
and 
$ \E\big[ e^{ U( X_0 ) } \big] < \infty $.
Then note
for all $ x = ( x_1, x_2, x_3 ) \in D $
that
\begin{equation}
\begin{split}
&
  ( \mathcal{G}_{ \mu, \sigma } U )( x )
  +
  \tfrac{ 1 }{ 2 }
  \|
    \sigma( x )^* ( \nabla U )( x )
  \|^2
  +
  \bar{ U }( x )
=
  ( \mathcal{G}_{ \mu, \sigma } U )( x )
  +
  \tfrac{ 1 }{ 2 }
  \left|
    \left<
      \sigma( x ),
      ( \nabla U )( x )
    \right>
  \right|^2
  +
  \bar{ U }( x )
\\ & =
  ( \mathcal{G}_{ \mu, \sigma } U )( x )
  +
  \bar{ U }( x )
=
  2 \varepsilon
  \left[
    x_1 + x_2
  \right]
  \left[
    - \delta x_1 + \delta
    - ( \gamma + \delta ) x_2
  \right]
  +
  2 \hat{ \varepsilon } x_3
  \left[
    \gamma x_2 - \delta x_3
  \right]
  +
  \bar{ U }( x )
\\ & =
  - 2 \varepsilon \delta
  \left[
    x_1 + x_2
  \right]
  \left[
    x_1 + x_2 - 1
  \right]
  -
  2 \varepsilon \gamma
  \left[
    x_1 + x_2
  \right]
  x_2
  +
  2 \hat{ \varepsilon } x_3
  \left[
    \gamma x_2 - \delta x_3
  \right]
  +
  \bar{ U }( x )
\\ & =
  - 2 \varepsilon \delta
  \left[
    x_1 + x_2 - 1
  \right]^2
  -
  2 \varepsilon \delta
  \left[
    x_1 + x_2
  \right]
  -
  2 \varepsilon \gamma
  \left[ x_1 + x_2 \right]
  x_2
  -
  2 \hat{ \varepsilon } \delta
  \left[ x_3 \right]^2
  +
  2 \varepsilon \delta
  +
  2 \hat{ \varepsilon } \gamma x_2 x_3
  +
  \bar{ U }( 0 )
\\ & \leq
  \bar{ U }( 0 )
  +
  2 \varepsilon \delta
  -
  2 \varepsilon \gamma
  \left[ x_2 \right]^2
  -
  2 \hat{ \varepsilon } \delta
  \left[ x_3 \right]^2
  +
  \left[
    2 \sqrt{ \varepsilon \gamma } x_2
  \right]
  \left[
    \tfrac{
      \hat{ \varepsilon }
      \sqrt{ \gamma }
      x_3
    }{
      \sqrt{ \varepsilon }
    }
  \right]
\leq
  \bar{ U }( 0 )
  +
  2 \varepsilon \delta
  +
  \hat{ \varepsilon }
  \left[
    \tfrac{
      \hat{ \varepsilon } \gamma
    }{ 2 \varepsilon }
    -
    2 \delta
  \right]
  \left[
    x_3
  \right]^2
\leq
  0
  .
\end{split}
\end{equation}
Combining 
(4.34)--(4.35) in 
Section~4.6 in \cite{HutzenthalerJentzen2014Memoires}
with Corollary~\ref{cor:for_examples}
therefore implies that 
for all $ r \in ( 0, \infty ) $
it holds that
$
  \limsup_{ N \to \infty }
  \big(
  \sup_{ t \in [0,T] }
  \E\big[
    \| X_t - Y^N_t \|^r
  \big]
  \big)
  = 0
$,
$
  \sup_{ N \in \N }
  \sup_{ t \in [0,T] }
  \E\big[
    e^{
      \frac{ \varepsilon }{ 2 }
        |
          Y^{ 1, N }_t
        |^2
        +
      \frac{ \varepsilon }{ 2 }
        |
          Y^{ 2, N }_t
        |^2
      +
      \hat{ \varepsilon }
      |
        Y^{ 3, N }_t
      |^2
    }
  \big]
  < \infty
$,
and
$
  \limsup_{ N \to \infty }
  \sup_{ t \in [0,T] }
  \E\big[
    e^{
      U( Y^N_t )
      -
      2 \varepsilon \delta
      \left(
        t \wedge \tau_N
      \right)
    }
  \big]
\leq
  \E\!\left[
    e^{
      U( X_0 )
    }
  \right]
  < \infty
$.

\subsection{Langevin dynamics}
\label{ssec:Langevin.dynamics}

In this subsection assume the setting in
Subsection~\ref{sec:ex_setting},
let
$ \beta, \gamma \in (0,\infty) $,
$
  \varepsilon \in
  ( 0, \frac{ 2 \gamma }{ \beta } ]
$,
$
  U, \bar{U} \in C( \R^{ 2 m }, \R )
$,
$ 
  V \in 
  C^3( \R^m, [0,\infty) ) \cap 
  ( 
    \cup_{ p, c \in (0,\infty) } C^3_{ p, c }( \R^m, [0,\infty) ) 
  )
$,
and
assume 
for all $ x = ( x_1, x_2 ) \in \R^{ 2 m } $, $ u \in \R^m $
that
$
  \limsup_{ r \searrow 0 }
  \sup_{ z \in \R^m }
  \frac{
    \| z \|^r
  }{
    1 + V( z )
  }
  < \infty
$,
$ d = 2 m $,
$ D = \R^d $,
$
  \mu( x )
  =
  ( x_2, - ( \nabla V )( x_1 ) - \gamma x_2 )
$,
$
  \sigma( x ) u
  = ( 0, \sqrt{ \beta } u )
$,
$
  U( x ) = \varepsilon \, V( x_1 ) + \tfrac{ \varepsilon }{ 2 } \, \| x_2 \|^2
$,
$
  \bar{U}( x )
  =
  \varepsilon
  \big[
    \gamma
    -
    \frac{ \varepsilon \beta }{ 2 }
  \big]
  \| x_2 \|^2
  -
  \frac{ \varepsilon \beta m }{ 2 }
$,
and 
$
  \E\big[
    e^{ U( X_0 ) }
  \big]
  < \infty
$.
Then it holds for all
$ x \in \R^{ 2 m } $
that
$
  ( \mathcal{G}_{ \mu, \sigma } U )( x )
  +
  \tfrac{
    1
  }{ 2 }
    \|
      \sigma( x )^*
      ( \nabla U )( x )	
    \|^2
  + \bar{ U }( x )
  = 0
$
(cf.\ Subsection~4.5 in Cox et al.~\cite{CoxHutzenthalerJentzen2014})
and 
Corollary~\ref{cor:for_examples}
hence shows
for all $ r \in ( 0, \infty ) $
that
$
  \limsup_{ N \to \infty }
  \big(
  \sup_{ t \in [0,T] }
  \E\big[
    \| X_t - Y^N_t \|^r
  \big]
  \big)
  = 0
$
and
\begin{gather}
  \sup_{ N \in \N }
  \sup_{ t \in [0,T] }
  \E\!\left[
    \exp\!\left(
      \varepsilon \, V( Y^{ 1, N }_t )
      +
      \tfrac{
        \varepsilon
      }{ 2 }
      \,
        \|
          Y^{ 2, N }_t
        \|^2
      +
      \smallint_0^{ t \wedge \tau_N }
        \varepsilon
        \left[
          \gamma
          -
          \tfrac{ \varepsilon \beta }{ 2 }
        \right]
        \|
          Y^{ 2, N }_s
        \|^2
        \,
      ds
    \right)
  \right]
  < \infty
  ,
\\
\nonumber
  \limsup_{ N \to \infty }
  \sup_{ t \in [0,T] }
  \E\!\left[
    \exp\!\left(
      \varepsilon \, V( Y^{ 1, N }_t )
      +
      \tfrac{
        \varepsilon
      }{ 2 }
      \,
        \|
          Y^{ 2, N }_t
        \|^2
      +
      \smallint_0^{ t \wedge \tau_N }
        \varepsilon
        \left[
          \gamma
          -
          \tfrac{ \varepsilon \beta }{ 2 }
        \right]
        \|
          Y^{ 2, N }_s
        \|^2
        \,
      ds
    \right)
  \right]
\leq
  \E\!\left[
    e^{
      \frac{
        \varepsilon \beta m T
      }{ 2 }
      +
      \varepsilon
      V( X_0^1 )
      +
      \frac{ \varepsilon }{ 2 }
      \| X^2_0 \|^2
    }
  \right]
  .
\end{gather}

\subsection{Brownian dynamics (Overdamped Langevin dynamics)}
\label{sec:overdamped_Langevin}
\label{ssec:overdamped.Langevin.dynamics}

In this subsection assume the setting in
Subsection~\ref{sec:ex_setting},
let
$ \beta \in (0,\infty) $,
$ \eta_0, \eta_1 \in [0,\infty) $, 
$ \eta_2 \in [ 0, \frac{ 2 }{ \beta } ) $,
$
  V \in 
  \cup_{ p, c \in (0,\infty) } C^3_{ p, c }( \R^d, [0,\infty) )
$,
$ \varepsilon \in (0, \frac{ 2 }{ \beta } - \eta_2 ] $,
$ U, \bar{U} \in C( \R^d, \R ) $,
assume 
for all $ x, u \in \R^d $
that
$ d = m $,
$ D = \R^d $,
$
  \limsup_{ r \searrow 0 }
  \sup_{ z \in \R^d }
  \frac{ \| z \|^r }{ 1 + V(z) }
  < \infty
$,
$
  \mu( x ) = - ( \nabla V )( x )
$,
$
  \sigma( x ) u =
  \sqrt{ \beta } u
$,
$
  ( \triangle V)( x )
\leq
  \eta_0
  +
  2\eta_1
  V(x)
  +
  \eta_2
  \left\|
    ( \nabla V )( x )
  \right\|^2
$,
$
  U(x)
  =
  \varepsilon V(x)
$,
$
  \bar{U}(x)
  =
  \varepsilon
  \,
  (
    1
    -
    \frac{ \beta }{ 2 }
    ( \eta_2 + \varepsilon )
  )
  \,
  \|
    ( \nabla V )( x )
  \|^2
  -
  \frac{ \varepsilon \beta \eta_0 }{ 2 }
$,
and
$
  \E\big[ 
    e^{ U( X_0 ) }
  \big]
  < \infty
$.
Then it holds for all
$ x \in \R^d $ that
$
  ( \mathcal{G}_{ \mu, \sigma } U )( x )
  +
  \tfrac{ 1 }{ 2 }
  \left\|
    \sigma( x )^*
    ( \nabla U )( x )
  \right\|^2
  +
  \bar{U}(x)
\leq
  \beta
  \eta_1
  U(x)
$
(cf.\ Subsection~4.6 in Cox et al.~\cite{CoxHutzenthalerJentzen2014})
and Corollary~\ref{cor:for_examples}
hence shows
for all $ r \in ( 0, \infty ) $
that
$
  \limsup_{ N \to \infty }
  \big(
  \sup_{ t \in [0,T] }
  \E\big[
    \| X_t - Y^N_t \|^r
  \big]
  \big)
  = 0
$
and
\begin{gather}
  \sup_{ N \in \N }
  \sup_{ t \in [0,T] }
  \E\!\left[
    \exp\!\left(
      \tfrac{
        \varepsilon V( Y^N_t )
      }{
        e^{ \beta \eta_1 t }
      }
      +
      \smallint_0^{ t \wedge \tau_N }
      \tfrac{
        \varepsilon
        \left[
          1
          -
          \frac{ \beta }{ 2 } ( \eta_2 + \varepsilon )
        \right]
      }{
        e^{ \beta \eta_1 s }
      }
      \|
        ( \nabla V )( Y^N_s )
      \|^2
      \,
      ds
    \right)
  \right]
  < \infty
  ,
\\
  \limsup_{ N \to \infty }
  \sup_{ t \in [0,T] }
  \E\!\left[
    \exp\!\left(
      \tfrac{
        \varepsilon V( Y^N_t )
      }{
        e^{ \beta \eta_1 t }
      }
      +
      \smallint_0^{ t \wedge \tau_N }
      \tfrac{
        \varepsilon
        \left[
          1
          -
          \frac{ \beta }{ 2 } ( \eta_2 + \varepsilon )
        \right]
      }{
        e^{ \beta \eta_1 s }
      }
      \|
        ( \nabla V )( Y^N_s )
      \|^2
      -
      \tfrac{
        \varepsilon \beta \eta_0
      }{
        2
        e^{
          \beta \eta_1 s
        }
      }
      \,
      ds
    \right)
  \right]
\leq
  \E\!\left[
    e^{
      \varepsilon V( X_0 )
    }
  \right]
  < \infty
  .
\end{gather}

\section{Counterexamples to exponential integrability properties}

Corollary~\ref{cor:for_examples} above establishes, under suitable assumptions, 
that stopped increment-tamed Euler-Maruyama approximations
converge strongly to the exact solution process
of the considered SDE 
and also inherit suitable exponential integrability properties of the exact solution process 
of the SDE.
In this section we illustrate in the case of one simple example SDE 
that several other approximation schemes, which converge strongly to the exact 
solution process of this example SDE, fail to preserve appropriate exponential integrability 
properties of the exact solution process of the SDE.

\subsection{An example SDE with finite exponential moments}
\label{ssec:exampleSDE}
Let $ T \in (0,\infty) $,
let
  $
    (
      \Omega, \mathcal{F}, \P
    )
  $ 
  be a probability space with a normal filtration 
  $
    (
      \mathcal{F}_t
    )_{
      t \in [0, T ]
    }
  $,
  let
  $
    W \colon [0,T] \times \Omega
    \rightarrow \mathbb{R}
  $
  be a standard 
  $ ( \calF_t )_{ t \in [0,T] } $-Brownian motion
  with continuous sample paths,
  let $ X \colon [0,T] \times \Omega \to \R $
  be an
  $ ( \mathcal{F}_t )_{ t \in [0,T] } $-adapted
  stochastic process
  with continuous sample paths which satisfies 
  that for all $ t \in [0,T] $
  it holds $ \P $-a.s.\ that
  \begin{equation}  
  \label{eq:SDE.minus.x3}
    X_t=X_0-\int_0^t (X_s)^3\,ds
           +\int_0^t dW_s
           ,
  \end{equation}
  let
  $
    \mu, \sigma \colon \R \to \R
  $
  be the functions with the property that
  for all $ x \in \R $
  it holds that
  $
    \mu(x) = - x^3
  $
  and
  $
    \sigma(x) = 1
  $,
  let
  $
    \eps \in ( 0, \tfrac{ 1 }{ 2 } ]
  $
  satisfy
  $
    \E\big[
      \exp( \eps | X_0 |^4 )
    \big]
    < \infty
  $,
  and
  let
  $
    U_{ \delta } \colon \R \to [0,\infty)
  $,
  $
    \delta \in [0,\infty)
  $,
  be the functions with the property that
  for all
  $ x \in \R $, 
  $ \delta \in [0,\infty) $
  it holds that
  $
    U_{ \delta }( x ) = \delta x^4
  $.
  Then observe for all
  $ \delta \in [0,\infty) $,
  $ x \in \R $
  that
  \begin{equation}  \begin{split}
  &
      \left(
        \mathcal{ G }_{ \mu , \sigma } U_{ \delta }
      \right)( x )
      +
      \tfrac{ 1 }{ 2 }
      \left|
        \sigma( x )^* ( \nabla U_{ \delta } )( x )
      \right|^2
  \\ &
  =
    \langle
      ( \nabla U_{ \delta } )( x ) , \mu(x)
    \rangle
    +
    \tfrac{ 1 }{ 2 }
    \tr\!\big(
      \sigma( x ) [ \sigma(x) ]^*
      ( \operatorname{Hess} U_{ \delta } )( x )
    \big)
    +
    \tfrac{ 1 }{ 2 }
    \left|
      \sigma( x )^* ( \nabla U_{ \delta } )( x )
    \right|^2
  \\ & =
    \delta 4 x^3 \cdot ( - x^3 )
    +
    \tfrac{ 1 }{ 2 }
    \delta 12 x^2
    +
    \tfrac{ 1 }{ 2 }
    \left| \delta 4 x^3 \right|^2
  =
    6 \delta x^2 -
    ( 4 \delta - 8 \delta^2 ) x^6
  =
    6 \delta x^2
    - 4 \delta
    \left( 1 - 2 \delta \right) x^6
    .
  \end{split}     
  \end{equation}
  Corollary~2.4 in Cox et al.~\cite{CoxHutzenthalerJentzen2014}
  (with
  $
    \bar{U} =
    [0,T] \times \R \ni (t,x) \mapsto 
    4 \delta
    \left( 1 - 2 \delta \right) x^6
    -
    6 \delta x^2
    \in \R
  $
  in the notation of Corollary~2.4 in Cox et al.~\cite{CoxHutzenthalerJentzen2014}; 
  see also Corollary~\ref{cor:for_examples} above)
  hence implies for all $ \delta \in [0,\varepsilon] $ that
  \begin{equation}  \begin{split} \label{eq:exampleSDE.Lyapunov}
  &
    \sup_{ t \in [0,T] }
    \E\!\left[
      \exp\!\left(
        \delta \left| X_t \right|^4
        +
        \int_0^t
        4 \delta
        \left( 1 - 2 \delta \right)
        \left| X_s \right|^6
        -
        6 \delta \left| X_s \right|^2
        ds
      \right)
    \right]
  \leq
    \E\!\left[
      e^{
        \delta |X_0|^4
      }
    \right]
  < \infty
    .
  \end{split}     \end{equation}
This shows, in particular,
that
for all $ \delta \in ( - \infty , \varepsilon] \cap ( - \infty , \nicefrac{ 1 }{ 2 } ) $
it holds that
$
  \sup_{ t \in [0,T] }
    \E\!\left[
      \exp\!\left(
        \delta \, | X_t |^4
      \right)
    \right]
  < \infty
$.

\subsection{Infinite exponential moments for (stopped) Euler approximation schemes}

The Euler scheme stopped after leaving certain sets is not suitable
for approximating the exponential moments on the left-hand side of~\eqref{eq:exampleSDE.Lyapunov}
as there is at least one Euler step and this results in tails of a normal
distribution.
Note that
in the special case $D_t=\R$, $t\in(0,T]$,
the numerical scheme~\eqref{eq:stopped.Euler}
is the Euler scheme for the SDE~\eqref{eq:SDE.minus.x3}.
We also note that Liu and Mao consider in \cite{LiuMao2013} a stopped Euler scheme
with $D_t=[0,\infty)$, $t\in[0,T]$,
for SDEs on the domain $[0,\infty)$.
\begin{lemma}
\label{l:stopped.Euler.no.exp}
  Assume the setting in Subsection~\ref{ssec:exampleSDE},
  let
  $
    D_t \in \mathcal{B}(\R)
  $,
  $
    t \in (0,T]
  $,
  be a non-increasing family of sets
  satisfying
  $
    \lambda_{ \R }( D_T )
    \cdot
    \P\big[
      X_0 \in D_T
    \big]
    > 0
  $
  and
  $
    \cup_{ t \in (0,T] } \mathring{ D }_t = \R
  $,
  and
  let 
  $
    Y^N \colon [0,T] \times \Omega \to \R 
  $,
  $ N \in \N $, 
  be the mappings which satisfy 
  for all
  $ N \in \N $,
  $ n \in \{ 0, 1, \ldots , N - 1 \} $,
  $
    t \in \big[ \frac{ n T }{ N } , \frac{ (n+1) T }{ N } \big]
  $
  that
  $ Y^N_0 = X_0 $ 
  and
  \begin{equation}  \label{eq:stopped.Euler}
    Y_{t}^{N}=Y_{\frac{nT}{N}}^N
    +\1_{D_{\frac{T}{N}}}(Y_{\frac{nT}{N}}^{N})
    \left(
           W_{t}-W_{\frac{nT}{N}}
    -\big(Y_{\frac{nT}{N}}^N\big)^3\left(t-\tfrac{nT}{N}\right)
     \right)
     .
  \end{equation}
  Then it holds
  for all 
  $ t \in (0,T] $, 
  $ N \in \N $, 
  $ p \in (0,\infty) $,
  $ q \in (2,\infty) $
  that
  $
    \limsup_{ M \to \infty }
    \P\big[
      \sup_{s\in[0,T]}
      |
        X_s - Y_s^M 
      |
      > p 
    \big] 
    = 0
  $
  and
  $
    \E\!\left[ \exp\!\left( p \, |Y_t^N|^q\right)\right]
    =\infty
  $.
\end{lemma}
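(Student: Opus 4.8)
The plan is to establish the two assertions of Lemma~\ref{l:stopped.Euler.no.exp} separately: the convergence in probability by invoking the consistency machinery of Section~3, and the blow-up of the exponential moments by a direct conditional Gaussian computation.

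For the convergence statement I would first verify that the (untamed) Euler increment function $\phi(x,t,y):=\mu(x)\,t+\sigma(x)\,y=y-x^3t$ is $(\mu,\sigma)$-consistent with respect to Brownian motion in the sense of Definition~\ref{def:consistent}: for every non-empty compact $K\subseteq\R$ one has $\tfrac{1}{\sqrt{t}}\sup_{x\in K}\E\big[\|\sigma(x)W_t-\phi(x,t,W_t)\|\big]=\sqrt{t}\,\sup_{x\in K}|x|^3\to0$ and $\sup_{x\in K}\|\mu(x)-\tfrac1t\E[\phi(x,t,W_t)]\|=0$ as $t\searrow0$ (this is the ``$q=0$'' case not covered by Lemma~\ref{lem:consistent_incrementtamed}, so it is done by hand). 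Since $D=\R=\cup_{t\in(0,T]}\mathring D_t$, Lemma~\ref{lem:consistent_stopped} then yields that $(x,t,y)\mapsto\1_{D_t}(x)\,\phi(x,t,y)$ is $(\mu,\sigma)$-consistent. Putting $\Psi(x,h,s,y):=\1_{D_h}(x)\,[\mu(x)\,s+\sigma(x)\,y]$, equation~\eqref{eq:stopped.Euler} is exactly the time-continuous interpolation treated in Proposition~\ref{prop:convergence.interpolation}, and its remaining hypotheses follow from the pointwise bound $\sup_{x\in K}\sup_{t\in[0,T]}\|\Psi(x,h,t-\lfloor t\rfloor_h,W_t-W_{\lfloor t\rfloor_h})\|\le h\,\sup_{x\in K}|x|^3+\sup_{t\in[0,T]}|W_t-W_{\lfloor t\rfloor_h}|$ combined with $\lim_{h\searrow0}\E\big[\sup_{t\in[0,T]}|W_t-W_{\lfloor t\rfloor_h}|\big]=0$, which is a standard consequence of the uniform continuity of Brownian sample paths. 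Proposition~\ref{prop:convergence.interpolation} then gives $\lim_{M\to\infty}\P\big[\sup_{s\in[0,T]}|X_s-Y_s^M|>p\big]=0$.

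For the exponential-moment blow-up, fix $t\in(0,T]$, $N\in\N$, $p\in(0,\infty)$ and $q\in(2,\infty)$, let $n\in\{0,1,\dots,N-1\}$ be the unique index with $t\in(\tfrac{nT}{N},\tfrac{(n+1)T}{N}]$, set $v:=t-\tfrac{nT}{N}\in(0,\tfrac{T}{N}]$ and $A_k:=\{Y_{kT/N}^N\in D_{T/N}\}$ for $k\in\{0,\dots,n\}$. The key observation is that, conditionally on $\mathcal F_{nT/N}$ and on the event $A_n$, the random variable $Y_t^N$ is Gaussian with $\mathcal F_{nT/N}$-measurable mean $c:=Y_{nT/N}^N-(Y_{nT/N}^N)^3v$ and fixed variance $v>0$, since on $A_n$ the indicator in~\eqref{eq:stopped.Euler} equals one and $W_t-W_{nT/N}$ is independent of $\mathcal F_{nT/N}$ with law $N(0,v)$. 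Because $q>2$ forces $p\,|c+z|^q-\tfrac{z^2}{2v}\to+\infty$ as $|z|\to\infty$, we have $\int_{\R}\exp(p\,|c+z|^q)\,e^{-z^2/(2v)}\,dz=\infty$ for every $c\in\R$, hence $\E\big[\exp(p\,|Y_t^N|^q)\,\big|\,\mathcal F_{nT/N}\big]=\infty$ on $A_n$, and therefore $\E\big[\exp(p\,|Y_t^N|^q)\big]\ge\E\big[\1_{A_n}\,\E[\exp(p\,|Y_t^N|^q)\mid\mathcal F_{nT/N}]\big]=\infty$ as soon as $\P[A_n]>0$.

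The step I expect to require the most care is precisely showing $\P[A_n]>0$, i.e.\ that with positive probability the stopped scheme survives through step $n$. Here I would use that $(D_t)_{t\in(0,T]}$ is non-increasing, so $D_T\subseteq D_{T/N}$ and hence $\P[A_0]\ge\P[X_0\in D_T]>0$ while $\lambda_{\R}(D_{T/N})\ge\lambda_{\R}(D_T)>0$, and then argue by induction: on $A_{k-1}$ the $k$-th Euler step is performed, so conditionally on $\mathcal F_{(k-1)T/N}$ the variable $Y_{kT/N}^N$ is Gaussian with positive variance $\tfrac{T}{N}$, and such a Gaussian charges the set $D_{T/N}$ of positive Lebesgue measure, whence $\P[A_k]\ge\E\big[\1_{A_{k-1}}\,\P[Y_{kT/N}^N\in D_{T/N}\mid\mathcal F_{(k-1)T/N}]\big]>0$ because the integrand is strictly positive on the positive-probability set $A_{k-1}$. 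Iterating from $k=1$ to $k=n$ gives $\P[A_n]>0$ and finishes the proof; when $D_t=\R$ for all $t$ one has $A_n=\Omega$ and this last argument is vacuous.
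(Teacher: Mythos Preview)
Your proof is correct and follows essentially the same strategy as the paper: convergence via Lemma~\ref{lem:consistent_stopped} and Proposition~\ref{prop:convergence.interpolation}, and blow-up by conditioning on $\mathcal F_{nT/N}$ and using that $Y_t^N$ is non-degenerate Gaussian on $\{Y_{nT/N}^N\in D_{T/N}\}$. The paper's write-up differs only cosmetically, isolating the Brownian increment via the inequality $|y+x|^q\ge 2^{-q}|y|^q-|x|^q$ before integrating, and asserting $\P[Y_{nT/N}^N\in D_{T/N}]>0$ in one line rather than spelling out your Gaussian-density induction.
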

\begin{proof}[Proof of Lemma~\ref{l:stopped.Euler.no.exp}]
  Throughout this proof
  let  
  $ N \in \N $, $ n \in \{ 0, 1, \ldots, N - 1 \} $,
  $ t \in \big( \frac{ n T }{ N } , \frac{ ( n + 1 ) T }{ N } \big]
  $,
  $
    p \in (0,\infty)
  $,
  and
  $
    q \in (2,\infty)
  $
  be numbers.
  Lemma~\ref{lem:consistent_stopped}
  implies that
  the function
  $
    \R \times (0,T] \times \R \ni
    (x,h,y) \mapsto \1_{ D_h }( x ) \left( y - x^3 h \right) \in \R
  $
  is $(\mu,\sigma)$-consistent with respect to Brownian motion.
  Then Proposition~\ref{prop:convergence.interpolation}
  applied to the function
  $
    \R \times (0,T]^2 \times \R
    \ni (x, h, s, y ) \mapsto
    \1_{ D_h }( x ) \big( y - x^3 s \big) 
    \in \R
  $
  shows that
  $
    \limsup_{M\to\infty}
   \P\big[
   \sup_{s\in[0,T]}
   \big|X_{s}-Y_s^{M}\big|>p\big]=0
 $.
  Moreover,
  the fact that
  $
    \P\big[
      Y_0^N \in D_{ T / N }
    \big] 
    =
    \P\big[
      X_0 \in D_{ T / N }
    \big] 
    > 0
  $
  and the fact that
  $
    \lambda_{ \R }\big( D_{ T / N } \big) 
    \geq 
    \lambda_{ \R }( D_T ) 
    > 0
  $
  prove that
  $
    \P\big[
      Y_{ n T / N }^N \in D_{ T / N }
    \big] > 0
  $.
  Combining this
  with 
  the fact that
  $ W_t - W_{ \frac{ n T }{ N } } $ 
  is independent from $ Y_{ \frac{ n T }{ N } }^N $
  yields that
  \begin{equation}
  \begin{split}
    &
    \E\!\left[
      \exp\!\left( p \, |Y_t^N|^q \right)
    \right]
  \geq
    \E\!\left[
      \mathbbm{1}_{
        \{
          Y^N_{ n T / N } \in D_{ T / N }
        \}
      }
      \exp\!\left( p \, |Y_t^N|^q \right)
    \right]
  \\ & =
    \int_{
      D_{ \frac{ T }{ N } }
    }
    \E\!\left[
      \exp\!\left( p \, | Y_t^N |^q \right)
      \big|
      \,
      Y_{ \frac{ n T }{ N }}^N = x
    \right]
    \P\!\left[
      Y_{ \frac{ n T }{ N } }^N \in dx
    \right]
  \\ & =
    \int_{
      D_{ \frac{ T }{ N } }
    }
    \E\!\left[
      \exp\!\left(
        p \,
        \big|
          W_t - W_{ \frac{ n T }{ N } }
          + x - x^3 \, ( t - \tfrac{ n T }{ N } )
        \big|^q
      \right)
    \right]
    \P\!\left[
      Y_{ \frac{ n T }{ N } }^N \in dx
    \right]
    .
  \end{split}
  \end{equation}
  The fact that
  $
    \forall \, x, y \in \R 
    \colon
    | y + x |^q \geq \frac{ |y|^q }{ 2^q } - |x|^q
  $
  and 
  the fact that 
  $
    \lim_{ y \to \infty }
    \left(
      |y|^q / y^2 
    \right) = \infty
  $
  hence prove that
  \begin{equation}
  \begin{split}
  &
    \E\!\left[
      \exp\!\left( p \, |Y_t^N|^q \right)
    \right]
  \\ & \geq
    \int_{
      D_{ T / N }
    }
    \E\!\left[
      \exp\!\left(
        \tfrac{ p }{ 2^q }
        \,
        \big|
          W_t - W_{ \frac{ n T }{ N } }
        \big|^q
      - p
      \left| x - x^3 ( t - \tfrac{ n T }{ N } )
      \right|^q
    \right)
    \right]
    \P\!\left[
      Y_{ \frac{ n T }{ N } }^N \in dx
    \right]
  \\ & =
    \E\!\left[
      \exp\!\left(
        \tfrac{ p }{ 2^q }
        \,
        \big|
          W_{ t - \frac{ n T }{ N } }
        \big|^q
      \right)
    \right]
    \left[
    \int_{
      D_{ \frac{ T }{ N } }
    }
    \exp\!\left(
      - p
      \left| x - x^3 ( t - \tfrac{ n T }{ N } )
      \right|^q
    \right)
    \P\!\left[
      Y_{ \frac{ n T }{ N } }^N \in dx
    \right]
    \right]
  \\ & =
    \E\!\left[
      \exp\!\left(
        \tfrac{ p }{ 2^q }
        \,
        \big|
          W_{ t - \frac{ n T }{ N } }
        \big|^q
      \right)
    \right]
    \E\!\left[
      \mathbbm{1}_{
        \{
          Y^N_{ n T / N } \in D_{ T / N }
        \}
      }
    \exp\!\left(
      - p \,
      \big|
        Y^N_{ n T / N } -
        \big[ Y^N_{ n T / N } \big]^3 [ t - \tfrac{ n T }{ N } ]
      \big|^q
    \right)
    \right]
  \\ & =
    \int_{-\infty}^\infty\tfrac{1}{\sqrt{2\pi}}
      \exp\!\left((\tfrac{Nt-nT}{4N})^{\frac{q}{2}}p\,|y|^{q}-\tfrac{y^2}{2}\right)
    dy\;
    \E\!\left[
      \mathbbm{1}_{
        \{
          Y^N_{ n T / N } \in D_{ T / N }
        \}
      }
    \exp\!\left(
      - p \,
      \big|
        Y^N_{ n T / N } -
        \big[ Y^N_{ n T / N } \big]^3 [ t - \tfrac{ n T }{ N } ]
      \big|^q
    \right)
    \right]
  \\ &
    = \infty
    .
  \end{split}
  \end{equation}
  This
  finishes the proof
  of Lemma~\ref{l:stopped.Euler.no.exp}.
\end{proof}

\subsection{Infinite exponential moments for a (stopped) linear-implicit Euler approximation scheme}

The following lemma shows that the stopped
linear-implicit Euler scheme~\eqref{eq:stopped.linear.implicit.Euler}
is not suitable
for approximating the
exponential moments on the left-hand side of~\eqref{eq:exampleSDE.Lyapunov}.
Display~\eqref{eq:stopped.linear.implicit.Euler}
shows that the linear-implicit Euler scheme~\eqref{eq:stopped.linear.implicit.Euler}
with $ \forall \, t \in (0,T] \colon D_t = \R $ 
belongs to the class of balanced implicit methods (choose
$ c^0 = [0,\infty) \times \R \ni x \mapsto x^2 \in \R $ and 
$ c^1 = [0,\infty) \times \R \ni x \mapsto 0 \in \R $
in the notation of (3.3) in \cite{mps98})
introduced
in Milstein, Platen \& Schurz~\cite{mps98}.

\begin{lemma}
\label{l:linear.implicit.Euler.no.exp}
  Assume the setting in Subsection~\ref{ssec:exampleSDE},
  let
  $
    D_t \in \mathcal{B}(\R)
  $,
  $
    t \in (0,T]
  $,
  be a non-increasing family of sets
  satisfying
  $
    \lambda_{ \R }( D_T )
    \cdot
    \P[ X_0 \in D_T ] > 0
  $
  and
  $
    \cup_{ t \in (0,T] }
    \mathring{ D }_t
    = \R
  $,
  and
  let
  $
    Y^N \colon [0,T] \times \Omega \to \R
  $,
  $ N \in \N
  $,
  be the mappings which satisfy 
  for all
  $ N \in \N $,
  $ n \in \{ 0, 1, \ldots , N - 1 \} $,
  $
    t \in \big[ \frac{ n T }{ N } , \frac{ (n+1) T }{ N } \big]
  $
  that
  $
    Y^N_0 = X_0
  $
  and
  \begin{align}  \label{eq:stopped.linear.implicit.Euler}
    Y_{t}^{N}
    &=Y_{\frac{nT}{N}}^N
    +\1_{D_{\frac{T}{N}}}(Y_{\frac{nT}{N}}^{N})
    \left(
           W_{t}-W_{\frac{nT}{N}}
    -Y_{t}^{N}\big(Y_{\frac{nT}{N}}^N\big)^2\left(t-\tfrac{nT}{N}\right)
     \right)
    \\&
    =Y_{\frac{nT}{N}}^N
    +\1_{D_{\frac{T}{N}}}(Y_{\frac{nT}{N}}^{N})
    \left(
           W_{t}-W_{\frac{nT}{N}}
    -\big(Y_{\frac{nT}{N}}^N\big)^3\left(t-\tfrac{nT}{N}\right)
     \right)
    +
    \1_{
      D_{ \frac{ T }{ N } }
    }\!(
      Y_{ \frac{ n T }{ N } }^N
    )
    \,
    \big(
      Y_{ \frac{ n T }{ N } }^N
    \big)^2
    \big(
      Y_{ \frac{ n T }{ N } }^N
      -
      Y_t^N
    \big)
    \left(
      t - \tfrac{ n T }{ N }
    \right)
  \nonumber
    .
  \end{align}
  Then it holds 
  for all $ t \in (0,T] $, $ M \in \N $, $ p \in (0,\infty) $, $ q \in (2, \infty ) $
  that
  $
    \limsup_{N\to\infty}
    \E\big[
      \sup_{
        n \in \{ 0, 1, \ldots, N \}
      }
      |
        X_{ \frac{ n T }{ N } }
        -
        Y_{ \frac{ n T }{ N }
        }^N
      |^p
    \big] = 0
  $
  and
  $
    \E\!\left[ \exp\!\left(p\,|Y_t^M|^q\right)\right]
    =\infty
  $.
\end{lemma}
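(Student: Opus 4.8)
The plan is to prove the two assertions of Lemma~\ref{l:linear.implicit.Euler.no.exp} separately, exactly paralleling the structure of the proof of Lemma~\ref{l:stopped.Euler.no.exp}. For the convergence statement, I would first observe that one can solve the implicit relation \eqref{eq:stopped.linear.implicit.Euler} explicitly: on the event $\{Y_{nT/N}^N\in D_{T/N}\}$ one has
\begin{equation*}
  Y_t^N
  =
  \frac{
    Y_{nT/N}^N + \1_{D_{T/N}}(Y_{nT/N}^N)\,(W_t - W_{nT/N})
  }{
    1 + \1_{D_{T/N}}(Y_{nT/N}^N)\,(Y_{nT/N}^N)^2\,(t - \tfrac{nT}{N})
  },
\end{equation*}
so that the scheme is a genuine one-step scheme with increment function
$
  \R\times(0,T]^2\times\R\ni(x,h,s,y)\mapsto
  \1_{D_h}(x)\bigl[\tfrac{y - x^3 s}{1 + x^2 s}\bigr]\in\R.
$
I would then check that this function is $(\mu,\sigma)$-consistent with respect to Brownian motion — by Lemma~\ref{lem:consistent_stopped} it suffices to verify consistency of the unstopped version $(x,h,s,y)\mapsto\tfrac{y-x^3 s}{1+x^2 s}$, which is a short computation since $\tfrac{1}{1+x^2 s}\to1$ uniformly on compacts as $s\searrow0$ — and apply Proposition~\ref{prop:convergence.interpolation} to obtain convergence in probability of the time-continuous interpolation. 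Uniform $L^p$-boundedness of $Y_{nT/N}^N$ then upgrades this to the stated $L^p$-convergence of the discrete skeleton; this boundedness can be read off from the explicit formula since the denominator is bounded below by $1$ (so $|Y_t^N|\le |Y_{nT/N}^N| + |W_t - W_{nT/N}|$ pointwise, whence a standard Gronwall-type recursion in $N$ controls all moments, uniformly in $N$).

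For the infinite-exponential-moment statement, I would fix $N\in\N$, $n\in\{0,1,\dots,N-1\}$, $t\in(\tfrac{nT}{N},\tfrac{(n+1)T}{N}]$, $p\in(0,\infty)$, $q\in(2,\infty)$ and argue exactly as in Lemma~\ref{l:stopped.Euler.no.exp}. The hypotheses $\lambda_\R(D_T)\,\P[X_0\in D_T]>0$ and monotonicity of $D_t$ give $\P[Y_{nT/N}^N\in D_{T/N}]>0$ and that $\1_{\{Y_{nT/N}^N\in D_{T/N}\}}$ has positive probability on a set of positive Lebesgue measure in the $Y_{nT/N}^N$-law. Conditioning on $Y_{nT/N}^N = x$ for $x\in D_{T/N}$, using independence of $W_t - W_{nT/N}$ from $Y_{nT/N}^N$, and using the explicit formula, one gets
\begin{equation*}
  \E\!\left[\exp\!\left(p\,|Y_t^N|^q\right)\right]
  \geq
  \int_{D_{T/N}}
  \E\!\left[
    \exp\!\left(
      p\,\left|\tfrac{(W_t - W_{nT/N}) + x - x^3(t - nT/N)}{1 + x^2(t - nT/N)}\right|^q
    \right)
  \right]
  \P[Y_{nT/N}^N\in dx].
\end{equation*}
Applying the elementary inequality $|a+b|^q\ge \tfrac{|a|^q}{2^q} - |b|^q$ with $a = \tfrac{W_t - W_{nT/N}}{1 + x^2(t-nT/N)}$ and $b = \tfrac{x - x^3(t-nT/N)}{1+x^2(t-nT/N)}$ separates the Gaussian tail from a bounded deterministic factor, leading (after Fubini) to the product of $\E\bigl[\exp(c_x |W_{t - nT/N}|^q)\bigr]$ for some strictly positive constant $c_x = \tfrac{p}{2^q(1 + x^2(t-nT/N))^q}$ and a strictly positive $\P[Y_{nT/N}^N\in D_{T/N}]$-weighted integral. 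Since $|y|^q/y^2\to\infty$ as $|y|\to\infty$ for $q>2$, the Gaussian integral $\int_\R\tfrac{1}{\sqrt{2\pi\,(t-nT/N)}}\exp\bigl(c_x |y|^q - \tfrac{y^2}{2(t-nT/N)}\bigr)\,dy$ diverges, and hence the whole expression is $+\infty$.

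The only genuinely new wrinkle compared with Lemma~\ref{l:stopped.Euler.no.exp} is the $x$-dependent denominator $1 + x^2(t - nT/N)$, which I expect to be the main (though still minor) obstacle: one must be slightly careful that the constant $c_x$ in front of $|W_{t-nT/N}|^q$, while $x$-dependent, is bounded below by a strictly positive constant on any compact piece of $D_{T/N}$, so that one can restrict the outer integral to such a compact piece of positive $Y_{nT/N}^N$-mass before pulling the (now $x$-independent) divergent Gaussian integral out. Concretely I would replace $D_{T/N}$ by $D_{T/N}\cap\{|x|\le R\}$ for $R$ large enough that this set still carries positive mass and positive Lebesgue measure, on which $c_x\ge \tfrac{p}{2^q(1 + R^2 T)^q}>0$, and then the argument goes through verbatim. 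Everything else — the consistency check, the application of Proposition~\ref{prop:convergence.interpolation}, and the moment bound feeding the $L^p$-convergence — is routine given the results already established in the excerpt.
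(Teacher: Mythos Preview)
Your overall strategy matches the paper's proof: solve the implicit scheme explicitly to get the increment function $\1_{D_h}(x)\bigl(\tfrac{x+y}{1+x^2 s}-x\bigr)$, verify consistency via Lemma~\ref{lem:consistent_stopped} (the paper cites Lemma~3.30 in \cite{HutzenthalerJentzen2014Memoires} for the unstopped part), apply Proposition~\ref{prop:convergence.interpolation} for convergence in probability, and upgrade to $L^p$-convergence via uniform moment bounds and uniform integrability. For the divergence statement you condition on $Y_{nT/N}^N=x\in D_{T/N}$ and observe the resulting Gaussian integral diverges for $q>2$, exactly as the paper does.

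There is, however, a genuine gap in your moment-bound step. From the pointwise estimate $|Y_{(k+1)T/N}^N|\le|Y_{kT/N}^N|+|W_{(k+1)T/N}-W_{kT/N}|$ you \emph{cannot} deduce uniform-in-$N$ $L^p$-bounds by any ``Gronwall-type recursion'': iterating in $L^p$ gives only $\|Y_{nT/N}^N\|_{L^p}\le\|X_0\|_{L^p}+nC_p\sqrt{T/N}$, hence $\sup_n\|Y_{nT/N}^N\|_{L^p}\le\|X_0\|_{L^p}+C_p\sqrt{NT}\to\infty$. The pointwise bound throws away the stabilising drift $-x^3$, and nothing downstream can recover it. The paper obtains the required uniform moment bounds by invoking Lemma~2.28 in \cite{HutzenthalerJentzen2014Memoires}, which exploits the one-sided Lipschitz structure $\langle x,\mu(x)\rangle=-x^4\le 0$; you need either to cite such a result or to carry out the Lyapunov-type argument (e.g.\ via $\E[(Y_{(k+1)T/N}^N)^2\mid\mathcal F_{kT/N}]\le (Y_{kT/N}^N)^2+T/N$ and higher-order analogues).

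A minor remark on the divergence part: your detour through $|a+b|^q\ge 2^{-q}|a|^q-|b|^q$ and the restriction to $|x|\le R$ is unnecessary. For \emph{each fixed} $x\in D_{T/N}$ the inner integral
\[
\int_{\R}\tfrac{1}{\sqrt{2\pi}}\exp\!\Bigl(p\Bigl|\tfrac{x+\sqrt{t-nT/N}\,y}{1+x^2(t-nT/N)}\Bigr|^q-\tfrac{y^2}{2}\Bigr)\,dy
\]
is already $+\infty$ (the integrand behaves like $\exp(c_x|y|^q-y^2/2)$ with $c_x>0$), so the outer integral over a set of positive $\P[Y_{nT/N}^N\in\cdot\,]$-mass is $+\infty$ directly; this is how the paper argues, and no uniformity in $x$ is needed.
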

\begin{proof}[Proof of Lemma~\ref{l:linear.implicit.Euler.no.exp}]
Throughout this proof let 
$ N \in \N $,
$ n \in \{ 0, 1, \ldots , N - 1 \} $, 
$ t \in \big( \frac{ n T }{ N } , \frac{ ( n + 1 ) T }{ N } \big] $, 
and $ q \in (2,\infty) $ 
be numbers. 
Lemma~3.30 in~\cite{HutzenthalerJentzen2014Memoires}
  and
  Lemma~\ref{lem:consistent_stopped}
  imply that
the function
$
  \R \times (0,T] \times \R
  \ni (x,h,y) \mapsto \1_{ D_h }( x ) 
  \,
  \big( 
    \tfrac{ x + y }{ 1 + x^2 h } - x 
  \big)
  \in \R
$
is $ (\mu, \sigma ) $-consistent with respect to Brownian motion.
Then Proposition~\ref{prop:convergence.interpolation}
applied to the function
$
  \R \times (0,T]^2 \times \R
  \ni (x, h, s, y) \mapsto
  \1_{ D_h }( x )
  \big(
    \tfrac{ x + y }{ 1 + x^2 s } - x
  \big)
  \in \R
$
shows
for all
$
  p \in (0, \infty )
$
that
$
  \limsup_{ M \to \infty }
  \P\big[
    \sup_{ s \in [0,T] }
    |
      X_s - Y_s^M
    | > p
  \big]
  = 0
$.
In addition, Lemma~2.28 in~\cite{HutzenthalerJentzen2014Memoires}
yields
for all $ p \in (0,\infty) $
that
$
  \sup_{ M \in \N }
  \big\|
    \sup_{
      m \in \{ 0, 1, \ldots, M \}
    }
    | Y_{ \frac{ m T }{ M } }^M |
  \big\|_{
    L^p( \Omega; \R )
  }
  < \infty
$
and
this shows for all $ p \in (0, \infty) $
that
the family
of random variables
  $
    \sup_{
      m \in \{ 0, 1, \ldots, M \}
    }
    |
      X_{ \frac{ m T }{ M } } - Y_{ \frac{ m T }{ M } }^M
    |^p
  $,
  $
    M \in \N
  $,
  is uniformly integrable.
  Combining this with convergence in probability
  and, e.g.,
  Theorem~6.25 in Klenke~\cite{Klenke2008}
  proves
  for all $ p \in (0, \infty) $
  that
  $
    \limsup_{ M \to \infty }
    \E\big[
      \sup_{
        m \in \{ 0, 1, \ldots, M \}
      }
      |
        X_{ \frac{ m T }{ M } }
        -
        Y_{ \frac{ m T }{ M } }^M
      |^p
    \big] = 0
$.
Moreover, the fact that
$
  \P\big[
    Y_0^N \in D_{ T / N }
  \big] 
  =
  \P\big[
    X_0 \in D_{ T / N }
  \big] 
  > 0
$
and the fact that
$
  \lambda_{ \R }\big( D_{ T / N } \big)
  \geq
  \lambda_{ \R }\big( D_T \big)
  > 0
$
prove that
$
    \P\big[
      Y_{ \frac{ n T }{ N } }^N
      \in D_{ \frac{ T }{ N } }
    \big] > 0
$.
This
and the fact that 
$ 
  \lim_{ y \to \infty }
  \big(
    |y|^q / y^2 
  \big)
  = \infty
$
imply that
  \begin{equation}  \begin{split}
    \E\!\left[ \exp\!\left( p \, |Y_t^N|^q \right) \right]
  & \geq
    \E\!\left[
      \mathbbm{1}_{
        \{
          Y^N_{ n T / N } \in D_{ T / N }
        \}
      }
      \exp\!\left( p \, |Y_t^N|^q \right)
    \right]
  \\ & =
    \int_{
      D_{ \frac{ T }{ N } }
    }
    \E\!\left[
      \exp\!\left(
        p \, | Y_t^N |^q
      \right)
      \big| \,
      Y_{ \frac{ n T }{ N } }^N = x
    \right]
    \P\!\left[
      Y_{ \frac{ n T }{ N } }^N
      \in dx
    \right]
  \\ & =
    \int_{
      D_{ \frac{ T }{ N } }
    }
    \E\!\left[
      \exp\!\left(
        p
        \,
        \Big|
          \tfrac{
            x + W_t - W_{ \frac{ n T }{ N } }
          }{
            1 + x^2
            \left( t - \frac{ n T }{ N }
            \right)
          }
        \Big|^q
      \right)
    \right]
    \P\!\left[
      Y_{
        \frac{ n T }{ N }
      }^N
      \in dx
    \right]
    \\&
    =
    \int_{
      D_{ \frac{ T }{ N } }
    }
    \int_{
      - \infty
    }^{ \infty }
    \tfrac{ 1 }{ \sqrt{ 2 \pi } }
      \exp\!\left(
        p
        \left|
          \tfrac{ x + \sqrt{ t - \frac{ n T }{ N } } y
          }{
            1 + x^2 \left( t - \frac{ n T }{ N } \right)
          }
        \right|^q
        -
        \tfrac{ y^2 }{ 2 }
      \right)
    dy
    \;
    \P\!\left[Y_{\frac{nT}{N}}^N\in dx\right]
    = \infty
    .
  \end{split}     \end{equation}
  This
  finishes the proof of Lemma~\ref{l:linear.implicit.Euler.no.exp}.
\end{proof}

\subsection{Unbounded exponential moments for a (stopped) increment-tamed Euler approximation scheme}

\begin{lemma}
\label{l:unbounded.expmoments}
Let
$
  T, q, \delta, \alpha, \beta \in (0,\infty)
$
satisfy
$
  q \beta > 2 \alpha + 1
$,
  let
  $
    f \colon \R \to \R
  $
  be a $ \mathcal{B}( \R ) $/$ \mathcal{B}( \R ) $-measurable and locally bounded function,
  let
  $
    D_{ h, t } \in \mathcal{B}( \R )
  $,
  $
    h, t \in (0,T]
  $,
  be
  sets
  satisfying
  $
    \forall \, n \in \N \colon
    \exists \, r \in (0,T] \colon
    [ - n , n ]
    \subseteq     
    \cap_{ h \in (0,r] }
    \cap_{ t \in (0,h] }
    D_{ h, t } 
  $,
  let
  $
    (
      \Omega, \mathcal{F}, \P
    )
  $ 
  be a probability space with a normal filtration 
  $
    (
      \mathcal{F}_t
    )_{
      t \in [0, T ]
    }
  $,
  let $W\colon[0,T]\times\Omega\to\R$ be a standard
  $(\mathcal{F}_t)_{t\in[0,T]}$-Brownian motion
  with continuous sample paths,
  let
  $
    Y^N
    \colon
    [0,T] \times \Omega \to \R
  $,
  $ N \in \N $,
  be 
  $ ( \mathcal{F}_t )_{ t \in [0,T] } $-adapted
  stochastic processes which satisfy
  that
  for all 
  $ N \in \N $,
  $
    n \in \{ 0, 1, \ldots, N - 1 \}
  $,
  $
    t \in
    \big(
      \frac{ n T }{ N } ,
      \frac{ ( n + 1 ) T }{ N }
    \big]
  $
  it holds $ \P $-a.s.\ that
  \begin{equation}
  \label{eq:condition.unbounded.expmoments}
    | Y_t^N |
  \geq
    \big[
      \tfrac{
        2 \delta
      }{
        (
          t - \frac{ n T }{ N }
        )^{
          \beta
        }
      }
      -
      f(
        Y_{ \frac{ n T }{ N } }^N
      )
    \big]
    \,
    \1_{
      \big\{
        1
      \leq
        \left( t - \frac{ n T }{ N } \right)^{ \alpha }
        (
          W_t - W_{ \frac{ n T }{ N } }
        )
      \leq
        2
      \big\}
      \cap
      \big\{
        Y_{
          \frac{ n T }{ N }
        }^N
        \in
        D_{ \frac{ T }{ N } , t - \frac{ n T }{ N } }
      \big\}
    }
    ,
  \end{equation}
  and let
  $X\colon[0,T]\times\Omega\to\R$
  be an 
  $ ( \mathcal{F}_t )_{ t \in [0,T] } $-adapted
  stochastic process
  with continuous sample paths 
  which satisfies
  $
    \limsup_{ N \to \infty }
    \sup_{ n \in \{ 0, 1, \ldots, N \} }
    \P\big[
      |
        X_{ n T / N }
        -
        Y_{ n T / N }^N
      |
      > 1
    \big] = 0
  $.
  Then
  $
    \liminf_{ N \to \infty }
    \inf_{ t \in (0,T] }
    \E\!\left[
      \exp\!\left(
        \delta
        \left| Y_t^N \right|^q
      \right)
    \right]
    = \infty
  $.
\end{lemma}
\begin{proof}
[Proof of Lemma~\ref{l:unbounded.expmoments}]
  Assumption~\eqref{eq:condition.unbounded.expmoments}
  implies that
  for all
  $ N \in \N $,
  $ n \in \{ 0, 1, \ldots, N - 1 \} $,
  $
    t \in 
    \big( \frac{ n T }{ N } , \frac{ ( n + 1 ) T }{ N } 
    \big]
  $
  it holds $ \P $-a.s.\ that
  \begin{align}
  &
    \exp\!\left(\delta\left|Y_t^N\right|^q\right)
  \nonumber
  \\ & \geq
  \nonumber
    \exp\!\left(
      \delta
      \left|
      \max\!\left\{
        0 ,
        \tfrac{
          2 \delta
        }{
          (
            t - \frac{ n T }{ N }
          )^{
            \beta
          }
        }
        -
        f(
          Y_{ \frac{ n T }{ N } }^N
        )
      \right\}
      \right|^q
    \right)
    \1_{
      \left\{
        1
      \leq
        \left( t - \frac{ n T }{ N } \right)^{ \alpha }
        (
          W_t - W_{ \frac{ n T }{ N } }
        )
      \leq
        2
      \right\}
      \cap
      \left\{
        Y_{
          \frac{ n T }{ N }
        }^N
        \in
        D_{ \frac{ T }{ N } , t - \frac{ n T }{ N } }
      \right\}
    }
  \\ & \geq
    \exp\!\left(
      \delta
      \left|
      \max\!\left\{
        0 ,
        \tfrac{
          2 \delta
        }{
          (
            t - \frac{ n T }{ N }
          )^{
            \beta
          }
        }
        -
        f(
          Y_{ \frac{ n T }{ N } }^N
        )
      \right\}
      \right|^q
    \right)
    \1_{
      \left\{
        1
      \leq
        \left( t - \frac{ n T }{ N } \right)^{ \alpha }
        (
          W_t - W_{ \frac{ n T }{ N } }
        )
      \leq
        2
      \right\}
      \cap
      \left\{
        Y_{
          \frac{ n T }{ N }
        }^N
        \in
        D_{ \frac{ T }{ N } , t - \frac{ n T }{ N } }
      \right\}
      \cap
      \left\{
        f\big(
          Y_{
            \frac{ n T }{ N }
          }^N
        \big)
        \leq
        \frac{ \delta }{
          ( t - \frac{ n T }{ N } )^{ \beta }
        }
      \right\}
    }
  \nonumber
  \\ & \geq
  \nonumber
    \exp\!\left(
      \delta
      \left|
        \tfrac{
          \delta
        }{
          (
            t - \frac{ n T }{ N }
          )^{
            \beta
          }
        }
      \right|^q
    \right)
    \1_{
      \left\{
        1
      \leq
        \left( t - \frac{ n T }{ N } \right)^{ \alpha }
        (
          W_t - W_{ \frac{ n T }{ N } }
        )
      \leq
        2
      \right\}
      \cap
      \left\{
        Y_{
          \frac{ n T }{ N }
        }^N
        \in
        D_{ \frac{ T }{ N } , t - \frac{ n T }{ N } }
      \right\}
      \cap
      \left\{
        f\big(
          Y_{
            \frac{ n T }{ N }
          }^N
        \big)
        \leq
        \frac{ \delta }{
          ( t - \frac{ n T }{ N } )^{ \beta }
        }
      \right\}
    }
  \\ & =
    \exp\!\left(
        \tfrac{
          \delta^{ ( q + 1 ) }
        }{
          (
            t - \frac{ n T }{ N }
          )^{
            q \beta
          }
        }
    \right)
    \cdot
    \1_{
      \left\{
        1
      \leq
        \left( t - \frac{ n T }{ N } \right)^{ \alpha }
        (
          W_t - W_{ \frac{ n T }{ N } }
        )
      \leq
        2
      \right\}
    }
    \cdot
    \1_{
      \left\{
        Y_{
          \frac{ n T }{ N }
        }^N
        \in
        D_{ \frac{ T }{ N } , t - \frac{ n T }{ N } }
      \right\}
    }
    \cdot
    \1_{
      \left\{
        f\big(
          Y_{
            \frac{ n T }{ N }
          }^N
        \big)
        \leq
        \frac{ \delta }{
          ( t - \frac{ n T }{ N } )^{ \beta }
        }
      \right\}
    }
    .
  \end{align}
  The fact that
  for all
  $ N \in \N $,
  $ n \in \{ 0, 1, \ldots, N - 1 \} $,
  $
    t \in \big( \frac{ n T }{ N } , \frac{ ( n + 1 ) T }{ N } \big] 
  $
  it holds that
  $
    W_t - W_{ \frac{ n T }{ N } }
  $ 
  and $ Y_{ \frac{ n T }{ N } }^N $
  are independent 
  hence implies
  for all
  $
    N \in \N
  $,
  $
    n \in \{ 0, 1, \ldots, N - 1 \}
  $,
  $
    t \in \big( \frac{ n T }{ N } , \frac{ ( n + 1 ) T }{ N } \big]
  $
  that
  \begin{align}
  \label{eq:unbounded.expmoments1}
  &
    \E\!\left[
      \exp\!\left(
        \delta
        \left|
          Y_t^N
        \right|^q
      \right)
    \right]
  \nonumber
  \\ & \geq
  \nonumber
    \exp\!\left(
        \tfrac{
          \delta^{ ( q + 1 ) }
        }{
          (
            t - \frac{ n T }{ N }
          )^{
            q \beta
          }
        }
    \right)
    \cdot
    \P\!\left[
      1
      \leq
      \left(
        t - \tfrac{ n T }{ N }
      \right)^{ \alpha }
      \big(
        W_t - W_{ \frac{ n T }{ N } }
      \big)
      \leq
      2
    \right]
    \cdot
    \P\!\left[
      |
        f( Y_{ \frac{ n T }{ N } }^N )
      |
      \left(
        t - \tfrac{ n T }{ N }
      \right)^{ \beta }
      \leq \delta
      ,
      \;
      Y_{ \frac{ n T }{ N } }^N
      \in
      D_{
        \frac{ T }{ N } , t - \frac{ n T }{ N }
      }
    \right]
  \\ &
    =
    \exp\!\left(
        \tfrac{
          \delta^{ ( q + 1 ) }
        }{
          (
            t - \frac{ n T }{ N }
          )^{
            q \beta
          }
        }
    \right)
    \cdot
    \int_1^2
    \tfrac{
      1
    }{
      \sqrt{
        2 \pi
        ( t - \frac{ n T }{ N } )^{ ( 2 \alpha + 1 ) }
      }
    }
    \exp\!\left(
      -
      \tfrac{
        y^2
      }{
        2 \, ( t - \frac{ n T }{ N } )^{ ( 2 \alpha + 1 ) }
      }
    \right)
    dy
    \cdot
    \P\!\left[
      |
        f( Y_{ \frac{ n T }{ N } }^N )
      |
      \left(
        t - \tfrac{ n T }{ N }
      \right)^{ \beta }
      \leq \delta
      ,
      \;
      Y_{ \frac{ n T }{ N } }^N
      \in
      D_{
        \frac{ T }{ N } , t - \frac{ n T }{ N }
      }
    \right]
  \nonumber
  \\ & \geq
    \tfrac{ 1 }{
      \sqrt{ 2 \pi T^{ ( 2 \alpha + 1 ) } }
    }
    \cdot
    \exp\!\left(
      \tfrac{
        \delta^{ (q + 1) }
      }{
        ( t - \frac{ n T }{ N }
        )^{ q \beta }
      }
      -
      \tfrac{ 2 }{
        ( t - \frac{ n T }{ N } )^{ ( 2 \alpha + 1 ) }
      }
    \right)
    \cdot
    \P\!\left[
      | f( Y_{ \frac{ n T }{ N } }^N ) |
      \left(
        t - \tfrac{ n T }{ N }
      \right)^{
        \beta
      }
      \leq \delta ,
      \;
      Y_{
        \frac{ n T }{ N }
      }^N
      \in D_{ \frac{ T }{ N } , t - \frac{ n T }{ N } }
    \right]
  \\ & \geq
  \nonumber
    \tfrac{ 1 }{
      \sqrt{ 2 \pi T^{ ( 2 \alpha + 1 ) } }
    }
    \cdot
    \exp\!\left(
      \inf_{ 
        z \in ( 0, \frac{ T }{ N } ]
      }
      \left[
        \tfrac{
          \delta^{ (q+1) }
        }{
          z^{ q \beta }
        }
        -
        \tfrac{ 2 }{ 
          z^{ ( 2 \alpha + 1 ) }
        }
      \right]
    \right)
    \cdot
    \P\!\left[
      | f( Y_{ \frac{ n T }{ N } }^N ) |
      \left(
        t - \tfrac{ n T }{ N }
      \right)^{
        \beta
      }
      \leq \delta ,
      \;
      Y_{
        \frac{ n T }{ N }
      }^N
      \in D_{ \frac{ T }{ N } , t - \frac{ n T }{ N } }
    \right]
  \\ & =
\nonumber
    \tfrac{ 1 }{
      \sqrt{ 2 \pi T^{ ( 2 \alpha + 1 ) } }
    }
    \cdot
    \exp\!\left(
      \inf_{
        z \in [ \frac{ N }{ T } , \infty ) 
      }
      \left[
        \delta^{ ( q + 1 ) } 
        z^{ q \beta }
        -
        2 z^{ ( 2 \alpha + 1 ) }
      \right]
    \right)
    \cdot
    \P\!\left[
      | f( Y_{ \frac{ n T }{ N } }^N ) |
      \left(
        t - \tfrac{ n T }{ N }
      \right)^{
        \beta
      }
      \leq \delta ,
      \;
      Y_{
        \frac{ n T }{ N }
      }^N
      \in D_{ \frac{ T }{ N } , t - \frac{ n T }{ N } }
    \right]
    .
  \end{align}
  In the next step we note that the fact the sample paths of $ X $ are continuous 
  ensures that there exists a natural number
  $ k \in \N $
  such that
  \begin{equation}
  \label{eq:Pgeq12}
    \P\big[
      \sup\nolimits_{ s \in [0,t] }
      | X_s | \leq k - 1
    \big]
    \geq
    \tfrac{ 1 }{ 2 }
    .
  \end{equation}
  The assumption that
  $
    \forall \, n \in \N \colon
    \exists \, r \in (0,T] \colon
    [-n,n]
    \subseteq
    \cap_{ h \in (0,r] }
    \cap_{ t \in (0,h] }
    \,
    D_{ h, t }
  $
  and the fact that
  $
    \sup_{
      x \in [ - k, k ]
    }
    | f(x) | < \infty
  $
  yield that there exists a natural number
  $ N_0 \in \N $
  such that
  $
    N_0
  \geq
    T
    \,
    \big[
      \frac{ 1 }{ \delta }
      \cdot
      \sup_{ x \in [ - k , k ] } |f(x)|
    \big]^{ 1 / \beta }
  $
  and 
  $
    [ - k, k ]
    \subseteq
    \big(
      \cap_{ h \in (0, \frac{ T }{ N_0 } ] }
      \cap_{ t \in (0,h] }
      \, D_{ h, t }
    \big)
    =
    \big(
      \cap_{ N = N_0 }^{ \infty }
      \cap_{ h \in (0, \frac{ T }{ N } ] }
      \cap_{ t \in (0,h] }
      \, D_{ h, t }
    \big)
  $.
  This shows
  for all $ N \in \N \cap [ N_0, \infty ) $ that
  \begin{equation}  \begin{split}
  [ - k, k ]
  & \subseteq
    \bigcap_{ h \in (0, \frac{ T }{ N } ] }
    \bigcap_{t\in(0,h]}
    \left\{
      x \in D_{ h, t } \cap [ - k, k ]
      \colon
        N
        \geq
        T
        \big[
          \tfrac{ 1 }{ \delta }
          \cdot
          \sup\nolimits_{ y \in [ - k, k ]  }
          |f(y)|
        \big]^{ 1 / \beta }
    \right\}
  \\
  & \subseteq
    \bigcap_{ h \in (0, \frac{ T }{ N } ] }
    \bigcap_{t\in(0,h]}
    \left\{
      x \in D_{ h, t } \cap [ - k, k ]
      \colon
        \left[
          \tfrac{ N }{ T }
        \right]^{ \beta }
        \geq
          \tfrac{ 1 }{ \delta }
          \cdot
          | f(x) |
    \right\}
  \\ &
  \subseteq
    \bigcap_{
      h \in ( 0, \frac{ T }{ N } ]
    }
    \bigcap_{
      t \in (0, h ]
    }
    \Big\{
      x \in D_{ h, t }
      \colon
      | f(x) | \, t^{ \beta }
      \leq \delta
    \Big\}
  \subseteq
    \bigcap_{
      t \in (0, \frac{ T }{ N } ]
    }
    \Big\{
      x \in D_{ \frac{ T }{ N }, t }
      \colon
      | f(x) | \, t^{ \beta }
      \leq \delta
    \Big\}
  \\&
  =
    \bigcap_{ n = 0 }^{ N - 1 }
    \bigcap_{
      t \in
      ( \frac{ n T }{ N } , \frac{ ( n + 1 ) T }{ N } ]
    }
    \bigg\{
      x \in
      D_{ \frac{ T }{ N } , t - \frac{ n T }{ N }
      }
      \colon
      | f(x) |
      \left[
        t - \tfrac{ n T }{ N }
      \right]^{ \beta }
      \leq \delta
    \bigg\}
    .
  \end{split}     
  \end{equation}
  Hence, we obtain that
  for all
  $ N \in \N \cap [ N_0, \infty ) $,
  $ n \in \{ 0, 1, \ldots, N - 1 \} $,
  $
    t \in \big( \frac{ n T }{ N } , \frac{ ( n + 1 ) T }{ N } \big]
  $
  it holds that
\begin{equation}
    \Big\{
      x \in
      D_{ \frac{ T }{ N } , t - \frac{ n T }{ N }
      }
      \colon
      | f(x) |
      \left[
        t - \tfrac{ n T }{ N }
      \right]^{ \beta }
      \leq \delta
    \Big\}
  \supseteq
  \Big\{ 
    x \in \R \colon
    | x | \leq k
  \Big\}
  .
\end{equation}
  Combining this 
  with 
  \eqref{eq:Pgeq12} 
  and
  the monotonicity of $ \P $ 
  yields that
  for all
  $ N \in \N \cap [ N_0, \infty ) $,
  $ n \in \{ 0, 1, \ldots, N - 1 \} $,
  $
    t \in \big( \frac{ n T }{ N } , \frac{ ( n + 1 ) T }{ N } \big]
  $
  it holds that
  \begin{equation}
  \label{eq:unbounded.expmoments2}
  \begin{split}
  &
    \P\!\left[
      |
        f( Y_{ \frac{ n T }{ N } }^N )
      |
      \left(
        t - \tfrac{ n T }{ N }
      \right)^{ \beta }
      \leq \delta
      ,
      \;
      Y_{
        \frac{ n T }{ N }
      }^N
      \in D_{ \frac{ T }{ N } , t - \frac{ n T }{ N } }
    \right]
  \geq
    \P\!\left[
      |
        Y_{ \frac{ n T }{ N } }^N
      |
      \leq k
    \right]
  \\ & \geq
    \P\!\left[
      |
        X_{ \frac{ n T }{ N } }
      |
      \leq k - 1
      ,
      \;
      |
        X_{ \frac{ n T }{ N } }
        -
        Y_{ \frac{ n T }{ N } }^N
      |
      \leq 1
    \right]
  \geq
     \frac{ 1 }{ 2 }
     -
     \sup_{
       m \in \{ 0, 1, \ldots, N \}
     }
     \P\!\left[
       |
         X_{ \frac{ m T }{ N } }
         -
         Y_{ \frac{ m T }{ N } }^N
       | > 1
     \right]
     .
  \end{split}     \end{equation}
  In the next step we combine 
  inequalities~\eqref{eq:unbounded.expmoments1}
  and~\eqref{eq:unbounded.expmoments2} 
  with the assumption that
  $
    q \beta > 2 \alpha + 1
  $
  and with the assumption that
  $
    \limsup_{
      N \to \infty
    }
    \sup_{
      n \in \{ 0, 1, \ldots, N \}
    }
    \P\big[
      |
        X_{ \frac{ n T }{ N } }
        -
        Y_{ \frac{ n T }{ N } }^N
      | > 1
    \big] = 0
  $
  to obtain that
  \begin{equation}  \begin{split}
    &
    \liminf_{ N \to \infty }
    \inf_{ t \in (0,T] }
    \E\!\left[
      \exp\!\left(
        \delta
        \left| Y_t^N \right|^q
      \right)
    \right]
    \\&
    \geq
    \liminf_{N\to\infty}
    \left[
    \tfrac{1}{\sqrt{2\pi T^{2\alpha+1}}}
    \cdot
    \exp\!\left(
      \inf_{
        z \in [ \frac{ N }{ T } , \infty ) 
      }
      \left[
        \delta^{ ( q + 1 ) } 
        z^{ q \beta }
        -
        2 z^{ ( 2 \alpha + 1 ) }
      \right]
    \right)
    \left(
      \frac{ 1 }{ 2 }
      -
      \sup_{
        m \in \{ 0, 1, \ldots, N \}
      }
      \P\!\left[
        |
          X_{ \frac{ m T }{ N } }
          -
          Y_{ \frac{ m T }{ N } }^N
        | > 1
      \right]
    \right)
    \right]
  \\ &
    =\infty.
  \end{split}     \end{equation}
  The proof of Lemma~\ref{l:unbounded.expmoments}
  is thus completed.
\end{proof}

\begin{corollary}
\label{c:increment.tamed.no.exp1}
  Assume the setting in Subsection~\ref{ssec:exampleSDE},
  let
  $
    D_t \in \mathcal{B}(\R)
  $,
  $ t \in (0,T] $,
  be a non-increasing family of sets
  satisfying
  $
    \forall \, n \in \N \colon
    \exists \, t \in (0,T] \colon
    [ -n , n ] \subseteq D_t
  $,
  and
  let
  $
    Y^N \colon [0,T] \times \Omega \to \R
  $,
  $ N \in \N $,
  be the mappings 
  which satisfy
  for all
  $ N \in \N $, 
  $ n \in \{ 0, 1, \ldots, N - 1 \} $,
  $ 
    t \in \big( \frac{ n T }{ N } , \frac{ ( n + 1 ) T }{ N } \big]
  $
  that
  $
    Y^N_0 = X_0
  $
  and
  \begin{equation}  
  \label{eq:increment-tamed.Euler1}
    Y_t^N
  =
    Y_{
      \frac{ n T }{ N }
    }^N
    +
    \1_{
      D_{ \frac{ T }{ N } }
    }\!(
      Y_{ \frac{ n T }{ N } }^N
    )
    \left[
      \frac{
        W_t - W_{ \frac{ n T }{ N } }
        -
        \big(
          Y_{ \frac{ n T }{ N } }^N
        \big)^3
        (
          t - \frac{ n T }{ N }
        )
      }{
        \max\!\big\{
          1,
          \left( t - \frac{ n T }{ N } \right)
          \big|
            W_t - W_{ \frac{ n T }{ N } }
            -
            \big(
              Y_{ \frac{ n T }{ N } }^N
            \big)^3
            (
              t - \frac{ n T }{ N }
            )
          \big|
        \big\}
      }
    \right]
    .
  \end{equation}
  Then 
  it holds
  for all
  $ p \in (0, \infty ) $,
  $ q \in ( 3, \infty ) $
  that
  $
    \limsup_{ N \to \infty }
    \P\big[
      \sup_{
        n \in \{ 0, 1, \ldots, N \}
      }
      |
        X_{ \nicefrac{ n T }{ N } }
        -
        Y_{ \nicefrac{ n T }{ N }
        }^N
      |
      > p
    \big] = 0
  $
  and
  $
    \liminf_{ N \to \infty }
    \inf_{ t \in (0,T] }
    \E\!\left[
      \exp\!\left(
        p \left| Y_t^N \right|^q
      \right)
    \right]
    = \infty
  $.
\end{corollary}

\begin{proof}[Proof of Corollary~\ref{c:increment.tamed.no.exp1}]
Throughout this proof let
$ p \in (0,\infty) $
and
$ q \in (3,\infty) $
be real numbers
and let
$
  \psi \colon \R \times (0,T]^2 \times \R \to \R
$
be the mapping with the property that
for all 
$
  (x,h,t,y) \in \R \times (0,T]^2 \times \R
$
it holds that
$
    \psi( x, h, t, y)
    =
    \1_{ D_h }\!( x )
    \tfrac{
      y - x^3 t
    }{
      \max\{ 1, t | y - x^3 t | \}
    }
$.
In the next step we apply 
Lemma 3.28
in~\cite{HutzenthalerJentzen2014Memoires}
and
Lemma~\ref{lem:consistent_stopped}
to obtain that
the function $\R\times(0,T]\times\R\ni(x,t,y)\mapsto \psi(x,t,t,y)\in\R$
  is $(\mu,\sigma)$-consistent with respect to Brownian motion.
  Proposition~\ref{prop:convergence_probab}
  hence implies for all $ r \in ( 0, \infty ) $ that
  $
    \limsup_{ N \to \infty }
    \P\big[
      \sup_{
        n \in \{ 0, 1, \ldots, N \}
      }
      |
        X_{ \frac{ n T }{ N } }
        -
        Y_{ \frac{ n T }{ N } }^N
      | > r
    \big] = 0
  $.
  For proving the divergence statement
  in Corollary~\ref{c:increment.tamed.no.exp1},
  we intend to apply Lemma~\ref{l:unbounded.expmoments} above.
To this end we first prove 
inequality~\eqref{eq:condition.unbounded.expmoments}.
  For this let
  $
    D_{ h, t } \in \mathcal{B}( \R )
  $,
  $ h, t \in (0,T] $,
  be the sets 
which satisfy
for all $ h, t \in (0,T] $
that
  $
    D_{ h, t }
    =
      D_h
      \cap
      [ - t^{ - 2 / 3 } , t^{ - 2 / 3 } ]
  $.
  Next note that for all $ r \in (0,T] $
  it holds that
  \begin{equation}
  \begin{split}
    \cap_{ h \in (0,r] }
    \cap_{ t \in (0,h] }
    D_{ h, t }
  &
    =
    \cap_{ h \in (0,r] }
    \cap_{ t \in (0,h] }
    \big(
      D_h
      \cap
      \big[
        - t^{ - 2 / 3 }
        ,
        t^{ - 2 / 3 }
      \big]
    \big)
   =
    \cap_{ h \in (0,r] }
    \big(
      D_h
      \cap
      \big[
        - h^{ - 2 / 3 }
        ,
        h^{ - 2 / 3 }
      \big]
    \big)
  \\ & =
      D_r
      \cap
      \big[
        - r^{ - 2 / 3 }
        ,
        r^{ - 2 / 3 }
      \big]
    .
  \end{split}
  \end{equation}
  This and the assumption that
  $
    \forall \, n \in \N \colon \exists \, t \in (0,T]
    \colon
    [ -n , n ] \subseteq D_t 
  $
  assure that for all $ n \in \N $
  there exists a real number $ r \in (0,T] $
  such that
  \begin{equation}
  \label{eq:sets_property}
    [-n,n] \subseteq
    \cap_{ h \in (0,r] }
    \cap_{ t \in (0,h] }
    D_{ h, t }
    .
  \end{equation}
  Moreover, observe that
  for all
  $
    ( x, h, t, y)
    \in \R \times (0,T]^2 \times \R
  $
  it holds that
  \begin{equation}  \begin{split}
    \left|
      x + \psi( x, h, t, y )
    \right|
  & \geq
    \left(
      \tfrac{
        |y - x^3 t |
      }{
        \max( 1, t | y - x^3 t | )
      }
      - |x|
    \right)
    \cdot
    \1_{
      D_{ h, t }
    }(x)
    \cdot
    \1_{ [1, 2] }\big( t y \big)
  \\
  & \geq
    \left(
      \tfrac{
        |y |
      }{
        \max( 1, t | y - x^3 t | )
      }
      - |x|
      - | x |^3 t
    \right)
    \cdot
    \1_{
      D_{ h, t }
    }(x)
    \cdot
    \1_{ [1, 2] }\big( t y \big)
  \\ & \geq
    \left(
      \tfrac{
        |y|
      }{
        1 + t^2 |x|^3 + t |y|
      }
      - |x| - |x|^3 t
    \right)
    \cdot
    \1_{
      D_{ h, t }
    }(x)
    \cdot
    \1_{ [1, 2] }\big( t y \big)
  \\ & \geq
    \left(
      \tfrac{ |y| }{ 2 + t |y| } - |x| - |x|^3 T
    \right)
    \cdot
    \1_{
      D_{ h, t }
    }(x)
    \cdot
    \1_{ [1, 2] }\big( t y \big)
  \\ & \geq
    \left(
      \tfrac{ 1 }{ 3 t }
      - |x| - |x|^3 T
    \right)
    \cdot
    \1_{
      D_{ h, t }
    }(x)
    \cdot
    \1_{ [1, 2] }\big( t y \big)
    .
  \end{split}     \end{equation}
  This together with the fact that
  for all
  $
    N \in \N
  $,
  $
    n \in \{ 0, 1, \ldots, N - 1 \}
  $,
  $
    t \in \big( \frac{ n T }{ N } , \frac{ (n + 1) T }{ N } \big]
  $
  it holds that
  $
    Y_t^N =
    Y_{ \frac{ n T }{ N } }^N
    +
    \psi\big(
      Y_{ \frac{ n T }{ N } }^N , \tfrac{ T }{ N } , t - \tfrac{ n T }{ N }, W_t - W_{ \frac{ n T }{ N } }
    \big)
  $
  shows that
  for all
  $
    N \in \N
  $,
  $
    n \in \{ 0, 1, \ldots, N - 1 \}
  $,
  $
    t \in \big( \frac{ n T }{ N } , \frac{ (n + 1) T }{ N } \big]
  $
  it holds that
  \begin{equation}
    | Y_t^N |
  \geq
    \bigg[
      \tfrac{
        2 \min\{ \frac{ 1 }{ 6 } , p \}
      }{
        (
          t - \frac{ n T }{ N }
        )
      }
      -
      \left[
        \big|
          Y_{ \frac{ n T }{ N } }^N
        \big|
        +
        \big|
          Y_{ \frac{ n T }{ N } }^N
        \big|^3
        T
      \right]
    \bigg]
    \1_{
      \left\{
        1
      \leq
        \left( t - \frac{ n T }{ N } \right)
        (
          W_t - W_{ \frac{ n T }{ N } }
        )
      \leq
        2
      \right\}
      \cap
      \left\{
        Y_{
          \frac{ n T }{ N }
        }^N
        \in
        D_{ \frac{ T }{ N } , t - \frac{ n T }{ N } }
      \right\}
    }
    .
  \end{equation}
  This and \eqref{eq:sets_property} allows us to apply 
  Lemma~\ref{l:unbounded.expmoments}
  (with $\alpha=\beta=1$ in the notation of
  Lemma~\ref{l:unbounded.expmoments})
  to obtain
  the divergence statement in
  Corollary~\ref{c:increment.tamed.no.exp1}.
  The proof of
  Corollary~\ref{c:increment.tamed.no.exp1}
  is thus completed.
\end{proof}

The proof of the following corollary, Corollary~\ref{c:increment.tamed.no.exp2},
is analogous
to the proof of Corollary~\ref{c:increment.tamed.no.exp1} 
and therefore omitted.

\begin{corollary}
\label{c:increment.tamed.no.exp2}
  Assume the setting in Subsection~\ref{ssec:exampleSDE},
  let
  $
    D_t \in \mathcal{B}(\R)
  $,
  $
    t \in (0,T]
  $,
  be a non-increasing family of sets
  satisfying
  $
    \forall \, n \in \N 
    \colon
    \exists \, t \in (0,T]
    \colon
    [-n, n]
    \subseteq
    D_t
  $,
  and
  let
  $
    Y^N \colon [0,T] \times \Omega \to \R
  $,
  $ N \in \N $,
  be mappings which satisfy
  for all
  $ N \in \N $,
  $
    n \in \{ 0, 1, \ldots, N - 1 \}
  $,
  $
    t \in
    \big(
      \frac{ n T }{ N },
      \frac{ ( n + 1 ) T }{ N }
    \big]
  $
  that
  $
    Y^N_0 = X_0
  $
  and
  \begin{equation}  \label{eq:increment-tamed.Euler2}
    Y_t^N
  =
    Y_{ \frac{ n T }{ N } }^N
    +
    \1_{
      D_{ \frac{ T }{ N } }
    }\!(
      Y_{ \frac{ n T }{ N } }^N
    )
    \left[
    \frac{
      W_t - W_{ \frac{ n T }{ N } }
      -
      (
        Y_{ \frac{ n T }{ N } }^N
      )^3
      \left(
        t - \frac{ n T }{ N }
      \right)
    }{
      1 +
      \left( t - \frac{ n T }{ N } \right)
      \big|
        W_t -
        W_{
          \frac{ n T }{ N }
        }
        -
        (
          Y_{ \frac{ n T }{ N } }^N
        )^3
        \left(
          t - \frac{ n T }{ N }
        \right)
      \big|
    }
    \right]
    .
  \end{equation}
  Then
  it holds  
  for all $ p \in (0,\infty) $, $ q \in ( 3, \infty) $
  that
  $
    \limsup_{ N \to \infty }
    \P\big[
      \sup_{ n \in \{0, 1, \ldots, N \} }
      |
        X_{ \nicefrac{ n T }{ N } }
        -
        Y_{ \nicefrac{ n T }{ N } }^N
      |
      > p
    \big]
    = 0
  $
  and
  $
    \liminf_{ N \to \infty }
    \inf_{ t \in (0,T] }
    \E\!\left[
      \exp\!\left(
        p \,
        | Y_t^N |^q
      \right)
    \right]
    = \infty
  $.
\end{corollary}

%
%

\subsection*{Acknowledgements}

Special thanks are due to Andreas Herzwurm, Primoz Pusnik, 
and Klaus Ritter for fruitful discussions on exponentially growing test functions
and some auxiliary lemmas.
XW is grateful to the Institute for Mathematical Research (FIM) at ETH Zurich, 
which provided the office space for him and partially organized his short 
visit to AJ in 2013.
This project has been partially supported by the
research project
``Numerical approximation of stochastic differential equations with non-globally
Lipschitz continuous coefficients'' (HU1889/2-1)
funded by the German Research Foundation and by a research funding from NSF of China (11671405, 11301550).

%
%

\bibliographystyle{acm}
 \bibliography{bibfile}

\end{document}